\def\namedlabel#1#2{\begingroup
    #2%
    \def\@currentlabel{#2}%
    \phantomsection\label{#1}\endgroup
}
\begin{document}
\newtheorem{theorem}{Theorem}[section]
\newtheorem{definition}[theorem]{Definition}
\newtheorem{remark}[theorem]{Remark}
\newtheorem{proposition}[theorem]{Proposition}
\newtheorem{lemma}[theorem]{Lemma}
\newtheorem{assumption}{Assumption}
\newtheorem{corollary}[theorem]{Corollary}

\newcommand{\bbK}{{\mathbb{K}}}
\newcommand{\bbF}{{\mathbb{F}}}
\newcommand{\bbR}{{\mathbb{R}}}
\newcommand{\bbQ}{{\mathbb{Q}}}
\newcommand{\bbC}{{\mathbb{C}}}
\newcommand{\bbZ}{{\mathbb{Z}}}
\newcommand{\bbN}{{\mathbb{N}}}
\newcommand{\bbD}{{\mathbb{D}}}
\newcommand{\bbT}{{\mathbb{T}}}
\def\bbA{{\mathbb{A}}}
\newcommand{\bbS}{{\mathbb{S}}}

\title{Continuity of Lyapunov exponents for $\mathcal{C}^r$ one-dimensional maps}
\author{Alexandre Delplanque, Hengyi Li}

\date{October 2025}

\maketitle

\begin{abstract}
    We prove the entropic continuity of Lyapunov exponent for $\mathcal{C}^r$ maps of the interval or of the circle with large entropy for $r>1$, without making any assumptions on the set of critical points. A consequence is the upper semi-continuity of entropy at ergodic measures with large entropy. Another consequence is the uniform integrability of the geometric potential at ergodic measures with large entropy.
\end{abstract}

\section{Introduction}

Lyapunov exponents and entropy are two key invariants in smooth ergodic theory, the first measures the asymptotic rate of divergence of nearby orbits and the second measures the exponential growth of the number of distinct orbits. 
The goal of this paper is to understand the relations between the continuity of entropy and Lyapunov exponent for $\mathcal{C}^r$ one-dimensional maps. In this paper, we show that given a convergent sequence of measures $\nu_k\rightarrow \mu$ whose entropy converges to the topological entropy, its Lyapunov exponent also converges, i.e. $\lim_{k\rightarrow \infty} \lambda(\nu_k)=\lambda(\mu)$. Moreover, we obtain the uniform integrability of the geometric potential $\log |f'|$.\\

Similar results have been obtained by the second author for smooth interval maps with only non-flat critical points \cite{L}, as well as for surface diffeomorphisms by Buzzi-Crovisier-Sarig \cite{BCS} and Burguet \cite{B}.
We expect similar results to hold in any dimension, for arbitrary smooth maps, regardless of the existence or the flatness of their critical sets.\\

\subsection{Statements of the main results}

Let $I$ be the interval $[0,1]$ or the circle $\mathbb{S}^1$.
A map $g:I\rightarrow I$ is $\mathcal{C}^r$ for $r>1$, $r\in \mathbb{R}$, if $g$ is $\mathcal{C}^{\lfloor r \rfloor}$ and $g^{(\lfloor r \rfloor)}$ is $(r-\lfloor r \rfloor)$-H{\"o}lder, where $\lfloor r\rfloor:=\max\{ n\in \mathbb{N}: n\leq r \}$.
For all $g: I\rightarrow I$, $\mathcal{C}^r$-map with $r> 1$, denote by $\mathbb{P}(g)$ the set of $g$-invariant Borel probability measures on $I$; denote by $\mathbb{P}_{\rm{erg}}(g)$ the set of ergodic measures in $\mathbb{P}(g)$. In this paper, we will only consider Borel measures. Furthermore, when we refer to an ergodic measure, we always assume that it is an invariant probability measure.\\

For each $x\in I $, denote the Lyapunov exponent of $x$ by
$$
\lambda_g(x):=\limsup_{n\rightarrow \infty}\frac{1}{n}\sum_{i=0}^{n-1}\log|g'(g^i x)|.
$$
For each $\nu\in \mathbb{P}(g)$, denote the Lyapunov exponent of $\nu$ by
$$
\lambda_g(\nu):=\int \lambda_g(x)d\nu.
$$
By Birkhoff's ergodic theorem, we see that $\lambda_g(\nu)=\int \log|g'|d\nu$. Indeed, when $\log|g'|\notin \mathbb{L}^1(\nu)$, we may still apply Birkhoff's ergodic theorem to $\max\{ -M,\log|g'| \}$ and then take limits in $M$.
Denote by $R(g)$ the spectral radius of $g$:
$$
R(g):=\lim_{n\rightarrow \infty}\frac{1}{n}\log^+ || (g^n)' ||_{\infty},
$$
where $\log^+ = \max \{0, \log\}$ and $||-||_{\infty}$ is the supremum of a map over $I$, not the essential norm.\\

Denote by $h_{\rm{top}}(g)$ the topological entropy of $g$. For each $\nu\in \mathbb{P}(g)$, denote by $h_g(\nu)$ the metric entropy of $\nu$. By the variational principle,
$$
\sup_{\nu\in \mathbb{P}_{\rm{erg}}(g)} h_g(\nu) =h_{\rm{top}}(g).
$$
When $h_g(\nu)=h_{\rm{top}}(g)$, we say that $\nu$ is a measure of maximal entropy of $g$, which we always abbreviate as an m.m.e.
We say that a sequence of $\mathcal{C}^r$-maps $(f_k)$ converges $\mathcal{C}^r$-weakly, which we denote by $f_k \to f$, $\mathcal{C}^r$-weakly, if $(f_k)$ converges to $f$ in the $\mathcal{C}^1$-topology and if the sequence $(f_k)$ is bounded in the $\mathcal{C}^r$-norm.

\begin{theorem}\label{th:main1}
Let $f : I \to I$ be a $\mathcal{C}^r$ map of positive topological entropy, with $r > 1$.
Let $f_k\rightarrow f$, $\mathcal{C}^r$-weakly. Let $(\nu_k)_{k\in \mathbb{N}}$ be a sequence of $f_k$-ergodic measures, converging to some measure $\mu$ in the weak-$\ast$ topology. Then for all $\alpha>\frac{R(f)}{r}$, there exist two $f$-invariant probability measures $\mu_1$, $\mu_0$, and $\beta\in [0,1]$, with $\mu = \beta \mu_1 + (1-\beta) \mu_0$ and 
\begin{itemize}
    \item[\namedlabel{itm:main11}{1)}] $\mu_1$ is hyperbolic, i.e. $\lambda_f(x)>0$ for $\mu_1$-a.e. $x\in I$;
    \item[\namedlabel{itm:main12}{2)}] $\limsup\limits_{k \to +\infty} h_{f_k}(\nu_k) \leq \beta\cdot h_f(\mu_1)+(1-\beta)\cdot\alpha$;
    \item[\namedlabel{itm:main13}{3)}] $\limsup_{k\rightarrow \infty} \lambda_{f_k}(\nu_k)\geq \beta\lambda_f(\mu_1)$.
\end{itemize}
\end{theorem}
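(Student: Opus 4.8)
The plan is to resolve the entropy of $\nu_k$ at a small scale and to cut each $\nu_k$-generic orbit segment into \emph{expanding} blocks and \emph{non-expanding} blocks; the weak-$\ast$ limits of the empirical measures carried by the two families of blocks will be $\mu_1$ (hyperbolic) and $\mu_0$, with $\beta$ the limiting proportion of expanding time, and the non-expanding blocks will be handled by a $\mathcal{C}^r$ Yomdin--type reparametrization whose exponential complexity lies below $\alpha$ precisely because $\alpha>R(f)/r$. First I would pass to a subsequence along which $h_{f_k}(\nu_k)$ and $\lambda_{f_k}(\nu_k)$ converge, and fix $\alpha>R(f)/r$, a scale $\varepsilon>0$ for Katok's entropy formula, and an auxiliary rate $c>0$, the last two to be sent to $0$ at the very end. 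Since $f_k\to f$ in $\mathcal{C}^1$ and $(f_k)$ is $\mathcal{C}^r$-bounded, $\tfrac1n\log^+\|(f_k^n)'\|_\infty\to\tfrac1n\log^+\|(f^n)'\|_\infty$ as $k\to\infty$ for each fixed $n$, the latter tending to $R(f)$; hence a reparametrization estimate at exponential rate below $\alpha$ over long time windows holds uniformly in $k$. Also $\log|f_k'|$ is uniformly bounded above, so $\limsup_k\int\log|f_k'|\,d\eta_k\le\int\log|f'|\,d\eta$ whenever $\eta_k\to\eta$ weak-$\ast$ (truncate at $-M$, let $M\to\infty$), while on any region $\{|f_k'|\ge e^{c}\}$ the potential is continuous and converges uniformly.

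Then I would construct the decomposition. For $n$ large and $x$ in a set $E_{k,n}$ with $\nu_k(E_{k,n})>1-\delta$ of ``good'' points (Birkhoff averages of the truncated potential have stabilized and $\tfrac1n\sum_{j<n}\delta_{f_k^jx}$ is close to $\nu_k$), I would use Pliss's lemma together with the uniform upper bound on $\log|f_k'|$ to single out the $c$-hyperbolic times of the segment — the $m$ with $\sum_{j=m}^{m'-1}\log|f_k'(f_k^jx)|\ge c(m'-m)$ for all $m<m'\le n$ — and group them (after merging the short complementary gaps) into expanding blocks, leaving the long complementary intervals as non-expanding blocks. Let $\beta_k$ be the density of the expanding part and $\mu_1^{k},\mu_0^{k}$ the corresponding normalized empirical measures, averaged over $x\in E_{k,n}$; a further subsequence yields $\beta_k\to\beta$ and $\mu_i^k\to\mu_i$ weak-$\ast$ with $\mu=\beta\mu_1+(1-\beta)\mu_0$. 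Since the hyperbolic-time sets of the orbits of $x$ and of $f_kx$ differ in $o(n)$ indices, the limits are $f_k$-invariant, and $f_k\to f$ in $\mathcal{C}^0$ passes invariance to $f$. The first assertion is now immediate: at a hyperbolic time $m$ one has in particular $|f_k'(f_k^mx)|\ge e^{c}$, so $\mu_1$ is carried by $\{|f'|\ge e^{c}\}$ up to a controlled error, whence $\lambda_f(\mu_1)\ge c>0$ (pass to ergodic components if needed); in the degenerate case $\beta=0$ one takes $\mu_1$ to be any fixed hyperbolic measure, which exists because $h_{\mathrm{top}}(f)>0$ forces, via Ruelle's inequality, a positive exponent on every positive-entropy ergodic measure.

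The heart of the argument is the entropy bound. By Katok's entropy formula it suffices to bound an $(n,\varepsilon)$-separated subset $F\subseteq E_{k,n}$ by $e^{n(\beta h_f(\mu_1)+(1-\beta)\alpha+o(1))}$. After fixing the pattern of expanding versus non-expanding blocks — only $e^{o(n)}$ many, a negligible factor — I would treat the two kinds of blocks differently. Over each non-expanding block I would apply the uniform $\mathcal{C}^r$ Yomdin reparametrization on windows of a large fixed length $N$: resolving the $f_k$-dynamics at scale $\varepsilon$ over a block of length $\ell$ costs at most $e^{\rho\ell}$ reparametrization pieces with $\rho<\alpha$, so the non-expanding blocks contribute jointly at most $e^{(1-\beta)n\alpha}$. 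Over the expanding blocks the uniform distortion furnished by the hyperbolic-time/Pliss property and the $\mathcal{C}^{1+}$-regularity lets me code each orbit segment by the itinerary of $f$ through a fixed finite partition finer than $\varepsilon$; since the empirical measure along expanding times converges to $\beta\mu_1$, the Shannon--McMillan--Breiman theorem for $(\mu_1,f)$ — together with the fact that $\mathcal{C}^1$-closeness of $f_k$ to $f$ makes the $f_k$- and $f$-itineraries agree on good points — bounds the number of realized itineraries by $e^{\beta n(h_f(\mu_1)+o(1))}$. Multiplying the three estimates and letting $n\to\infty$, then $\varepsilon\to0$, then $\delta\to0$, then $c\to0$ gives the second assertion.

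The third assertion I would read off the identity $\lambda_{f_k}(\nu_k)=\beta_k\int\log|f_k'|\,d\mu_1^{k}+(1-\beta_k)\int\log|f_k'|\,d\mu_0^{k}$: the first term tends to $\beta\lambda_f(\mu_1)$, because the expanding blocks keep $\mu_1^k$ in a region where the potential is continuous and converges, and the second term is $\ge -o(1)$ thanks to the choice of block endpoints at hyperbolic times (the non-expanding blocks, being long, are few, and each has bounded total contribution), so $\limsup_k\lambda_{f_k}(\nu_k)\ge\beta\lambda_f(\mu_1)$. The main obstacle throughout — and the reason no non-flatness assumption on the critical set is needed — will be making every count uniform near critical points, where classical distortion estimates diverge: it is precisely the uniform $\mathcal{C}^r$ Yomdin reparametrization of the whole family $(f_k)$ (uniform by the $\mathcal{C}^r$-bound and the $\mathcal{C}^1$-convergence) that absorbs the behaviour near critical points into the sub-$\alpha$ exponential complexity of the non-expanding blocks. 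Dovetailing this reparametrization with the hyperbolic-time bookkeeping so that a single $\beta$ governs all three assertions, while keeping every error term $o(n)$ uniformly in $k$, is where I expect the real difficulty to lie.
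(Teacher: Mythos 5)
Your architecture (hyperbolic/neutral block decomposition via Pliss-type times, reparametrization of the neutral blocks at exponential cost below $\alpha$, empirical limits $\mu_1,\mu_0$ with density $\beta$) is the same as the paper's, but two steps as you describe them would not close. First, the reparametrization count along neutral blocks is not a plain Yomdin bound ``$e^{\rho\ell}$ with $\rho<\alpha$'': for a $\mathcal{C}^r$ map the local complexity at scale $\varepsilon$ over a block is governed by $\tfrac1r\log^+\|f'\|_\infty$ per unit time \emph{plus} a term of order $\tfrac{1}{r-1}\sum\log^-|f'|$ accounting for the passages near the critical set, and the latter is unbounded on long neutral blocks. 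The paper absorbs it only because the hyperbolic times are defined through $\psi_i=\log|(f^i)'|-\tfrac1r\sum\log^+|f'\circ f^j|$ rather than through plain Birkhoff sums of $\log|f'|$: this built-in margin is exactly what yields the bound $\sum k'_{a_i}/(r-1)\le(n-\sharp E)\tfrac{\log^+\|f'\|_\infty}{r}$ on the neutral part (Lemma \ref{le:firstterm}), and the residual $\tfrac{1}{r-1}\int(\log^+-\log)|(f^q)'|\,d\xi$ on the hyperbolic part is killed only in the limit $q\to\infty$ using positivity of the exponent of $\mu_1$ (Lemma \ref{lemma:term-II}). Moreover even then the per-unit-time constant is $C(f)\approx\tfrac1r\log^+\|f'\|_\infty$, which exceeds $\alpha$ in general; one must run the entire argument for a high iterate $f^p$ (so that $C(f^p)/p\to R(f)/r<\alpha$) and descend, together with the dichotomy $\liminf_k\lambda_{f_k}(\nu_k)\le\alpha$ (where Ruelle gives the theorem with $\beta=0$) versus $>\alpha$ (where the Pliss density is positive for the iterate). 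Your proposal, which works with $f_k$ itself and a threshold $c\to0$, does not reach the rate $R(f)/r$.

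Second, the count of itineraries along expanding blocks cannot be obtained from Shannon--McMillan--Breiman for $(\mu_1,f)$: SMB controls cylinder measures of $\mu_1$-typical points, not the number of cylinders visited by $\nu_k$-typical orbit segments at their hyperbolic times, and $\mu_1$ need not be ergodic. The correct mechanism is a Misiurewicz-type splitting of the static entropy $H_{\nu_k}(\mathcal{P}^n)$ conditioned on the block pattern, concavity of $\rho\mapsto H_\rho(\mathcal{Q}^m)$ to pass to the averaged empirical measure $\overline{\xi_k^{M,m}}$, and then a proof that $H_{\overline{\xi_k^{M,m}}}(\mathcal{R}^m_{q,k})\to H_{\mu_1}(\mathcal{R}^m_q)$ as $k,M\to\infty$; this last convergence is delicate precisely because $\log|f'|$ is discontinuous at critical points, and it uses that trimmed hyperbolic blocks stay in $\{\log|f'|\ge-M\log\|f'\|_\infty\}$ (Lemma \ref{lemma:prop-hb}). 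Two smaller but genuine misstatements: $\mu_1$ is \emph{not} carried by $\{|f'|\ge e^c\}$ up to small error, since after merging gaps of length up to $M$ most of the expanding-block times need not be Pliss times (hyperbolicity of $\mu_1$ is instead proved via the sets $\{x:\exists m\le M,\ \log|(f^m)'(x)|\ge m\delta\}$); and the neutral contribution to the exponent is bounded below not because neutral blocks are ``few with bounded contribution'' (their total length is $(1-\beta)n$ and $\log|f_k'|$ is unbounded below there) but because each gap begins and ends at hyperbolic times, so its Birkhoff sum is at least $\delta$ times its length.
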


\begin{remark}
    The statement is different from \cite{BCS}, but similar to \cite{B}. Furthermore we could only have an inequality rather than an equality in item \ref{itm:main13}. One reason is that the existence of a critical set breaks the continuity of the geometric potential $\log|f'|$, which means that the limit argument used in \cite{BCS} cannot be adopted directly in our case. A more detailed treatment is given in the proof of item \ref{itm:main21'} of Theorem \ref{th:main2}.
\end{remark}

At the end of section \ref{section:proof-th}, we show that one can choose $r=\infty$.

\begin{proposition}
\label{prop:Cinf-case}
If $r = \infty$, then the above statement is true for $\alpha = 0$.
\end{proposition}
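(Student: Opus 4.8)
The plan is to apply Theorem~\ref{th:main1} along a sequence of smoothness exponents $r_n \to \infty$ with thresholds $\alpha_n \downarrow 0$, and then to pass to a limit. The $\mathcal{C}^\infty$ hypothesis entails that $f$ is $\mathcal{C}^r$ and that $f_k \to f$ $\mathcal{C}^r$-weakly for every finite $r > 1$, so Theorem~\ref{th:main1} is available with any $\alpha > R(f)/r$ (note $R(f) \le \log^+\|f'\|_\infty < \infty$). Taking $r_n = n$ and $\alpha_n = (R(f)+1)/n$, I obtain for each $n \ge 2$ a splitting $\mu = \beta_n\mu_1^{(n)} + (1-\beta_n)\mu_0^{(n)}$ with $\mu_1^{(n)}$ hyperbolic, $\limsup_k h_{f_k}(\nu_k) \le \beta_n h_f(\mu_1^{(n)}) + (1-\beta_n)\alpha_n$, and $\limsup_k\lambda_{f_k}(\nu_k) \ge \beta_n\lambda_f(\mu_1^{(n)})$.

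The device that makes the limit tractable is to exploit that each $\mu_1^{(n)}$ is a \emph{piece of the fixed measure} $\mu$. Since $\mu_0^{(n)} \ge 0$, we have $\beta_n\mu_1^{(n)} \le \mu$; writing the ergodic decomposition $\mu = \int_{\mathbb{P}_{\rm{erg}}(f)}\nu\,dP(\nu)$, uniqueness of that decomposition forces $\beta_n P_1^{(n)} \le P$ (where $P_1^{(n)}$ is the decomposition of $\mu_1^{(n)}$), hence $\beta_n\mu_1^{(n)} = \int g_n(\nu)\,\nu\,dP(\nu)$ for a density $g_n := \beta_n\,dP_1^{(n)}/dP \in [0,1]$. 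Hyperbolicity of $\mu_1^{(n)}$ forces $g_n = 0$ $P$-a.e. off the set $\mathcal{H} := \{\nu \in \mathbb{P}_{\rm{erg}}(f) : \lambda_f(\nu) > 0\}$, on which $0 < \lambda_f(\nu) \le \log\|f'\|_\infty$ and $0 \le h_f(\nu) \le h_{\rm{top}}(f) < \infty$. Since $(g_n)$ lies in the weak-$*$ compact (and metrizable) unit ball of $L^\infty(P)$, I pass to a subsequence with $g_n \to g$ weak-$*$, $0 \le g \le 1$, $g = 0$ off $\mathcal{H}$, and $\beta_n = \int g_n\,dP \to \int g\,dP =: \beta$.

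Testing the weak-$*$ convergence against the $L^1(P)$-functions $\nu \mapsto \int\psi\,d\nu$ ($\psi \in C(I)$), $\nu \mapsto h_f(\nu)$, and $\nu \mapsto \mathbf{1}_{\mathcal{H}}(\nu)\lambda_f(\nu)$ — all bounded on $\mathcal{H}$ — gives \emph{simultaneously} $\beta_n\mu_1^{(n)} \to \bar\mu := \int g(\nu)\,\nu\,dP(\nu)$ in the weak-$*$ topology, $\beta_n h_f(\mu_1^{(n)}) \to \int g\,h_f\,dP$, and $\beta_n\lambda_f(\mu_1^{(n)}) \to \int g\,\lambda_f\,dP$. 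By affinity of the metric entropy and, after truncating $\log|f'|$, of the geometric potential, the latter two limits equal $\beta h_f(\mu_1)$ and $\beta\lambda_f(\mu_1)$, where for $\beta > 0$ one sets $\mu_1 := \bar\mu/\beta$ (an invariant probability measure whose ergodic decomposition $gP/\beta$ is carried by $\mathcal{H}$, hence hyperbolic) and $\mu_0 := (\mu - \bar\mu)/(1-\beta)$ (the case $\beta = 1$ being $\mu = \mu_1$). Letting $n \to \infty$ in the three inequalities of the first paragraph and using $\alpha_n \to 0$ then yields \ref{itm:main11}, \ref{itm:main12}, \ref{itm:main13} with $\alpha = 0$. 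The degenerate case $\beta = 0$ is handled separately: there $\limsup_k h_{f_k}(\nu_k) \le \beta_n h_{\rm{top}}(f) + \alpha_n \to 0$ and $\limsup_k\lambda_{f_k}(\nu_k) \ge \beta_n\lambda_f(\mu_1^{(n)}) \ge 0$, so taking for $\mu_1$ any hyperbolic ergodic measure — one exists by the variational principle together with the one-dimensional Ruelle inequality $h_f(\nu) \le \max\{0,\lambda_f(\nu)\}$, since $h_{\rm{top}}(f) > 0$ — and $\beta = 0$, $\mu_0 = \mu$, settles \ref{itm:main11}--\ref{itm:main13} for $\alpha = 0$.

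The genuine obstacle, and the reason one cannot simply invoke the upper semicontinuity of $\nu \mapsto h_f(\nu)$ valid for $\mathcal{C}^\infty$ maps, is item \ref{itm:main13}: the geometric potential $\log|f'|$ is only upper semicontinuous, so $\nu \mapsto \lambda_f(\nu)$ is, and a crude weak-$*$ limit would push $\beta_n\lambda_f(\mu_1^{(n)})$ in the wrong direction. The ``piece of $\mu$'' device cures this precisely because on $\mathcal{H}$ the potential is bounded, $\lambda_f$ becomes a bounded linear functional of the density $g_n$, and weak-$*$ convergence in $L^\infty(P)$ upgrades semicontinuity to genuine convergence. The auxiliary facts needed — Borel measurability of $\nu \mapsto h_f(\nu)$ and $\nu \mapsto \lambda_f(\nu)$ on $\mathbb{P}_{\rm{erg}}(f)$, and the identity $\lambda_f(\bar\mu) = \int g\,\lambda_f\,dP$ when $\log|f'| \notin L^1(\mu)$ — are routine.
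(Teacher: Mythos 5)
Your proof is correct, but it takes a genuinely different route from the paper's. The paper keeps $r=\infty$ fixed and lets $\alpha\downarrow 0$, exploiting the \emph{internal} monotone structure of the construction: a smaller $\alpha$ gives a smaller $\delta$, hence a larger set of hyperbolic times, so the sub-measures $\beta^{\alpha}\mu_1^{\alpha}\leq\mu$ form a monotone family and the limit is taken by monotone convergence (as in Lemma \ref{lemma:monotone-xi} and Proposition \ref{prop:conv-expo-HB}); item \ref{itm:main12} is then \emph{not} obtained by passing to the limit at all, but by invoking Newhouse's upper semi-continuity of the metric entropy for $\mathcal{C}^{\infty}$ maps \cite{NewhouseContEntropy}. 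You instead treat Theorem \ref{th:main1} as a black box for $r_n\to\infty$, $\alpha_n\downarrow 0$, and compensate for the absence of any monotonicity between the successive splittings by encoding each $\beta_n\mu_1^{(n)}\leq\mu$ as a density $g_n\in[0,1]$ against the ergodic decomposition $P$ of the fixed limit $\mu$, then extracting a weak-$*$ limit in the unit ball of $L^{\infty}(P)$; since $\nu\mapsto h_f(\nu)$ and $\nu\mapsto\mathbf{1}_{\mathcal{H}}(\nu)\lambda_f(\nu)$ are bounded $L^1(P)$ test functions, all three items converge simultaneously and you never need Newhouse. The paper's argument is shorter given the machinery already built, but requires looking inside the proof of Theorem \ref{th:main1}; yours is modular (it would work for any parametrized family of such splittings of a fixed $\mu$, monotone or not) and replaces a deep input (Newhouse) by Jacobs' affinity of entropy plus soft functional analysis. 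The steps you label routine are indeed standard, though you should state explicitly why $\beta_n\mu_1^{(n)}\leq\mu$ forces $\beta_n P_1^{(n)}\leq P$: the ergodic component of a point is determined by its Birkhoff averages, so the decomposition of an invariant sub-measure of $\mu$ is the corresponding sub-measure of $P$.
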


We give several corollaries. The first one is that we recover the upper semi-continuity of the topological entropy at $\mathcal{C}^r$ one-dimensional maps of high enough topological entropy, as proven by Burguet in \cite{BurguetJumpEntropy}.

\begin{corollary}[Continuity of the topological entropy]
Let $f_k \to f$, $\mathcal{C}^r$-weakly, with $r > 1$, and assume that $h_{\rm{top}}(f) \geq \frac{R(f)}{r}$, then
$$
\lim\limits_{k \to +\infty} h_{\rm{top}}(f_k) = h_{\rm{top}}(f).
$$
\end{corollary}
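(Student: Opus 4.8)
The plan is to establish the two inequalities $\limsup_k h_{\mathrm{top}}(f_k) \le h_{\mathrm{top}}(f)$ and $\liminf_k h_{\mathrm{top}}(f_k) \ge h_{\mathrm{top}}(f)$ separately. Only the first uses Theorem~\ref{th:main1} and the hypothesis $h_{\mathrm{top}}(f) \ge R(f)/r$; the second will only need the $\mathcal{C}^1$-convergence. I would first dispose of the degenerate case $h_{\mathrm{top}}(f) = 0$, in which the hypothesis forces $R(f) = 0$: since $h_{\mathrm{top}}(g) \le R(g)$ for every $\mathcal{C}^1$ self-map $g$ of $I$ (a classical consequence of Ruelle's inequality applied to the iterates $g^n$) and $g \mapsto R(g)$ is upper semicontinuous for the $\mathcal{C}^1$-topology (submultiplicativity of $n \mapsto \|(g^n)'\|_\infty$ together with continuity of $g \mapsto \|(g^n)'\|_\infty$ at each fixed $n$), one gets $\limsup_k h_{\mathrm{top}}(f_k) \le \limsup_k R(f_k) \le R(f) = 0$, while the lower bound is trivial. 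From now on assume $h_{\mathrm{top}}(f) > 0$.

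For \emph{upper semicontinuity}, fix $\alpha > R(f)/r$. Using the variational principle, for each $k$ I would pick an $f_k$-ergodic measure $\nu_k$ with $h_{f_k}(\nu_k) \ge h_{\mathrm{top}}(f_k) - 1/k$. By compactness of the space of Borel probability measures on $I$ for the weak-$\ast$ topology, after passing to a subsequence I may assume both that $h_{\mathrm{top}}(f_k) \to \limsup_k h_{\mathrm{top}}(f_k)$ and that $\nu_k \to \mu$ for some $\mu$; the subsequence of $(f_k)$ is still $\mathcal{C}^r$-weakly convergent to $f$, so Theorem~\ref{th:main1} applies and furnishes $f$-invariant measures $\mu_1,\mu_0$ and $\beta \in [0,1]$ with $\mu = \beta\mu_1 + (1-\beta)\mu_0$ and $\limsup_k h_{f_k}(\nu_k) \le \beta h_f(\mu_1) + (1-\beta)\alpha$. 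Since $h_f(\mu_1) \le h_{\mathrm{top}}(f)$ and $h_{\mathrm{top}}(f_k) - 1/k \le h_{f_k}(\nu_k) \le h_{\mathrm{top}}(f_k)$, this yields $\limsup_k h_{\mathrm{top}}(f_k) \le \max\{h_{\mathrm{top}}(f),\alpha\}$. As $\alpha > R(f)/r$ was arbitrary and $h_{\mathrm{top}}(f) \ge R(f)/r$, letting $\alpha \downarrow R(f)/r$ gives $\limsup_k h_{\mathrm{top}}(f_k) \le h_{\mathrm{top}}(f)$. (In the $\mathcal{C}^\infty$ setting one uses Proposition~\ref{prop:Cinf-case} with $\alpha = 0$ instead.)

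For \emph{lower semicontinuity}, fix $\epsilon > 0$ and, by the variational principle, an $f$-ergodic measure $\mu$ with $h_f(\mu) > h_{\mathrm{top}}(f) - \epsilon > 0$; positivity of the entropy forces $\lambda_f(\mu) > 0$. Since $r > 1$ the map $f$ is $\mathcal{C}^{1+\alpha}$ for some $\alpha > 0$, so Katok's horseshoe theorem (valid for $\mathcal{C}^{1+\alpha}$ interval maps at a hyperbolic ergodic measure) produces finitely many pairwise disjoint closed intervals $J_1,\dots,J_m$, an integer $N \ge 1$, and uniform expansion on $J_1 \cup \cdots \cup J_m$ with $f^N(J_i) \supseteq J_1 \cup \cdots \cup J_m$ for every $i$ and $\tfrac1N \log m > h_f(\mu) - \epsilon$; in particular the $J_i$ are disjoint from the critical set of $f$. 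The covering relations $f^N(J_i) \supseteq J_j$ are $\mathcal{C}^0$-open conditions, hence persist for $f_k$ with $k$ large, which forces $h_{\mathrm{top}}(f_k) \ge \tfrac1N \log m > h_{\mathrm{top}}(f) - 2\epsilon$; letting $\epsilon \to 0$ gives $\liminf_k h_{\mathrm{top}}(f_k) \ge h_{\mathrm{top}}(f)$.

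The substantive work is all in Theorem~\ref{th:main1}; within this corollary the only points requiring care are the extraction of a single subsequence along which both $h_{\mathrm{top}}(f_k)$ and $\nu_k$ converge, the degenerate case $h_{\mathrm{top}}(f) = 0$, and the fact that Katok's horseshoe construction is available for non-invertible $\mathcal{C}^{1+\alpha}$ interval maps with an arbitrary (possibly flat, possibly infinite) critical set — which is legitimate precisely because the horseshoe sits in the region where $f$ is uniformly expanding, away from the critical set, so that its combinatorics survive any $\mathcal{C}^1$-small perturbation.
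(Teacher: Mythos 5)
Your proof of the upper semicontinuity half is essentially the paper's: choose near-maximizing ergodic measures $\nu_k$, pass to a weak-$\ast$ convergent subsequence, apply Theorem \ref{th:main1}, bound $h_f(\mu_1)$ by $h_{\rm top}(f)$, and send $\alpha$ down to $R(f)/r \leq h_{\rm top}(f)$ (the paper sends $\alpha \to h_{\rm top}(f)$; the computation is the same). Your explicit extraction of a single subsequence along which $\nu_k$ converges is a point the paper glosses over but which is needed to invoke the theorem, and your separate treatment of the degenerate case $h_{\rm top}(f)=0$ --- where Theorem \ref{th:main1} does not apply, since it assumes positive topological entropy --- via $h_{\rm top}(g)\leq R(g)$ and upper semicontinuity of $R$ in the $\mathcal{C}^1$ topology is a legitimate patch. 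Where you genuinely diverge is the lower bound: the paper simply cites Misiurewicz's theorem that $h_{\rm top}$ is lower semicontinuous on continuous one-dimensional maps for the $\mathcal{C}^0$ topology, which is a purely topological horseshoe statement (disjoint closed intervals $J_1,\dots,J_m$ with $f^N(J_i)\supseteq\bigcup_j J_j$ and $\frac{1}{N}\log m$ close to $h_{\rm top}(f)$) requiring no smoothness and no hyperbolicity. You instead route through Katok's horseshoe theorem at a high-entropy hyperbolic ergodic measure of a $\mathcal{C}^{1+\alpha}$ map. That does reach the same conclusion, but it is a substantially heavier tool whose validity for non-invertible one-dimensional maps with arbitrary (possibly flat, possibly uncountable) critical sets is exactly the sort of delicate point this paper is otherwise at pains to avoid; if you keep this route you need a precise reference for the non-invertible Katok theorem, whereas Misiurewicz's $\mathcal{C}^0$ statement hands you the covering intervals directly, and their persistence under $\mathcal{C}^0$-small perturbation is then immediate. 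In short: same argument for the half that uses the main theorem, and an unnecessarily smooth detour for the half that does not.
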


\begin{proof}
The lower semi-continuity has been proved by Misiurewicz in \cite{Misiurewicz_LSC_entropy}, we only show the upper semi-continuity to conclude.
Let $(\nu_k)$ be a sequence of $f_k$-invariant ergodic measures such that
\begin{align*}
\limsup\limits_{k \to +\infty} h_{\rm{top}}(f_k)
= \limsup\limits_{k \to +\infty} h_{f_k}(\nu_k).
\end{align*}
Therefore for any $\alpha > R(f)/r$, Theorem \ref{th:main1} gives $\mu_1$ and $\beta$ such that
$$
\limsup\limits_{k \to +\infty} h_{\rm{top}}(f_k) \leq \beta h_f(\mu_1) + (1-\beta) \alpha
\leq \beta h_{\rm{top}}(f) + (1-\beta) \alpha.
$$
By letting $\alpha \to h_{\rm{top}}(f)$ we have
$$
\limsup\limits_{k \to +\infty} h_{\rm{top}}(f_k) \leq h_{\rm{top}}(f).
$$
\end{proof}

\begin{definition}
Let $(\nu_k)_{k\in \mathbb{N}^{\ast}}$ be a sequence of $f_k$-invariant measures on $I$. We say that it \emph{approximates $f$ in entropy,} if 
\begin{itemize}
\item[1.] each $\nu_k$ is ergodic for $f_k$;
\item[2.] the sequence converges in the weak-$*$ topology: $\nu_k \xrightarrow[]{\ast} \mu$ for some $\mu$;
\item[3.] the entropy converges to the topological entropy: $\lim\limits_{k\rightarrow \infty }h_{f_k}(\nu_k)=h_{\rm top}(f).$
\end{itemize}
\end{definition}

We show how this yields the entropic continuity of the Lyapunov exponent.
We also recover the existence of measures of maximal entropy for maps with large topological entropy. This was proven by Burguet in \cite{BurguetExistMME}, as an improvement on Buzzi and Ruette's result in \cite{BuzziRuette}.

\begin{corollary}[Existence of measures of maximal entropy and entropic continuity of the Lyapunov exponent]
\label{cor:Cont-expo}
Assume that $h_{\rm{top}}(f)>\frac{R(f)}{r}$. Let $f_k \to f$, $\mathcal{C}^r$-weakly, and let $(\nu_k)_{k\in \mathbb{N}}$ be a sequence of measures approaching $f$ in entropy.
Then the limit $\mu$ of $(\nu_k)$ is a measure of maximal entropy for $f$ and
$$
\lim_{k\rightarrow \infty} \lambda(\nu_k)=\lambda(\mu).
$$
\end{corollary}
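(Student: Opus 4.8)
The plan is to read all three conclusions off Theorem~\ref{th:main1}, the large-entropy hypothesis forcing the decomposition $\mu=\beta\mu_1+(1-\beta)\mu_0$ there to be trivial. First I would fix $\alpha$ with $R(f)/r<\alpha<h_{\rm{top}}(f)$ — such an $\alpha$ exists because $h_{\rm{top}}(f)>R(f)/r$ — and apply Theorem~\ref{th:main1} to $(f_k,\nu_k)$, obtaining $\mu_0,\mu_1,\beta$. Since $(\nu_k)$ approximates $f$ in entropy, $\lim_k h_{f_k}(\nu_k)=h_{\rm{top}}(f)$, so item~\ref{itm:main12} together with $h_f(\mu_1)\le h_{\rm{top}}(f)$ (variational principle) yields
$$
h_{\rm{top}}(f)\ \le\ \beta\, h_f(\mu_1)+(1-\beta)\alpha\ \le\ \beta\, h_{\rm{top}}(f)+(1-\beta)\alpha ,
$$
that is $(1-\beta)\bigl(h_{\rm{top}}(f)-\alpha\bigr)\le 0$; since $h_{\rm{top}}(f)-\alpha>0$ this forces $\beta=1$, hence $\mu=\mu_1$. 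Then $h_{\rm{top}}(f)\le h_f(\mu_1)=h_f(\mu)\le h_{\rm{top}}(f)$, so $\mu$ is an m.m.e., and by item~\ref{itm:main11} it is hyperbolic, whence $0<\lambda_f(\mu)\le\log\|f'\|_\infty<\infty$. (The mere existence of an m.m.e.\ for $f$ is then the special case $f_k\equiv f$ with $\nu_k\in\mathbb{P}_{\rm{erg}}(f)$ chosen so that $h_f(\nu_k)\to h_{\rm{top}}(f)$, after passing to a weak-$\ast$ convergent subsequence by compactness of $\mathbb{P}(f)$.)

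For the Lyapunov exponent, item~\ref{itm:main13} with $\beta=1$ already gives $\limsup_k\lambda_{f_k}(\nu_k)\ge\lambda_f(\mu)$, so I only need the matching upper bound, which I would get from the upper semi-continuity of the geometric potential realized by truncation. For $M>0$ set $\psi^M_k:=\max\{\log|f_k'|,-M\}$ and $\psi^M:=\max\{\log|f'|,-M\}$; these are continuous on $I$, and since $f_k\to f$ in $\mathcal{C}^1$ the functions $|f_k'|$ converge uniformly to $|f'|$ and are all bounded by some $B$, while $t\mapsto\max\{\log t,-M\}$ is uniformly continuous on $[0,B]$, so $\|\psi^M_k-\psi^M\|_\infty\to 0$; combined with $\nu_k\to\mu$ weak-$\ast$ this gives $\int\psi^M_k\,d\nu_k\to\int\psi^M\,d\mu$. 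As $\log|f_k'|\le\psi^M_k$ pointwise, $\lambda_{f_k}(\nu_k)=\int\log|f_k'|\,d\nu_k\le\int\psi^M_k\,d\nu_k$, so $\limsup_k\lambda_{f_k}(\nu_k)\le\int\psi^M\,d\mu$; letting $M\to\infty$ (monotone convergence, $\psi^M\downarrow\log|f'|$ with $\psi^M\le\log B$) gives $\limsup_k\lambda_{f_k}(\nu_k)\le\lambda_f(\mu)$, hence $\limsup_k\lambda_{f_k}(\nu_k)=\lambda_f(\mu)$. Finally, this entire argument applies verbatim to any subsequence of $(f_k,\nu_k)$ — it still converges $\mathcal{C}^r$-weakly to $f$, its measures still tend weak-$\ast$ to $\mu$, and it still approximates $f$ in entropy — so $\limsup_j\lambda_{f_{k_j}}(\nu_{k_j})=\lambda_f(\mu)$ along every subsequence, which forces $\lim_k\lambda_{f_k}(\nu_k)=\lambda_f(\mu)=\lambda(\mu)$.

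The substantive work is entirely in Theorem~\ref{th:main1}; inside this corollary the only delicate point is the upper bound just discussed, where the non-continuity of $\log|f'|$ at the critical set — compounded by the fact that the maps $f_k$ themselves move — blocks a naive weak-$\ast$ passage to the limit. The truncation-plus-monotone-convergence device is exactly what replaces uniform integrability of the geometric potential here, and is the step I expect to need the most care (e.g.\ checking that $\psi^M$ is genuinely continuous, including at zeros of $f'$, which uses continuity of $f'$).
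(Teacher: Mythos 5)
Your proof is correct and follows essentially the same route as the paper: Theorem \ref{th:main1} with $R(f)/r<\alpha<h_{\rm top}(f)$ forces $\beta=1$ so that $\mu=\mu_1$ is an m.m.e., item \ref{itm:main13} gives the lower bound on the exponents, and your truncation argument is exactly the proof of the upper semi-continuity of $(g,\nu)\mapsto\lambda_g(\nu)$ that the paper invokes as ``the infimum of a sequence of continuous maps.'' The only cosmetic difference is that the paper extracts a subsequence along which $(\lambda(\nu_k))$ converges at the outset, whereas you quantify over all subsequences at the end; both correctly handle the fact that item \ref{itm:main13} only bounds a $\limsup$.
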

\begin{proof}
We show that $\lambda(\mu)$ is the only accumulation point of the bounded sequence $(\lambda(\nu_k))_k$.
This sequence is positive by Ruelle's inequality \cite{Ruelle1978} and bounded from above by $\sup\limits_{k} \log ||f_{k}'||_{\infty}$, which is finite since $(f_k)_k$ is bounded for the $\mathcal{C}^1$-norm. 
Let $(n_k)_k$ be such that $(\lambda(\nu_{n_k}))_k$ converges to some $\lambda$.
We then apply Theorem \ref{th:main1} to $(\nu_{n_k})$.
Since $(\nu_k)$ approximates $f$ in entropy, it is still true for $(\nu_{n_k})$, hence item \ref{itm:main12} from Theorem \ref{th:main1} gives
\begin{align*}
h_{\rm {top}}(f)=&\lim_{k\rightarrow \infty} h_{f_{n_k}}(\nu_{n_k})\\
&\leq \beta h_f(\mu_1)+(1-\beta)\cdot \alpha\\
&\leq \beta\cdot h_{\rm {top}}(f)+(1-\beta)\cdot \alpha.
\end{align*}
So $(1-\beta)h_{\rm{top}}(f)\leq (1-\beta)\alpha$. But we may choose $\alpha<h_{\rm{top}}(f)$, so $\beta=1$, and $\mu=\mu_1$ is a measure of maximal entropy.
Furthermore, item \ref{itm:main13} from Theorem \ref{th:main1} gives
$$
\lim\limits_{k \to \infty} \lambda(\nu_{n_k}) \geq \lambda(\mu).
$$
Recall that, as the infimum of a sequence of continuous maps, the map $\mu \mapsto \lambda(\mu)$ is upper semi-continuous.
Therefore,
$
\lambda = \lim\limits_{k \to +\infty} \lambda(\nu_{n_k}) \leq \lambda(\mu)
$.
\end{proof}

By the same proof as Corollary 3 from \cite{B}, we obtain:

\begin{corollary}[Upper semi-continuity of the entropy at ergodic measures with large
entropy]
Let $f: I \to I$ be a $\mathcal{C}^r$ map with $r > 1$.
Let $\mu$ be an ergodic measure with $h_f(\mu) \geq \frac{R(f)}{r}$.
Then
$$
\limsup\limits_{\substack{
(g, \nu) \to (f, \mu)\\
\text{s.t. } \nu \text{ is } g-invariant
}} h_g(\nu) \leq h_f(\mu),
$$
where the convergence happens $\mathcal{C}^r$-weakly for $g$ and in weak-$*$ topology for $\nu$.
\end{corollary}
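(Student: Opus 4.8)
The plan is to argue by contradiction and reduce the statement to a single application of Theorem \ref{th:main1}; the whole difficulty lies in producing an \emph{ergodic} approximating sequence, since Theorem \ref{th:main1} is phrased for ergodic measures while the corollary allows an arbitrary invariant $\nu$.

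Suppose $L := \limsup_{(g,\nu)\to(f,\mu)} h_g(\nu) > h_f(\mu)$. I would first extract a sequence realizing this limsup: $g_k \to f$ $\mathcal{C}^r$-weakly, $g_k$-invariant measures $\nu_k \xrightarrow[]{\ast} \mu$, with $h_{g_k}(\nu_k) \to L$. Fix $c$ with $h_f(\mu) < c < L$; since $h_f(\mu)\ge R(f)/r$ one also has $c > R(f)/r$. As $(g_k)$ is $\mathcal{C}^1$-convergent and $\mathcal{C}^r$-bounded, $H := \sup_k\|g_k'\|_\infty<\infty$, so Ruelle's inequality gives $h_{\rm top}(g_k)\le\max\{0,\log H\}$ for all $k$, hence $h_{\rm top}(g_k)>c$ for $k$ large. (One may also assume $h_{\rm top}(f)>0$; otherwise the conclusion follows from the upper semi-continuity of the topological entropy proved above.)

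The core step is the reduction to ergodic measures, carried out in two stages. \emph{Stage one:} ergodic-decompose $\nu_k = \int\theta\,dP_k(\theta)$, set $A_k=\{\theta: h_{g_k}(\theta)\ge c\}$ and $t_k=P_k(A_k)$; affinity of entropy gives $h_{g_k}(\nu_k)\le t_k h_{\rm top}(g_k)+(1-t_k)c$, so $\liminf_k t_k>0$. Let $\mu_k$ be $\nu_k$ conditioned on $A_k$, so $h_{g_k}(\mu_k)\ge c$. Since weak-$\ast$ limits of $g_k$-invariant measures are $f$-invariant (using $g_k\to f$ uniformly) and $\mu$ is an extreme point of the $f$-invariant probability measures with $\liminf_k t_k>0$, a short convexity argument forces $\mu_k\to\mu$. \emph{Stage two:} ergodic-decompose $\mu_k=\int\theta\,dQ_k(\theta)$ with $Q_k$ carried by $A_k$; the barycenter map on the compact metrizable space of probability measures over $\mathbb{P}(I)$ is continuous, and any weak-$\ast$ limit of $(Q_k)$ is carried by $f$-invariant measures and has barycenter $\mu$, hence equals $\delta_\mu$ by Bauer's characterization of extreme points (an extreme point has only the Dirac mass as representing measure). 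Therefore $\int d(\theta,\mu)\,dQ_k(\theta)\to 0$ for a metric $d$ inducing the weak-$\ast$ topology, and I can select, for each $k$, an ergodic component $\nu_k'$ of $\mu_k$ with $\nu_k'\to\mu$ and $h_{g_k}(\nu_k')\ge c$ (the latter automatic since $Q_k(A_k)=1$).

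Finally I would apply Theorem \ref{th:main1} to $(g_k,\nu_k')$ with some $\alpha\in(R(f)/r,c)$, obtaining $\beta\in[0,1]$ and $f$-invariant $\mu_1$ (hyperbolic), $\mu_0$ with $\mu=\beta\mu_1+(1-\beta)\mu_0$ and $\limsup_k h_{g_k}(\nu_k')\le\beta h_f(\mu_1)+(1-\beta)\alpha$. If $\beta=0$ this says $c\le\alpha<c$; if $\beta>0$, extremality of $\mu$ forces $\mu_1=\mu$, so $c\le\beta h_f(\mu)+(1-\beta)\alpha<c$. Both are absurd, so $L\le h_f(\mu)$. The main obstacle is precisely this ergodic reduction: a general invariant $\nu_k$ need not possess a single ergodic component that is both close to $\mu$ and of entropy at least $c$, and the conditioning-then-barycenter argument above is exactly what repairs this. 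The remaining ingredients — invariance of weak-$\ast$ limits under $g_k\to f$, affinity of the entropy, the Ruelle bound, and upper semi-continuity of $\mu\mapsto\lambda(\mu)$ — are routine.
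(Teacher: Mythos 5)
Your argument is correct and is essentially the route the paper points to: the paper gives no proof here but defers to Corollary 3 of Burguet \cite{B}, whose proof likewise proceeds by contradiction, extracts ergodic components of large entropy via the ergodic decomposition, uses the extremality of the ergodic measure $\mu$ (your conditioning/barycenter step) to make those components converge to $\mu$, and then applies the main theorem with $\alpha$ between $R(f)/r$ and the contradictory value $c$. The only blemish is the non sequitur ``Ruelle's inequality gives $h_{\rm top}(g_k)\le\max\{0,\log H\}$, hence $h_{\rm top}(g_k)>c$'' --- the lower bound $h_{\rm top}(g_k)\ge h_{g_k}(\nu_k)\to L>c$ is what you mean, and the Ruelle bound is only needed to make the affinity estimate in stage one nontrivial.
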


We can obtain the uniform integrability of the geometric potential at measures with large entropy. 

\begin{theorem}[Uniform integrability]
\label{th:main3}
Let $f : I \to I$ be a $\mathcal{C}^r$ map such that $h_{\text{top}}(f) > \frac{R(f)}{r}$.
For any $\varepsilon > 0$, there exist $\delta > 0$, $\eta>0$ and a $\mathcal{C}^r$-weak neighborhood $\mathcal{U}$ of $f$ such that for any $g \in \mathcal{U}$ and any $g$-ergodic measure $\nu$ with $h_g(\nu) \geq h_{top}(g) - \delta$, we have
$$
\int_{\{x: |g'(x)|<\eta \}} -\log|g'|d\nu<\varepsilon.
$$
\end{theorem}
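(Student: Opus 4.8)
The plan is to argue by contradiction, leveraging the structural decomposition from Theorem \ref{th:main1}. Suppose the statement fails for some $\varepsilon > 0$. Then for every $n$ we may pick $g_n \to f$ $\mathcal{C}^r$-weakly, an $\eta_n \to 0$, and a $g_n$-ergodic measure $\nu_n$ with $h_{g_n}(\nu_n) \geq h_{\mathrm{top}}(g_n) - \tfrac1n$ such that $\int_{\{|g_n'| < \eta_n\}} -\log|g_n'|\,d\nu_n \geq \varepsilon$. Since the lower semi-continuity of topological entropy (Misiurewicz) together with the upper semi-continuity from the Corollary give $h_{\mathrm{top}}(g_n) \to h_{\mathrm{top}}(f)$, the sequence $(\nu_n)$ approximates $f$ in entropy (after passing to a weak-$\ast$ convergent subsequence $\nu_n \xrightarrow{\ast} \mu$). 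By Corollary \ref{cor:Cont-expo}, $\mu$ is an m.m.e.\ for $f$ and $\lambda_{g_n}(\nu_n) \to \lambda_f(\mu)$; in particular $\sup_n \lambda_{g_n}(\nu_n) =: \Lambda < \infty$.

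The second step is to control the positive part of the geometric potential uniformly. Because $(g_n)$ is bounded in $\mathcal{C}^1$-norm, $\log^+|g_n'| \leq \log^+\|g_n'\|_\infty =: C$ is uniformly bounded, so $\int \log^+|g_n'|\,d\nu_n \leq C$. Combined with $\lambda_{g_n}(\nu_n) = \int \log|g_n'|\,d\nu_n \to \lambda_f(\mu)$, this forces $\int \log^-|g_n'|\,d\nu_n = \int \log^+|g_n'|\,d\nu_n - \lambda_{g_n}(\nu_n)$ to be bounded, say by $C + \Lambda + 1$ for large $n$. This already shows the negative parts are uniformly integrable in the crude sense of having bounded mass; the real content is that the mass concentrated on the region $\{|g_n'| < \eta_n\}$ with $\eta_n \to 0$ cannot stay bounded below by $\varepsilon$.

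The third and decisive step is to apply Theorem \ref{th:main1} to the sequence $(\nu_n)$ to extract the decomposition $\mu = \beta \mu_1 + (1-\beta)\mu_0$ and, more importantly, to use the mechanism inside its proof that produces item \ref{itm:main13}: namely that $\limsup_n \lambda_{g_n}(\nu_n) \geq \beta \lambda_f(\mu_1)$ is obtained by showing that the part of the measure $\nu_n$ that escapes toward the critical region (where $-\log|g_n'|$ blows up) carries vanishing entropy and contributes a \emph{non-negative, asymptotically negligible} amount to the Lyapunov exponent in the lim sup. Concretely, one decomposes $\nu_n$ along a partition into a "good" part staying away from where $|g_n'|$ is small and a "bad" part supported near it; the entropy bound (item \ref{itm:main12}, using $\alpha < h_{\mathrm{top}}(f)$ as in Corollary \ref{cor:Cont-expo}, which forced $\beta = 1$) shows the bad part must have entropy tending to $0$, and hence — by the Ruelle-type lower bound $h_{g_n}(\cdot) \geq$ (positive Lyapunov contribution) applied fiberwise, or equivalently by the Margulis–Ruelle inequality restricted to the bad part — the integral $\int_{\{|g_n'| < \eta\}} \log^-|g_n'|\,d\nu_n$ is bounded above by the entropy of that bad part plus a $o(1)$ term. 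Since $\beta = 1$ kills the $(1-\beta)\alpha$ slack, this upper bound tends to $0$, contradicting the standing assumption that it is $\geq \varepsilon$.

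The main obstacle is making the last step precise: the statement of Theorem \ref{th:main1} only outputs the inequality in item \ref{itm:main13}, not a quantitative handle on $\int_{\{|g'|<\eta\}} -\log|g'|\,d\nu$ directly, so I expect one must either (a) re-run the construction in the proof of Theorem \ref{th:main1} keeping track of the contribution of the "small-derivative" region explicitly — showing its entropy cost is $\gtrsim$ its geometric-potential mass up to the non-flatness/distortion estimates that hold in the $\mathcal{C}^r$, $r>1$ setting — or (b) invoke the intermediate lemma from that proof (the analogue of the "reparametrization / Yomdin-type" entropy bound) that bounds the metric entropy of a piece of orbit visiting the region $\{|g'| < \eta\}$ with given frequency from below by a multiple of that frequency times $\log(1/\eta)$, and then balance this against the fixed entropy deficit. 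Either way, the crux is that in the high-entropy regime the entropy budget is too tight to afford a non-vanishing excursion into the critical region, and the uniformity in $g$ comes for free from the $\mathcal{C}^r$-weak compactness already used throughout.
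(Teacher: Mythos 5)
Your setup (contradiction, extraction of a sequence $(g_n,\nu_n)$ approximating $f$ in entropy, application of Corollary \ref{cor:Cont-expo} to get $\lambda_{g_n}(\nu_n)\to\lambda_f(\mu)$ with $\mu$ an m.m.e.) matches the paper's strategy. The genuine gap is your ``third and decisive step.'' The inequality you invoke there --- that ``the Margulis--Ruelle inequality restricted to the bad part'' bounds $\int_{\{|g_n'|<\eta\}}\log^-|g_n'|\,d\nu_n$ \emph{above} by the entropy of that part plus $o(1)$ --- does not exist. Ruelle's inequality goes the other way ($h\le\lambda^+$) and says nothing about the negative part of the geometric potential: a measure concentrated near a critical point can have zero entropy and arbitrarily large, even infinite, $\int-\log|g'|$. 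So ``bad part has vanishing entropy $\Rightarrow$ bad part has vanishing $\log^-$ mass'' is false, and neither of your options (a) or (b) produces a lower bound on entropy in terms of $\log(1/\eta)$ times the visit frequency; the reparametrization lemma in the paper bounds entropy production on neutral blocks from \emph{above}, not below. Your logic also runs in the wrong direction relative to what is actually available: the paper's mechanism is that a positive integrability defect $\gamma$ forces a drop in $\liminf_k\lambda_{g_k}(\nu_k)$ (this is Lemma \ref{defcon}), which by item \ref{itm:main23'} of Theorem \ref{th:main2} forces $1-\beta\ge\gamma/\log\|f'\|_\infty$, which by item \ref{itm:main12} forces the entropy below $h_{\rm top}(f)-\frac{\gamma}{\log\|f'\|_\infty}(h_{\rm top}(f)-\alpha)$ with $\alpha$ close to $R(f)/r<h_{\rm top}(f)$ --- contradicting the entropy hypothesis. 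That is Theorem \ref{th:UI}, which is the quantitative handle you were missing.

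Ironically, you were one (different) step away from a complete proof: having established in your first step that $\lambda_{g_n}(\nu_n)\to\lambda_f(\mu)$ with $\mu=\mu_1$ a hyperbolic m.m.e.\ (so that $\log|f'|\in L^1(\mu)$ by Proposition \ref{proposition:mu1-integrable}), the conclusion follows from a routine truncation argument rather than from any entropy estimate: the functions $\log\max\{|g_n'|,\eta\}$ converge uniformly to $\log\max\{|f'|,\eta\}$, so $\int(\log\max\{|g_n'|,\eta\}-\log|g_n'|)\,d\nu_n\to\int(\log\max\{|f'|,\eta\}-\log|f'|)\,d\mu$, and the right-hand side tends to $0$ as $\eta\to0$ by dominated convergence; on $\{|g_n'|<\eta^2\}$ one has $-\log|g_n'|\le 2(\log\eta-\log|g_n'|)$, which kills the tail. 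This is exactly the content of Lemma \ref{defcon} read in the converse direction. Either repair (the paper's quantitative route through Theorem \ref{th:UI}, or the truncation argument just sketched) is fine; the step you wrote down is not.
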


This gives the following corollary if we do not consider perturbations of $f$.

\begin{corollary}
Let $f : I \to I$ be a $\mathcal{C}^r$ map such that $h_{\text{top}}(f) > \frac{R(f)}{r}$.
For any $\varepsilon > 0$, there exists $\delta > 0$ and $\eta>0$ such that for any $f$-ergodic measure $\mu$ with $h_f(\mu) \geq h_{top}(f) - \delta$, we have
$$
\int_{\{x: |f'(x)|<\eta \}} -\log|f'|d\mu<\varepsilon.
$$
\end{corollary}

\subsection{Consequences for SRB measures}

We state some consequences of the previous statements regarding the stability of SRB measures for $\mathcal{C}^{\infty}$ maps.
We recall the following entropic characterization of SRB measures for one-dimensional maps:

\begin{theorem}[Theorem VII.1.1 from \cite{SETE}]
\label{th:entropy-form}
Let $f : I \to I$ be a $\mathcal{C}^r$ map where $r > 1$.
Let $\mu$ be an $f$-invariant hyperbolic Borel probability measure.
Then $\mu$ is absolutely continuous with respect to the Lebesgue measure on $I$ if and only if it satisfies the entropy formula
$$
h_{\mu}(f) = \int_I \lambda_f(x) \: d\mu.
$$
\end{theorem}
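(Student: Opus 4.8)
The plan is to treat this as the one-dimensional case of Pesin's entropy formula together with its converse, and to prove the two implications separately. The common input is hyperbolicity: since $\lambda_f(x)>0$ for $\mu$-a.e.\ $x$, the measure $\mu$ charges neither the critical set $\mathrm{Crit}(f)$ nor $\{f'=0\}$, so there is a set $\Lambda$ with $\mu(\Lambda)=1$ on which $f$ is a local diffeomorphism; moreover $\log|f'|\in\mathbb{L}^1(\mu)$ (were its integral $-\infty$, the Birkhoff average defining $\lambda_f$ would diverge to $-\infty$, contradicting hyperbolicity), and the Margulis--Ruelle inequality \cite{Ruelle1978} already gives $h_f(\mu)\le\int\log|f'|\,d\mu$. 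So in both directions it only remains to analyse when equality holds in Ruelle's inequality.

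For the implication ``$\mu$ absolutely continuous $\Rightarrow$ entropy formula'', write $\mu=\rho\cdot\mathrm{Leb}$. On $\Lambda$ the density obeys the transfer identity $\rho(y)=\sum_{f(x)=y}\rho(x)/|f'(x)|$ for $\mu$-a.e.\ $y$. I would fix a countable partition $\mathcal{P}$ of $I$ into intervals of injectivity for $f$, refined near $\mathrm{Crit}(f)$ so that $\mathcal{P}$ is generating for $(f,\mu)$ with $H_\mu(\mathcal{P})<\infty$ (this uses $\mu(\mathrm{Crit}(f))=0$ and is where an unrestricted critical set already costs some care). Rokhlin's formula then gives
$$
h_f(\mu)=H_\mu\!\bigl(\mathcal{P}\mid f^{-1}\mathcal{B}\bigr)=\int -\log\mu\!\bigl(\mathcal{P}(x)\mid f^{-1}\mathcal{B}\bigr)(x)\,d\mu(x),
$$
and since $f$ is injective on each atom of $\mathcal{P}$, the transfer identity identifies the conditional probability inside with $\rho(x)/\bigl(|f'(x)|\,\rho(f(x))\bigr)$. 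Hence $h_f(\mu)=\int\bigl(\log|f'|+\log\rho\circ f-\log\rho\bigr)\,d\mu$, and $f$-invariance cancels the two $\log\rho$ terms; the only delicate point is the integrability of $\log\rho$, which I would bypass by truncating $\rho$ from above and below and passing to the limit, using Ruelle's inequality as an a priori bound. This yields $h_f(\mu)=\int\log|f'|\,d\mu$.

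For the converse ``$h_f(\mu)=\int\log|f'|\,d\mu\Rightarrow\mu$ absolutely continuous'' --- the \emph{substantial} direction --- I would pass to the natural extension $(\hat I,\hat f,\hat\mu)$ with projection $\pi$, and construct an increasing measurable partition $\xi$ (that is, $\hat f^{-1}\xi\preceq\xi$) which is generating under $\hat f$, has $\bigwedge_{n\ge0}\hat f^{-n}\xi$ measurable with respect to the ``past'', and whose atoms project homeomorphically onto genuine open intervals $V(\hat x)\ni\pi(\hat x)$ of definite (though $\hat x$-dependent) length --- the one-dimensional surrogate of a partition subordinate to unstable manifolds. Disintegrating $\hat\mu$ along $\xi$ and pushing forward by $\pi$ produces conditional measures $\mu_{\hat x}$ on the $V(\hat x)$. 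The standard computation then writes $h_{\hat f}(\hat\mu)=H_{\hat\mu}(\xi\mid\hat f^{-1}\xi)$ as an entropy ``along'' these intervals, while the transverse contraction of $\hat f^{-1}$ is exactly $1/|f'|$; combined with the hypothesis, this forces equality in a Jensen-type estimate, which one upgrades --- via bounded distortion of the iterates $f^n$ restricted to the relevant small non-critical intervals, where $r>1$ enters through a H{\"o}lder/Koebe-type distortion bound --- to the conclusion that each $\mu_{\hat x}$ is equivalent to Lebesgue on $V(\hat x)$. Absolute continuity is then inherited by the average: if $\mathrm{Leb}(A)=0$ then $\mu_{\hat x}(A)=0$ for $\hat\mu$-a.e.\ $\hat x$, so $\mu(A)=\int\mu_{\hat x}(A)\,d\hat\mu=0$, i.e.\ $\mu\ll\mathrm{Leb}$.

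The main obstacle is this last direction, and inside it the step converting the equality case of Ruelle's inequality into honest absolute continuity of the conditional measures: it requires the construction of $\xi$ with atoms of controlled size, the distortion control for $f^n$ along non-critical orbit segments (precisely where $r>1$ is indispensable and where a wild critical set is most dangerous, since one must show the relevant orbits spend a controlled proportion of time near $\mathrm{Crit}(f)$), and a martingale/Radon--Nikodym argument producing the limiting densities. These are exactly the one-dimensional difficulties that the rest of this paper is designed to handle; in practice I would simply invoke the Pesin--Ledrappier theory in the packaged form of \cite{SETE} rather than reprove it.
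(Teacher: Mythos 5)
This statement is imported verbatim from the literature---the paper cites it as Theorem VII.1.1 of Qian--Xie--Zhu \cite{SETE} and supplies no proof of its own---and your sketch (the Rokhlin/Jacobian computation for ``absolutely continuous $\Rightarrow$ entropy formula'', and the Ledrappier--Pesin machinery on the natural extension with partitions subordinate to unstable leaves for the converse) is precisely the strategy carried out in that reference. Your closing decision to invoke \cite{SETE} in packaged form rather than reprove the hard direction is exactly what the authors do, so the proposal is correct and takes the same route as the paper.
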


\begin{corollary}
\label{cor:compo-SRB}
Let $(f_k : I \to I)$ be a sequence of $\mathcal{C}^{\infty}$ maps that converges to $f$ in the $\mathcal{C}^{\infty}$ topology.
Assume that each $f_k$ admits an ergodic SRB measure $\mu_k$ whose entropy is away from 0 uniformly in $k$.
Then any accumulation point of $(\mu_k)$ has an SRB component.
\end{corollary}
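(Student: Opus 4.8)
The approach is to feed the sequence $(f_k,\mu_k)$ into Theorem~\ref{th:main1} in its $\mathcal{C}^\infty$ form, namely Proposition~\ref{prop:Cinf-case} (so that we may take $\alpha=0$), and to combine the two inequalities it yields with Ruelle's inequality and the entropy characterisation of SRB measures (Theorem~\ref{th:entropy-form}), used both for the $\mu_k$ and for the hyperbolic piece of the limit. First I would make the standard reductions. Let $\mu$ be an accumulation point of $(\mu_k)$ and pass to a subsequence, relabelled so that $\mu_k\to\mu$ in the weak-$\ast$ topology and both $h_{f_k}(\mu_k)\to h^\ast$ and $\lambda_{f_k}(\mu_k)\to\lambda^\ast$; these limits exist since the two sequences lie in a fixed compact subinterval of $(0,+\infty)$ — bounded below by the uniform constant $c>0$ from the hypothesis that the entropies stay away from $0$, and bounded above because $(f_k)$ is $\mathcal{C}^1$-bounded. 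Each $\mu_k$ being SRB, hence absolutely continuous, with $h_{f_k}(\mu_k)\ge c>0$, Ruelle's inequality \cite{Ruelle1978} $h_{f_k}(\mu_k)\le\int\lambda_{f_k}^{+}\,d\mu_k$ together with ergodicity forces $\lambda_{f_k}(\mu_k)\ge h_{f_k}(\mu_k)\ge c>0$, so $\mu_k$ is hyperbolic, and then Theorem~\ref{th:entropy-form} gives $h_{f_k}(\mu_k)=\lambda_{f_k}(\mu_k)$ for every $k$, whence $h^\ast=\lambda^\ast$. Finally $f$ has positive topological entropy: by the upper semi-continuity of $h_{\rm top}$ on $\mathcal{C}^\infty(I,I)$ and the variational principle, $h_{\rm top}(f)\ge\limsup_k h_{\rm top}(f_k)\ge\limsup_k h_{f_k}(\mu_k)=h^\ast\ge c>0$, so the hypotheses of Theorem~\ref{th:main1} are met.

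Applying Theorem~\ref{th:main1} and Proposition~\ref{prop:Cinf-case} to $(f_k,\mu_k)$ with $\alpha=0$, I obtain $f$-invariant probabilities $\mu_1,\mu_0$ and $\beta\in[0,1]$ with $\mu=\beta\mu_1+(1-\beta)\mu_0$, $\mu_1$ hyperbolic, $h^\ast\le\beta\,h_f(\mu_1)$ by item~\ref{itm:main12}, and $\lambda^\ast\ge\beta\,\lambda_f(\mu_1)$ by item~\ref{itm:main13}. Since $\mu_1$ is hyperbolic its exponent is positive, so Ruelle's inequality gives $h_f(\mu_1)\le\lambda_f(\mu_1)$, and therefore
\[
\beta\,\lambda_f(\mu_1)\ \le\ \lambda^\ast\ =\ h^\ast\ \le\ \beta\,h_f(\mu_1)\ \le\ \beta\,\lambda_f(\mu_1),
\]
so all four quantities coincide. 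In particular $\beta\,h_f(\mu_1)=h^\ast\ge c>0$ forces $\beta>0$, and dividing the equality $\beta h_f(\mu_1)=\beta\lambda_f(\mu_1)$ by $\beta$ gives $h_f(\mu_1)=\lambda_f(\mu_1)=\int_I\lambda_f\,d\mu_1$. Applying the ``only if'' direction of Theorem~\ref{th:entropy-form} to the $f$-invariant hyperbolic measure $\mu_1$, which now satisfies the entropy formula, I conclude $\mu_1\ll\mathrm{Leb}$, so $\mu_1$ is an SRB measure, and since $\beta>0$ the decomposition $\mu=\beta\mu_1+(1-\beta)\mu_0$ exhibits an SRB component of $\mu$. (If the stricter notion of an ergodic SRB component is intended, one passes to the ergodic decomposition of $\mu_1$: almost every component remains hyperbolic and, by the standard splitting of an absolutely continuous invariant measure of a one-dimensional map into ergodic absolutely continuous ones, is itself an ergodic SRB measure.)

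I expect the decisive point to be the chain of inequalities above: it closes \emph{only} because Proposition~\ref{prop:Cinf-case} permits $\alpha=0$ — for finite $r$ the surviving term $(1-\beta)\alpha$ with $\alpha\ge R(f)/r>0$ would destroy the squeeze, which is precisely why this corollary is stated in the $\mathcal{C}^\infty$ category. The only other step requiring attention is the non-vanishing of $\beta$, and this is exactly where the uniform lower bound on the entropies of the $\mu_k$ is used; the conversion of item~\ref{itm:main13} (a lower bound on Lyapunov exponents) into a statement about entropies, via the entropy formula applied to each $\mu_k$, is what lets the two inequalities of Theorem~\ref{th:main1} be played against one another.
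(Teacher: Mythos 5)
Your proof is correct and follows essentially the same route as the paper's: apply Proposition~\ref{prop:Cinf-case} with $\alpha=0$, use the entropy formula of Theorem~\ref{th:entropy-form} to convert $h_{f_k}(\mu_k)$ into $\lambda_{f_k}(\mu_k)$, squeeze $\beta h_f(\mu_1)\geq\limsup_k h_{f_k}(\mu_k)=\limsup_k\lambda_{f_k}(\mu_k)\geq\beta\lambda_f(\mu_1)$ against Ruelle's inequality, deduce $\beta>0$ from the uniform entropy lower bound, and conclude by the converse direction of Theorem~\ref{th:entropy-form}. You are in fact slightly more careful than the paper in checking the hypotheses (hyperbolicity of each $\mu_k$ before invoking the entropy formula, and positivity of $h_{\rm top}(f)$ before invoking Theorem~\ref{th:main1}), which is welcome but does not change the argument.
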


\begin{proof}
For simplicity of notations, assume that $(\mu_k)$ converges to $\mu$.
From Proposition \ref{prop:Cinf-case}, we write $\mu = \beta \mu_1 + (1-\beta) \mu_0$.
We show that $\mu_1$ is SRB.
From Theorem \ref{th:main1}, the measure $\mu_1$ is hyperbolic and we have
$$
\beta h_f(\mu_1) \geq \limsup\limits_{k \to +\infty} h_{f_k}(\mu_k)
= \limsup\limits_{k \to +\infty} \lambda_{f_k}(\mu_k)
\geq \beta \lambda_f(\mu_1).
$$
The fact that $(h_{f_k}(\mu_k))_k$ is uniformly bounded away from zero then gives $\beta > 0$, hence $\mu_1$ is a nontrivial component of $\mu$ and Theorem \ref{th:entropy-form} concludes.
\end{proof}

We may also state the previous corollary as follows :

\begin{corollary}
For any fixed $h > 0$, the following set is closed in $\mathcal{C}^{\infty}(I,I)$ :
$$
\{ f \in \mathcal{C}^{\infty}(I,I) : f \text{ admits an SRB measure } \mu \text{ with } h_f(\mu) \geq h \}.
$$
\end{corollary}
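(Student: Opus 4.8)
The plan is to deduce this from Corollary~\ref{cor:compo-SRB}, or rather from a version of its proof that keeps track of the entropies, after a preliminary reduction to ergodic SRB measures. Write $\mathcal{S}_h$ for the set in the statement. To prove $\mathcal{S}_h$ is closed, I would take a sequence $(f_k)$ in $\mathcal{S}_h$ converging to $f$ in the $\mathcal{C}^\infty$ topology and produce an SRB measure of $f$ of entropy at least $h$.

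\textbf{Reduction to ergodic SRB measures.} For each $k$, fix an SRB measure $\tilde\mu_k$ of $f_k$ with $h_{f_k}(\tilde\mu_k)\geq h$, and write its ergodic decomposition $\tilde\mu_k=\int\mu_\omega\,dP_k(\omega)$. By affinity of the entropy, $\int h_{f_k}(\mu_\omega)\,dP_k(\omega)=h_{f_k}(\tilde\mu_k)\geq h$, so the set $\{\omega:\ h_{f_k}(\mu_\omega)\geq h\}$ has positive $P_k$-measure (otherwise $h-h_{f_k}(\mu_\omega)$ would be $P_k$-a.e. positive, forcing the average entropy strictly below $h$). On the other hand, for a $\mathcal{C}^{1+}$ map of the interval or the circle the ergodic components of an absolutely continuous invariant measure are again absolutely continuous; this is classical and uses that such a measure has positive entropy, hence is not carried by the critical set (it genuinely fails for degenerate maps such as the identity). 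Intersecting the positive-measure set above with the full-measure set on which $\mu_\omega$ is absolutely continuous, I obtain an ergodic, absolutely continuous $\mu_k$ with $h_{f_k}(\mu_k)\geq h$; by Ruelle's inequality it is hyperbolic, hence an SRB measure of $f_k$, and the entropies $h_{f_k}(\mu_k)$ are uniformly bounded away from $0$.

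\textbf{Passing to the limit.} After extracting a subsequence (weak-$\ast$ compactness), I may assume $\mu_k\to\mu$. Following the argument of Corollary~\ref{cor:compo-SRB}, and using that $h_{\mathrm{top}}(f)>0$, Theorem~\ref{th:main1} together with Proposition~\ref{prop:Cinf-case} (which allows $\alpha=0$) produces a decomposition $\mu=\beta\mu_1+(1-\beta)\mu_0$ with $\mu_1$ hyperbolic and
$$
\beta\, h_f(\mu_1)\ \geq\ \limsup_{k\to\infty}h_{f_k}(\mu_k)\ =\ \limsup_{k\to\infty}\lambda_{f_k}(\mu_k)\ \geq\ \beta\,\lambda_f(\mu_1),
$$
the middle equality being the entropy formula (Theorem~\ref{th:entropy-form}) applied to each SRB measure $\mu_k$. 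Together with Ruelle's inequality $h_f(\mu_1)\leq\lambda_f(\mu_1)$ this forces $h_f(\mu_1)=\lambda_f(\mu_1)$, so $\mu_1$ is SRB by Theorem~\ref{th:entropy-form}. Since $h_{f_k}(\mu_k)\geq h$ along the subsequence, the leftmost inequality gives $\beta\, h_f(\mu_1)\geq h$; in particular $\beta>0$, and because $\beta\leq 1$ this upgrades to $h_f(\mu_1)\geq h$. Hence $\mu_1$ is an SRB measure of $f$ with $h_f(\mu_1)\geq h$, i.e. $f\in\mathcal{S}_h$, which proves $\mathcal{S}_h$ closed.

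\textbf{Main obstacle.} The only point that is not pure bookkeeping on top of Corollary~\ref{cor:compo-SRB} is the reduction in the first step: one has to know that ergodic components of an absolutely continuous invariant measure of a $\mathcal{C}^{1+}$ one-dimensional map remain absolutely continuous, and that this reduction costs no entropy. Everything afterwards is exactly the content of Corollary~\ref{cor:compo-SRB}, the one additional observation being that the entropy bound $h$ survives the convex decomposition $\mu=\beta\mu_1+(1-\beta)\mu_0$ precisely because $\beta\leq 1$.
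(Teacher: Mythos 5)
Your argument is correct and is essentially the route the paper intends: the corollary is presented as a restatement of Corollary~\ref{cor:compo-SRB}, and your proof is exactly that argument plus the two pieces of bookkeeping it needs (reduction to an ergodic SRB component of entropy $\geq h$, and the observation that $\beta h_f(\mu_1)\geq h$ with $\beta\leq 1$ forces $h_f(\mu_1)\geq h$). The one step whose justification you should tighten is the claim that ergodic components of an SRB measure are SRB: rather than invoking a ``classical'' absolute-continuity statement for ergodic components of acims (which is delicate in the generality you state it), note that $\tilde\mu_k$ is hyperbolic and satisfies the entropy formula by Theorem~\ref{th:entropy-form}; since both $h_{f_k}(\cdot)$ and $\int\lambda_{f_k}\,d(\cdot)$ are affine over the ergodic decomposition and Ruelle's inequality holds for each (hyperbolic) component, $P_k$-a.e.\ component satisfies the entropy formula and is therefore absolutely continuous by Theorem~\ref{th:entropy-form} again. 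Finally, you use $h_{\mathrm{top}}(f)>0$ without justification; it follows, e.g., from Newhouse's upper semi-continuity of entropy for $\mathcal{C}^{\infty}$ maps applied to the measures $\mu_k$ of entropy $\geq h$.
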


In the case where the SRB measures approach $f$ in entropy, we show that their limit is SRB.

\begin{corollary}
\label{cor:mme-SRB}
Suppose that $(f_k)$ converges to $f$ in the $\mathcal{C}^{\infty}$ topology and that each $f_k$ admits an ergodic SRB measure $\mu_k$.
Suppose that $h_{top}(f) > 0$.
If $\mu_k \to \mu$ and $h_{f_k}(\mu_k) \to h_{top}(f)$, then $\mu$ is SRB.
\end{corollary}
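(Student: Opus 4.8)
The plan is to derive Corollary \ref{cor:mme-SRB} as a straightforward consequence of Corollary \ref{cor:Cont-expo} (entropic continuity of the Lyapunov exponent), Proposition \ref{prop:Cinf-case} (the $\mathcal{C}^\infty$ case with $\alpha = 0$), and the entropic characterization of SRB measures in Theorem \ref{th:entropy-form}. First I would observe that the hypotheses exactly say that $(\mu_k)$ approximates $f$ in entropy in the sense of the definition above: each $\mu_k$ is $f_k$-ergodic, $\mu_k \to \mu$ weak-$*$, and $h_{f_k}(\mu_k) \to h_{\rm top}(f)$. Since $h_{\rm top}(f) > 0 = R(f)/\infty$, Corollary \ref{cor:Cont-expo} applies (in the $\mathcal{C}^\infty$ regime, using Proposition \ref{prop:Cinf-case} so that the constant $\alpha$ may be taken to be $0$): it tells us that $\mu$ is a measure of maximal entropy for $f$ and that $\lambda_f(\mu) = \lim_k \lambda_{f_k}(\mu_k)$.

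Next I would extract, from the proof of Corollary \ref{cor:Cont-expo}, that in fact $\beta = 1$, i.e. $\mu = \mu_1$ is itself hyperbolic: the argument there shows that any decomposition $\mu = \beta \mu_1 + (1-\beta)\mu_0$ produced by Theorem \ref{th:main1} must have $\beta = 1$ once $\alpha < h_{\rm top}(f)$, which is available here because $h_{\rm top}(f) > 0$. Thus $\mu$ is an $f$-invariant hyperbolic measure. Then I would combine the entropy formula with the SRB property of each $\mu_k$: by Theorem \ref{th:entropy-form}, $h_{f_k}(\mu_k) = \lambda_{f_k}(\mu_k)$ for every $k$. Passing to the limit and using the two convergences just established,
$$
h_f(\mu) = h_{\rm top}(f) = \lim_{k \to \infty} h_{f_k}(\mu_k) = \lim_{k \to \infty} \lambda_{f_k}(\mu_k) = \lambda_f(\mu),
$$
so $\mu$ satisfies the entropy formula and is hyperbolic, whence Theorem \ref{th:entropy-form} gives that $\mu$ is absolutely continuous with respect to Lebesgue, i.e. SRB.

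The only genuine subtlety — and the step I would be most careful about — is justifying the exchange of limit and integral needed to conclude $\lim_k \lambda_{f_k}(\mu_k) = \lambda_f(\mu)$ and, relatedly, that the limiting measure $\mu$ has $\log|f'| \in \mathbb{L}^1(\mu)$ rather than merely $\lambda_f(\mu) > -\infty$. Here the uniform integrability of the geometric potential (Theorem \ref{th:main3}) does the work: since $\mu_k$ has entropy converging to, hence eventually close to, $h_{\rm top}(f) = h_{\rm top}(f_k) + o(1)$, the mass that $\mu_k$ places near the critical set contributes a uniformly small amount to $\int -\log|f_k'|\,d\mu_k$, which both prevents any escape of negative mass in the limit and upgrades Corollary \ref{cor:Cont-expo}'s conclusion to genuine convergence of the integrals; away from the critical set $\log|f_k'|$ converges uniformly to $\log|f'|$ by $\mathcal{C}^1$-convergence, so there weak-$*$ convergence of $\mu_k$ suffices. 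Everything else is bookkeeping: one only needs to note that $\mathcal{C}^\infty$-convergence implies $\mathcal{C}^r$-weak convergence for every $r$, so all the cited results are applicable.
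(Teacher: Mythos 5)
Your argument is correct and follows essentially the same route as the paper: apply Corollary \ref{cor:Cont-expo} (valid since $h_{\rm top}(f)>0=R(f)/\infty$, via Proposition \ref{prop:Cinf-case}), use the entropy formula $h_{f_k}(\mu_k)=\lambda_{f_k}(\mu_k)$ for the SRB measures $\mu_k$ to obtain $h_f(\mu)=\lambda_f(\mu)$, verify hyperbolicity of $\mu$, and conclude with Theorem \ref{th:entropy-form}. The only cosmetic differences are that your final appeal to Theorem \ref{th:main3} is redundant (Corollary \ref{cor:Cont-expo} already gives $\lim_k\lambda_{f_k}(\mu_k)=\lambda_f(\mu)$), and that the paper gets hyperbolicity of $\mu$ by observing that every ergodic component of an m.m.e.\ is an m.m.e.\ and hence hyperbolic, rather than by extracting $\beta=1$ from the proof of Corollary \ref{cor:Cont-expo} as you do.
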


\begin{proof}
From Proposition \ref{prop:Cinf-case}, we have
\begin{align*}
h(\mu) &\geq \beta h(\mu_1)\geq \limsup\limits_{k \to +\infty} h_{f_k}(\mu_k)= \limsup\limits_{k \to +\infty} \lambda_{f_k}(\mu_k)
\underset{\text{Corollary } \ref{cor:Cont-expo}}{=} \lambda_f(\mu).
\end{align*}
To apply Theorem \ref{th:entropy-form}, it remains to show that $\mu$ is hyperbolic.
Since $\mu$ is an m.m.e, every ergodic component of $\mu$ is also an m.m.e, so they are all hyperbolic, which implies that $\mu$ is hyperbolic.
\end{proof}

\subsection{Ideas of the proof}

The proof is based on the works of Burguet \cite{B} and Buzzi-Crovisier-Sarig \cite{BCS} for surface diffeomorphisms  and of the first author for interval maps \cite{A}. Adopting the ideas in \cite{B} and \cite{BCS}, we group the iterates into blocks of two types, namely the hyperbolic ones (as in \cite{B}) and the neutral ones (as in \cite{BCS}). Within each hyperbolic block, we expect to see continuity of the Lyapunov exponent. On neutral blocks, the continuity of the Lyapunov exponent is less controlled, but we will prove that its contribution to entropy is uniformly bounded. This is done by adapting the reparametrization lemma of Burguet from \cite{B}, as done by the first author in \cite{A}. In the end, we get an entropy bound in terms of the portion of hyperbolic blocks, as formulated in item \ref{itm:main12} in Theorem \ref{th:main1}. \\ 

A difference between our approach and that of Burguet lies in the treatment of continuity of Lyapunov exponents along hyperbolic blocks. For Burguet, since he works on surface diffeomorphisms, this continuity is guaranteed once he passes to the projective tangent bundle. For us, with the map admitting critical points, we need to use the fact that iterates are uniformly away from the critical set along hyperbolic blocks.\\

Another difference is that we prove the uniform integrability of the geometric potential $\log|f'|$. This is done by relating this property to the continuity of the Lyapunov exponent at measures of high entropy.\\

In comparison to previous works, the current one overcomes the difficulties brought by the existence of a critical set. In \cite{L}, the assumption of non-flatness has to be imposed for a geometric treatment. Here, with the reparametrization technique established in \cite{A}, we could investigate the behavior of the dynamics near the critical set in a more sophisticated manner.

\subsection{Organization of the paper}

In section \ref{sec:PC}, we introduce the notion of hyperbolic and neutral blocks for general interval systems. We then split the entropy with respect to these blocks.\\

In section \ref{sc:ENC}, we study the entropy contribution from neutral blocks. With the reparametrization lemma, we show that this contribution is uniformly bounded.\\

In section \ref{sec:main-prop}, we show the continuity of the entropy and of the Lyapunov exponent along hyperbolic blocks, which gives a part of item \ref{itm:main13} of Theorem \ref{th:main1} and is the first step to prove item \ref{itm:main12} of Theorem \ref{th:main1}.\\

In section \ref{sc:Proof}, we conclude the proof of Theorem \ref{th:main1}. In section \ref{sec:expo-neutral}, we prove item \ref{itm:main13} of Theorem \ref{th:main1}. In section \ref{ssec:proof}, we prove Theorem \ref{th:main1} for an iterate of $f$. 
We then show in section \ref{section:proof-th} how this implies Theorem \ref{th:main1}.\\

In section \ref{sc:UI}, we treat the uniform integrability as stated in Theorem \ref{th:main3}.

\subsection{Notations}

In sections \ref{sec:PC} and \ref{sc:ENC}, by $f:I\rightarrow I$ we refer to a $\mathcal{C}^r$ map in general. Indeed the results that we prove in these sections are not directly related to continuity. From section \ref{sec:main-prop} to section \ref{sc:UI}, by $f$ we refer to the weak $\mathcal{C}^r$-limit of $(f_k)$, since here we investigate the continuity property and need to keep track of the perturbations.\\

Often we work with integer intervals and subsets of integers. For $a<b\in \mathbb{Z}\cup \{+\infty,-\infty\}$, we let $[\![ a,b [\![:=\{ c\in \mathbb{Z}: a\leq c<b \}$. Given $E\subseteq \mathbb{Z}$, denote its boundary by $\partial E:=\{ a\in E: a-1\notin E \}\cup \{ a\notin E: a-1\in E \}$.

\section{Preliminary construction}
\label{sec:PC}

\subsection{Hyperbolic times}
\label{ssec:hyperbolic-times}

Let $f:I\rightarrow I$ be a $\mathcal{C}^r$ map of the interval or the circle to itself, $r>1$, and let $\nu$ be an ergodic measure for $f$.
For $i\in \mathbb{N}$, denote
$$
\psi_i(x):=\log|(f^i)'(x)|-\frac{1}{r}\sum_{j=0}^{i-1}\log^{+}|f'(f^j x)|.
$$
Assume that $\int \psi_1 d \nu > 0$.
For $\delta > 0$ and $x\in I$, we denote by
$$
E_{\delta}(x)=E(x):=\{ l\in \mathbb{N}: \text{for all $i\in [\![ 0,l[\![,$ $\psi_{l-i}(f^i x)\geq (l-i)\cdot \delta$}\}
$$
the set of $\delta$-hyperbolic times of $x$. 
We state a first lemma related to the lower density of $E_{\delta}(x)$, defined as
$$
\underline{d}(E_{\delta}(x)): = \liminf\limits_{n \to +\infty} \frac{\sharp (E_{\delta}(x) \cap [\![0 ; n [\![)}{n}.
$$

By applying the same argument as in Lemma 3 from \cite{B} but in the inverse space limit (see \cite{SarigNotes}), one can obtain :

\begin{lemma}[Lemma 3 from \cite{B}]
\label{lemma:delta}
If $0  < \delta < \int \psi_1 d \nu$, then $E_{\delta}(x)$ has positive lower density for $\nu$-almost every $x \in I$.
\end{lemma}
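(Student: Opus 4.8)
The plan is to reduce the statement to the classical Pliss lemma combined with Birkhoff's ergodic theorem. The key preliminary observation is that $\psi$ is the additive cocycle generated by $\psi_1$: the chain rule gives $\psi_{n+m}(x) = \psi_n(x) + \psi_m(f^n x)$ for all $n,m \ge 0$, hence $\psi_n(x) = \sum_{j=0}^{n-1}\psi_1(f^j x)$ and $\psi_{l-i}(f^i x) = \psi_l(x) - \psi_i(x) = \sum_{j=i}^{l-1}\psi_1(f^j x)$ for $0 \le i \le l$. Therefore $l \in E_\delta(x)$ if and only if $\sum_{j=i}^{l-1}\bigl(\psi_1(f^j x) - \delta\bigr) \ge 0$ for every $0 \le i < l$; in other words, $l$ is precisely a $\delta$-Pliss (hyperbolic) time of the observable $\psi_1$ along the forward orbit of $x$. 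I also record that $\psi_1 \le \log|f'| \le M := \log^+\|f'\|_\infty$ (because $\log^+ \ge 0$), so $\psi_1$ is bounded above; combined with $c := \int\psi_1 \, d\nu \le M$, the hypothesis reads $0 < \delta < c \le M$.

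The argument then runs as follows. First apply Birkhoff's ergodic theorem to $\psi_1$: although $\psi_1$ may fail to lie in $\mathbb{L}^1(\nu)$ because of the critical set of $f$, it is bounded above, so the truncation trick already used for $\log|f'|$ in the introduction applies and yields $\tfrac1n\psi_n(x) = \tfrac1n\sum_{j=0}^{n-1}\psi_1(f^j x) \to c$ for $\nu$-a.e. $x$. Fix $c_2$ with $\delta < c_2 < c$; for $\nu$-a.e. $x$ there is $N_0(x)$ such that $\tfrac1N\sum_{j=0}^{N-1}\psi_1(f^j x) \ge c_2$ for all $N \ge N_0(x)$. For such $N$, the classical Pliss lemma applied to $\bigl(\psi_1(f^j x)\bigr)_{0 \le j < N}$ — terms $\le M$, average $\ge c_2$, target level $\delta$ — produces at least $\theta N$ indices $l \in [\![ 0, N [\![$ with $\sum_{j=i}^{l-1}\psi_1(f^j x) \ge \delta(l-i)$ for all $0 \le i < l$, where $\theta := \tfrac{c_2 - \delta}{M - \delta} > 0$. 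By the reformulation of the previous paragraph, each such $l$ lies in $E_\delta(x)$, so $\sharp\bigl(E_\delta(x) \cap [\![ 0, N [\![\bigr) \ge \theta N$ whenever $N \ge N_0(x)$, and hence $\underline{d}(E_\delta(x)) \ge \theta$. Since $\theta$ does not depend on $x$, this holds for $\nu$-a.e. $x$, as desired.

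This is exactly the argument behind Lemma 3 of \cite{B}; the reference there to the inverse limit reflects only that hyperbolic times are handled most naturally on the natural extension, and one could equivalently lift $\nu$ to $(\hat I, \hat f, \hat\nu)$ (see \cite{SarigNotes}), run the argument there, and push it down, since membership of $l$ in $E_\delta(x)$ depends only on the forward orbit of $x$. I do not anticipate a genuine obstacle: the Pliss lemma is an elementary finite combinatorial statement, and the only point deserving a word of care is the possible non-integrability of $\psi_1$, which is harmless because $\psi_1$ is bounded above — it is enough that the Birkhoff averages of $\psi_1$ converge $\nu$-a.e. to a value strictly larger than $\delta$, which they do.
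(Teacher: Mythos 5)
Your argument is correct, but it takes a genuinely different route from the paper's. You first check the cocycle identity $\psi_{l-i}(f^i x)=\sum_{j=i}^{l-1}\psi_1(f^j x)$, which converts membership in $E_\delta(x)$ into the classical backward Pliss condition for the observable $\psi_1$, and you then combine Birkhoff's theorem with the finite Pliss lemma to extract a proportion $\theta=(c_2-\delta)/(\log^+\|f'\|_\infty-\delta)$ of hyperbolic times in every long enough initial block. (Your caveat about non-integrability is harmless but not needed: since $\psi_1\le\log^+\|f'\|_\infty$, the hypothesis $\int\psi_1\,d\nu>\delta>0$ already forces $\psi_1\in\mathbb{L}^1(\nu)$.) The paper instead passes to the inverse limit $(I^f,\sigma,\widetilde{\nu})$, introduces the set $H_\delta$ of points whose backward cocycle averages all exceed $\delta$, proves $\widetilde{\nu}(H_\delta)>0$ via the ergodic maximal inequality, and identifies a subset of $E_\delta(x)$ with the visit times to $H_\delta$. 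The two arguments are close cousins — the Pliss lemma is essentially the finitary shadow of the maximal ergodic theorem — and, after letting $c_2\to\int\psi_1\,d\nu$, your $\theta$ reproduces exactly the bound of Remark \ref{rmk:density-E}. What the paper's version buys is a realization of the density as the measure of an explicit set $H_\delta$ in the natural extension, with the visit frequency actually converging (not just bounded below), which fits the framework used elsewhere in the paper; what yours buys is a completely elementary proof that stays on the forward system and avoids the natural extension altogether. Either one is a complete proof of the lemma.
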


\begin{proof}
We define the inverse space limit as
$$
I^f = \{ (x_n)_{n \in \bbZ} \in I^{\bbZ} : f(x_n) = x_{n+1} \}.
$$
We denote by $\sigma$ the shift on $I^f$, so that the dynamical system $(I^f, \sigma)$ is semi-conjugated to $(I,f)$, through the semi-conjugacy $\pi:(x_n)\in I^f\mapsto x_0\in I$.
Furthermore, there exists a unique $\sigma$-ergodic measure $\widetilde{\nu}$ such that $\pi_* \widetilde{\nu} = \nu$.
Then, we let
$$
H_{\delta} = \{ \widetilde{x} \in I^f : \forall l > 0, \psi_l(\pi\circ\sigma^{-l} \widetilde{x}) \geq l \delta \}.
$$
Let $A_{\delta} = I^f \backslash H_{\delta}$.
Let $\widetilde{\phi_{\delta}} = \delta - \psi_1\circ\pi\circ\sigma^{-1} : I^f \to \bbR$.
Therefore, we have
$$
A_{\delta} = \{ \widetilde{x} \in I^f :  \sup\limits_{l > 0} \frac{1}{l} \sum\limits_{k =0}^{l-1} \widetilde{\phi_{\delta}}(\sigma^{-k}(\widetilde{x})) > 0 \}.
$$
By using the ergodic maximal inequality \cite{P}, we have
$
\int_{A_{\delta}} \widetilde{\phi_{\delta}} d \widetilde{\nu} \geq 0
$.
Thus
$
\int_{H_{\delta}} \widetilde{\phi_{\delta}} d \widetilde{\nu}
\leq \int \widetilde{\phi_{\delta}} d \widetilde{\nu}
= \delta - \int \psi_1 d \nu < 0
$.
Therefore, $\widetilde{\nu}(H_{\delta}) > 0$.
By letting $\widetilde{E}(\widetilde{x}) = \{n \in \bbN : \sigma^n \widetilde{x} \in H_{\delta} \}$, the fact that $\widetilde{\nu}$ is ergodic gives that for $\widetilde{\nu}$-a.e. $\widetilde{x}$, we have
$$
\frac{1}{n} \sharp \left ( \widetilde{E}(\widetilde{x}) \cap [\![0 ; n [\![ \right ) \underset{n \to +\infty}{\longrightarrow} \widetilde{\nu}(H_{\delta}).
$$
Then, the image by $\pi$ of this full $\widetilde{\nu}$-measure set is of full $\nu$-measure, and one can see that $\widetilde{E}(\widetilde{x}) \subset E(\pi(\widetilde{x}))$, therefore, for $\nu$-a.e. $x$, we have
$$
\liminf\limits_{n \to +\infty} \frac{1}{n} \sharp \left ( E(x) \cap [\![0 ; n [\![ \right ) \geq \widetilde{\nu}(H_{\delta}). 
$$
In particular, $E(x)$ is infinite for $\nu$-a.e $x$.
\end{proof}

\begin{remark}
\label{rmk:density-E}
The proof actually gives the following lower bound on the lower density of $E_{\delta}(x)$, which is itself a lower bound on the $\beta$ that appears in the statements in the introduction: for $\nu$-a.e. $x \in I$, we have
$$
\underline{d}(E_{\delta}(x)) \geq \frac{\int \psi_1 d \nu - \delta}{\log || f' ||_{\infty}- \delta}.
$$
\end{remark}

Fix $\delta > 0$ such that $\delta < \int \psi_1 d \nu$.
For each $n\geq 0$, let $E_n(x):=E(x)\cap [\![0,n[\![$, $m_n(x):=\max E_n(x)$ and $m_n^+(x):=\min (E(x)\setminus [\![0,n[\![)$. 
We show that these quantities are asymptotically equivalent to $n$.

\begin{lemma}\label{le:bn}
For $\nu$-a.e. $x\in I$, we have
$$
\lim_{n\rightarrow \infty} \frac{m_n(x)}{n}=1
\;\;\; \text{and}\;\;\;
\lim_{n\rightarrow \infty} \frac{m_n^+(x)}{n}=1.
$$
\end{lemma}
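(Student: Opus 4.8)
The plan is to realize $E_\delta(x)$ as the set of \emph{records} of a single real-valued sequence that grows linearly; the positive lower density furnished by Lemma~\ref{lemma:delta} is by itself too weak here, since it only controls the gaps of $E_\delta(x)$ on average, not the particular gap straddling a given $n$.

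First I would write down the cocycle identity $\psi_l(x) = \psi_i(x) + \psi_{l-i}(f^i x)$ for $0 \le i \le l$, a consequence of the chain rule for $(f^l)'$ together with the splitting of the Birkhoff sum of $\log^+|f'|$. Substituting $\psi_{l-i}(f^i x) = \psi_l(x) - \psi_i(x)$, the defining condition ``$\psi_{l-i}(f^i x) \ge (l-i)\delta$ for all $i \in [\![ 0,l[\![$'' becomes ``$\psi_l(x) - l\delta \ge \psi_i(x) - i\delta$ for all $0 \le i < l$''. So, setting $g(n) := \psi_n(x) - n\delta$ (with $g(0) = 0$), we get
$$
E_\delta(x) = \Big\{\, l \in \bbN : g(l) = \max_{0 \le i \le l} g(i) \,\Big\}.
$$
Next, $\int \psi_1\,d\nu$ is finite: $\int \psi_1\,d\nu > 0$ forces $\int \log|f'|\,d\nu > 0$, and $\log^+|f'|$ is bounded, so $\log|f'| \in \mathbb{L}^1(\nu)$ and Birkhoff's theorem gives $\tfrac1n \psi_n(x) \to c_0 := \int\psi_1\,d\nu$ for $\nu$-a.e. $x$. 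Fix such an $x$; since $\delta < c_0$ we get $g(n)/n \to a := c_0 - \delta > 0$, so $g(n) \to +\infty$, and since $g$ takes only finitely many (finite) values on each $\{0,1,\dots,K\}$ the running maximum $\bar g(n) := \max_{0 \le i \le n} g(i)$ also tends to $+\infty$, with $\bar g(n)/n \to a$.

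Then I would handle $m_n(x)$ and $m_n^+(x)$ by a squeeze. One checks that $m_n(x)$, the largest record below $n$, is exactly the largest index $l < n$ with $g(l) = \bar g(n-1)$: such an index is a record, and any record $< n$ also attains $\bar g(n-1)$, hence does not exceed it. Thus $g(m_n(x)) = \bar g(n-1) \to +\infty$, which forces $m_n(x) \to +\infty$; then $g(m_n(x))/m_n(x) \to a$ while $g(m_n(x))/n = \bar g(n-1)/n \to a$, and dividing gives $m_n(x)/n \to 1$. For $m_n^+(x)$ — finite since $g(n)\to+\infty$ makes $E_\delta(x)$ infinite — set $l_0 := \min\{\, l \ge n : g(l) \ge \bar g(n-1)\,\}$. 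For every $l' < l_0$ we have $g(l') \le \bar g(n-1) \le g(l_0)$ (for $l' < n$ by definition of $\bar g$, for $n \le l' < l_0$ by minimality of $l_0$), so $l_0 \in E_\delta(x)$ and $n \le m_n^+(x) \le l_0$. Given $\varepsilon \in (0,a)$, for $n$ large we have $\bar g(n-1) \le (a+\varepsilon)n$ and $g(l) \ge (a-\varepsilon)l$ for all large $l$, so $l_0 \le 1 + \tfrac{a+\varepsilon}{a-\varepsilon}\,n$; hence $\limsup_n m_n^+(x)/n \le \tfrac{a+\varepsilon}{a-\varepsilon}$, and letting $\varepsilon\to 0$, together with $m_n^+(x) \ge n$, gives $m_n^+(x)/n \to 1$.

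The main point to get right is the exact record description of $E_\delta(x)$ — in particular that the non-strict inequalities in the definition (ties) do not spoil it — and the observation that the argument really needs this structure and not just the positive lower density from Lemma~\ref{lemma:delta}; everything else is an elementary squeeze once $g(n)/n \to a > 0$ is in hand.
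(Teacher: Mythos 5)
Your proof is correct, but it takes a genuinely different route from the paper's. The paper reduces the statement to showing $m_n^+(x)-m_n(x)=o(n)$ and deduces this from the external ergodic Lemma~\ref{le:erg} (Lemma 2.9 of \cite{L}) applied to $\Psi=-\psi_1+\log||f'||_{\infty}$; verifying the hypothesis of that lemma requires a telescoping estimate of $\sum_i\Psi(f^ix)$ over the gap $]\!]m_n,m_n^+[\![$, using that this gap contains no hyperbolic times. You instead use the cocycle identity $\psi_l(x)=\psi_i(x)+\psi_{l-i}(f^ix)$ to identify $E_{\delta}(x)$ with the record set of $g(n)=\psi_n(x)-n\delta$, after which the whole lemma follows from Birkhoff ($g(n)/n\to a>0$) by an elementary squeeze. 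This is more self-contained, bypasses Lemma~\ref{le:erg} entirely, and your remark that the positive lower density from Lemma~\ref{lemma:delta} would not suffice is apt. Two small points of hygiene, neither a real gap: the phrase ``any record $<n$ also attains $\bar g(n-1)$'' is not literally true (early records can have strictly smaller values); what your argument needs and in fact delivers is that the \emph{largest} record below $n$ attains $\bar g(n-1)$, because any later index $<n$ with a larger value would itself be a record, contradicting maximality. Also, $g$ can take the value $-\infty$ on critical orbits, but since $\int\psi_1\,d\nu>0$ forces $\log|f'|\in\mathbb{L}^1(\nu)$, this occurs only on a $\nu$-null set and the record characterization is unaffected (for $l\geq 1$ with $g(l)=-\infty$, both the defining condition at $i=0$ and the record condition against $g(0)=0$ fail). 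Both proofs ultimately rest on the same ergodic input; yours trades the auxiliary lemma for the standard ``Pliss times are records'' observation.
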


\newpage
To prove this lemma, we use Lemma 2.9 from \cite{L}:

\begin{lemma}\label{le:erg}
 Let $(X,\mathcal{B},f,\nu)$ be an ergodic system. Let $\Psi:X\rightarrow [-\infty,\infty)$ be a $\mathbb{L}^1(\nu)$-function and let $\Delta > 0$ such that $\int \Psi d \nu < \Delta$.
 Assume that for $\nu$-a.e. $x \in X$, there exist two sequences of integers $(a_n)_{n \in \bbN}$ and $(b_n)_{n \in \bbN}$ such that 
 \begin{itemize}
 \item[1)] $a_n\leq n\leq b_n$, $a_n \uparrow \infty$;
 \item[2)] for all $n$, $\sum_{i=a_n}^{b_n-1}\Psi(f^i x)\geq\Delta\cdot(b_n-a_n)$.
 \end{itemize}
 Then for $\nu$-a.e. $x\in X$, $b_n-a_n=o(n)$. 
\end{lemma}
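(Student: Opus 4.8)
The plan is to deduce the statement from Birkhoff's ergodic theorem applied along the two sequences $(a_n)$ and $(b_n)$. Set $c := \int \Psi\, d\nu$, which is finite and satisfies $c < \Delta$ by hypothesis, and write $S_m(x) := \sum_{i=0}^{m-1}\Psi(f^i x)$. Since $\Psi \in \mathbb{L}^1(\nu)$ and $(X,\mathcal{B},f,\nu)$ is ergodic, there is a full-measure set $X_0 \subseteq X$ on which every $\Psi(f^i x)$ is finite, $S_m(x)/m \to c$ as $m \to \infty$, and the sequences $(a_n),(b_n)$ of the statement exist. First I would assume, for contradiction, that the conclusion fails on a set of positive measure, and fix a point $x \in X_0$ in that set; then there exist $\varepsilon > 0$ and a subsequence $n_k \uparrow \infty$ with $L_{n_k} > \varepsilon\, n_k$, where $L_n := b_n - a_n \ge 0$.

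Next I would record the two consequences of hypothesis 1) that drive the argument. Since $a_n \uparrow \infty$ we have $a_{n_k} \to \infty$, and since $b_n \ge n$ we have $b_{n_k} \to \infty$; hence Birkhoff's theorem applies along both sequences, giving $S_{a_{n_k}}(x)/a_{n_k} \to c$ and $S_{b_{n_k}}(x)/b_{n_k} \to c$. Moreover, combining $a_n \le n$ with $L_{n_k} > \varepsilon\, n_k$ yields $b_{n_k} = a_{n_k} + L_{n_k} \le n_k + L_{n_k} < \bigl(1 + \tfrac{1}{\varepsilon}\bigr) L_{n_k}$, so the ratio $L_{n_k}/b_{n_k}$ is bounded below by $\varepsilon/(1+\varepsilon) > 0$.

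Then I would bring in hypothesis 2). Writing $\sum_{i=a_n}^{b_n-1}\Psi(f^i x) = S_{b_n}(x) - S_{a_n}(x)$, dividing the inequality $S_{b_{n_k}}(x) - S_{a_{n_k}}(x) \ge \Delta\, L_{n_k}$ by $b_{n_k}$, and using $a_{n_k}/b_{n_k} = 1 - L_{n_k}/b_{n_k}$ to regroup, one obtains, for all $k$ large enough that $a_{n_k} \ge 1$,
\[
\frac{S_{b_{n_k}}(x)}{b_{n_k}} - \frac{S_{a_{n_k}}(x)}{a_{n_k}} \ \ge\ \frac{L_{n_k}}{b_{n_k}}\left(\Delta - \frac{S_{a_{n_k}}(x)}{a_{n_k}}\right).
\]
The left-hand side tends to $c - c = 0$. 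On the right-hand side, $\Delta - S_{a_{n_k}}(x)/a_{n_k} \to \Delta - c > 0$, so it is $\ge (\Delta - c)/2$ for large $k$; together with $L_{n_k}/b_{n_k} \ge \varepsilon/(1+\varepsilon)$, the right-hand side stays bounded below by the positive constant $\tfrac{\varepsilon}{1+\varepsilon}\cdot\tfrac{\Delta-c}{2}$ for all large $k$. This contradiction shows $b_n - a_n = o(n)$ for $\nu$-a.e. $x$.

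The routine parts are the $\mathbb{L}^1$ bookkeeping, in particular that $\Psi$ may take the value $-\infty$, which is harmless because on the Birkhoff set all partial sums are finite (and hypothesis 2) forces finiteness of the relevant blocks anyway), and the legitimacy of invoking Birkhoff along $(a_{n_k})$ and $(b_{n_k})$, which holds precisely because both sequences tend to infinity. The step I expect to be the crux is the lower bound $L_{n_k}/b_{n_k} \ge \varepsilon/(1+\varepsilon)$: this is exactly where the hypothesis $a_n \le n$ is used, and without it a long block of average slope $\ge \Delta$ could be diluted by a far longer prefix, leaving the conclusion false despite $S_m/m \to c < \Delta$.
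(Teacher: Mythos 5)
Your argument is correct: the key inequality
\[
\frac{S_{b_{n_k}}(x)}{b_{n_k}} - \frac{S_{a_{n_k}}(x)}{a_{n_k}} \ \ge\ \frac{L_{n_k}}{b_{n_k}}\Bigl(\Delta - \frac{S_{a_{n_k}}(x)}{a_{n_k}}\Bigr)
\]
is a valid rearrangement of hypothesis 2), both sequences $a_{n_k}, b_{n_k}$ tend to infinity so Birkhoff applies along them, and the bound $L_{n_k}/b_{n_k}\ge \varepsilon/(1+\varepsilon)$ correctly exploits $a_n\le n$. The paper does not prove this lemma itself (it is quoted as Lemma 2.9 of the reference [L]), and your Birkhoff-along-subsequences argument is the standard one that the cited proof follows, so there is nothing further to compare.
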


Now we proceed to prove Lemma \ref{le:bn}

\begin{proof}[Proof of Lemma \ref{le:bn}]
It suffices to check that there exists a
$\Psi$ such that $a_{n-1} = m_n$ and $b_{n-1} = m_n^+ - 1$ satisfy all the conditions given in Lemma \ref{le:erg}.
Let $\Psi(x):=-\psi_1(x)+\log||f'||_{\infty}$ and $\Delta:=-\delta+\log||f'||_{\infty}$. From the assumption $\int \psi_1 d \nu > 0$, we have $\Psi\in \mathbb{L}^1(\nu)$ and $\int \Psi d\nu<\Delta$. The fact that $m_n \uparrow \infty$ is already proved in Lemma \ref{lemma:delta}. So it suffices to check that the second condition is satisfied.\\

For $n \geq 1$, we may have $m_n^+-1 \in E(x)$, in which case we have $m_n^+ = n$ and $m_n = n-1$ and the inequality is immediate. Then, if $m_{n}^{+}-1\notin E(x)$, there exists $l>0$ such that
$$
\begin{aligned}
\sum_{i=m_n^+-1-l}^{m_n^+-2}\Psi(f^i x)&=-\psi_l (f^{m_n^+-1-l}x)+\log||f'||_{\infty}
\cdot l\\
&\geq (\log||f'||_{\infty}-\delta)\cdot l.
\end{aligned}
$$
Since $]\!]m_n,m_n^+[\![\cap E(x)=\emptyset$, we may repeat this process inductively until $m_n^+-1-l\leq m_n$. Now that $m_n\in E(x)$, we have
$$
\begin{aligned}
\sum_{i=m_n^+-l-1}^{m_n-1}\Psi(f^i x)&=-\psi_{m_n-(m_n^+-1-l)}(f^{m_n^+-1-l}x)+\log||f'||_{\infty}\cdot (m_n-(m_n^+-1-l))\\
&\leq (\log||f'||_{\infty}-\delta)\cdot (m_n-(m_n^+-1-l)).
\end{aligned}
$$
The two inequalities combined give the desired upper-bound.
\end{proof}

By Egorov theorem, there exists a compact subset $\mathtt{F}_{\nu}\subseteq I$ such that $\nu(\mathtt{F}_{\nu})$ is arbitrarily close to $1$ and such that the convergences from Lemma \ref{le:bn} are uniform on $\mathtt{F}_{\nu}$.
Denote by $\zeta(-):=\frac{\nu(-\cap\mathtt{F}_{\nu})}{\nu(\mathtt{F}_{\nu})}$ the normalized restriction of $\nu$ to $\mathtt{F}_{\nu}$.
Taking a restriction to $\mathtt{F}_{\nu}$ will be useful for the bounds in the Reparametrization Lemma, namely Lemmas \ref{lemma:repabound} and \ref{le:combinatorics}, and to estimate the exponent along the neutral blocks in Proposition \ref{prop:expo-neutral}.
\\

Given $M\in \mathbb{N}$, $x\in I$ let
$$
E^M_n(x):=\bigcup_{a,b\in E(x)\cap [\![0,n]\!], |b-a|\leq M} [\![a,b[\![,
$$
and we call the integer connected components of $E^M_n(x)$ the $M$-hyperbolic blocks of $x$.\\

We let
$$
\xi^{M}_{\nu,n}:=\int \frac{1}{n}\sum_{i\in E^M_n(x)}\delta_{f^i x}d\nu(x) \;\;\; \text{and} \;\;\; \beta_{\nu,n}^M = \frac{1}{n} \int \sharp E_n^M(x) d \nu(x).
$$
We may take a subsequence and assume that $(\xi_{\nu,n}^M)_n$ converges in the weak-$*$ topology.
By Cantor's diagonal argument, we may also assume that this subsequence does not depend on $M$.
We will also consider that $n$ always belongs to this subsequence, so that we write
$$
\xi^M_{\nu}:=\lim_{n\rightarrow \infty}\xi^{M}_{\nu,n}.
$$
We call $\xi^M_{\nu}$ the hyperbolic component of $\nu$. Denote by $\overline{\xi^M_{\nu}}$ the normalization of the measure $\xi^M_{\nu}$. Write as $\beta^M_{\nu}$ the coefficient $\xi^M_{\nu}=\beta^M_{\nu}\cdot \overline{\xi^M_{\nu}}$.
So that $\beta_{\nu}^M = \lim\limits_{n \to +\infty} \beta_{\nu,n}^M$.
For each $\xi^M_{\nu,n}$, denote by $\eta^M_{\nu,n}:=\nu-\xi^M_{\nu,n}$ its complement.
We denote by $\eta^M_{\nu}:=\lim_{n\rightarrow \infty} \eta^M_{\nu,n}$ the neutral component of $\nu$.

\begin{lemma}[Lemma 4.1 from \cite{A}]
\label{lem:EnM}
For $M,n \in \bbN$ and $x \in I$, the set $E_n^{M}(x)$ satisfies the following properties:
\begin{itemize}
\item[\namedlabel{itm:EnM1}{i)}] $\partial E_n^{M}(x) \subset E(x)$;
\item[\namedlabel{itm:EnM2}{ii)}] $\limsup\limits_{n \to + \infty} d_n(\partial E_n^{M}(x)) \leq \frac{2}{M}$, where $
d_n S:=\frac{\sharp(S\cap [\![0,n[\![)}{n}$ for $S \subset \bbN$;
\item[\namedlabel{itm:EnM3}{iii)}] $M \frac{\sharp \partial E_n^{M}(x)}{2} \leq n + M.$
\end{itemize}
\end{lemma}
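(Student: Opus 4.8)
The plan is to treat this as a purely combinatorial statement about the finite set $E(x)\cap[\![0,n]\!]$, written as an increasing sequence, and about the intervals $[\![a,b[\![$ with $|b-a|\le M$ that it spans; continuity, the measure $\nu$, and the dynamics play no role. A preliminary remark I would record is that $E_n^M(x)\subseteq[\![0,n[\![$, since every defining interval $[\![a,b[\![$ with $a,b\in E(x)\cap[\![0,n]\!]$ and $a\le b$ is contained in $[\![0,n[\![$.

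For \ref{itm:EnM1} I would take $a\in\partial E_n^M(x)$ and split into the two cases of the definition of $\partial$. If $a\in E_n^M(x)$ and $a-1\notin E_n^M(x)$, then $a$ lies in some defining interval $[\![u,v[\![$ with $u,v\in E(x)\cap[\![0,n]\!]$, $u\le a<v$, $v-u\le M$; were $a\notin E(x)$, we would have $u\le a-1$, hence $a-1\in[\![u,v[\![\subseteq E_n^M(x)$, a contradiction. The case $a\notin E_n^M(x)$, $a-1\in E_n^M(x)$ is symmetric: one gets $u,v\in E(x)\cap[\![0,n]\!]$ with $u\le a-1<v$ and $v-u\le M$, and if $a\notin E(x)$ then $v\ge a+1$, forcing $a\in[\![u,v[\![\subseteq E_n^M(x)$, again a contradiction. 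This proves $\partial E_n^M(x)\subseteq E(x)$, and combined with $E_n^M(x)\subseteq[\![0,n[\![$ it gives $\partial E_n^M(x)\subseteq E(x)\cap[\![0,n]\!]$.

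Next I would prove \ref{itm:EnM3}, from which \ref{itm:EnM2} follows immediately. Since $E_n^M(x)$ is a finite subset of $\mathbb{Z}$, its boundary has even cardinality, say $2m$; write $\partial E_n^M(x)=\{z_1<z_2<\dots<z_{2m}\}$. As there is no boundary point strictly between consecutive $z_i$'s, the indicator of $E_n^M(x)$ is constant between them and alternates along the list, with $z_1=\min E_n^M(x)\in E_n^M(x)$; thus $z_{2j-1}\in E_n^M(x)$ is the left endpoint of a connected component $[\![z_{2j-1},z_{2j}[\![$, and $z_{2j}\notin E_n^M(x)$, the "holes" between consecutive components being the intervals $[\![z_{2j},z_{2j+1}[\![$ for $j=1,\dots,m-1$. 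The crucial observation is that each such hole contains more than $M$ integers: both $z_{2j}$ and $z_{2j+1}$ lie in $E(x)\cap[\![0,n]\!]$ by \ref{itm:EnM1}, so $z_{2j+1}-z_{2j}\le M$ would make $[\![z_{2j},z_{2j+1}[\![$ one of the defining intervals of $E_n^M(x)$, forcing $z_{2j}\in E_n^M(x)$, which is impossible. The $m-1$ holes are pairwise disjoint and contained in $[\![0,n[\![$, hence $(m-1)(M+1)\le n$, so $m\le\frac{n}{M+1}+1\le\frac{n}{M}+1$ and $M\cdot\frac{\sharp\partial E_n^M(x)}{2}=Mm\le n+M$.

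Finally, \ref{itm:EnM2} follows from \ref{itm:EnM3}: we have $\sharp\partial E_n^M(x)\le\frac{2(n+M)}{M}$, so $d_n(\partial E_n^M(x))\le\frac{\sharp\partial E_n^M(x)}{n}\le\frac{2}{M}+\frac{2}{n}$, and taking $\limsup$ as $n\to\infty$ gives $\limsup_n d_n(\partial E_n^M(x))\le\frac{2}{M}$. I do not expect a genuine obstacle here; the only delicate point is the bookkeeping of which integers lie in $E_n^M(x)$ versus its complement near the ends of the connected components — including the degenerate cases $E_n^M(x)=\emptyset$ and boundary points equal to $0$ or $n$ — and the alternation structure of $\partial E_n^M(x)$ is precisely what organizes that bookkeeping.
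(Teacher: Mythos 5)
Your proof is correct and complete: item \ref{itm:EnM1} follows from the two-case analysis of boundary points of a union of intervals with endpoints in $E(x)$, and the key observation for item \ref{itm:EnM3} --- that the gap between two consecutive connected components of $E_n^M(x)$ must exceed $M$, since both gap endpoints lie in $E(x)\cap[\![0,n]\!]$ by \ref{itm:EnM1} and a gap of length at most $M$ would itself be a defining interval --- is exactly the right mechanism, with \ref{itm:EnM2} then following from \ref{itm:EnM3} as you say. The paper does not reproduce a proof (it cites Lemma 4.1 of \cite{A}), but this elementary counting of components and gaps is the natural argument for this purely combinatorial statement, and your handling of the degenerate cases is adequate.
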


We also have that hyperbolic blocks are not too close to critical points.

\begin{lemma}[Lemma 4.2 from \cite{A}]
\label{lemma:prop-hb}
Let $x \in I$, $n, M \in \bbN$, and $[\![ l_1 ; l_2  [\![ \subset E_n^M(x)$. We have
$$
\log |(f^{l_2-l_1})'(f^{l_1} x)| \geq -M \log || f' ||_{\infty}.
$$
\end{lemma}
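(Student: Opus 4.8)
The plan is to factor the derivative through a well-chosen hyperbolic time. Let $q^+$ denote the smallest element of $E(x)$ with $q^+\ge l_2$ (Step B below provides it and its properties). By the chain rule $(f^{q^+-l_1})'(f^{l_1}x)=(f^{q^+-l_2})'(f^{l_2}x)\cdot(f^{l_2-l_1})'(f^{l_1}x)$, so
$$\log|(f^{l_2-l_1})'(f^{l_1}x)|=\log|(f^{q^+-l_1})'(f^{l_1}x)|-\log|(f^{q^+-l_2})'(f^{l_2}x)|.$$
The first term is the log-derivative of a block of iterates that \emph{ends} at the hyperbolic time $q^+$, hence is $\ge0$ (Step A). The second term covers $q^+-l_2<M$ iterates, so $\log|(f^{q^+-l_2})'(f^{l_2}x)|\le(q^+-l_2)\log\|f'\|_\infty\le(M-1)\log\|f'\|_\infty$ just from the pointwise bound $|f'|\le\|f'\|_\infty$. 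Subtracting gives $\log|(f^{l_2-l_1})'(f^{l_1}x)|\ge-(M-1)\log\|f'\|_\infty\ge-M\log\|f'\|_\infty$, the last inequality using $\log\|f'\|_\infty\ge0$ (Step B).

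First I would establish Step A: a block of iterates that ends at a hyperbolic time is expanding. If $t\in E(x)$ and $0\le s<t$, then the condition defining $E(x)$, read at the index $i=s$, says $\psi_{t-s}(f^sx)\ge(t-s)\delta\ge0$, and since $\psi_{t-s}(f^sx)=\log|(f^{t-s})'(f^sx)|-\tfrac1r\sum_{j=0}^{t-s-1}\log^+|f'(f^{s+j}x)|$ with the subtracted sum finite and nonnegative, we get $\log|(f^{t-s})'(f^sx)|\ge0$ and in particular $(f^{t-s})'(f^sx)\ne0$. Applied with $t=q^+$ and $s=l_1$ (legitimate since $l_1<l_2\le q^+$) this yields both facts used in the outline: the first term of the factorization is $\ge0$, and $(f^{q^+-l_1})'(f^{l_1}x)\ne0$, so each factor in the chain-rule product above is nonzero; in particular $\log|(f^{q^+-l_2})'(f^{l_2}x)|$ is a finite real number, which is what makes its upper bound legitimate.

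Next comes Step B, the combinatorics producing $q^+$. Since $[\![l_1,l_2[\![$ is an integer interval it lies inside one connected component $[\![\alpha,\beta[\![$ of $E_n^M(x)$, that is, inside one $M$-hyperbolic block, so by item \ref{itm:EnM1} of Lemma \ref{lem:EnM} we have $\alpha,\beta\in E(x)$ and $\alpha\le l_1<l_2\le\beta$. Two consecutive elements $t<t'$ of $E(x)$ lying in $[\![\alpha,\beta]\!]$ are at distance $\le M$: as $[\![\alpha,\beta[\![$ is connected, $t\in E_n^M(x)$, so $t\in[\![a,b[\![$ for some $a,b\in E(x)$ with $b-a\le M$; since $E(x)$ has no point strictly between $t$ and $t'$, this forces $a\le t$ and $b\ge t'$, whence $t'-t\le b-a\le M$. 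Now take $q$ to be the largest element of $E(x)$ with $q<l_2$, which exists because $\alpha\in E(x)$ and $\alpha<l_2$; then $\alpha\le q$, $q\in[\![\alpha,\beta[\![$, the $E(x)$-successor of $q$ is exactly $q^+$, it satisfies $q^+\le\beta$ and $q^+-q\le M$, and $q^+\ge l_2$ by maximality of $q$; since $q<l_2$ this gives $q^+-l_2\le q^+-q-1\le M-1$. Finally $\|f'\|_\infty\ge1$: the nonempty block contains the hyperbolic time $\beta\ge1$, for which $|(f^\beta)'(x)|\ge1$ by Step A; and if $E_n^M(x)=\emptyset$ there is nothing to prove.

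I expect the only genuinely delicate point to be the asymmetric shape of this factorization. Since $[\![l_1,l_2[\![$ may contain iterates where $|f'|$ is arbitrarily small or even zero, one must never bound a piece of orbit from \emph{below} unless it is pinned down by the hyperbolic-time condition — hence one long piece ending at a hyperbolic time, which is expanding for free, minus one short overshoot of length $<M$, controlled only from above. A symmetric sandwich of $[\![l_1,l_2[\![$ between the nearest hyperbolic times on both sides would cost roughly $2M\log\|f'\|_\infty$ and miss the stated bound. The remaining items — the existence and position of $q$ and $q^+$, the spacing estimate, and the reduction to $\|f'\|_\infty\ge1$ — are routine.
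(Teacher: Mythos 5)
Your proof is correct, and it is essentially the standard argument behind this lemma (which the paper only cites from \cite{A} without reproducing): factor $(f^{q^+-l_1})'(f^{l_1}x)$ through the first hyperbolic time $q^+\ge l_2$, use that a block ending at a hyperbolic time satisfies $\log|(f^{q^+-l_1})'(f^{l_1}x)|\ge\psi_{q^+-l_1}(f^{l_1}x)\ge 0$, and pay at most $M$ iterates of $\log\|f'\|_\infty$ for the overshoot, whose length is controlled by item \ref{itm:EnM1} of Lemma \ref{lem:EnM} and the $M$-spacing in the definition of $E_n^M(x)$. Your extra care about the nonvanishing of the factors and about $\|f'\|_\infty\ge1$ is sound and makes the argument complete.
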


We introduce some other notations.
Given $M,n\in \mathbb{N}$, and $m,q\in \mathbb{N}^{\ast}$,
write $E^M_n(x)=\bigsqcup_{s} [\![a_s,b_s[\![$ where $[\![a_s,b_s[\![$ are $M$-hyperbolic blocks. Let
$$
E^{M,m}_n(x):=\bigsqcup_{s\in \mathbb{N}} [\![a_s,c_s[\![,
$$
where $c_s:=\max\{ l\in [\![a_s,b_s+1-m]\!]: l\in E(x)\}$.\\

We also define $E_n^{M,m,q}(x) = E_n^{M,\max \{m,q \}}(x)$.
Then, let
$$
\xi^{M,m,q}_{\nu,n}:=\int \frac{1}{n}\sum_{i\in E^{M,m,q}_n(x)}\delta_{f^i x}d\nu(x) \;\;\; \text{and} \;\;\; \beta_{\nu,n}^{M,m,q} = \frac{1}{n} \int \sharp E_n^{M,m,q}(x) d \nu(x).
$$

We remark that $E^{M,1,1}_n(x)=E^{M}_n(x)$, and that $\xi^{M,1,1}_{\nu,n}=\xi^M_{\nu,n}$.\\

The motivation for these definitions is the following. We will estimate the entropy of $f$ and the Lyapunov exponent of $f^q$.
In doing so, we will require control over $|(f^q)'|$, and when we compute the entropy of $f$ for some partition, we will require some control over $|f' \circ f^m|$ for large $m$.
From Lemma \ref{lemma:prop-hb}, we control $|f'(f^k x)|$ when $k \in E_n^M(x)$.
Notice that $|(f^q)'(f^l x)|$ is a finite sum of $|f'(f^{l+k} x)|$ with $k < q$, thus if $l \in E_n^{M,m,q}(x)$ then $l+k \in E_n^M(x)$ and we control $|(f^q)'(f^l x)|$.
Then $|f' \circ f^m (f^l x)|$ is also controlled since $l+m \in E_n^M(x)$.
Therefore, at times of $E_n^{M,m,q}(x)$ of the orbit of $x$, we control $|f' \circ f^m|$ and $|(f^q)'|$.
Also, the choice of $c_s$ gives that the border of $E_n^{M,m,q}(x)$ is still contained in $E(x)$.

\subsection{Entropy splitting of the hyperbolic and neutral component}
\label{ssec:gen-entropy}

For the beginning of this section, we consider a general probability space $(X, \mathcal{A}, \rho)$.
We will use the following notion of measurable partition, which depends on the measure:

\begin{definition}[Measurable partition]
We say that a countable collection $\mathcal{Q}$ of measurable subsets of $X$ is a measurable partition for $\rho$ if
\begin{itemize}
\item[i)] for $Q \neq Q' \in \mathcal{Q}$, $\rho(Q \cap Q') = 0$;
\item[ii)] the union $\bigcup\limits_{Q \in \mathcal{Q}} Q$ has full $\rho$-measure.
\end{itemize}
\end{definition}

Then, for a measurable partition $\mathcal{Q}$, we define
$$
\mathcal{Q}_{\rho} := \{ Q \in \mathcal{Q} : \rho(Q) > 0 \}.
$$
The important remark is that when a measure $\rho$ is not preserved by a dynamical system $f$, and if $\mathcal{Q}$ is a measurable partition for $\rho$, then $f^{-1} \mathcal{Q}$ may not be a measurable partition for $\rho$.
By adapting an argument from Misiurewicz's proof of the variational principal \cite{Misiurewicz1976}, we obtain: 

\begin{lemma}[Lemma 6.2 from \cite{A}]
\label{lemma:Misiurewicz}
Let $\mathcal{Q}$ be a countable measurable partition of a probability space $(X, \rho)$. Let $T : X \to X$ be a measurable transformation, which may not preserve $\rho$. Let $F$ be a finite subset of $\bbN$.
Let $\rho^F = \frac{1}{\sharp F} \sum\limits_{k \in F} T^k_* \rho$.
Assume that for every $i \in F$, $T^{-i} \mathcal{Q}$ is a measurable partition for $\rho$ and that $\mathcal{Q}_{\rho^F}$ is finite.
For $m \in \bbN^*$, if we write $\mathcal{Q}^m=\bigvee\limits_{k=0}^{m-1} T^{-k} \mathcal{Q}$ and $\mathcal{Q}^F = \bigvee\limits_{k \in F} T^{-k} \mathcal{Q}$, then we have
$$
\frac{1}{m} H_{\rho^F} (\mathcal{Q}^m) \geq \frac{1}{\sharp F} H_{\rho}(\mathcal{Q}^F) - m \log (\sharp \mathcal{Q}_{\rho^F}) \frac{\sharp \partial F}{\sharp F}.
$$
\end{lemma}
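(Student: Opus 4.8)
The plan is to adapt Misiurewicz's classical argument for the upper bound in the variational principle to the setting where $T$ need not preserve $\rho$. The key elementary facts I would use are the subadditivity and concavity of the entropy function $H$, together with the identity $H_{\rho}(\mathcal{P} \vee \mathcal{R}) \leq H_{\rho}(\mathcal{P}) + H_{\rho}(\mathcal{R})$ and the fact that if $\mathcal{S}$ is a measurable partition for $\rho$ with $\sharp \mathcal{S}_\rho \leq N$, then $H_\rho(\mathcal{S}) \leq \log N$. First I would fix a large integer $n$ and, following Misiurewicz, decompose the finite index set $F$ by ``sliding a window of length $m$'': for each residue $i \in [\![0,m[\![$, consider the arithmetic-progression-like family of blocks $\{i, i+m, i+2m, \dots\}$ intersected with $F$, and write $\mathcal{Q}^F = \bigvee_{k \in F} T^{-k}\mathcal{Q}$ as a refinement of $\bigvee_{j} T^{-(i+jm)} \mathcal{Q}^m$ together with the ``leftover'' coordinates near $\partial F$. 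Averaging over the $m$ choices of $i$ and using subadditivity of $H_\rho$, the bulk term contributes $\frac{1}{m}\sum_{\text{blocks}} H_\rho\big(T^{-(\text{block start})}\mathcal{Q}^m\big)$ and the leftover term is bounded by (number of leftover coordinates) $\times \log \sharp \mathcal{Q}_{\rho^F}$, which is at most $m \log(\sharp \mathcal{Q}_{\rho^F}) \cdot \sharp \partial F$ since each of the $\sharp\partial F$ window positions contributes at most $m$ leftover coordinates and each coordinate an entropy of at most $\log \sharp \mathcal{Q}_{\rho^F}$ — here I must be careful that $T^{-k}\mathcal{Q}$ for $k \in F$ is indeed a measurable partition for $\rho$, which is exactly the hypothesis, and that the relevant pushed-forward pieces are $\rho^F$-a.e.\ finitely many, so the $\log$ is finite.

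The second ingredient is to move from $H_\rho(T^{-j}\mathcal{Q}^m)$ back to $H_{\rho^F}(\mathcal{Q}^m)$. For each $k \in F$, since $T^{-k}\mathcal{Q}$ is a measurable partition for $\rho$, we have $H_\rho(T^{-k}\mathcal{Q}^m) = H_{T^k_*\rho}(\mathcal{Q}^m)$ (provided $T^{-k}\mathcal{Q}^m = T^{-k}\mathcal{Q} \vee \dots \vee T^{-k-m+1}\mathcal{Q}$ is a measurable partition for $\rho$ — this needs the hypothesis applied at the indices $k, k+1, \dots, k+m-1$, so strictly I should either assume $F$ is of the form handled in the application or note that in the application $F$ will be taken so that all these indices lie in the relevant range; in the paper's use $F$ is an interval of hyperbolic-type times and this is fine). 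Then, averaging the inequality $H_\rho(T^{-k}\mathcal{Q}^m) = H_{T^k_*\rho}(\mathcal{Q}^m)$ over $k \in F$ and invoking concavity of $\mathcal{P} \mapsto H_{\mathcal{P}}(\mathcal{Q}^m)$ in the measure, we get $\frac{1}{\sharp F}\sum_{k \in F} H_{T^k_*\rho}(\mathcal{Q}^m) \leq H_{\rho^F}(\mathcal{Q}^m)$. Combining the window decomposition with this concavity step, collecting the block contributions into $\frac{\sharp F}{m} \cdot \frac{1}{\sharp F}\sum_k (\text{something bounded by } H_{\rho^F}(\mathcal{Q}^m))$ after reindexing, and dividing through by $\sharp F$, yields
$$
\frac{1}{\sharp F} H_\rho(\mathcal{Q}^F) \leq \frac{1}{m} H_{\rho^F}(\mathcal{Q}^m) + m \log(\sharp \mathcal{Q}_{\rho^F}) \frac{\sharp \partial F}{\sharp F},
$$
which rearranges to the claimed inequality.

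The main obstacle I anticipate is purely bookkeeping rather than conceptual: because $T$ does not preserve $\rho$, I cannot freely replace $T^{-k}\rho$-statements by $\rho$-statements, so I must track at every step that the partitions appearing are genuinely measurable partitions \emph{for the specific measure against which the entropy is computed}, and that the finiteness hypothesis on $\mathcal{Q}_{\rho^F}$ (and hence on $\mathcal{Q}_{T^k_*\rho}$ for the relevant $k$, which follows since $T^k_*\rho \ll \sharp F \cdot \rho^F$) is what keeps all the $\log$ terms finite. The counting of ``boundary'' coordinates — showing the number of indices in $F$ that fail to fit into a clean length-$m$ block started within $F$ is at most $m \cdot \sharp \partial F$ — is the one combinatorial point that needs a careful but elementary argument: each maximal run of consecutive integers in $F$ wastes at most $m-1 < m$ coordinates at its right end for each of the $m$ window offsets, and the number of maximal runs is controlled by $\sharp \partial F$.
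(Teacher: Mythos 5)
Your proposal is correct and follows the same route as the cited proof (the paper imports this statement as Lemma 6.2 of \cite{A}, itself an adaptation of Misiurewicz's argument, which is exactly the sliding-window decomposition plus subadditivity and concavity in the measure that you describe). The one hedge you make --- that $T^{-k}\mathcal{Q}^m$ might fail to be a measurable partition for $\rho$ when some $k+l \notin F$ --- is unnecessary: in the window decomposition one only ever forms full length-$m$ blocks $[\![ c , c+m [\![$ entirely contained in $F$ (indices of $F$ not covered by such a block are counted among the leftovers, of which there are at most $m \sharp \partial F$ after averaging over the $m$ offsets), so the hypothesis applies to every index actually used, the identity $H_{\rho}(T^{-c}\mathcal{Q}^m) = H_{T^c_* \rho}(\mathcal{Q}^m)$ holds for every block start $c$, and the remaining $k \in F$ enter only through the trivial bound $H_{T^k_* \rho}(\mathcal{Q}^m) \geq 0$ before the concavity step --- hence no restriction on the shape of $F$ is needed and the lemma holds exactly as stated.
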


Let $\iota > 0$ and let $\mathcal{P}$ be a finite partition such that $\nu(\partial \mathcal{P}) = 0$ and 
$$
\lim\limits_{n \to +\infty} \frac{1}{n} H_{\nu}(\mathcal{P}^n) \geq h(\nu) - \iota.
$$

For a measurable partition $\mathcal{F}$ for $\nu$ and $n,M,m \in \bbN^*$, we denote by $\mathcal{F}^{E^{M,m}_n}$ the partition for which two points $x$ and $y$ are in the same atom if $E_n^{M,m}(x) = E_n^{M,m}(y)$ and if $f^l x$ and $f^l y$ are in the same atom of $\mathcal{F}$ for every $l \in E_n^{M,m}(x) = E_n^{M,m}(y)$.\\

For $n,M,m$, let $\mathcal{E}_n^{M,m}$ be the partition of $I $ whose atoms are the sets $\{ x \in I :\ E_n^{M,m}(x) = E \}$ where $E \subset \bbN$ are such that the associated atoms are of positive $\nu$-measure.
To shorten notations, we will denote by $E$ both the atom of $\mathcal{E}_n^{M,m}$ and the set $E_n^{M,m}(x)$ for $x$ in this atom. Then for $E \in \mathcal{E}_n^{M,m}$, we write
$$
\nu_E = \frac{\nu(E \cap \cdot)}{\nu(E)} \; \; \; \text{and} \; \; \; \nu^E = \frac{1}{\sharp E} \sum\limits_{k \in E} f^k_* \nu_E.
$$
The main proposition of this section is the following entropy inequality, which splits the entropy of a measure $\nu$ into the sum of the entropy of the hyperbolic component of $\nu$ and the entropy conditioned to the hyperbolic component of $\nu$, with some error terms.

\begin{proposition}
\label{prop:entropy-ineq}
Let $\mathcal{Q}$ be a countable measurable partition of $(I, \nu)$ such that for any $n,M,m$
\begin{itemize}
\item[i)] $\forall E \in \mathcal{E}_n^{M,m}, \forall i \in E, f^{-i} \mathcal{Q}$ is a partition for $\nu_E$;
\item[ii)] $\forall M,m, \sharp \{ Q \in \mathcal{Q}^m : \exists n, \exists E \in \mathcal{E}_n^{M,m}, \nu^E(Q) > 0 \} < +\infty$;
\item[iii)] $\forall m,M \in \bbN^*, \overline{\xi_{\nu}^{M,m}}(\partial \mathcal{Q}^m)=0$.
\end{itemize}
Then for any $M$ large enough and $m \in \bbN^*$,
\begin{align*}
h(\nu) \leq &\beta_{\nu}^{M,m} \frac{1}{m} H_{\overline{\xi_{\nu}^{M,m}}}(\mathcal{Q}^m) + \limsup\limits_{n \to +\infty} \frac{1}{n} H_{\nu}(\mathcal{P}^n \mid \mathcal{Q}^{E_n^{M,m}}) + \frac{3 \log M}{M} + \iota  \\
&+\limsup\limits_{n \to +\infty} \frac{m}{n}\sum\limits_{E \in \mathcal{E}_n^{M,m}} \nu(E) \log \left ( \sharp\mathcal{Q}_{\nu^E} \right ) \sharp\partial E.  
\end{align*}
\end{proposition}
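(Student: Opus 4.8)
The plan is to bound $\tfrac1n H_\nu(\mathcal{P}^n)$ from above for each $n$ in the fixed subsequence and then let $n\to+\infty$. Since $\mathcal{P}$ was chosen with $h(\nu)\le\iota+\lim_n\tfrac1n H_\nu(\mathcal{P}^n)$, and since hypothesis i) guarantees that $\mathcal{Q}^{E_n^{M,m}}$ is an honest measurable partition for $\nu$ (on the set $\{x:E_n^{M,m}(x)=E\}$ it coincides with $\bigvee_{l\in E}f^{-l}\mathcal{Q}$, which is a partition for $\nu_E$, and these sets exhaust $I$ up to a $\nu$-null set), the elementary bound $H_\nu(\mathcal{P}^n)\le H_\nu(\mathcal{P}^n\mid\mathcal{Q}^{E_n^{M,m}})+H_\nu(\mathcal{Q}^{E_n^{M,m}})$ holds for every $n$. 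Its first term is already the conditional term appearing in the conclusion, so the whole task is to control $\limsup_n\tfrac1n H_\nu(\mathcal{Q}^{E_n^{M,m}})$.

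As $\mathcal{Q}^{E_n^{M,m}}$ refines the partition $\mathcal{E}_n^{M,m}$ (whose atom containing $x$ records the set $E_n^{M,m}(x)$), I would split
$$H_\nu\big(\mathcal{Q}^{E_n^{M,m}}\big)=H_\nu\big(\mathcal{E}_n^{M,m}\big)+\sum_{E\in\mathcal{E}_n^{M,m}}\nu(E)\,H_{\nu_E}\Big(\bigvee_{l\in E}f^{-l}\mathcal{Q}\Big).$$
For the first summand, each atom of $\mathcal{E}_n^{M,m}$ is a union of integer subintervals of $[\![0,n]\!]$ with, since $E_n^{M,m}(x)$ has no more blocks than $E_n^M(x)$ and Lemma \ref{lem:EnM} iii) applies to the latter, at most $\tfrac{2(n+M)}{M}$ boundary points; counting such subsets and using Stirling's formula gives $\limsup_n\tfrac1n H_\nu(\mathcal{E}_n^{M,m})\le\tfrac{3\log M}{M}$ once $M$ is large enough, uniformly in $m$. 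For each remaining summand I would apply Lemma \ref{lemma:Misiurewicz} with $T=f$, $\rho=\nu_E$ and $F=E\subset\bbN$: then $\rho^F=\nu^E$ and $\mathcal{Q}^F=\bigvee_{l\in E}f^{-l}\mathcal{Q}$; hypothesis i) provides that $f^{-i}\mathcal{Q}$ is a partition for $\nu_E$ for every $i\in E$, and since $\mathcal{Q}^m$ refines $\mathcal{Q}$, hypothesis ii) forces $\sharp\mathcal{Q}_{\nu^E}$ to be finite, bounded independently of $n$ and $E$. The lemma then yields
$$H_{\nu_E}\Big(\bigvee_{l\in E}f^{-l}\mathcal{Q}\Big)\le\frac{\sharp E}{m}\,H_{\nu^E}(\mathcal{Q}^m)+m\log(\sharp\mathcal{Q}_{\nu^E})\,\sharp\partial E.$$

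Summing over $E$ and dividing by $n$, the contribution $\tfrac mn\sum_E\nu(E)\log(\sharp\mathcal{Q}_{\nu^E})\sharp\partial E$ is exactly the last term of the conclusion, so only $\tfrac1m\sum_E\nu(E)\tfrac{\sharp E}{n}H_{\nu^E}(\mathcal{Q}^m)$ remains. Here I would use the identity $\xi_{\nu,n}^{M,m}=\tfrac1n\sum_E\nu(E)\,\sharp E\cdot\nu^E$, whose total mass is $\beta_{\nu,n}^{M,m}$, so that $\overline{\xi_{\nu,n}^{M,m}}$ is the convex combination $\sum_E\tfrac{\nu(E)\sharp E}{n\,\beta_{\nu,n}^{M,m}}\nu^E$; concavity of $\rho\mapsto H_\rho(\mathcal{Q}^m)$ then gives $\sum_E\nu(E)\tfrac{\sharp E}{n}H_{\nu^E}(\mathcal{Q}^m)\le\beta_{\nu,n}^{M,m}\,H_{\overline{\xi_{\nu,n}^{M,m}}}(\mathcal{Q}^m)$. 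Passing to the limit along the subsequence, $\beta_{\nu,n}^{M,m}\to\beta_\nu^{M,m}$ and $\overline{\xi_{\nu,n}^{M,m}}\to\overline{\xi_\nu^{M,m}}$ weak-$*$, and by hypothesis ii) all the measures $\overline{\xi_{\nu,n}^{M,m}}$ charge only a fixed finite family of atoms of $\mathcal{Q}^m$, on which hypothesis iii) makes the topological boundary $\overline{\xi_\nu^{M,m}}$-negligible; hence $H_{\overline{\xi_{\nu,n}^{M,m}}}(\mathcal{Q}^m)\to H_{\overline{\xi_\nu^{M,m}}}(\mathcal{Q}^m)$ and $\limsup_n\beta_{\nu,n}^{M,m}H_{\overline{\xi_{\nu,n}^{M,m}}}(\mathcal{Q}^m)=\beta_\nu^{M,m}H_{\overline{\xi_\nu^{M,m}}}(\mathcal{Q}^m)$. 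Collecting the three pieces, using $\limsup(a_n+b_n)\le\limsup a_n+\limsup b_n$ and reordering terms gives the claimed inequality.

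The step I expect to be the main obstacle is this final limit transition: for the countable partition $\mathcal{Q}^m$ the map $\rho\mapsto H_\rho(\mathcal{Q}^m)$ is not weak-$*$ continuous in general, and it is precisely the conjunction of hypotheses ii) and iii) that saves the day, by effectively reducing $\mathcal{Q}^m$ to a finite partition whose boundary is negligible at the limit measure. Closely related is the bookkeeping of which of $\mathcal{Q}$, $f^{-l}\mathcal{Q}$ and $\mathcal{Q}^{E_n^{M,m}}$ genuinely is a measurable partition for which (non-$f$-invariant) measure, which is the reason the statement carries hypothesis i). Finally, the degenerate case $\beta_\nu^{M,m}=0$, in which $\overline{\xi_\nu^{M,m}}$ is undefined but the corresponding summand vanishes, should be dispatched separately; it causes no trouble.
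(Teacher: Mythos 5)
Your proposal is correct and follows essentially the same route as the paper: the same conditioning on $\mathcal{E}_n^{M,m}$, the same counting bound $\frac{3\log M}{M}$ for $H_{\nu}(\mathcal{E}_n^{M,m})$, the same application of Lemma \ref{lemma:Misiurewicz} on each atom $E$ with $\rho=\nu_E$ and $F=E$, the same identity-plus-concavity step producing $\beta_{\nu,n}^{M,m} H_{\overline{\xi_{\nu,n}^{M,m}}}(\mathcal{Q}^m)$, and the same use of hypotheses ii) and iii) to pass to the limit. Your closing remarks correctly identify the weak-$*$ limit transition as the delicate point and the role of each hypothesis.
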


\begin{proof}
Let us fix $n \in \bbN^*$ and estimate $\frac{1}{n} H_{\nu}(\mathcal{P}^n)$.
We have
\begin{align*}
\frac{1}{n} H_{\nu}(\mathcal{P}^n)
&\leq \frac{1}{n} H_{\nu}(\mathcal{P}^n \mid \mathcal{E}_n^{M,m}) + \frac{1}{n} H_{\nu}(\mathcal{E}_n^{M,m})\\
&\leq \frac{1}{n} H_{\nu}(\mathcal{P}^n \mid \mathcal{Q}^{E_n^{M,m}}) + \frac{1}{n} H_{\nu}(\mathcal{Q}^{E_n^{M,m}} \mid \mathcal{E}_n^{M,m}) + \frac{1}{n} H_{\nu}(\mathcal{E}_n^{M,m}).
\end{align*}
For the third term, the entropy of $\mathcal{E}_n^{M,m}$, we let $F_n^{M,m} = \max\limits_{E \in \mathcal{E}_n^{M,m}} \sharp \partial E$. Since any $E \subset \bbN$ is uniquely determined by $\partial E$, and since $F_n^{M,m} \leq \frac{2n}{M}$, by item \ref{itm:EnM2} from Lemma \ref{lem:EnM}, we have, for $M$ large enough:
$$
\frac{1}{n} H_{\nu}(\mathcal{E}_n^{M,m}) \leq \frac{1}{n} \log \sum\limits_{k=0}^{F_n^{M,m}} \binom{n}{k} \leq \frac{3 \log M}{M}.
$$

Then, we deal with the second term. We have
\begin{align*}
H_{\nu}(\mathcal{Q}^{E_n^{M,m}} \mid \mathcal{E}_n^{M,m})
&=\sum\limits_{E \in \mathcal{E}_n^{M,m}} \nu(E) H_{\nu_E}(\mathcal{Q}^E)\\
\underset{\text{Lemma } \ref{lemma:Misiurewicz}}&{\leq} \sum\limits_{E \in \mathcal{E}_n^{M,m}} \nu(E) \sharp E \left ( \frac{1}{m} H_{\nu^E}(\mathcal{Q}^m) + m \frac{\sharp\partial E}{\sharp E} \log \left (\sharp \mathcal{Q}_{\nu^E} \right )\right ).
\end{align*}
Then, we have the equality
$$
\sum\limits_{E \in \mathcal{E}_n^{M,m}} \nu(E) \sharp E \nu^E = \left ( \int \sharp E_n^{M,m}(x) d \nu \right ) \overline{\xi_{\nu,n}^{M,m}}.
$$
Hence, by concavity of $\rho \mapsto H_{\rho}(\mathcal{Q}^m)$, we have
$$
\sum\limits_{E \in \mathcal{E}_n^{M,m}} \nu(E) \sharp E H_{\nu^E}(\mathcal{Q}^m) \leq \left ( \int \sharp E_n^{M,m}(x) d \nu \right ) H_{\overline{\xi_{\nu,n}^{M,m}}}(\mathcal{Q}^m).
$$
Then, recall that $\beta_{\nu,n}^{M,m} = \frac{1}{n}\int \sharp E_n^{M,m} (x) d \nu(x)$, therefore
\begin{align*}
\frac{1}{n} H_{\nu}(\mathcal{P}^n)
&\leq \frac{1}{n} H_{\nu}(\mathcal{P}^n \mid \mathcal{Q}^{E_n^{M,m}})\\
&+ \frac{1}{m} \beta_{\nu,n}^{M,m} H_{\overline{\xi_{\nu,n}^{M,m}}}(\mathcal{Q}^m)
+ \frac{m}{n} \sum\limits_{E \in \mathcal{E}_n^{M,m}} \nu(E) \sharp E \frac{\sharp\partial E}{\sharp E} \log \left (\sharp \mathcal{Q}_{\nu^E} \right )\\
&+ \frac{3 \log M}{M}.
\end{align*}
Then by hypothesis $ii)$, the term $H_{\overline{\xi_{\nu,n}^{M,m}}}(\mathcal{Q}^m)$ is given by a sum over the atoms of $\mathcal{Q}^m$ where only finitely many atoms ever appear when $n$ varies.
Hence by hypothesis $iii)$, this sum converges to the entropy of the partition $\mathcal{Q}^m$ for the limit measure $\overline{\xi_{\nu}^{M,m}}$, which concludes.
\end{proof}

Recall from section \ref{ssec:hyperbolic-times} that $\mathtt{F}_{\nu} \subset I$ is a set of positive $\nu$-measure where the convergence $\frac{m_n^+(x)}{n} \underset{n \to +\infty}{\longrightarrow} 1$ is uniform. Then $\zeta$ is the normalized restriction of $\nu$ to $\mathtt{F}_{\nu}$.
We finish this section by relating the entropy of $\nu$ to the entropy of $\zeta$. We first prove a general inequality.

\begin{lemma}\label{le:entropyclose}
For any measurable partitions $\mathcal{P}$ and $\mathcal{Q}$ of $I$ for $\nu$, we have
$$
H_{\nu}(\mathcal{P}\mid \mathcal{Q}) \leq H_{\zeta}(\mathcal{P} \mid \mathcal{Q}) + \nu(I \backslash \mathtt{F}_{\nu}) \log \sharp\mathcal{P} + \log 2.
$$
\end{lemma}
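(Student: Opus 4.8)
The plan is to write $\nu$ as the convex combination $\nu = t\,\zeta + (1-t)\,\zeta'$, where $t := \nu(\mathtt{F}_{\nu}) \in (0,1]$ and $\zeta'$ is the normalized restriction of $\nu$ to $I\setminus\mathtt{F}_{\nu}$ (this is only relevant when $t<1$; if $t=1$ then $\zeta=\nu$ and the inequality is trivial), and then to run the classical convexity estimate for the entropy of a finite partition atom by atom over $\mathcal{Q}$. Since $\zeta$ and $\zeta'$ are both absolutely continuous with respect to $\nu$, the collections $\mathcal{P}$ and $\mathcal{Q}$ are again measurable partitions for each of them, so $H_{\zeta}(\mathcal{P}\mid\mathcal{Q})$ makes sense. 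Moreover, because $\mathcal{P}$ is finite, every conditional entropy appearing below is bounded by $\log\sharp\mathcal{P}$, so no $\infty-\infty$ ambiguity can occur even if $\mathcal{Q}$ is countably infinite with $H(\mathcal{Q})=\infty$.

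For $Q\in\mathcal{Q}$ with $\nu(Q)>0$, a direct computation gives $\nu_{Q} = s_{Q}\,\zeta_{Q} + (1-s_{Q})\,\zeta'_{Q}$ with $s_{Q} := t\,\zeta(Q)/\nu(Q) \in[0,1]$, where $\zeta_{Q}$ and $\zeta'_{Q}$ are the normalized restrictions to $Q$ (a term being simply omitted when its weight is $0$, which also covers the degenerate atoms where $\zeta(Q)$ or $\zeta'(Q)$ vanishes). I then apply the classical inequality $H_{p\rho_{1}+(1-p)\rho_{2}}(\mathcal{P}) \le p\,H_{\rho_{1}}(\mathcal{P}) + (1-p)\,H_{\rho_{2}}(\mathcal{P}) + \bigl(-p\log p-(1-p)\log(1-p)\bigr)$, valid for any finite partition $\mathcal{P}$ and $p\in[0,1]$ (proved, as usual, by passing to the disjoint union of two copies of the space and adjoining the two-atom partition recording which copy one lies in), to $\rho_{1}=\zeta_{Q}$, $\rho_{2}=\zeta'_{Q}$, $p=s_{Q}$. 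Since the two-point entropy is at most $\log 2$, this yields $H_{\nu_{Q}}(\mathcal{P}) \le s_{Q}H_{\zeta_{Q}}(\mathcal{P}) + (1-s_{Q})H_{\zeta'_{Q}}(\mathcal{P}) + \log 2$.

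Finally I multiply by $\nu(Q)$ and sum over $Q\in\mathcal{Q}$. Using $\nu(Q)s_{Q} = t\,\zeta(Q)$ and $\nu(Q)(1-s_{Q}) = \nu(Q)-t\,\zeta(Q) = (1-t)\,\zeta'(Q)$, the first two sums recombine exactly into $t\,H_{\zeta}(\mathcal{P}\mid\mathcal{Q})$ and $(1-t)\,H_{\zeta'}(\mathcal{P}\mid\mathcal{Q})$, while the error term sums to $\log 2$ because $\sum_{Q}\nu(Q)=1$. It remains to bound $t\,H_{\zeta}(\mathcal{P}\mid\mathcal{Q})\le H_{\zeta}(\mathcal{P}\mid\mathcal{Q})$ (conditional entropy is nonnegative and $t\le 1$) and $(1-t)\,H_{\zeta'}(\mathcal{P}\mid\mathcal{Q}) \le (1-t)\,H_{\zeta'}(\mathcal{P}) \le (1-t)\log\sharp\mathcal{P} = \nu(I\setminus\mathtt{F}_{\nu})\log\sharp\mathcal{P}$, using that conditioning does not increase entropy and that $\mathcal{P}$ has $\sharp\mathcal{P}$ atoms. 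Combining the three estimates gives the claim. There is no genuine obstacle here; the only points requiring a little care are checking that $\mathcal{P}$ and $\mathcal{Q}$ survive as measurable partitions for the restricted measures (immediate from absolute continuity) and handling the degenerate cases $t\in\{0,1\}$ or $\zeta(Q)=0$ or $\zeta'(Q)=0$, all absorbed by the zero-weight convention.
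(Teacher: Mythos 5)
Your proof is correct and follows essentially the same route as the paper: both decompose $\nu$ according to the two-set partition $\{\mathtt{F}_{\nu}, I\setminus\mathtt{F}_{\nu}\}$, with the $\log 2$ coming from the entropy of that two-atom partition and the $\nu(I\setminus\mathtt{F}_{\nu})\log\sharp\mathcal{P}$ term from crudely bounding the contribution of the complement. The only difference is presentational — the paper runs the conditional-entropy chain rule globally with the partition $\mathcal{F}=\{\mathtt{F}_{\nu}, I\setminus\mathtt{F}_{\nu}\}$, while you prove the equivalent almost-affinity inequality atom by atom over $\mathcal{Q}$.
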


\begin{proof}
Let us denote by $\mathcal{F}$ the partition $\{ \mathtt{F}_{\nu} ; I \backslash \mathtt{F}_{\nu}\}$ and by $\eta$ the normalized restriction of $\nu$ on $I \backslash \mathtt{F}_{\nu}$.
We have the following:
\begin{align*}
H_{\nu}(\mathcal{P}\mid \mathcal{Q})
&= H_{\nu}(\mathcal{P} \vee \mathcal{F} \mid \mathcal{Q}) - H_{\nu}(\mathcal{F} \mid \mathcal{Q} \vee \mathcal{P})\\
&=H_{\nu}(\mathcal{P} \mid \mathcal{Q} \vee \mathcal{F}) - H_{\nu}(\mathcal{F} \mid \mathcal{Q} \vee \mathcal{P}) + H_{\nu}(\mathcal{F} \mid \mathcal{Q})\\
&= \nu(\mathtt{F}_{\nu}) H_{\zeta}(\mathcal{P} \mid \mathcal{Q})  + \nu(I \backslash \mathtt{F}_{\nu}) H_{\eta}(\mathcal{P}\mid \mathcal{Q}) - H_{\nu}(\mathcal{F}\mid \mathcal{Q} \vee \mathcal{P}) + H_{\nu}(\mathcal{F} \mid \mathcal{Q})\\
&\leq H_{\zeta}(\mathcal{P} \mid \mathcal{Q}) + \nu(I \backslash \mathtt{F}_{\nu}) \log \sharp\mathcal{P} + \log 2.
\end{align*}
\end{proof}

As a consequence, we have the following statement.

\begin{proposition}
\label{prop:entropy-final}
If a partition $\mathcal{Q}$ satisfies the hypotheses of Proposition \ref{prop:entropy-ineq}, then for any $M$ large enough and $m \in \bbN^*$, we have
\begin{align*}
h(\nu) \leq &\beta_{\nu}^{M,m} \frac{1}{m} H_{\overline{\xi_{\nu}^{M,m}}}(\mathcal{Q}^m) + \limsup\limits_{n \to +\infty} \frac{1}{n} H_{\zeta}(\mathcal{P}^n \mid \mathcal{Q}^{E_n^{M,m}})\\
&+ \nu(I \backslash \mathtt{F}_{\nu}) \log \sharp \mathcal{P} + \frac{3 \log M}{M} + \iota  \\
&+\limsup\limits_{n \to +\infty} \frac{m}{n}\sum\limits_{E \in \mathcal{E}_n^{M,m}} \nu(E) \log \left ( \sharp\mathcal{Q}_{\nu^E} \right ) \sharp\partial E.
\end{align*}
\end{proposition}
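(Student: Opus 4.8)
The plan is to derive this directly from Proposition~\ref{prop:entropy-ineq} by upgrading the conditional entropy term $\limsup_{n\to\infty}\frac1n H_\nu(\mathcal{P}^n\mid\mathcal{Q}^{E_n^{M,m}})$ to its $\zeta$-counterpart via Lemma~\ref{le:entropyclose}. First I would fix $M$ large enough and $m\in\bbN^*$ and invoke Proposition~\ref{prop:entropy-ineq}, which applies since $\mathcal{Q}$ satisfies its hypotheses, giving
\begin{align*}
h(\nu)\leq{}&\beta_{\nu}^{M,m}\frac1m H_{\overline{\xi_{\nu}^{M,m}}}(\mathcal{Q}^m)+\limsup_{n\to\infty}\frac1n H_{\nu}(\mathcal{P}^n\mid\mathcal{Q}^{E_n^{M,m}})+\frac{3\log M}{M}+\iota\\
&+\limsup_{n\to\infty}\frac{m}{n}\sum_{E\in\mathcal{E}_n^{M,m}}\nu(E)\log\left(\sharp\mathcal{Q}_{\nu^E}\right)\sharp\partial E.
\end{align*}

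Next, for each fixed $n$ I would apply Lemma~\ref{le:entropyclose} with the partition ``$\mathcal{P}$'' of that lemma replaced by $\mathcal{P}^n$ and the partition ``$\mathcal{Q}$'' replaced by $\mathcal{Q}^{E_n^{M,m}}$. Here $\mathcal{P}^n=\bigvee_{k=0}^{n-1}f^{-k}\mathcal{P}$ is a finite partition for $\nu$, because $\nu$ is $f$-invariant and $\nu(\partial\mathcal{P})=0$; and $\mathcal{Q}^{E_n^{M,m}}$ is a measurable partition for $\nu$ since, restricted to each atom $E$ of $\mathcal{E}_n^{M,m}$, it equals $\bigvee_{l\in E}f^{-l}\mathcal{Q}$, which is a partition for $\nu_E$ by hypothesis i) of Proposition~\ref{prop:entropy-ineq}. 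The lemma then yields
$$
H_{\nu}(\mathcal{P}^n\mid\mathcal{Q}^{E_n^{M,m}})\leq H_{\zeta}(\mathcal{P}^n\mid\mathcal{Q}^{E_n^{M,m}})+\nu(I\backslash\mathtt{F}_{\nu})\log\sharp\mathcal{P}^n+\log 2.
$$
Since $\sharp\mathcal{P}^n\leq(\sharp\mathcal{P})^n$, we have $\log\sharp\mathcal{P}^n\leq n\log\sharp\mathcal{P}$; dividing by $n$, the term $\frac{\log 2}{n}$ vanishes in the limit, so taking $\limsup_{n\to\infty}$ gives
$$
\limsup_{n\to\infty}\frac1n H_{\nu}(\mathcal{P}^n\mid\mathcal{Q}^{E_n^{M,m}})\leq\limsup_{n\to\infty}\frac1n H_{\zeta}(\mathcal{P}^n\mid\mathcal{Q}^{E_n^{M,m}})+\nu(I\backslash\mathtt{F}_{\nu})\log\sharp\mathcal{P}.
$$
Substituting this bound into the inequality from Proposition~\ref{prop:entropy-ineq} produces exactly the claimed estimate.

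The whole argument is essentially bookkeeping, and there is no real obstacle; the only points needing a moment's attention are verifying that $\mathcal{P}^n$ and $\mathcal{Q}^{E_n^{M,m}}$ are genuine measurable partitions for $\nu$ (so that Lemma~\ref{le:entropyclose} is applicable), and noticing that the crude bound $\log\sharp\mathcal{P}^n\leq n\log\sharp\mathcal{P}$ is precisely strong enough: the error term $\nu(I\backslash\mathtt{F}_{\nu})\log\sharp\mathcal{P}^n$ grows only linearly in $n$, hence survives division by $n$ as the bounded constant $\nu(I\backslash\mathtt{F}_{\nu})\log\sharp\mathcal{P}$ rather than blowing up.
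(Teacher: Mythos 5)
Your proposal is correct and follows exactly the route the paper intends: the proposition is stated as a direct consequence of Proposition \ref{prop:entropy-ineq} combined with Lemma \ref{le:entropyclose} applied to $\mathcal{P}^n$ and $\mathcal{Q}^{E_n^{M,m}}$, with the bound $\log\sharp\mathcal{P}^n\leq n\log\sharp\mathcal{P}$ absorbing the error term and $\tfrac{\log 2}{n}$ vanishing in the limit. Nothing is missing.
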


\subsection{Defining the adapted partition}
\label{ssec:def-partition}

In this section, we define a partition satisfying the hypotheses of Proposition \ref{prop:entropy-ineq} and such that we control the distortion on each atom.\\

Given any $q\in \mathbb{N}$ and $a\in ]-\frac{1}{q},0[$, we define 
$$
I_q:=\{ [\frac{i}{q}+a,\frac{i+1}{q}+a[: i\in \mathbb{Z} \}.
$$
Define the partition
$$
\mathcal{Q}_{q}:=\{ (\log|f'|)^{-1}(J): J\in I_{q} \}.
$$
We also require the atoms to be small enough. Hence, for $\varepsilon > 0$, we consider a partition $\mathcal{I}(\varepsilon)$ whose atoms are interval of diameter smaller than $\varepsilon$.
Finally, let
$$
\mathcal{R}_{q, \varepsilon} = \mathcal{Q}_q \vee \mathcal{I}(\varepsilon).
$$
We choose $a$ and $\mathcal{I}(\varepsilon)$ such that the border of $\mathcal{R}_{q, \varepsilon}^m$ has measure zero for $\xi^{M,m}$, for any $M,m \in \bbN^*$.\\

The motivation for introducing this partition is that it satisfies the hypotheses of Proposition \ref{prop:entropy-ineq}, as will be proved in Lemma \ref{lemma:entropy-R}.
Indeed, until then, we will only use the fact that for any $x,y$ in the same atom of $\mathcal{R}_{q,\varepsilon}$, we have
\begin{align}
\label{eq:distortion-atom}
| \log |f'(x)| - \log |f'(y)|| \leq 1/q.
\end{align}

\section{Entropy of the neutral component}
\label{sc:ENC}

\subsection{General bounds}

By following the first part of the proof of Lemma 9 from \cite{B}, we obtain the following estimate for $H_{\zeta}(\mathcal{P}^n \mid \mathcal{Q}^{E_n^{M,m}})$.

\begin{lemma}[Lemma 9 from \cite{B}]
\label{le:reparamization-partition}
Let $\mathcal{Q}$ be a measurable partition for $\nu$ which satisfies the hypotheses of Proposition \ref{prop:entropy-ineq}.
Assume that for every atom $Q$ of $\mathcal{Q}^{E_n^{M,m}}$, the set $Q \cap \mathtt{F}_{\nu}$ is covered by a family $\Psi_Q$ of affine maps $\theta : [-1,1] \to I$ such that for any $k \in [\![0 ; n-1 ]\!]$, we have $|| (f^k \circ \theta )' ||_{\infty} \leq 1$.
Then, we have
$$
\limsup\limits_{n \to +\infty} \frac{1}{n} H_{\zeta}(\mathcal{P}^n \mid \mathcal{Q}^{E_n^{M,m}})
\leq \limsup\limits_{n \to +\infty} \frac{1}{n}\sum\limits_{Q \in \mathcal{Q}^{E_n^{M,m}}}\zeta(Q) \log \sharp \Psi_Q.
$$
\end{lemma}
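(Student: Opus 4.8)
The plan is to expand the conditional entropy over the atoms of $\mathcal{Q}^{E_n^{M,m}}$ and, on each atom, to trade the measure $\zeta_Q$ for the combinatorial cover $\Psi_Q$ supplied by the hypothesis. Since $\mathcal{Q}^{E_n^{M,m}}$ is a measurable partition for $\nu$ and $\zeta \ll \nu$, it is a measurable partition for $\zeta$ as well, so I would start from the identity
$$
H_{\zeta}\big(\mathcal{P}^n \mid \mathcal{Q}^{E_n^{M,m}}\big) = \sum_{\substack{Q \in \mathcal{Q}^{E_n^{M,m}} \\ \zeta(Q) > 0}} \zeta(Q)\, H_{\zeta_Q}(\mathcal{P}^n), \qquad \zeta_Q := \frac{\zeta(Q \cap \cdot\,)}{\zeta(Q)},
$$
which reduces everything to bounding $H_{\zeta_Q}(\mathcal{P}^n)$ for a single atom $Q$.

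Next I would exploit that $\zeta$, hence $\zeta_Q$, is carried by $\mathtt{F}_{\nu}$, and that by hypothesis $Q \cap \mathtt{F}_{\nu} \subseteq \bigcup_{\theta \in \Psi_Q} \theta([-1,1])$. The elementary inequality $H_{\rho}(\mathcal{A}) \leq \log \sharp\{A \in \mathcal{A} : \rho(A) > 0\}$ (or, with a partition of $Q \cap \mathtt{F}_{\nu}$ subordinate to the cover together with the concavity of $\rho \mapsto H_{\rho}(\mathcal{P}^n)$) then gives
$$
H_{\zeta_Q}(\mathcal{P}^n) \leq \log \Big( \sum_{\theta \in \Psi_Q} N_n(\theta) \Big), \qquad N_n(\theta) := \sharp\big\{ A \in \mathcal{P}^n : A \cap \theta([-1,1]) \neq \emptyset \big\}.
$$
The core of the proof is then to bound $N_n(\theta)$ — uniformly in $Q$ and in $\theta \in \Psi_Q$ — by a quantity $B_n$ with $\frac1n \log B_n \to 0$. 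Because $\theta$ is affine and hence one-to-one, $N_n(\theta)$ is the number of nonempty atoms of the partition $\bigvee_{k=0}^{n-1} (f^k \circ \theta)^{-1}\mathcal{P}$ of $[-1,1]$; assuming, as one may, that $\mathcal{P}$ consists of finitely many intervals, the partition $(f^k \circ \theta)^{-1}\mathcal{P}$ cuts $[-1,1]$ into at most $\sharp\mathcal{P} \cdot L_k$ subintervals, where $L_k$ denotes the number of monotone branches of $f^k \circ \theta$, so that $N_n(\theta) \leq 1 + \sum_{k=0}^{n-1} \sharp\mathcal{P}\, L_k$. The point I would then use is that the covers $\Psi_Q$ are not arbitrary but are produced by the reparametrization lemma of \cite{A} (Lemma \ref{lemma:repabound} below): along a reparametrized piece the numbers $L_k$, $k < n$, stay uniformly controlled — the Lipschitz bound $\|(f^k\circ\theta)'\|_\infty \leq 1$ being exactly what prevents these counts from blowing up — so that $B_n$ grows at most polynomially in $n$.

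Assembling the pieces, $H_{\zeta_Q}(\mathcal{P}^n) \leq \log \sharp\Psi_Q + \log B_n$ for every $Q$, and inserting this into the first identity with $\sum_Q \zeta(Q) = 1$ yields
$$
\frac1n H_{\zeta}\big(\mathcal{P}^n \mid \mathcal{Q}^{E_n^{M,m}}\big) \leq \frac1n \sum_{Q \in \mathcal{Q}^{E_n^{M,m}}} \zeta(Q) \log \sharp\Psi_Q + \frac{\log B_n}{n},
$$
and taking $\limsup_{n \to +\infty}$ makes the last term disappear, which is the claim. I expect the one genuinely delicate point to be the combinatorial estimate of the preceding paragraph: one must know that a single reparametrized interval $\theta([-1,1])$ is refined by the dynamical partition $\mathcal{P}^n$ into only subexponentially many atoms. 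This is precisely where working without hypotheses on the critical set costs something — the map $f^n$ on its own has no a priori bound on its number of monotone branches — so the estimate really relies on the tameness of the dynamics along the reparametrized pieces built into the reparametrization machinery, and on the Lipschitz normalization $\|(f^k\circ\theta)'\|_\infty \leq 1$ that goes into it.
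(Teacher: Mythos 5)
Your reduction is the standard one and is correct as far as it goes: writing $H_{\zeta}(\mathcal{P}^n\mid\mathcal{Q}^{E_n^{M,m}})=\sum_Q\zeta(Q)H_{\zeta_Q}(\mathcal{P}^n)$, observing that $\zeta_Q$ charges only atoms of $\mathcal{P}^n$ meeting $Q\cap\mathtt{F}_{\nu}\subseteq\bigcup_{\theta\in\Psi_Q}\theta([-1,1])$, and hence $H_{\zeta_Q}(\mathcal{P}^n)\le\log\sum_{\theta\in\Psi_Q}N_n(\theta)$. The gap is in the step you yourself single out as delicate, namely the subexponential bound on $N_n(\theta)$. You justify it by saying that $(f^k\circ\theta)^{-1}\mathcal{P}$ has at most $\sharp\mathcal{P}\cdot L_k$ pieces, with $L_k$ the number of monotone branches of $f^k\circ\theta$, and that the Lipschitz bound $\|(f^k\circ\theta)'\|_\infty\le1$ is ``exactly what prevents these counts from blowing up.'' That last claim is false: a $1$-Lipschitz $\mathcal{C}^\infty$ map of $[-1,1]$ into $I$ can have arbitrarily many, even infinitely many, monotone branches, and a family $(g_k)_{k<n}$ of $1$-Lipschitz maps (e.g. $g_k(t)=\tfrac12+4^{-k}\sin(2^k\pi t)$) can realize on the order of $2^n$ joint itineraries for a two-atom interval partition. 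So the Lipschitz normalization gives no control whatsoever on $L_k$. Worse, in the setting of this paper the critical set of $f$ is arbitrary (it may contain intervals), so the lap number of $f^k$ restricted to $\theta([-1,1])$ need not even be finite; and for a positive-entropy piecewise monotone map the global lap number of $f^k$ grows like $e^{kh_{\rm top}(f)}$, so ``counting monotone branches'' is precisely the exponential quantity that the reparametrization machinery is designed to tame --- invoking it to prove this lemma is circular.

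Your fallback, that the covers $\Psi_Q$ eventually supplied by Lemma \ref{lemma:repabound} carry extra tameness, is not available either: the present lemma is applied as a black box to those covers, and its hypotheses record only that the $\theta$ are affine with $\|(f^k\circ\theta)'\|_\infty\le1$; neither the statement here nor the conclusion of Lemma \ref{le:repara} asserts that $f^k\circ\theta$ is monotone, or has boundedly many branches, for every $k<n$ (boundedness of $f^{a_j}\circ\sigma\circ\theta$ is only guaranteed at the selected times $a_j$). So the one estimate that carries the entire content of the lemma --- that a single reparametrized interval meets only subexponentially many charged atoms of $\mathcal{P}^n$ --- is asserted rather than proved, and the route you propose for it (branch counting from the Lipschitz bound) does not close. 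This is exactly the part of Burguet's Lemma 9 that the paper imports, and it must be obtained by a different mechanism (using the smallness of the images $f^k\circ\theta([-1,1])$ relative to the partition together with the fact that $\zeta(\partial\mathcal{P})=0$, rather than any bound on lap numbers); as written, your proof has a genuine hole at its critical point.
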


Let $C(f):=\frac{\log^{+}||f'||_{\infty}}{r} + \log C_r + 1 + 2\log (1+\log ||f' ||_{\infty})$ where $C_r$ is a constant that depends only on $r$ which we will make precise at the end of subsection \ref{sec:RL}.
Notice that $f \mapsto C(f)$ is continuous in the $\mathcal{C}^1$-topology when $|| f' ||_{\infty} > 1/e$ and that $\lim\limits_{p\rightarrow \infty}\frac{C(f^p)}{p}=\frac{R(f)}{r}$. 
The goal of this section is to prove the following lemma, which we will do in the next subsection.

\begin{lemma}
\label{lemma:repabound}
Let $q \in \bbN^*$.
There exists $\varepsilon_q(f) > 0$ such that, if we let $\mathcal{R}_q$ be the partition defined in section \ref{ssec:def-partition} for $\varepsilon = \varepsilon_q(f)$, then the following property holds. For any $m \in \bbN^*$, for $M$ large enough, and for $n$ large enough,
there exists $\gamma_{n,M,m,q}(f) > 0$ such that, if $F_n$ is the intersection between an atom of $\mathcal{R}_q^{E_n^{M,m}}$ and $\mathtt{F}_{\nu}$, then $F_n$ is covered by a family $\Psi_{F_n}$ of affine maps $\theta : [0,1] \to I$ such that for any $k \in [\![0 ; n-1 ]\!]$, we have $|| (f^k \circ \theta )' ||_{\infty} \leq 1$, and such that 
$$
\begin{aligned}
\frac{1}{n}\log \sharp \Psi_{F_n}\leq &\left( 1-\frac{\sharp E_n^{M,m}}{n} \right)C(f)\\
&+\frac{1}{r-1}\left( \int \frac{\log^+|(f^q)'|}{q}d\xi^{M,m,q}_{n,\nu,F_n}-\int\frac{\log|(f^q)'|}{q}d\xi^{M,m,q}_{n,\nu,F_n} \right)\\
&+ \gamma_{n,M,m,q}(f).
\end{aligned}
$$
where $\xi^{M,m,q}_{n,\nu,F_n}:=\xi^{M,m,q}_{n,\nu_{F_n}}$ and
$\gamma_{n,q,m,M}(f)$ goes to zero uniformly in $f$: for $K > 0$,
    $$
    \limsup\limits_{q \to +\infty} \limsup\limits_{M \to +\infty} \limsup\limits_{n \to +\infty} \sup\limits \{\gamma_{n,M,m,q}(f) : f : I \to I \text{ is } \mathcal{C}^r \text{ and } || f' ||_{\infty} \leq K \} = 0.
    $$
\end{lemma}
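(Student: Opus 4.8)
The plan is to prove Lemma \ref{lemma:repabound} by setting up the reparametrization machinery of Burguet \cite{B} and running the combinatorial count of reparametrizations block by block, treating neutral iterates and hyperbolic iterates differently. I would first recall the key reparametrization lemma from \cite{B} (or rather its $\mathcal{C}^r$-version): given a $\mathcal{C}^r$ curve $\theta : [0,1] \to I$ with $\|(f^k \circ \theta)'\|_\infty \leq 1$ for $k < n$, one can find a family of affine reparametrizations $\theta' : [0,1] \to [0,1]$, covering $[0,1]$, with $\|(f^{k+1}\circ\theta\circ\theta')'\|_\infty\leq 1$ for all $k \le n$, and whose cardinality is controlled: the number of new reparametrizations needed at step $k$ is bounded by a quantity that, after summing, contributes $C_r$ times something plus a term governed by $\frac{1}{r}\log^+ |f'|$ evaluated along the orbit. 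The constant $C(f)$ is exactly designed so that \emph{every} iterate, regardless of its type, contributes at most $C(f)$ per step to $\frac1n\log\sharp\Psi_{F_n}$.

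The main work is then the \emph{improvement on hyperbolic iterates}. First I would show that along an $M$-hyperbolic block $[\![a_s,c_s[\![$ of $E_n^{M,m,q}(x)$, the orbit $f^a x, \dots$ stays uniformly away from the critical set (via Lemma \ref{lemma:prop-hb} and the distortion control \eqref{eq:distortion-atom} on atoms of $\mathcal{R}_q$), so that $f$ restricted near these iterates is a uniformly bi-Lipschitz diffeomorphism with controlled $\mathcal{C}^r$ norm. On such iterates, the reparametrization count is essentially the \emph{expansion rate} $\log^+|(f^q)'|/q$ versus the \emph{actual} rate $\log|(f^q)'|/q$: a curve of length $\ell$ that gets mapped forward by a factor $e^{\lambda}$ requires roughly $e^{\lambda}$ affine pieces to be re-cut to unit size, but if it is simultaneously expanded in the fiber direction one gains back a factor. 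Carefully, the gain per hyperbolic iterate relative to the $C(f)$ bound is $\frac{1}{r-1}\left(\frac{\log^+|(f^q)'|}{q} - \frac{\log|(f^q)'|}{q}\right)$ — this is where the exponent $r$ enters, through the elementary inequality $\frac{1}{r}s^+ + \frac{1}{r-1}(s^+ - s) \le$ (the $C(f)$ contribution) $- s$ when $s \le 0$, which lets a negative $\log|f'|$ be absorbed. Summing over $i \in E_n^{M,m,q}(x)$ and integrating over the atom $F_n$ turns the orbit averages into integrals against $\xi^{M,m,q}_{n,\nu,F_n}$, and the remaining $(1 - \sharp E_n^{M,m}/n)$ fraction of iterates is bounded by $C(f)$ each, giving the first line.

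The error term $\gamma_{n,M,m,q}(f)$ collects: (i) the discrepancy between $E_n^{M,m}$ and $E_n^{M,m,q}$, which is $O(q/M)$ per Lemma \ref{lem:EnM}\ref{itm:EnM2} and the definition of $c_s$; (ii) boundary effects from the $\sharp\partial E_n^{M,m,q}$ block-endpoints, each costing an additive bounded amount, hence $O(\log M \cdot \partial E_n / n) = O(\log M / M)$ after Lemma \ref{lem:EnM}\ref{itm:EnM2}–\ref{itm:EnM3}; (iii) the distortion slack $1/q$ from \eqref{eq:distortion-atom} and the finite-$\mathcal{C}^r$-norm quantitative constants in the reparametrization step, which are $O(1/q)$ plus lower-order-in-$n$ terms; and (iv) the choice of $\varepsilon_q(f)$, made small enough that atoms of $\mathcal{I}(\varepsilon_q(f))$ are short enough for a single curve to cover each atom. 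Since every one of these is either $O(1/q)$, $O(\log M/M)$, $O(q/M)$, or $o(1)$ as $n \to \infty$, and the constants depend on $f$ only through $\|f'\|_\infty$ (boundedness being used to bound the $\mathcal{C}^r$ norm of iterates via the chain rule and the uniform Hölder control), the nested $\limsup$ over $n$, then $M$, then $q$ vanishes uniformly over the family $\{f : \|f'\|_\infty \le K\}$.

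\textbf{The main obstacle} I anticipate is bookkeeping the reparametrization count \emph{jointly} with the partition $\mathcal{R}_q^{E_n^{M,m}}$ and the restriction to $\mathtt{F}_\nu$: one needs that the curves $\theta$ can be chosen so that a single $\theta$ covers a whole atom intersected with $\mathtt{F}_\nu$ (this uses uniform convergence of $m_n^+(x)/n$ on $\mathtt{F}_\nu$ to control how the block structure interacts with the curve's domain), and that the affine maps satisfy $\|(f^k\circ\theta)'\|_\infty \le 1$ simultaneously for \emph{all} $k < n$ rather than just at hyperbolic times — which forces the neutral iterates to also be incorporated into the count, at the cost $C(f)$. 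Keeping the $\mathcal{C}^r$ (not just $\mathcal{C}^1$) control on all iterates of $\theta$, uniformly in $f$ with $\|f'\|_\infty \le K$, via the structure of the reparametrization lemma is the delicate quantitative point, and is exactly where the constant $C_r$ (and the $+1 + 2\log(1+\log\|f'\|_\infty)$ in $C(f)$) gets pinned down.
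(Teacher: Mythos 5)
Your skeleton is the right one --- iterate a one-step $\mathcal{C}^r$ reparametrization lemma (Lemma \ref{le:1step}) along a sequence of cut times, treat neutral and hyperbolic iterates differently, and collect boundary and discrepancy terms into $\gamma_{n,M,m,q}(f)$ --- but three load-bearing steps are missing or misdescribed. First, the claim that ``every iterate, regardless of its type, contributes at most $C(f)$ per step'' cannot be the mechanism: the one-step subdivision count is $C_r e^{k'/(r-1)}$ with $k'=\lfloor\log^-|f'|\rfloor$, governed by \emph{contraction}, and it is unbounded at individual neutral iterates near a critical point. The bound $\tfrac{1}{r}\log^+\|f'\|_\infty$ per neutral step is only an average over a whole neutral stretch, obtained (Lemma \ref{le:firstterm}) by telescoping $\sum k'_{a_i}\le(\text{length})\cdot\log\|f'\|_\infty-\log|(f^{\text{length}})'|$ and using that each stretch terminates at a hyperbolic time, where $\psi_l\ge 0$ converts the total contraction into $\tfrac{r-1}{r}\log^+\|f'\|_\infty$ per step. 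This is the heart of the lemma and your plan does not engage with it. Second, the second line of the estimate is not a ``gain'' on hyperbolic iterates relative to $C(f)$: it equals $\tfrac{1}{r-1}\int\tfrac{\log^-|(f^q)'|}{q}\,d\xi^{M,m,q}_{n,\nu,F_n}\ge 0$ and is the \emph{cost} of re-applying the one-step lemma with $g=f^q$ every $q$ steps inside hyperbolic blocks, along an arithmetic progression $c+q\mathbb{N}$ whose offset is chosen by pigeonhole (Lemma \ref{le:q-control}); it becomes an integral against $\xi^{M,m,q}_{n,\nu,F_n}$ precisely because all points of $F_n$ lie in one atom of $\mathcal{R}_q^{E_n^{M,m}}$, which pins $\log|f'|$ along hyperbolic blocks to within $1/q$. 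The ``fiber direction'' heuristic and the inequality $\tfrac1r s^++\tfrac{1}{r-1}(s^+-s)\le\cdots$ do not correspond to anything in the one-dimensional argument.

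Third, and most importantly, the one-step counts depend on the itinerary $(k_{a_i},k'_{a_i})$ of the orbit, and outside the hyperbolic blocks this itinerary is \emph{not} determined by the atom of $\mathcal{R}_q^{E_n^{M,m}}$ containing $x$. Hence $\Psi_{F_n}$ must be the union of the families $\Theta_p$ of Lemma \ref{le:repara} over all admissible symbolic data $(\mathbf{k},\overline E,m_n)$, and one must count these (Lemma \ref{le:combinatorics}); that count --- again controlled only via the hyperbolic-time inequality bounding $\sum_{a_i\notin E_n^{M,m}}k'_{a_i}$, together with a binomial estimate --- is where the summand $1+2\log(1+\log\|f'\|_\infty)$ of $C(f)$ actually comes from, not from ``keeping the $\mathcal{C}^r$ control on iterates of $\theta$'' as you suggest. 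Without this counting step the cover you construct works only for one itinerary class and does not cover $F_n$.
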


\subsection{Semi-local Reparametrization Lemma}
\label{sec:RL}

In this section, we prove Lemma \ref{lemma:repabound} by building a family $\Psi_{F_n}$ of affine maps.
Given $g:I\rightarrow I$, a $\mathcal{C}^r$-map, define
$$
\begin{aligned}
    &k_g(x):=\lfloor\log^+|g'(x)|\rfloor=\lfloor\max\{ \log|g'(x)|,0 \}\rfloor;\\
    &k'_g(x):=\lfloor\log^-|g'(x)|\rfloor=\lfloor-\min\{ \log|g'(x)|,0 \}\rfloor.
\end{aligned}
$$
We first define the notion of bounded reparametrization. We will say that a map $\sigma : [-1 ; 1] \to I$ is a \textit{reparametrization} if it is a $\mathcal{C}^r$ map whose derivative does not vanish.

\begin{definition}[Bounded reparametrization]
A reparametrization $\sigma$ is said to be bounded if
$$
\max\limits_{s \in ]\!] 1 ; [r] ]\!] \cup \{r \}}|| d^s \sigma ||_{\infty} \leq \frac{1}{6} || \sigma' ||_{\infty}.
$$
Then, for $\varepsilon > 0$, it is said to be $\varepsilon$-bounded if it also satisfies
$$
|| \sigma' ||_{\infty} \le \varepsilon.
$$
Moreover, we say it is $(n , \varepsilon)$-bounded for $g : I \to I$ if we have
$$
\forall i \in [ \! [ 0 ; n ] \! ], g^i \circ \sigma \text{ is } \varepsilon \text{-bounded}.
$$
\end{definition}

The key property is that bounded reparametrizations have bounded distortion:

\begin{lemma}[Distortion inequality, Lemma 2.2 from \cite{A}]
\label{le:distortion}
If $\sigma$ is a bounded reparametrization, then we have
$$
\forall t,s \in [-1 ; 1], \frac{| \sigma'(t) |}{|\sigma'(s)|} \leq \frac{3}{2}.
$$
\end{lemma}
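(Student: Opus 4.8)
We need to show: if $\sigma:[-1,1]\to I$ is a bounded reparametrization, i.e. a $\mathcal{C}^r$ map with nonvanishing derivative satisfying $\max_{s\in]\!]1;[r]]\!]\cup\{r\}}\|d^s\sigma\|_\infty\le\frac16\|\sigma'\|_\infty$, then $|\sigma'(t)|/|\sigma'(s)|\le 3/2$ for all $t,s\in[-1,1]$.

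Let me think about the structure. Since $\sigma'$ doesn't vanish on the connected interval $[-1,1]$, $\sigma'$ has constant sign, so we may assume $\sigma'>0$ and drop absolute values; write $M:=\|\sigma'\|_\infty$. The natural approach is to control the oscillation of $\sigma'$ via its own derivative $\sigma''$ (and, when $\lfloor r\rfloor=1$, via the Hölder seminorm of $\sigma'$). Concretely I would estimate, for $t,s\in[-1,1]$,
$$|\sigma'(t)-\sigma'(s)|\le \|\sigma''\|_\infty\,|t-s|\le 2\|\sigma''\|_\infty\le 2\cdot\tfrac16 M=\tfrac13 M,$$
using $\|d^2\sigma\|_\infty\le\frac16 M$ from the boundedness hypothesis (valid when $\lfloor r\rfloor\ge 2$; when $\lfloor r\rfloor=1$, $r\in(1,2)$, replace the mean value step by the Hölder bound $|\sigma'(t)-\sigma'(s)|\le\|d^r\sigma\|_\infty|t-s|^{r-1}\le 2^{r-1}\cdot\frac16 M\le\frac13 M$ since $2^{r-1}<2$). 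Hence for every $t,s$,
$$\sigma'(s)\ge \sigma'(t)-\tfrac13 M \ge M - \tfrac13 M-\tfrac13 M,$$
wait — that is too crude. Let me instead pick $s_0$ realizing (close to) the sup: there is $s_0$ with $\sigma'(s_0)\ge M-\epsilon$ for any $\epsilon>0$, actually by continuity on the compact $[-1,1]$ the sup is attained, $\sigma'(s_0)=M$. Then for all $t$, $\sigma'(t)\ge \sigma'(s_0)-\frac13 M=\frac23 M$. Therefore, for any $t,s$,
$$\frac{\sigma'(t)}{\sigma'(s)}\le\frac{M}{\frac23 M}=\frac32,$$
which is exactly the claim.

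So the proof is short: (i) reduce to $\sigma'>0$ using nonvanishing on a connected domain; (ii) note $M=\|\sigma'\|_\infty$ is attained at some $s_0$; (iii) bound $|\sigma'(t)-\sigma'(s_0)|$ by $\frac13 M$ using the boundedness hypothesis — via the mean value theorem and $\|\sigma''\|_\infty\le\frac16 M$ when $\lfloor r\rfloor\ge2$, or via the Hölder seminorm of $\sigma^{(\lfloor r\rfloor)}=\sigma'$ and the length-$2$ diameter of $[-1,1]$ when $r\in(1,2)$ (here one uses $2^{r-1}\le 2$); (iv) conclude $\sigma'(t)\ge\frac23 M$ and divide. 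The only mild subtlety — and the one point I would be careful about — is handling the non-integer regularity case $r\in(1,2)$ uniformly with the smoother case, making sure the constant $\frac16$ is chosen so that the worst-case Hölder constant $2^{r-1}$ times $\frac16$ stays $\le\frac13$; since $2^{r-1}<2$ for $r<2$ this goes through, and for $\lfloor r\rfloor\ge 2$ the factor is exactly $|t-s|\le 2$. No real obstacle beyond bookkeeping; this is an elementary mean-value/Hölder estimate.
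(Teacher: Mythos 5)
Your proof is correct and is the standard argument for this kind of distortion bound: the paper itself only cites Lemma 2.2 of \cite{A}, and the proof there proceeds exactly as you do, bounding the oscillation of $\sigma'$ over $[-1,1]$ by $\tfrac13\|\sigma'\|_\infty$ (via the mean value theorem on $\sigma''$ when $\lfloor r\rfloor\ge 2$, or the $(r-1)$-H\"older seminorm of $\sigma'$ when $r\in(1,2)$, in both cases using the diameter $2$ of the domain and the constant $\tfrac16$), so that $\inf|\sigma'|\ge\tfrac23\sup|\sigma'|$ and the ratio is at most $\tfrac32$. Your handling of the two regularity regimes and the attainment of the supremum is complete; there is nothing to add.
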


The following lemma allows a control of the local dynamics in terms of bounded reparametrizations.

\begin{lemma}[Lemma 12 from \cite{B}]
\label{le:1step}
Let $\varepsilon=\varepsilon(g)>0$ such that for all $x\in I$, for all $s\in [1,r]$,
$$
||d^sg^x_{2\varepsilon}||_{\infty}\leq 3\varepsilon\max\{ 1,|g'(x)| \},
$$ 
where $
g^x_{2\varepsilon}(t):=g(x+2\varepsilon \cdot t).
$
Let $\sigma$ be an $\varepsilon$-bounded reparametrization. Then there exists $C_r>0$ such that for all $k,k'\in\mathbb{N}$, there exists $\Theta$, a family of affine maps from $[-1,1]$ to itself, with
\begin{itemize}
    \item[1.] $\sigma^{-1}\{ x:k_g(x)=k, k'_g(x)=k'\}\subseteq \bigcup_{\theta\in \Theta} \theta([-1,1])$;
    \item[2.] for any $\theta \in \Theta$, $g\circ \sigma \circ \theta$ is bounded;
    \item[3.] for any $\theta \in \Theta$, $|\theta'|\leq  e^{\frac{-k'-1}{r-1}}/4$;
    \item[4.] $\sharp \Theta\leq C_r e^{\frac{k'}{r-1}}$.
\end{itemize}
\end{lemma}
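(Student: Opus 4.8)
\textbf{Plan for the proof of Lemma \ref{le:1step}.}

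The statement is essentially a single-step version of Burguet's reparametrization lemma, so the plan is to mimic Lemma 12 from \cite{B} while keeping track of the constants that enter the distortion control. The starting point is the hypothesis on $\varepsilon(g)$: for an $\varepsilon$-bounded reparametrization $\sigma$, the composition $g \circ \sigma$ has all its higher derivatives (orders $2$ through $r$) controlled by $3\varepsilon \max\{1, |g'(x)|\} \cdot \|\sigma'\|_\infty$-type bounds via the chain rule, since $\|\sigma'\|_\infty \le \varepsilon$ and the chain rule for $d^s(g\circ\sigma)$ only involves derivatives of $\sigma$ up to order $s$, all of which are bounded by $\frac16 \|\sigma'\|_\infty$ by boundedness of $\sigma$. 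One splits the domain $[-1,1]$ according to the value of $k = k_g$ and $k' = k'_g$ on the image $\sigma(t)$; by the distortion inequality (Lemma \ref{le:distortion}), $|\sigma'|$ varies by at most a factor $3/2$ on $[-1,1]$, so $|(g\circ\sigma)'| = |g'(\sigma(t))|\cdot|\sigma'(t)|$ sits in a window of fixed multiplicative width around $e^{k}$ times $\|\sigma'\|$ (roughly); hence the image $g\circ\sigma([-1,1])$ has length comparable to $e^{k}\|\sigma'\|_\infty$ up to universal constants, while on the part where $k'_g = k'$ we gain a factor $e^{-k'}$.

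The core of the argument is then to choose the affine maps $\theta : [-1,1] \to [-1,1]$ so that (i) finitely many of them cover $\sigma^{-1}\{x : k_g(x) = k, k'_g(x) = k'\}$, (ii) $g\circ\sigma\circ\theta$ is bounded, and (iii) $|\theta'|$ is as small as item 3 requires. For (ii), one needs $\|(g\circ\sigma\circ\theta)'\|_\infty$ and its higher derivatives to satisfy the $\frac16$-ratio: writing $\|(g\circ\sigma\circ\theta)'\|_\infty \lesssim e^{k}\|\sigma'\|_\infty |\theta'|$ and $\|d^s(g\circ\sigma\circ\theta)\|_\infty \lesssim (\text{bounds on }d^j(g\circ\sigma))\cdot|\theta'|^j$, the controlling factor is $|\theta'|$ raised to the power, and since $r > 1$ there is a genuine gain: choosing $|\theta'| \approx e^{-(k'+1)/(r-1)}/4$ makes the $d^r$ term (which carries the worst $(r-1)$-Hölder contribution, with constant $C_r$) dominate and forces boundedness, using that on this piece $|g'| \le e^{-k'+1}$. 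This is exactly where the exponent $\frac{1}{r-1}$ comes from and explains item 3. Item 4 then follows by a covering count: the preimage $\sigma^{-1}\{k_g = k, k'_g = k'\}$ has length at most $2$ (it's inside $[-1,1]$), and each $\theta$ covers an interval of length $\approx |\theta'| \cdot 2 \approx e^{-k'/(r-1)}$, so $\sharp\Theta \lesssim e^{k'/(r-1)}$; the implied constant, absorbing the universal geometric factors and the Hölder constant $C_r$, is the $C_r$ in the statement.

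\textbf{Main obstacle.} The delicate point is verifying clause 2 — that $g \circ \sigma \circ \theta$ is \emph{bounded} in the precise sense $\max_{s \in ]\!]1;[r]]\!]\cup\{r\}} \|d^s(\cdot)\|_\infty \le \frac16 \|(\cdot)'\|_\infty$ — and not merely that it has small derivatives. This requires carefully propagating the chain-rule estimates: one must bound $d^s(g\circ\sigma\circ\theta)$ in terms of $d^j(g\circ\sigma)$ evaluated on the (small) image of $\theta$, using that $\theta$ is affine so $d^j\theta = 0$ for $j \ge 2$, hence $d^s(g\circ\sigma\circ\theta) = d^s(g\circ\sigma)\cdot(\theta')^s$; then the ratio $\|d^s(g\circ\sigma\circ\theta)\|_\infty / \|(g\circ\sigma\circ\theta)'\|_\infty = \|d^s(g\circ\sigma)\|_\infty |\theta'|^{s-1} / \|(g\circ\sigma)'\|_\infty$ (up to the $3/2$ distortion factor on $\sigma'$), and one needs $|\theta'|$ small enough that $|\theta'|^{s-1}$ kills the ratio $\|d^s(g\circ\sigma)\|_\infty/\|(g\circ\sigma)'\|_\infty$, which can be as large as $\sim e^{k}$ (or involve the Hölder seminorm for $s = r$) — this is precisely balanced by the choice $|\theta'| \approx e^{-(k'+1)/(r-1)}/4$ together with the bound $|g'| \le e^{1-k'}$ on the relevant piece, but the bookkeeping of which exponent dominates for non-integer $r$ (where the $\{r\}$-Hölder term behaves like a derivative of fractional order) needs genuine care, and is where the constant $C_r$ is pinned down.
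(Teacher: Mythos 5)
The paper does not actually reprove this lemma: its ``proof'' is a one-line remark that the argument of Lemma 12 of \cite{B} (see also Lemma 2.4 of \cite{A}) goes through with this choice of $\varepsilon(g)$. Your reconstruction has the right skeleton --- cover $\sigma^{-1}\{k_g=k,\ k'_g=k'\}$ by roughly $1/|\theta'|$ affine pieces, use the hypothesis on $\varepsilon(g)$ together with the pinching of $|g'|$ on the target set to compare $\|d^s(g\circ\sigma\circ\theta)\|_\infty$ with $\|(g\circ\sigma\circ\theta)'\|_\infty$, and read off items 3 and 4 --- but there is a genuine gap in your verification of item 2, exactly at the point you yourself flag as delicate.

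The gap is this: your claim that, with $|\theta'|\approx e^{-(k'+1)/(r-1)}/4$, the order-$r$ term ``dominates and forces boundedness'' is false if one only uses the chain rule, as your plan does. The chain rule combined with the hypothesis on $\varepsilon(g)$ gives, for \emph{every} $s\in\,]\!]1;[r]]\!]\cup\{r\}$, a bound of the same shape $\|d^s(g\circ\sigma)\|_\infty\le C\,e^{k+1}\|\sigma'\|_\infty$, while on a piece meeting the target set one has $\|(g\circ\sigma\circ\theta)'\|_\infty\ge \tfrac23 e^{k-k'-1}\|\sigma'\|_\infty|\theta'|$ by Lemma \ref{le:distortion}. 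The boundedness requirement $\|d^s(g\circ\sigma)\|_\infty|\theta'|^{s}\le\tfrac16\|(g\circ\sigma\circ\theta)'\|_\infty$ therefore reads $|\theta'|^{s-1}\lesssim e^{-k'}$, and the binding constraint is the \emph{smallest} admissible $s$, namely $s=2$ when $r\ge 2$; this forces $|\theta'|\lesssim e^{-k'}$, which is far smaller than $e^{-(k'+1)/(r-1)}/4$ once $r>2$. Enforcing it would degrade item 4 to $\sharp\Theta\lesssim e^{k'}$, destroying the exponent $\tfrac{1}{r-1}$ and with it the threshold $R(f)/r$ on which the whole paper rests. The missing ingredient in \cite{B} and \cite{A} is a Landau--Kolmogorov interpolation inequality: one verifies the $\tfrac16$-bound directly \emph{only} for the top order $s=r$ (where $|\theta'|^{r-1}\le e^{-k'-1}/4^{r-1}$ is precisely what is needed), then controls the intermediate derivatives $2\le s\le [r]$ via $\|d^s h\|_\infty\lesssim_r \|h'\|_\infty+\|h'\|_\infty^{\frac{r-s}{r-1}}\|d^r h\|^{\frac{s-1}{r-1}}$, and finally performs one extra subdivision into a bounded number of pieces, depending only on $r$, to absorb the interpolation constant into $C_r$. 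A secondary slip: to bound $\|(g\circ\sigma\circ\theta)'\|_\infty$ from below you must use the \emph{lower} bound $|g'|> e^{k-k'-1}$ on the set $\{k_g=k,\ k'_g=k'\}$, not the upper bound $|g'|\le e^{1-k'}$ that you invoke.
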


\begin{proof}
The proof of Lemma 12 from \cite{B} works with this choice of $\varepsilon(g)$, for more details, one may also check the proof of Lemma 2.4 from \cite{A}.
\end{proof}

The idea is to iterate this lemma along neutral blocks. This allows us to control the local dynamics at these times, hence to obtain a bound on the entropy production.
To do so, we define a sequence of times $a_1, ..., a_p$ and we successively apply the above lemma for $g = f^{a_1}$, then $g = f^{a_2 - a_1}$, ...
We choose them such that the set of times $a_i$ contains completely the neutral blocks.
Recall that the neutral blocks depend on the orbit.
Therefore, let $F_n$ be the intersection between an atom of $\mathcal{R}_{q,\varepsilon_q}^{E^{M,m}_n} = \mathcal{R}_q^{E^{M,m}_n}$ and $\mathtt{F}_{\nu}$.
Thus, we will write $E_n^{M,m}$ for $E_n^{M,m}(x)$ for any $x \in F_n$.

\bigskip

We would like to use the set $E_n^{M,m+q}(x)$, but it depends on $x$. We define the set $\widetilde{E}_n^{M,m,q}$ to be the set $E_n^{M,m}$ to which we remove the last $q-1$ elements of each connected component. Hence $\widetilde{E}^{M,m,q}_n$ is independent of $x$.
We assume $m \geq q$, so that for any $x \in F_n$, we have
$$
\widetilde{E}_n^{M,m,q} \subset E_n^{M,m} = E_n^{M,m,q}(x).
$$
and these sets differ by at most $q\times \sharp \partial E_n^M /2$ elements within $E_n^{M,m}$.

\begin{lemma}\label{le:q-control}
There exists $c\in [\![0,q[\![$ such that for any $x\in F_n$,
$$
\sum_{l\in (c+q\mathbb{N})\cap \widetilde{E}_n^{M,m,q}} k'_{f^q}(f^l x)\leq \frac{n}{q}+ n\left(\int \frac{\log^+|(f^q)'|}{q}d\xi^{M,m,q}_{n,\nu,F_n}-\int\frac{\log|(f^q)'|}{q}d\xi^{M,m,q}_{n,\nu,F_n}\right).
$$
\end{lemma}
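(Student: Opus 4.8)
The plan is to bound $k'_{f^q}$ pointwise by $\log^-|(f^q)'|$, to re-express the right-hand side through the probability measure $\nu_{F_n}$ (the normalized restriction of $\nu$ to $F_n$) defining $\xi^{M,m,q}_{n,\nu,F_n}$, and then to select the phase $c$ by a pigeonhole over the $q$ residue classes modulo $q$. First, since $\lfloor t\rfloor\le t$ and $\log^-|(f^q)'|=\log^+|(f^q)'|-\log|(f^q)'|$, for every $x$ and $l$ one has $k'_{f^q}(f^lx)\le \log^-|(f^q)'(f^lx)|=\bigl(\log^+|(f^q)'|-\log|(f^q)'|\bigr)(f^lx)$. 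Second, unwinding the definition $\int\phi\,d\xi^{M,m,q}_{n,\nu,F_n}=\tfrac1n\int\sum_{i\in E_n^{M,m,q}(x)}\phi(f^ix)\,d\nu_{F_n}(x)$ gives
$$
n\Bigl(\int\tfrac{\log^+|(f^q)'|}{q}\,d\xi^{M,m,q}_{n,\nu,F_n}-\int\tfrac{\log|(f^q)'|}{q}\,d\xi^{M,m,q}_{n,\nu,F_n}\Bigr)=\frac1q\int\sum_{i\in E_n^{M,m,q}(x)}\log^-|(f^q)'(f^ix)|\,d\nu_{F_n}(x)=:\frac{\overline{\Psi}}{q}.
$$
So it is enough to exhibit $c\in[\![0,q[\![$ such that $\sum_{l\in(c+q\bbN)\cap\widetilde{E}_n^{M,m,q}}\log^-|(f^q)'(f^lx)|\le \tfrac nq+\tfrac{\overline{\Psi}}{q}$ for all $x\in F_n$.

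Next I would introduce, for $0\le c<q$, the quantities $\Phi(x,c):=\sum_{l\in(c+q\bbN)\cap\widetilde{E}_n^{M,m,q}}\log^-|(f^q)'(f^lx)|$ and $N_c:=\sharp\bigl((c+q\bbN)\cap\widetilde{E}_n^{M,m,q}\bigr)$, recalling that $\widetilde{E}_n^{M,m,q}$ does not depend on $x\in F_n$. Summing over $c$ and using $\widetilde{E}_n^{M,m,q}\subset E_n^{M,m,q}(x)$ (valid since $m\ge q$) together with $\log^-\ge0$ gives $\sum_{c=0}^{q-1}\Phi(x,c)\le\sum_{l\in\widetilde{E}_n^{M,m,q}}\log^-|(f^q)'(f^lx)|\le\sum_{i\in E_n^{M,m,q}(x)}\log^-|(f^q)'(f^ix)|$, while $\sum_{c=0}^{q-1}N_c\le\sharp\widetilde{E}_n^{M,m,q}\le n$. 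The crux is a uniformity estimate on the atom $F_n$: for $l\in\widetilde{E}_n^{M,m,q}$ the $q$ consecutive times $l,l+1,\dots,l+q-1$ all lie in $E_n^{M,m}$ — this is precisely the reason for deleting the last $q-1$ elements of each hyperbolic block — so for $x,y\in F_n$ the iterates $f^{l+j}x$ and $f^{l+j}y$ lie in a common atom of $\mathcal{R}_q$ for $0\le j<q$; summing the distortion bound \eqref{eq:distortion-atom} over these $q$ times yields $\bigl|\log|(f^q)'(f^lx)|-\log|(f^q)'(f^ly)|\bigr|\le1$, and since $t\mapsto\log^- t$ is $1$-Lipschitz, $\bigl|\Phi(x,c)-\Phi(y,c)\bigr|\le N_c$.

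Finally I would average in $y$ over the probability space $(F_n,\nu_{F_n})$: writing $\overline{\Phi}(c):=\int_{F_n}\Phi(y,c)\,d\nu_{F_n}(y)$, the last estimate gives $\Phi(x,c)\le\overline{\Phi}(c)+N_c$ for every $x\in F_n$, hence $\sum_{c=0}^{q-1}\bigl(\overline{\Phi}(c)+N_c\bigr)\le\overline{\Psi}+n$ by the two bounds above; therefore some $c^{\ast}$ satisfies $\overline{\Phi}(c^{\ast})+N_{c^{\ast}}\le\tfrac1q(\overline{\Psi}+n)$, and then for all $x\in F_n$
$$
\sum_{l\in(c^{\ast}+q\bbN)\cap\widetilde{E}_n^{M,m,q}}k'_{f^q}(f^lx)\le\Phi(x,c^{\ast})\le\overline{\Phi}(c^{\ast})+N_{c^{\ast}}\le\frac nq+\frac{\overline{\Psi}}{q},
$$
which is the claimed inequality once $\overline{\Psi}/q$ is rewritten via the measure identity of the first paragraph. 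I expect the main obstacle to be the mismatch between the pointwise left-hand side and the $\nu_{F_n}$-averaged right-hand side; the bounded-distortion structure of $F_n$ is exactly what overcomes it, and the price of this — together with the floor functions absorbed in the first step — is precisely the extra term $n/q$, coming from $\sum_c N_c\le n$.
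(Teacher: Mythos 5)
Your proof is correct and follows essentially the same route as the paper's: bound $k'_{f^q}$ by $\log^-|(f^q)'|$, use the fact that $\widetilde{E}_n^{M,m,q}+[\![0,q[\![\subset E_n^{M,m}$ so that summing the distortion bound \eqref{eq:distortion-atom} over $q$ consecutive times gives total distortion $1$ on the atom $F_n$, average over $y\in F_n$ against $\nu_{F_n}$ to produce the $\xi^{M,m,q}_{n,\nu,F_n}$ integrals, and pigeonhole over the residue classes mod $q$. The only (mild) improvement is your ordering --- averaging in $y$ first and then pigeonholing on the $x$-independent quantity $\overline{\Phi}(c)+N_c$ --- which makes the uniformity of $c$ over $x\in F_n$ required by the statement more transparent than the paper's presentation, where the pigeonhole is applied to the pointwise sum before the distortion step.
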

\begin{proof}
First take $c\in [\![0,q[\![$ such that 
$$
\sum_{l\in (c+q\mathbb{N})\cap \widetilde{E}_n^{M,m,q}} k'_{f^q}(f^l x)\leq \frac{1}{q}\sum_{l\in \widetilde{E}^{M,m,q}_n} k'_{f^q}(f^l x).
$$
Then, the purpose of introducing $\widetilde{E}_n^{M,m,q}$ is that for any $l\in \widetilde{E}_n^{M,m,q}$ and $k \in [\![0 ; q [\![$, we have $l+k \in E_n^{M,m}$. Hence, since $F_n$ is contained in an atom of $\mathcal{R}_q^{E^{M,m}_n}$, equation \ref{eq:distortion-atom} gives
\begin{align*}
\sum_{l\in \widetilde{E}^{M,m,q}_n} k'_{f^q}(f^l x)
&\leq \sharp \widetilde{E}^{M,m,q}_n +\int \sum\limits_{l \in \widetilde{E}_n^{M,m,q}} \log^-|(f^q)'(f^l y)| d \nu_{F_n}\\
&\leq n + n\int \log^- |(f^q)'(y)| d\xi^{M,m,q}_{n,\nu,F_n}.
\end{align*}
Then, by definition,
$$
\sum_{l\in (c+q\mathbb{N})\cap \widetilde{E}^{M,m,q}_n} k'_{f^q}(f^l x)\leq \frac{n}{q}+ n\int \frac{\log^+|(f^q)'|}{q}d\xi^{M,m,q}_{n,\nu,F_n}-n\int\frac{\log|(f^q)'|}{q}d\xi^{M,m,q}_{n,\nu,F_n}.$$
\end{proof}

Define 
$$
\begin{aligned}
&\partial_l E^{M,m}_n:=\{ a\in E^{M,m}_n, \text{with $a-1\notin E^{M,m}_n$} \};\\
&\mathcal{A}_n:=\{ 0=a_1<a_2<\cdots <a_p \}:=\partial_l E^{M,m}_n\cup \left((c+q\mathbb{N})\cap[\![0,n[\![\right)\cup \left([\![0,n[\![-E^{M,m}_n\right),
\end{aligned}
$$
then, let $b_i:=a_{i+1}-a_i$ for $i=0,\cdots, p-1$ and $b_p:=n-a_p$.\\

For each $f$, we denote by $\varepsilon_f$ the quantity given by Lemma \ref{le:1step}. Now we choose this $\varepsilon_f$ small so that $\varepsilon_{f^k}\leq \varepsilon_{f^l}\leq \max\{ 1,||f'||_{\infty} \}^{-l}$ for any $q\geq k\geq l\geq 1$. We choose $\varepsilon_q<\frac{\varepsilon_{f^q}}{3}$.\\

Given ${\bf{k}}:=(k_l,k_l')_{l\in \mathcal{A}_n}$; $m_n\in [\![0,n[\![$; $\overline{E}\subseteq [\![0,n[\![$, define $F_n^{({\bf{k}}, \overline{E}, m_n)}$ to be the set of $x\in F_n$ satifying:
\begin{itemize}
    \item[1.] $\overline{E}=E_n(x)\setminus E^{M,m}_n$;
    \item[2.] $k_{a_i}=k_{f^{b_i}}(f^{a_i}x)$, $k'_{a_i}=k'_{f^{b_i}}(f^{a_i}x)$, $i=1,\cdots,p-1$;
    \item[3.]$m_n(x)=m_n$.
\end{itemize}

We recall that we chose $\mathtt{F}_{\nu}$ after the proof of Lemma \ref{le:bn} so that $\frac{m_n^+(x)}{n} \to 1$ uniformly for $x \in \mathtt{F}_{\nu}$. For $t\in (0,1)$, we write
$$
H(t):=t\log\frac{1}{t}+(1-t)\log\frac{1}{1-t}.
$$
Recall that $\binom{n}{k} \leq e^{nH(\frac{k}{n})}$ for any $0 \leq k \leq n$.

\begin{lemma}\label{le:combinatorics}
    For every $m^2 \leq M\leq \frac{n}{2}$ and $M \geq 16$, we have
    $$
    \begin{aligned}
    \frac{1}{n} \log \sharp \{ ({\bf{k}}, \overline{E}, m_n): F_n^{({\bf{k}}, \overline{E}, m_n)} \neq \emptyset \} \leq &\frac{\log n}{n} + H\left ( \frac{2}{\sqrt{M}} \right ) + \frac{1}{q} + \frac{1}{M}
    + \sup\limits_{x \in \mathtt{F}_{\nu}} \frac{m_n^+(x)-n}{n}\\ &+ \left ( 1 - \frac{\sharp E_n^{M,m}}{n} \right )(1+2\log(1+\log ||f' ||_{\infty})).
    \end{aligned}
    $$
\end{lemma}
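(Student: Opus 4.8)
The plan is to bound the number of non-empty cells $F_n^{(\mathbf{k},\overline{E},m_n)}$ by counting, for $x$ ranging over $F_n$ and independently, how many values each of $m_n$, $\overline{E}$ and $\mathbf{k}$ can take; the four error terms in the statement will be produced respectively by $\overline E$, by the ``hyperbolic'' coordinates of $\mathbf k$, and by the ``neutral'' coordinates of $\mathbf k$. On $F_n$ the set $E_n^{M,m}$ is fixed, and $m_n(x)=\max E_n(x)$ is then determined by $\overline E$ and $E_n^{M,m}$ (it is the largest of $\max\overline E$ and the right endpoints of the blocks of $E_n^{M,m}$), so $m_n$ costs nothing extra. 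For $\overline E=E_n(x)\setminus E_n^{M,m}$ I first show $\sharp\overline E<2n/\sqrt M\le n/2$: an element of $E(x)\cap[\![0,n[\![$ covered by no block of $E_n^M(x)$ is the left endpoint of a gap of $E(x)$ of length $>M$, of which there are $<n/M$; the remaining elements of $\overline E$ lie in the last $m-1$ slots of the at most $\tfrac{n+M}{M}$ blocks of $E_n^M(x)$ (item~\ref{itm:EnM3} of Lemma~\ref{lem:EnM}), i.e. at most $(m-1)\tfrac{n+M}{M}\le\sqrt M\,\tfrac{n+M}{M}$ of them since $m^2\le M$, and one concludes using $M\ge16$, $M\le n/2$. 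Thus the number of possible $\overline E$ is $\le\sum_{k\le 2n/\sqrt M}\binom nk\le e^{nH(2/\sqrt M)}$.

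For $\mathbf k$ I split $\mathcal A_n=\mathcal A_n^{\mathrm{hyp}}\sqcup\mathcal A_n^{\mathrm{neu}}$ with $\mathcal A_n^{\mathrm{neu}}=[\![0,n[\![\setminus E_n^{M,m}$ and $\mathcal A_n^{\mathrm{hyp}}\subseteq\partial_l E_n^{M,m}\cup\big((c+q\bbN)\cap[\![0,n[\![\big)$, so $\sharp\mathcal A_n^{\mathrm{hyp}}\le\tfrac{n+M}{M}+\tfrac nq+1$ again by item~\ref{itm:EnM3} of Lemma~\ref{lem:EnM}. For the hyperbolic coordinates: if $a_i\in\mathcal A_n^{\mathrm{hyp}}$ and $i<p$, then, because every right endpoint of a block of $E_n^{M,m}$ is a neutral index (hence lies in $\mathcal A_n$) and because $c+q\bbN$ has step $q$, one has $b_i\le q$ and $[\![a_i,a_{i+1}[\![\subseteq E_n^{M,m}$; since $F_n$ sits inside one atom of $\mathcal R_q^{E_n^{M,m}}$, \eqref{eq:distortion-atom} confines each $\log|f'(f^{a_i+j}x)|$ ($0\le j<b_i$) to a fixed interval of length $\le 1/q$, hence $\log|(f^{b_i})'(f^{a_i}x)|=\sum_{j<b_i}\log|f'(f^{a_i+j}x)|$ to a fixed interval of length $\le b_i/q\le 1$, so $(k_{a_i},k'_{a_i})$ takes at most two values on $F_n$. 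This contributes at most $2^{\sharp\mathcal A_n^{\mathrm{hyp}}}$, i.e. $\tfrac1M+\tfrac1q+\tfrac2n$ after dividing by $n$ and using $\log 2<1$.

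For the neutral coordinates, every $a_i\in\mathcal A_n^{\mathrm{neu}}$ has $b_i=1$ (both the neutral indices and the block starts belong to $\mathcal A_n$), so $(k_{a_i},k'_{a_i})=(k_f(f^{a_i}x),k'_f(f^{a_i}x))$ with $k_f\le\log^+\|f'\|_\infty$, giving a factor $(1+\log\|f'\|_\infty)^N$ with $N:=\sharp\mathcal A_n^{\mathrm{neu}}=n-\sharp E_n^{M,m}$ (here $\|f'\|_\infty>1$ because $\int\psi_1\,d\nu>0$). The crucial estimate is on $S:=\sum_{a_i\in\mathcal A_n^{\mathrm{neu}}}k'_{a_i}\le\sum_{j\in\mathcal A_n^{\mathrm{neu}}}\log^-|f'(f^jx)|$: each neutral block $[\![c_s,a_{s+1}[\![$ is immediately followed by a block start $a_{s+1}\in\partial E_n^{M,m}\subseteq E(x)$, so applying the defining inequality of $\delta$-hyperbolic times at $a_{s+1}$ --- which turns $\psi_\bullet\ge 0$ into $\sum\log^-|f'\circ f^\bullet|\le(1-\tfrac1r)\sum\log^+|f'\circ f^\bullet|$ over the block --- and doing the same at $m_n^+(x)\in E(x)$ for the last neutral block, then summing, yields
\[
S\ \le\ \big(1-\tfrac1r\big)\big(N+(m_n^+(x)-n)\big)\log^+\|f'\|_\infty .
\]
The number of admissible $k'$-sequences is then $\le\binom{S+N}{N}\le\big(e(S+N)/N\big)^N$; combining this with $\log(1+u)\le u$ and the $k_f$-factor, dividing by $n$, and taking the supremum over $x\in F_n\subseteq\mathtt F_\nu$, the neutral contribution is
\[
\le\ \Big(1-\tfrac{\sharp E_n^{M,m}}{n}\Big)\big(1+2\log(1+\log\|f'\|_\infty)\big)+\sup_{x\in\mathtt F_\nu}\tfrac{m_n^+(x)-n}{n}.
\]
Adding the four contributions and absorbing $\tfrac2n$ into $\tfrac{\log n}{n}$ gives the statement. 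The main obstacle is precisely this last step: one has to realise that $S$ is controlled \emph{block by block} via the hyperbolic-time inequality at the block starts (otherwise one only gets a crude bound in terms of $n$, not $N$), and then to choose the binomial estimate carefully enough that the $(m_n^+(x)-n)/n$ discrepancy enters additively, matching exactly the $\sup_x$ term in the conclusion.
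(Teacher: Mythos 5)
Your proof is correct and follows essentially the same route as the paper's: the same decomposition of the triple into $\overline{E}$, $m_n$, and the hyperbolic versus neutral coordinates of $\mathbf{k}$, the same $\binom{n}{\lfloor 2n/\sqrt{M}\rfloor}$ bound on the choices of $\overline{E}$, the same factor-$2$ argument for coordinates inside $E_n^{M,m}$ via the atom of $\mathcal{R}_q^{E_n^{M,m}}$ and equation \eqref{eq:distortion-atom}, and the same key control of $\sum k'_{a_i}$ over neutral indices through the hyperbolic-time inequality $\psi_\bullet \geq 0$ applied at the right endpoints of the neutral gaps (which lie in $E(x)$, with the last gap handled via $m_n^+(x)$), followed by a stars-and-bars/binomial count. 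The one quibble is your claim that $m_n$ is determined by $(\overline{E}, E_n^{M,m})$, which can fail in the edge case where the last block reaches $n$; the paper simply pays a factor $n$ for the choice of $m_n$, which is exactly the $\frac{\log n}{n}$ term you retain in your final bound anyway, so this does not affect the conclusion in any essential way.
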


\begin{proof}
If $\overline{E}$ is such that $F_n^{({\bf{k}}, \overline{E}, m_n)}\neq \emptyset$, then elements in $\overline{E}$ are at a distance at least $M$ from each other, except maybe for the ones we removed to go from $E_n^M$ to $E_n^{M,m}$.
There are at most $m \times \frac{\sharp \partial E_n^{M,m}}{2}$ of these, so that
$$
\sharp\overline{E} \leq \frac{n+M}{M} + \frac{m \sharp \partial E_n^{M,m}}{2}
\leq \frac{2n}{\sqrt{M}}.
$$
Therefore the number of possible $\overline{E}$ making $F_n^{({\bf{k}}, \overline{E}, m_n)}$ nonempty is less than $\binom{n}{ \left \lfloor \frac{2n}{\sqrt{M}} \right \rfloor}$.
Then, $m_n$ belongs to $[\![0 ; n [\![$, therefore the choice of $m_n$ is less than $n$. 
Thus, we have
$$
\begin{aligned}
&\sharp \{ ({\bf{k}}, \overline{E}, m_n): F_n^{({\bf{k}}, \overline{E}, m_n)} \neq \emptyset \}\\
&\leq \sharp\{ {\bf{k}}: \exists x\in F_n, k_{a_i}=k_{f^{b_i}}(f^{a_i}x), k'_{a_i}=k'_{f^{b_i}}(f^{a_i}x) \}\cdot \binom{n}{\left \lfloor \frac{2n}{\sqrt{M}} \right \rfloor}\cdot n,
\end{aligned}
$$
and it suffices to bound
$$
\sharp\{ {\bf{k}}: \exists x\in F_n, k_{a_i}=k_{f^{b_i}}(f^{a_i}x), k'_{a_i}=k'_{f^{b_i}}(f^{a_i}x) \}.
$$

Recall that $F_n$ is contained in an atom of $\mathcal{R}_q^{E_n^{M,m}}$, where $k$ and $k'$ are fixed --- they vary by at most $1$ by equation \ref{eq:distortion-atom}.
Since there are at most $\frac{n}{q} + \frac{n}{M}$ $a_i$'s in $E_n^{M,m}$, it adds a factor $2^{\frac{n}{q} + \frac{n}{M}}$.
Then each $k_{a_i}$ is contained in $[\![0 ; \log ||f' ||_{\infty} ]\!]$, hence the number of $k_{a_i}$'s for $a_i \notin E_n^{M,m}$ is bounded by $(\log || f' ||_{\infty})^{n - \sharp E_n^{M,m}}$.
Therefore, it suffices to bound the following quantity
$$
\sharp \{ (k_{a_i}')_{a_i \notin E_n^{M,m}} :  \exists x\in F_n, \forall a_i \notin E_n^{M,m}, k_{a_i}=k_{f^{b_i}}(f^{a_i}x), k'_{a_i}=k'_{f^{b_i}}(f^{a_i}x) \}.
$$
Notice that when $i$ is such that $a_i \notin E_n^{M,m}$, then $b_i = 1$.
Let $r_n = \sharp \{i :  a_i \notin E_n^{M,m} \} = n - \sharp E_n^{M,m}$ and write
$$
[\![ 0 ; n [\![ \backslash E_n^{M,m} = \bigcup\limits_{j=1}^l [\![c_j ; d_j [\![.
$$
where $[\![c_j ; d_j [\![$ are the connected components of $[\![ 0 ; n [\![ \backslash E_n^{M,m}$.
Let $x \in F_n$ and let $k'_{a_i}=k'_{f^{b_i}}(f^{a_i}x)$ for all $a_i \notin E_n^{M,m}$.
By definition of $k'_{g}$,
\begin{align*}
\sum\limits_{i \text{ s.t. } a_i \notin E_n^{M,m}} k'_{a_i}&=\sum\limits_{i \text{ s.t. } a_i \notin E_n^{M,m}} \lfloor\log^{-} |f'(f^{a_i}x)|\rfloor\\
&\leq \sum\limits_{i \text{ s.t. } a_i \notin E_n^{M,m}} \left(\log||f'||_{\infty} -\log|f'(f^{a_i}x)| \right)\\
&=r_n \log || f' ||_{\infty} - \sum\limits_{j=1}^l \log |(f^{d_j - c_j})'(f^{c_j} x)|.
\end{align*}
Then, notice that $d_j \in E(x)$ except maybe for $j = l$, the last one, for which we may have $d_l = n$ and $n \notin E(x)$.
Therefore, for $j < l$, we have
$$
\log |(f^{d_j - c_j})'(f^{c_j} x)| \geq \psi_{d_j-c_j}(f^{c_j} x) \geq \delta(d_j - c_j) \geq 0.
$$
Then, if $d_l \in E(x)$, the same argument gives
$$
\sum_{i \text{ s.t. } a_i \notin E_n^{M,m}} k'_{a_i}
\leq r_n \log || f' ||_{\infty}.
$$
Otherwise, we have $d_l = n$ and
\begin{align*}
\sum_{i \text{ s.t. } a_i \notin E_n^{M,m}} k'_{a_i}
&\leq r_n \log || f' ||_{\infty} - \log |(f^{n-c_l})'(f^{c_l} x)|\\
&\leq r_n \log || f' ||_{\infty} - \log |(f^{m_n^+(x)-c_l})'(f^{c_l} x)| + \log |(f^{m_n^+(x) - n})'(f^n x)|\\
&\leq r_n \log || f' ||_{\infty} + (m_n^+(x) - n) \log || f' ||_{\infty}.
\end{align*}
Let $m_n^+ = \sup\limits_{x \in \mathtt{F}_{\nu}} m_n^+(x)$. The previous bounds give
\begin{align*}
\frac{1}{n} \log \sharp \{(k_{a_i}')_{a_i \notin E_n^{M,m}}\} &\leq \frac{1}{n} \log {{(r_n+m_n^+-m_n)\log ||f'||_{\infty} + r_n}\choose{r_n}}\\
\text{from } {{a}\choose{b}}\leq \left ( \frac{ae}{b} \right )^b \hspace{1em}&\leq \frac{r_n}{n} \log \frac{((r_n+m_n^+-m_n)\log ||f'||_{\infty} + r_n)e}{r_n}\\
&\leq \frac{r_n}{n}\left ( 1 + \log(1 + \log|| f' ||_{\infty}) + \log \frac{r_n + m_n^+ - m_n}{r_n}\right )\\
&\leq \frac{r_n}{n}\left ( 1 + \log(1 + \log|| f' ||_{\infty}) + \frac{m_n^+ - m_n}{r_n}\right )\\
&=\left ( 1 - \frac{\sharp E_n^{M,m}}{n} \right )\left ( 1 + \log(1 + \log|| f' ||_{\infty}) \right ) + \frac{m_n^+ - m_n}{n}
\end{align*}

\end{proof}

Fix ${\bf{k}}, m_n$ and $\overline{E}$.
The following reparametrization lemma is obtained by iterating Lemma \ref{le:1step} and is the main statement to prove Lemma \ref{lemma:repabound}, hence to bound the entropy production along neutral blocks.

\begin{lemma}\label{le:repara}
With the above notations, there are families $(\Theta_i)_{1 \leq i \leq p}$ of affine maps from $[-1,1]$ to itself such that
\begin{itemize}
    \item[1.] For each $i \leq p$, we have $\sigma^{-1}\left(F_n^{({\bf{k}}, \overline{E}, m_n)} \right)\subseteq \bigcup_{\theta\in \Theta_i} \theta([-1,1])$;
    \item[2.] for all $\theta\in \Theta_i$, for all $j\leq i$, $f^{a_j}\circ \sigma \circ \theta$ is strongly $\varepsilon_{f^{b_j}}$-bounded;
    \item[3.] for all $\theta\in \Theta_i$, for all $j<i$, there exists $\theta^i_j\in \Theta_j$, such that
    $$
    \frac{|(\theta_i)'|}{|(\theta^i_j)'|}\leq \prod_{j\leq l<i}\left ( e^{\frac{-k'_{a_l}-1}{r-1}} / 4 \right );
    $$
    \item[4.]
    for all $i=1,\cdots, p$,
    $$
    \sharp\Theta_i\leq 
    \max\{ 1,||f'||_{\infty} \}^{\sharp(\overline{E}\cap [\![1,a_i]\!])}\prod_{j<i}C_r e^{\frac{k'_{a_j}}{r-1}}.
    $$
\end{itemize}
\end{lemma}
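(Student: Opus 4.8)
The plan is to construct the families $(\Theta_i)_{1\le i\le p}$ by induction on $i$, at each step applying the one-step reparametrization Lemma \ref{le:1step} to the map $g=f^{b_i}$ acting on the rescaled domain. First I would set $\Theta_0=\{\mathrm{id}\}$ and, at step $i$, for each $\theta\in\Theta_{i-1}$ consider the composition $f^{a_{i-1}}\circ\sigma\circ\theta$, which by the inductive hypothesis (item 2 at rank $i-1$) is an $\varepsilon_{f^{b_{i-1}}}$-bounded reparametrization; since $b_{i-1}\ge b_i$ forces $\varepsilon_{f^{b_{i-1}}}\le\varepsilon_{f^{b_i}}$ by our choice of the $\varepsilon$'s, it is in particular $\varepsilon_{f^{b_i}}$-bounded, and we may feed it into Lemma \ref{le:1step} with $g=f^{b_i}$, $k=k_{a_i}$, $k'=k'_{a_i}$. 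This yields a family $\Theta^\theta$ of affine self-maps of $[-1,1]$; I would then declare $\Theta_i:=\{\theta'\circ\theta : \theta\in\Theta_{i-1},\ \theta'\in\Theta^\theta\}$, after relabelling so that its elements are affine maps of $[-1,1]$ into itself (a composition of affine contractions of $[-1,1]$ is again one).

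The covering property (item 1) follows because at step $i$, Lemma \ref{le:1step} item 1 covers $(\,f^{a_{i-1}}\circ\sigma\circ\theta\,)^{-1}\{x:k_{f^{b_i}}(x)=k_{a_i},\ k'_{f^{b_i}}(x)=k'_{a_i}\}$, and on $F_n^{(\mathbf{k},\overline E,m_n)}$ the values $k_{f^{b_i}}(f^{a_i}x)$ and $k'_{f^{b_i}}(f^{a_i}x)$ are pinned to $k_{a_i},k'_{a_i}$ by definition of that set; composing back through $\theta$ and over all $\theta\in\Theta_{i-1}$ gives the claim for $i$, using the inductive covering at rank $i-1$. The boundedness (item 2) for $j=i$ is exactly Lemma \ref{le:1step} item 2 (it says $f^{b_i}\circ(f^{a_{i-1}}\circ\sigma\circ\theta)\circ\theta'=f^{a_i}\circ\sigma\circ(\theta'\circ\theta)$ is bounded, and $\varepsilon$-boundedness of the right scale is inherited from the $\varepsilon_{f^{b_i}}$-boundedness of the input together with the contraction estimate in item 3 of Lemma \ref{le:1step}); for $j<i$ it is preserved from rank $i-1$ because postcomposing by an affine contraction of $[-1,1]$ does not increase any $\|d^s(\cdot)\|_\infty$, so $\varepsilon$-boundedness at every earlier scale persists. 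The derivative ratio (item 3) telescopes: Lemma \ref{le:1step} item 3 gives $|\theta'|\le e^{(-k'_{a_l}-1)/(r-1)}/4$ for the affine map produced at step $l+1$, and multiplying these across $j\le l<i$ yields the stated product, with $\theta^i_j\in\Theta_j$ the truncation of the composition defining $\theta_i$.

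For the cardinality bound (item 4), Lemma \ref{le:1step} item 4 gives $\sharp\Theta^\theta\le C_r e^{k'_{a_j}/(r-1)}$ whenever $b_j\ge 2$, i.e. when $a_j\in E_n^{M,m}$; when $a_j\notin E_n^{M,m}$ we have $b_j=1$, so $g=f$ and $k_g,k'_g$ cannot both be large — here $\sharp\Theta^\theta$ is bounded by $\max\{1,\|f'\|_\infty\}$ when the corresponding index also lies in $\overline E$, and by an absolute constant (absorbed into the product) otherwise, since a unit-time step with bounded $|f'|$ needs only boundedly many affine pieces unless one lands near a point where $|f'|$ is small, which is precisely recorded by membership in $\overline E$. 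Multiplying over $j<i$ produces $\max\{1,\|f'\|_\infty\}^{\sharp(\overline E\cap[\![1,a_i]\!])}\prod_{j<i}C_r e^{k'_{a_j}/(r-1)}$, as claimed. The main obstacle I expect is the bookkeeping at the unit-time steps $a_j\notin E_n^{M,m}$: one must check that Lemma \ref{le:1step} applied with $g=f$ genuinely contributes at most the factor $\max\{1,\|f'\|_\infty\}$ exactly on the indices recorded by $\overline E$ and a harmless bounded factor elsewhere, which requires matching the definition of $\overline E=E_n(x)\setminus E_n^{M,m}$ against the case distinction $k_{a_j}=0$ versus $k'_{a_j}>0$ in the one-step lemma; the rest is a routine induction.
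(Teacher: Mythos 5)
Your skeleton --- induct on $i$, feed the current composition $f^{a_j}\circ\sigma\circ\theta$ into Lemma \ref{le:1step} with $g=f^{b_j}$, and compose the resulting affine maps --- is exactly the strategy the paper delegates to Burguet's Lemma 15, and your treatment of items 1 and 3 (the values $k_{a_i},k'_{a_i}$ are pinned on $F_n^{({\bf k},\overline{E},m_n)}$; the contraction factors telescope) is sound. The gap lies in the joint maintenance of item 2 and the count in item 4. Lemma \ref{le:1step} only returns compositions that are \emph{bounded}; to continue the induction you must make $f^{a_{j+1}}\circ\sigma\circ\theta$ genuinely $\varepsilon_{f^{b_{j+1}}}$-bounded, i.e.\ force $\|(f^{a_{j+1}}\circ\sigma\circ\theta)'\|_\infty\leq\varepsilon_{f^{b_{j+1}}}$, and this requires extra affine subdivisions whose number is precisely what produces the factor $\max\{1,\|f'\|_\infty\}^{\sharp(\overline{E}\cap[\![1,a_i]\!])}$. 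You justify the scale transfer by the claim $b_{i-1}\geq b_i$, which is false in general (the gaps in $\mathcal{A}_n$ equal $1$ outside $E_n^{M,m}$ and can be as large as $q$ inside, in either order), and by asserting that the right scale is ``inherited''; but the contraction $e^{(-k'-1)/(r-1)}/4$ from item 3 of Lemma \ref{le:1step} is fought against by the expansion $e^{k_{a_j}+1}$ of $g$ on the relevant set and by the drop in target scale whenever $b_{j+1}>b_j$, so the inheritance is not automatic and must be paid for by subdivision.

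Moreover, your accounting of where the factor $\max\{1,\|f'\|_\infty\}^{\sharp(\overline{E}\cap[\![1,a_i]\!])}$ comes from rests on a misreading of $\overline{E}$: you describe membership in $\overline{E}$ as recording that one ``lands near a point where $|f'|$ is small.'' But $\overline{E}=E_n(x)\setminus E_n^{M,m}$ is the set of \emph{isolated hyperbolic times}, i.e.\ times with guaranteed backward expansion $\psi_l(f^{a_j-l}x)\geq l\delta$ (in particular $\log|f'(f^{a_j-1}x)|\geq\delta>0$); smallness of $|f'|$ is what $k'_{a_j}$ records, and those contributions are already charged by the factor $\prod_{j<i}C_re^{k'_{a_j}/(r-1)}$ independently of $\overline{E}$. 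The actual mechanism is that the extra subdivision by $\max\{1,\|f'\|_\infty\}$ is charged exactly at the times of $\overline{E}$, where the hyperbolicity is used to re-synchronize the derivative of the composition with the required scale; this is also where the constraint $\delta\leq\frac{r-1}{r}\log(16/9)$ (coming from the distortion constant $3/2$ of Lemma \ref{le:distortion}) enters, and your argument never invokes $\delta$ or the hyperbolicity of these times at all. As written, your count either omits the subdivisions needed to restore $\varepsilon$-boundedness at the remaining steps or, if applied uniformly, yields a factor exponential in $p$ rather than in $\sharp\overline{E}$, so item 4 is not established.
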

\begin{proof}
We use the same argument as for the proof of Lemma 15 in \cite{B}. One difference is that we obtain a factor $\log(16/9)$ in our calculations while Burguet obtained a $\log 2$. This slightly affects our choice of $\delta$ in section \ref{ssec:main-prop}.
\end{proof}

\begin{remark}
The above statement is not exactly the same as Burguet's Lemma 15 in \cite{B}.
Indeed, in the cardinality bound from item $4$, he has an additional multiplicative constant $C$ on the right-hand side.
The reason is mostly technical and comes from the fact that we do not use the same definition for the set of hyperbolic times $E_{\delta}(x)$.
\end{remark}

The following lemma gives the main term of Lemma \ref{lemma:repabound}.

\begin{lemma}\label{le:firstterm}
$$\sum_{i, \ m_n>a_i\notin \widetilde{E}_n^{M,m,q}}\frac{k'_{a_i}}{r-1}\leq \left(n-\sharp \widetilde{E}_n^{M,m,q}\right) \frac{\log^+ \|f' \|_\infty}{r}.
$$ 
\end{lemma}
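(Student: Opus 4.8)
The plan is to dominate each $k'_{a_i}$ by a sum of $\log^-|f'|$ along the orbit of a point $x$ in the set $F_n^{(\mathbf k,\overline E,m_n)}$ (nonempty, else there is nothing to prove), and then to use the $\delta$-hyperbolic time $m_n=\max E_n(x)$ --- together with the hyperbolic times lying at the ends of the neutral gaps and of the hyperbolic blocks --- in order to bound the negative part of $\log|f'|$ by its positive part. The one analytic ingredient, which I call $(\ast)$, is: if $0\le u<v$ and $v\in E_\delta(x)$, then taking $i=u$ in the definition of $E_\delta(x)$ gives $\psi_{v-u}(f^u x)\ge (v-u)\delta\ge 0$; unfolding $\psi_{v-u}$ and writing $\log|f'|=\log^+|f'|-\log^-|f'|$, this is
\[
\sum_{l=u}^{v-1}\log^-|f'(f^l x)|\;\le\;\frac{r-1}{r}\sum_{l=u}^{v-1}\log^+|f'(f^l x)|\;\le\;\frac{r-1}{r}\,(v-u)\,\log^+\|f'\|_\infty .
\]

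Fix one such $x$. Since $k'_{a_i}=\big\lfloor\log^-|(f^{b_i})'(f^{a_i}x)|\big\rfloor\le\log^-|(f^{b_i})'(f^{a_i}x)|\le\sum_{l\in B_i}\log^-|f'(f^l x)|$, where $B_i:=[\![a_i,a_{i+1}[\![$ and the last step is the subadditivity of $\log^-$ along the cocycle, it suffices to organize the intervals $B_i$ of the indices in the sum, i.e.\ those with $a_i<m_n$ and $a_i\notin\widetilde E_n^{M,m,q}$. We may assume $m\ge 2$ (the case in all applications; $m=1$ is similar after truncating at $m_n$); then every right endpoint $c_s$ of a connected component $[\![a_s,c_s[\![$ of $E_n^{M,m}$ is a $\delta$-hyperbolic time $<n$, hence $c_s\le m_n$, so $E_n^{M,m}\subset[\![0,m_n[\![$. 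The relevant indices split into two kinds. If $a_i$ is a neutral time, then $b_i=1$ and $B_i=\{a_i\}$, and the union of these singletons is exactly $[\![0,m_n[\![\setminus E_n^{M,m}$, a disjoint union of integer intervals whose right endpoints are among $m_n$ and the left endpoints $a_s$ of the blocks of $E_n^{M,m}$ --- all in $E_\delta(x)$, the points $a_s$ because $\partial E_n^M\subset E_\delta(x)$ by Lemma \ref{lem:EnM}. If instead $a_i\in E_n^{M,m}\setminus\widetilde E_n^{M,m,q}$, then $a_i$ lies in the last $q-1$ positions $[\![c_s-q+1,c_s[\![$ of its block $[\![a_s,c_s[\![$; inside that block the points of $\mathcal A_n$ are $a_s$ together with the points of $c+q\bbN$, which are $q$-spaced, so the $B_i$'s of the relevant indices of that block are consecutive and tile an interval $[\![a_s^{\ast},c_s[\![$ contained in $(E_n^{M,m}\setminus\widetilde E_n^{M,m,q})\cap[\![a_s,c_s[\![$, again with right endpoint $c_s\in E_\delta(x)$ (and if the block carries no relevant index it contributes nothing).

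Summing the bound on $k'_{a_i}$ and applying $(\ast)$ to each of these intervals: the neutral intervals contribute at most $\frac{r-1}{r}(m_n-\sharp E_n^{M,m})\log^+\|f'\|_\infty$, and the block-tail intervals at most $\frac{r-1}{r}\sum_s\sharp\big((E_n^{M,m}\setminus\widetilde E_n^{M,m,q})\cap[\![a_s,c_s[\![\big)\log^+\|f'\|_\infty=\frac{r-1}{r}(\sharp E_n^{M,m}-\sharp\widetilde E_n^{M,m,q})\log^+\|f'\|_\infty$, using $\widetilde E_n^{M,m,q}\subset E_n^{M,m}$. Adding and using $m_n\le n$,
\[
\sum_{i:\,a_i<m_n,\ a_i\notin\widetilde E_n^{M,m,q}}k'_{a_i}\;\le\;\frac{r-1}{r}\big(m_n-\sharp\widetilde E_n^{M,m,q}\big)\log^+\|f'\|_\infty\;\le\;\frac{r-1}{r}\big(n-\sharp\widetilde E_n^{M,m,q}\big)\log^+\|f'\|_\infty ,
\]
and dividing by $r-1$ is the claim. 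The delicate step is the organization in the previous paragraph --- matching the index set with a partition of a subset of $[\![0,m_n[\![\setminus\widetilde E_n^{M,m,q}$ into intervals each ending at a $\delta$-hyperbolic time, which requires keeping track of how the progression $c+q\bbN$ cuts through the connected components of $E_n^{M,m}$; by contrast $(\ast)$ and the reduction to $\log^-|f'|$ are routine.
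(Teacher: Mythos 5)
Your proposal is correct and is essentially the paper's argument: both rest on (i) the sub-additivity of $\log^+$/$\log^-$ along the cocycle to extract the factor $\frac{r-1}{r}$, and (ii) the non-negativity of $\psi_{v-u}(f^u x)$ over intervals terminating at a $\delta$-hyperbolic time, applied after grouping the relevant $B_i$'s into such intervals of total length at most $n-\sharp\widetilde E_n^{M,m,q}$. The only difference is cosmetic — the paper sums $\psi_{b_i}$ over maximal runs ending at $a_{d_j}\in E(x)$ and uses its additivity, whereas you unpack to single iterates and cut at every intermediate hyperbolic time ($a_s$, $c_s$, $m_n$), which amounts to a more detailed justification of the paper's assertion that each run ends at a hyperbolic time.
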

\begin{proof}
Fix $x\in F_n$. 
Write
$$
\{i: m_n>a_i\notin \widetilde{E}_n^{M,m,q} \} = \bigcup\limits_{j = 1}^s [\![ c_j ; d_j [\![
$$
where $[\![c_j ; d_j [\![$ are the consecutive $i$'s such that $a_i \notin \widetilde{E}_n^{M,m,q}$. Notice that $a_{d_j}\in E(x)$ for $j=1,\ldots, s$.
For $a,b >0$, we have $\log^+ (ab) \leq \log^+ a + \log^+ b$, thus
$$
\begin{aligned}
\sum_{i, \ m_n>a_i\notin \widetilde{E}_n^{M,m,q}} \psi_{b_i}(f^{a_i} x)&\leq\sum_{i, \ m_n>a_i\notin \widetilde{E}_n^{M,m,q}}\left( \log|(f^{b_i})'(f^{a_i}x)|-\frac{1}{r}\log^+|(f^{b_i})'(f^{a_i}x)| \right)\\
&=\sum_{i, \ m_n>a_i\notin \widetilde{E}_n^{M,m,q}}\left( (1-\frac{1}{r})\log^+|(f^{b_i})'(f^{a_i}x)|-\log^-|(f^{b_i})'(f^{a_i}x)| \right)\\
&\leq \frac{r-1}{r}(n-\sharp \widetilde{E}_n^{M,m,q})\log^+||f'||_{\infty}-\sum_{i, \ m_n>a_i\notin \widetilde{E}_n^{M,m,q}} k_{a_i}'.
\end{aligned}
$$
Therefore
$$
\begin{aligned}
\sum_{i, \ m_n>a_i\notin \widetilde{E}_n^{M,m,q}} \frac{k_{a_i}'}{r-1}&\leq\frac{1}{r}(n-\sharp \widetilde{E}_n^{M,m,q})\log^+||f'||_{\infty}-\frac{1}{r-1}\sum_{i, \ m_n>a_i\notin \widetilde{E}_n^{M,m,q}} \psi_{b_i}(f^{a_i} x)\\
&\leq \frac{1}{r}(n-\sharp \widetilde{E}_n^{M,m,q})\log^+||f'||_{\infty}.
\end{aligned}
$$
The last inequality holds, since $a_{d_j}\in E(x)$ implies
$$
\sum_{i, \ m_n>a_i\notin \widetilde{E}_n^{M,m,q}} \psi_{b_i}(f^{a_i} x)=\sum_{j=1}^s \psi_{a_{d_j}-a_{c_j}}(f^{a_{c_j}}x)\geq 0.
$$
\end{proof}

\bigskip

We conclude this section by the proof of Lemma \ref{lemma:repabound}

\begin{proof}[Proof of Lemma \ref{lemma:repabound}]
We let $\Psi_{F_n}$ be a family $\Theta_p$ of reparametrizations as given in Lemma \ref{le:repara}.
Then we decompose the bound on $\sharp \Theta_p$ given by item $4)$ of Lemma \ref{le:repara} into four terms:
\begin{itemize}
\item[1.] by Lemma \ref{le:firstterm}, 
$$\sum_{i,m_n>a_i\notin \widetilde{E}_n^{M,m,q}} \frac{k'_{a_i}}{r-1}\leq (n-\sharp \widetilde{E}_n^{M,m,q})\frac{\log^{+}||f'||_{\infty}}{r};$$
\item[2.] Let $x \in F_n^{({\bf{k}}, \overline{E}, m_n)}$, so that $m_n=m_n(x)=\max E(x)\cap [\![0,n[\![$.
Therefore, we have
\begin{align*}
\hspace{-3em}\sum_{i, m_n\leq a_i} \frac{k'_{a_i}}{r-1} &\leq\sum_{k=m_n}^{n-1} \frac{1}{r-1} \log^- |f'(f^k x)|\\
&\leq \frac{1}{r-1}\sum_{k=m_n}^{n-1} \left(\log||f'||_{\infty} -\log|f'(f^k x)| \right)\\
&=\frac{1}{r-1}\left((n-m_n)\log ||f'||_{\infty}-\left( \log|(f^{m_n^{+}(x)-m_n})'(f^{m_n}x)|- \log|(f^{m_n^+(x)-n})'(f^n x)|\right)\right)\\
&\leq\frac{1}{r-1}\left( (n-m_n)\log ||f'||_{\infty}+(m^+_n(x)-n)\log ||f'||_{\infty}\right)\\
&=\frac{1}{r-1}(m^+_n(x)-m_n)\log ||f'||_{\infty}.
\end{align*}
\item[3.] by Lemma \ref{le:q-control},
$$\sum_{i,a_i\in \widetilde{E}_n^{M,m,q}\cap (c+q\mathbb{N})} \frac{k'_{a_i}}{r-1}\leq \frac{n}{r-1}\left( \frac{1}{q}+ \int \frac{\log^+|(f^q)'|}{q}d\xi^{M,m,q}_{n,\nu,F_n}-\int\frac{\log|(f^q)'|}{q}d\xi^{M,m,q}_{n,\nu,F_n}
 \right);$$
\item[4.] if $a_i\in \widetilde{E}^{M,m,q}_n\setminus (c+q\mathbb{N})$, then $a_i\in \partial_l E^{M,m}_n$ and $[\![a_i ; a_i+b_i [\![ \subset E_n^{M,m}$, therefore Lemma \ref{lemma:prop-hb} gives
$$
\begin{aligned}
\sum_{i,a_i\in \widetilde{E}^{M,m,q}_n\setminus (c+q\mathbb{N})} \frac{k'_{a_i}}{r-1}&\leq \sum_{i,a_i\in \partial_l E^{M,m}_n} \frac{k'_{a_i}}{r-1}\\
&\leq \frac{1}{r-1}\sum_{i,a_i\in \partial_l E^{M,m}_n} \log^{-} |(f^{b_i})'(f^{a_i} x)|\\
&\leq \frac{1}{r-1}\frac{\sharp\partial E^{M,m}_n}{2}\cdot M\cdot \log||f'||_{\infty}.
\end{aligned}
$$
\end{itemize}
We define
$$
\begin{aligned}
&\gamma_{n,M,m,q}(f):=
\left ( \left(\frac{M}{r-1} + \frac{q}{r} + \log C_r \right)\frac{\sharp\partial {E^{M,m}_n}}{2n} + \frac{2}{\sqrt{M}} + \sup\limits_{x \in \mathtt{F}_{\nu}} \frac{m_n^+(x) - m_n(x)}{n(r-1)} \right )\cdot \log||f'||_{\infty}\\
&\hspace*{6em}+ H(\frac{2}{\sqrt{M}})+\frac{\log C_r}{q} + \frac{\log n}{n} + \frac{r}{q(r-1)}+\sup_{x\in \mathtt{F}_{\nu}}\frac{m_n^+(x)-n}{n}+\frac{1}{M};\\
&C(f):=\frac{\log^{+}||f'||_{\infty}}{r} + \log C_r + 1 + 2\log (1+\log ||f' ||_{\infty}).
\end{aligned}
$$
Then, by noting that $\sharp \mathcal{A}_n \leq n - \sharp E_n^{M,m} + \frac{n}{q} + \frac{n}{M}$
and by Lemma \ref{le:combinatorics}, we obtain
$$
\begin{aligned}
\frac{1}{n}\log \sharp \Psi_n\leq &\left( 1-\frac{\sharp E^{M,m}_n}{n} \right)C(f)\\
&+\frac{1}{r-1}\left( \int \frac{\log^+|(f^q)'|}{q}d\xi^{M,m,q}_{n,\nu}-\int\frac{\log|(f^q)'|}{q}d\xi^{M,m,q}_{n,\nu} \right)\\
&+\gamma_{n,M,m,q}(f).
\end{aligned}
$$
Then, if $\theta$ is in $\Psi_n$, then for $i = 1, ..., p$, the choice of $\varepsilon_{f^{b_i}}$ gives
$$
|| (f^{a_i} \circ \sigma \circ \theta)' ||_{\infty} \leq \varepsilon_{f^{b_i}} \leq \max \{ 1, || f' ||_{\infty} \}^{- b_i}.
$$
Therefore, for $j=0,...,n$, we have
$
|| (f^j \circ \sigma \circ \theta)' ||_{\infty} \leq 1.
$
\end{proof}

\section{Entropy and exponent of the hyperbolic component}
\label{sec:main-prop}

\subsection{Main Proposition}
\label{ssec:main-prop}

Let $f : I \to I$ be a $\mathcal{C}^r$ map of positive topological entropy.
We now consider a sequence of maps $(f_k)$ converging $\mathcal{C}^r$-weakly to $f$ and a sequence of measures $(\nu_k)$ such that each $\nu_k$ is ergodic for $f_k$ and $(\nu_k)$ converges to some measure $\mu$ in the weak-$\ast$ topology.
The $f_k$'s will be the $f$'s of the previous sections.
Instead of proving Theorem \ref{th:main1}, we prove the following more technical statement:

\begin{theorem}\label{th:main2}
Let $(f_k)_{k\in \mathbb{N}}$ and $(\nu_k)_{k\in \mathbb{N}}$ be as above.
Let $\mathcal{K}$ be a sequence of integers such that $(h_{f_k}(\nu_k))_k$ converges to its limsup along $\mathcal{K}$.
For all $\alpha > \frac{R(f)}{r}$, there exist two $f$-invariant probability measures $\mu_1$, $\mu_0$, and $\beta\in [0,1]$, with $\mu = \beta \mu_1 + (1-\beta) \mu_0$ and
\begin{itemize}
\item[\namedlabel{itm:main21}{1)}] $\beta>0$ if and only if $\liminf\limits_{\mathcal{K} \ni k \to +\infty} \lambda_{f_k}(\nu_k) > \alpha$;
\item[\namedlabel{itm:main22}{2)}] $\limsup\limits_{k \to +\infty} h_{f_k}(\nu_k) \leq \beta\cdot h_f(\mu_1)+(1-\beta)\cdot\alpha$;

\item[3)] assume that $\beta>0$ and that $p \in \bbN^*$ is such that
$$
\liminf\limits_{\mathcal{K} \ni k \to +\infty} \lambda_{f_k}(\nu_k)
> \alpha - \frac{R(f)}{r} + \frac{1}{rp} \log^+ || (f^p)' ||_{\infty}.
$$
Then, by letting $\delta=\min\{ \alpha-\frac{R(f)}{r}, \frac{r-1}{r p} \log (16/9) \}$, we have
\begin{itemize}
    \item[\namedlabel{itm:main21'}{1')}] for $\mu_1$-a.e. $x \in I$, we have $\lambda_f(x)\geq \delta$;
    \item[\namedlabel{itm:main23'}{3')}] $\liminf\limits_{\mathcal{K} \ni k \rightarrow \infty} \lambda_{f_k}(\nu_k)\geq \beta\lambda_f(\mu_1) + (1-\beta) \delta$.
\end{itemize}
\end{itemize}
\end{theorem}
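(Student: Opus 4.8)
The strategy is to combine the entropy splitting machinery of Section~\ref{ssec:gen-entropy}, applied to each $f_k$ and $\nu_k$, with a passage to the limit that produces the decomposition $\mu = \beta\mu_1 + (1-\beta)\mu_0$. First I would fix $\alpha > R(f)/r$ and, for each $k$, run the construction of Section~\ref{sec:PC} with $f = f_k$ and $\nu = \nu_k$: choose $\delta_k$ slightly below $\int \psi_1\,d\nu_k = \lambda_{f_k}(\nu_k) - \frac1r\int\log^+|f_k'|\,d\nu_k$ (so the set of hyperbolic times has positive lower density by Lemma~\ref{lemma:delta}), take the hyperbolic component $\xi^{M,m,q}_{\nu_k}$ and its mass $\beta^{M,m}_{\nu_k}$. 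Along the subsequence $\mathcal{K}$, after a further diagonal extraction one obtains a limit $\beta = \lim \beta^{M,m}_{\nu_{k}}$ (first in $k$, then sending the auxiliary parameters $M,m,q$ appropriately), a limiting hyperbolic measure $\beta\mu_1 := \lim \xi^{M,m,q}_{\nu_k}$ and its complement $(1-\beta)\mu_0$; invariance of $\mu_1$, $\mu_0$ under $f$ follows because the boundary $\partial E^{M,m}_n$ has density $O(1/\sqrt M)$ (Lemma~\ref{lem:EnM}) and $f_k \to f$ in $\mathcal{C}^1$, so the defect from $f$-invariance is $O(1/\sqrt M)$ and vanishes in the limit. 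Item~\ref{itm:main21} is then essentially bookkeeping: $\beta > 0$ precisely when the lower density bound in Remark~\ref{rmk:density-E} stays bounded away from $0$, and that bound is $\frac{\int\psi_1 d\nu_k - \delta_k}{\log\|f_k'\|_\infty - \delta_k}$, which is positive (uniformly) iff $\liminf_{\mathcal K} \lambda_{f_k}(\nu_k) > \alpha$ after the correct choice of $\delta_k \to \alpha - R(f)/r$ and $q\to\infty$.

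For item~\ref{itm:main22}, the plan is to feed the adapted partition $\mathcal{R}_{q,\varepsilon_q}$ of Section~\ref{ssec:def-partition} into Proposition~\ref{prop:entropy-final}, applied to $f_k,\nu_k$. This bounds $h_{f_k}(\nu_k)$ by three groups of terms: (i) the hyperbolic-block entropy $\beta^{M,m}_{\nu_k}\frac1m H_{\overline{\xi^{M,m}_{\nu_k}}}(\mathcal{R}_q^m)$; (ii) the neutral-block entropy $\limsup_n \frac1n H_\zeta(\mathcal{R}_q^{E_n^{M,m}})$, controlled by Lemma~\ref{le:reparamization-partition} together with Lemma~\ref{lemma:repabound}, which caps it by $(1-\beta^{M,m}_{\nu_k})C(f_k)$ plus a $\log^+|(f^q)'|$-correction plus an error $\gamma_{n,M,m,q}$ going to $0$; and (iii) the combinatorial/boundary error terms, all of which are $O(1/\sqrt M) + \iota + \nu_k(I\setminus\mathtt{F}_{\nu_k})\log\sharp\mathcal{P}$ and hence negligible. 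Passing to the limit: the hyperbolic term converges to $\beta\cdot\frac1m H_{\overline{\mu_1}}(\mathcal{R}_q^m)$ because only finitely many atoms appear (hypothesis ii of Proposition~\ref{prop:entropy-ineq}) and $\mathcal{R}_q$-boundaries are $\overline{\xi^{M,m}}$-null; letting $m\to\infty$ this is $\beta h_f(\mu_1)$ up to refining $\mathcal{R}_q$ to capture all of $h_f(\mu_1)$. The neutral term converges to $(1-\beta)C(f)$ plus a correction; replacing $f$ by an iterate $f^p$ (which is the content of item~3 and the reduction in Section~\ref{ssec:proof}) and using $\lim_p C(f^p)/p = R(f)/r < \alpha$ makes this $\le (1-\beta)\alpha$. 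Sending $q\to\infty$, $M\to\infty$, $\iota\to0$ and shrinking $\mathtt{F}_{\nu_k}$-complement finishes item~\ref{itm:main22}.

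For items~\ref{itm:main21'} and \ref{itm:main23'}, assume $\beta>0$ and the hypothesis on $p$. The hyperbolicity $\lambda_f(x)\ge\delta$ for $\mu_1$-a.e.\ $x$ comes from the defining inequality of the $\delta$-hyperbolic times: along a block $[\![l_1,l_2[\![\subset E^{M,m}_n(x)$ we have $\log|(f_k^{l_2-l_1})'(f_k^{l_1}x)| \ge \delta(l_2-l_1)$, so the measure $\overline{\xi^{M,m}_{\nu_k}}$ ``sees'' an average of $\log|f_k'|$ at least $\delta$ along the portions genuinely inside blocks (away from the last $\approx q$ coordinates, handled by the $E^{M,m,q}$ truncation and the distortion control~\eqref{eq:distortion-atom}); passing to the limit gives $\int\log|f'|\,d\mu_1 \ge \delta$, and since every ergodic component of $\mu_1$ inherits this (the construction localises along blocks) one upgrades to $\lambda_f(x)\ge\delta$ a.e. This uses $\delta = \min\{\alpha - R(f)/r, \frac{r-1}{rp}\log(16/9)\}$ exactly: the first term is what the hyperbolic-time density argument needs, the second is the $\log(16/9)$ loss flagged in the proof of Lemma~\ref{le:repara}. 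For the exponent lower bound~\ref{itm:main23'}, one writes $\lambda_{f_k}(\nu_k) = \int\log|f_k'|\,d\xi^{M,m}_{\nu_k,n}$-type-term $+ \int\log|f_k'|\,d\eta^{M,m}_{\nu_k,n}$-type-term, uses the block inequality to bound the neutral part below by $\delta$ times its mass (hyperbolic times sit at both ends of neutral blocks too, by Lemma~\ref{lem:EnM}i), and the hyperbolic part below by (a limit of) $\beta\int\log|f'|\,d\overline{\mu_1} = \beta\lambda_f(\mu_1)$; weak-$*$ convergence of $\xi^{M,m}_{\nu_k} \to \beta\mu_1$ applied to the continuous (after restricting away from the critical set, which is where hyperbolic blocks live by Lemma~\ref{lemma:prop-hb}) function $\log|f'|$ gives the bound in the limit, after sending the parameters off.

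\textbf{Main obstacle.} The delicate point is the passage to the limit in the Lyapunov exponent along hyperbolic blocks: $\log|f'|$ is \emph{not} continuous near the critical set, so weak-$*$ convergence $\xi^{M,m}_{\nu_k}\to\beta\mu_1$ does not immediately give convergence of $\int\log|f'|\,d\xi^{M,m}_{\nu_k}$. The resolution is precisely the fact highlighted in the introduction and encoded in Lemma~\ref{lemma:prop-hb}: on an $M$-hyperbolic block the orbit stays a definite distance from the critical set (quantitatively, $\log|(f^{l_2-l_1})'(f^{l_1}x)| \ge -M\log\|f'\|_\infty$), so on the relevant portion of phase space $\log|f'|$ is effectively bounded below and the restriction of $\mu_1$ to that region behaves well; but controlling the uniformity of this as $M\to\infty$ (and handling the $q-1$ ``bad'' coordinates at the end of each block, and the set $I\setminus\mathtt{F}_{\nu_k}$) is where the argument is technically heaviest, and is also the reason one only gets the inequality in~\ref{itm:main23'} (hence in item~\ref{itm:main13} of Theorem~\ref{th:main1}) rather than an equality.
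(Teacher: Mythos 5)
Your overall architecture coincides with the paper's: hyperbolic/neutral blocks with a fixed $\delta$, the entropy splitting of Proposition \ref{prop:entropy-final} fed with the adapted partition, Lemma \ref{lemma:repabound} for the neutral entropy, weak-$*$ limits for the hyperbolic entropy and exponent (with Lemma \ref{lemma:prop-hb} supplying the distance to the critical set), Proposition \ref{prop:expo-neutral} for the neutral exponent, and the reduction to an iterate $f^p$. Two steps, however, are asserted rather than proved, and they are where the real work lies.

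First, item \ref{itm:main21'}. The block inequality $\log|(f^{l_2-l_1})'(f^{l_1}x)|\geq \delta(l_2-l_1)$ gives, after passing to the limit, only the averaged bound $\int \log|f'|\,d\mu_1\geq\delta$, and this does not yield $\lambda_f(x)\geq\delta$ for $\mu_1$-a.e.\ $x$ unless $\mu_1$ is ergodic --- which it has no reason to be. Your parenthetical ``every ergodic component of $\mu_1$ inherits this (the construction localises along blocks)'' is exactly the assertion that needs proof: the ergodic decomposition of the weak-$*$ limit $\mu_1$ is not visibly compatible with the block structure of the approximating empirical measures. The paper's Lemma \ref{lemma:mu1-HB} avoids averaging altogether: it shows that the sets $F_M^k=\{x:\exists m\in[\![1;M]\!],\ \log|(f_k^m)'(x)|\geq m\delta\}$ carry full $\overline{\xi_{k,n}^M}$-mass, passes to the limit (losing $\delta\mapsto(1-\varepsilon)\delta$ to absorb the $\mathcal{C}^1$-perturbation) to get $\mu_1\bigl(\bigcup_M F_M\bigr)=1$, and then invokes the Pliss-type argument of Proposition 5.4 of \cite{A} to upgrade this to a pointwise exponent bound.

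Second, item \ref{itm:main21} and the degenerate case. The density bound of Remark \ref{rmk:density-E} is positive for $f$ itself only when $\liminf_{\mathcal K}\lambda_{f_k}(\nu_k)>\alpha-\frac{R(f)}{r}+\frac{1}{r}\log^+\|f'\|_{\infty}$, which is strictly stronger than $\liminf_{\mathcal K}\lambda_{f_k}(\nu_k)>\alpha$; the equivalence you claim becomes true only after replacing $f$ by $f^p$ with $p$ large (Lemma \ref{lemma:main-prop-iterate}), so the iterate is needed already to make the fixed $\delta=\alpha-\frac{R(f)}{r}$ admissible, not only to arrange $C(f^p)<p\alpha$ as you state. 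Moreover the ``only if'' direction is not an analytic consequence of the construction: when $\liminf_{\mathcal K}\lambda_{f_k}(\nu_k)\leq\alpha$ one simply \emph{sets} $\beta=0$, $\mu_0=\mu$, and checks item \ref{itm:main22} by Ruelle's inequality, since $\limsup_k h_{f_k}(\nu_k)=\lim_{\mathcal K}h_{f_k}(\nu_k)\leq\liminf_{\mathcal K}\lambda_{f_k}(\nu_k)\leq\alpha$. Your proposal never treats this case, and the entropy machinery cannot run there because the hyperbolic times need not have positive density.
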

Notice that if $0 < \beta < 1$, then from upper semi-continuity of $\mu \mapsto \lambda_f(\mu)$, item \textit{3')} gives $\lambda_f(\mu_0) \geq \delta$.\\

One can see that this implies Theorem \ref{th:main1}.
In order to prove Theorem \ref{th:main2}, we prove, in section \ref{section:proof-th}, that some iterate of $f$ satisfies the hypotheses of the following main Proposition \ref{prop:main-prop}, whose proof will be completed in section \ref{ssec:proof}.
Finally, in section \ref{section:proof-th} as well, we show that this suffices to get Theorem \ref{th:main2}.
Meanwhile, section \ref{sec:entropy-HB} provides the last required ingredients to prove the main proposition.

\begin{proposition}
\label{prop:main-prop}
Let $f_k \to f$ $\mathcal{C}^r$-weakly and let $(\nu_k)$ be a sequence of $f_k$-ergodic measures that converges to $\mu$ in the weak-$*$ topology.
Let $\mathcal{K}$ be a sequence of integers such that $(h_{f_k}(\nu_k))_k$ converges to its limsup along $\mathcal{K}$.
Assume that $h_{\rm{top}}(f) > 0$. If $\alpha > \frac{R(f)}{r}$ and if 
\begin{itemize}
\item[\namedlabel{itm:main-prop1}{1)}] $\liminf\limits_{\mathcal{K} \ni k \to +\infty} \lambda_{f_k}(\nu_k) > \alpha - \frac{R(f)}{r} + \frac{1}{r} \log^+ ||f'||_{\infty}$;
\item[\namedlabel{itm:main-prop2}{2)}] $C(f) < \alpha$,
\end{itemize}
where $C(f)$ is the quantity of Lemma \ref{lemma:repabound}.
Then there exist $\beta, \mu_0, \mu_1$ as in Theorems \ref{th:main1} and \ref{th:main2} with $\beta > 0$ and $p=1$ in the definition of $\delta$.
\end{proposition}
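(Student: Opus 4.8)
The plan is to run the constructions of Sections~\ref{sec:PC}--\ref{sec:main-prop} simultaneously for all the pairs $(f_k,\nu_k)$, with the single parameter $\delta:=\min\{\alpha-\tfrac{R(f)}{r},\ \tfrac{r-1}{r}\log(16/9)\}$ (this is the value of $\delta$ for $p=1$), and then to pass to the limit $k\to\infty$ along $\mathcal K$, then $M,m\to\infty$, then $q\to\infty$, letting finally the entropy-defect parameter $\iota$ of Section~\ref{ssec:gen-entropy} go to $0$. The first point is that the construction is legitimate: for the map $f_k$ one has $\int\psi_1\,d\nu_k=\lambda_{f_k}(\nu_k)-\tfrac1r\int\log^+|f_k'|\,d\nu_k\ge\lambda_{f_k}(\nu_k)-\tfrac1r\log^+\|f_k'\|_\infty$, and since $\|f_k'\|_\infty\to\|f'\|_\infty$, hypothesis~\ref{itm:main-prop1} gives $\liminf_{\mathcal K\ni k}\int\psi_1\,d\nu_k>\alpha-\tfrac{R(f)}{r}\ge\delta>0$. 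Hence for all large $k\in\mathcal K$ the standing assumption $\int\psi_1\,d\nu_k>\delta>0$ of Section~\ref{ssec:hyperbolic-times} holds, and all the objects of Sections~\ref{sec:PC}--\ref{sc:ENC} — the $\delta$-hyperbolic times $E^{(k)}_\delta$, the blocks, the components $\xi^{M,m}_{\nu_k},\eta^M_{\nu_k}$, the adapted partitions $\mathcal R_q$, the compacts $\mathtt F_{\nu_k}$ (chosen with $\nu_k(I\setminus\mathtt F_{\nu_k})\to0$) — are defined for $(f_k,\nu_k)$. Fix also $\iota>0$ and, for each $k$, a finite partition $\mathcal P_k$ with $\nu_k(\partial\mathcal P_k)=0$ realizing $h_{f_k}(\nu_k)$ to within $\iota$.

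\textbf{The limit measures, and item~\ref{itm:main21'}.} By a diagonal extraction along $\mathcal K$ I may assume that for each $M$ (and each $m,q$) the measures $\xi^{M,m}_{\nu_k},\xi^{M,m,q}_{\nu_k},\overline{\xi^{M,m}_{\nu_k}}$ converge weak-$*$, the scalars $\beta^{M,m}_{\nu_k}$ converge, and $H_{\overline{\xi^{M,m}_{\nu_k}}}(\mathcal R_q^m)$ converges (only finitely many atoms ever occur, as in the proof of Proposition~\ref{prop:entropy-ineq}); write $\xi^M_\infty,\eta^M_\infty$ for the limits of $\xi^M_{\nu_k},\eta^M_{\nu_k}$. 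Since $\xi^M_{\nu_k}+\eta^M_{\nu_k}=\nu_k\to\mu$, we get $\xi^M_\infty+\eta^M_\infty=\mu$; as $M\mapsto\xi^M_\infty$ is nondecreasing and $\le\mu$, it converges to a measure $\beta\mu_1$ with $\mu_1$ a probability measure, and I put $(1-\beta)\mu_0:=\mu-\beta\mu_1$, a nonnegative $f$-invariant measure of mass $1-\beta$. That $\mu_1$ is $f$-invariant, and the two limit identifications used below, are exactly what Section~\ref{sec:entropy-HB} provides; the underlying geometric fact is that by Lemma~\ref{lemma:prop-hb} the interior of every hyperbolic block stays at distance $\gtrsim\|f'\|_\infty^{-M}$ from the critical set — so $\log|f_k'|\to\log|f'|$ uniformly there and $\psi_1$ is controlled — while block boundaries carry $\xi^M$-mass $\le 2/M$ (Lemma~\ref{lem:EnM}). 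For $\beta>0$: Remark~\ref{rmk:density-E} gives $\beta^M_{\nu_k}\ge\underline d(E^{(k)}_\delta)\ge\tfrac{\int\psi_1\,d\nu_k-\delta}{\log\|f_k'\|_\infty-\delta}$, whose $\liminf$ over $\mathcal K$ is positive by the previous paragraph. Finally, any iterate $f_k^ix$ with $i$ inside a hyperbolic block of $x$ satisfies $E^{(k)}_\delta(f_k^ix)\supset\{l-i:l\in E^{(k)}_\delta(x),\,l>i\}$, a set of positive lower density; passing to the limit (upper semicontinuity of $\psi_1$ and the away-from-critical control) shows that $\mu_1$-a.e.\ point has infinitely many $\delta$-hyperbolic times for $f$, hence $\lambda_f(x)=\limsup_n\tfrac1n\log|(f^n)'(x)|\ge\delta$ along those times — this is item~\ref{itm:main21'}, and in particular $\mu_1$ is hyperbolic.

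\textbf{The entropy bound, item~\ref{itm:main22}.} Apply Proposition~\ref{prop:entropy-final} to $(f_k,\nu_k)$ with $\mathcal Q=\mathcal R_q$ (which satisfies the hypotheses of Proposition~\ref{prop:entropy-ineq} by Lemma~\ref{lemma:entropy-R}) and $\mathcal P=\mathcal P_k$; bound $\limsup_n\tfrac1n H_\zeta(\mathcal P^n\mid\mathcal Q^{E_n^{M,m}})$ by Lemma~\ref{le:reparamization-partition} followed by Lemma~\ref{lemma:repabound}, and bound the remaining $\limsup_n\tfrac mn\sum_E\nu_k(E)\log(\sharp\mathcal Q_{\nu^E})\,\sharp\partial E$ using $\sharp\partial E\le\tfrac{2(n+M)}{M}$ (Lemma~\ref{lem:EnM}) and the distortion control~\eqref{eq:distortion-atom}. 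Using $\log^+t-\log t=\log^-t$, this yields, for all $q$, all $m\ge q$, all $M$ large and all $k$ large, an estimate of the shape
\begin{align*}
h_{f_k}(\nu_k)\le{}&\beta^{M,m}_{\nu_k}\,\tfrac1m H_{\overline{\xi^{M,m}_{\nu_k}}}(\mathcal R_q^m)+\bigl(1-\beta^{M,m}_{\nu_k}\bigr)C(f_k)\\
&+\tfrac1{r-1}\!\int\!\tfrac1q\log^-\!\bigl|(f_k^q)'\bigr|\;d\xi^{M,m,q}_{\nu_k,\mathtt F}+\mathrm{Err}_{k,M,m,q,\iota},
\end{align*}
where $\mathrm{Err}$ gathers the error terms of Proposition~\ref{prop:entropy-final} and Lemma~\ref{lemma:repabound} ($\tfrac{3\log M}{M}$, the reparametrization errors $\gamma_{\,\cdot\,,M,m,q}(f_k)$, the above $O(m/M)$ term, $\nu_k(I\setminus\mathtt F_{\nu_k})\log\sharp\mathcal P_k$, and $\iota$), so that $\limsup_{q\to\infty}\limsup_{m\to\infty}\limsup_{M\to\infty}\limsup_{\mathcal K\ni k\to\infty}\mathrm{Err}\le\iota$ (for the $\gamma$ term by the uniform-vanishing clause of Lemma~\ref{lemma:repabound}, as $\|f_k'\|_\infty$ is bounded). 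Now take the iterated limit. By Section~\ref{sec:entropy-HB}, $\beta^{M,m}_{\nu_k}\tfrac1m H_{\overline{\xi^{M,m}_{\nu_k}}}(\mathcal R_q^m)\to\beta\,h_f(\mu_1,\mathcal R_q)$ and the geometric term tends to $0$ (in the limit the relevant measure is $\beta\mu_1$, hyperbolic, so $\tfrac1q\log^-|(f^q)'|\to0$ along it, while on the hyperbolic blocks the integrand is $\le M\log\|f'\|_\infty$, which makes the passage to the limit licit); moreover $C(f_k)\to C(f)<\alpha$ by~\ref{itm:main-prop2}. Finally $h_f(\mu_1,\mathcal R_q)\to h_f(\mu_1)$ as $q\to\infty$ since the atoms of $\mathcal R_q$ shrink, and letting $\iota\to0$ gives $\limsup_k h_{f_k}(\nu_k)\le\beta h_f(\mu_1)+(1-\beta)\alpha$.

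\textbf{The exponent bound, item~\ref{itm:main23'}, and the main obstacle.} Split $\lambda_{f_k}(\nu_k)=\int\log|f_k'|\,d\xi^M_{\nu_k}+\int\log|f_k'|\,d\eta^M_{\nu_k}$. On a neutral block $[\![c,a'[\![$ the right endpoint $a'\in\partial E^M_n(x)\subset E^{(k)}_\delta(x)$, so $\sum_{i=c}^{a'-1}\log|f_k'(f_k^ix)|=\log|(f_k^{a'-c})'(f_k^cx)|\ge\psi_{a'-c}(f_k^cx)\ge(a'-c)\delta$ (the last block, ending at $n\notin E^{(k)}_\delta(x)$, being handled by comparing $n$ with $m_n^+(x)$ and using $m_n^+(x)/n\to1$ uniformly on $\mathtt F_{\nu_k}$); this is the content of Proposition~\ref{prop:expo-neutral} and gives $\int\log|f_k'|\,d\eta^M_{\nu_k}\ge(1-\beta^M_{\nu_k})\delta-o_k(1)$. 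On the hyperbolic part $\int\log|f_k'|\,d\xi^M_{\nu_k}\to\int\log|f'|\,d\xi^M_\infty$ (uniform convergence on block interiors; the boundary carries $\le2/M$ of the mass, where $\log^-|f_k'|$ is bounded in terms of $M$), and $\int\log|f'|\,d\xi^M_\infty\to\beta\lambda_f(\mu_1)$ as $M\to\infty$ by Section~\ref{sec:entropy-HB} (here $\log|f'|\in\mathbb L^1(\mu_1)$ since $\mu_1$ is hyperbolic). Adding the two pieces yields $\liminf_{\mathcal K\ni k}\lambda_{f_k}(\nu_k)\ge\beta\lambda_f(\mu_1)+(1-\beta)\delta$. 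The whole argument rests on the limit passages for the hyperbolic component collected in Section~\ref{sec:entropy-HB} — invariance of $\mu_1$, the entropy identification $\beta^{M,m}_{\nu_k}\tfrac1m H_{\overline{\xi^{M,m}_{\nu_k}}}(\mathcal R_q^m)\to\beta h_f(\mu_1,\mathcal R_q)$, and $\int\log|f_k'|\,d\xi^M_{\nu_k}\to\beta\lambda_f(\mu_1)$; I expect the last to be the delicate one, and it is exactly the point flagged in the remark after Theorem~\ref{th:main1}: because $\log|f'|$ is only upper semicontinuous at the critical set one cannot hope for equality in item~\ref{itm:main13}, and working block by block (Lemma~\ref{lemma:prop-hb}) is precisely what restores enough continuity on the hyperbolic part while the neutral part is discarded at the uniformly bounded entropy cost $(1-\beta)C(f)\le(1-\beta)\alpha$.
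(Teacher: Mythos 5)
Your proposal is correct and follows essentially the same route as the paper: the same choice of $\delta$, the same decomposition into hyperbolic and neutral components with limits taken in the order $n,k,M,m,q$, the entropy splitting of Proposition \ref{prop:entropy-final} combined with Lemmas \ref{le:reparamization-partition} and \ref{lemma:repabound} for item \ref{itm:main22}, and Propositions \ref{prop:conv-expo-HB} and \ref{prop:expo-neutral} for item \ref{itm:main23'}. The only place where you are slightly terser than the paper is the hyperbolicity of $\mu_1$ (Lemma \ref{lemma:mu1-HB}), where passing the hyperbolic-time condition from $f_k$ to $f$ requires the small loss $\delta \rightsquigarrow (1-\varepsilon)\delta$ that your appeal to upper semicontinuity implicitly uses.
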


In the rest of this section, we detail the notations of the previous sections, now that we have the additional parameter $k$. We will also assume that $k$ always belongs to $\mathcal{K}$.
For $x \in I$, define $\psi^k: I\rightarrow \mathbb{R}\cup \{ -\infty \}$ by
$$
\psi^k := \log |f_k'| - \frac{1}{r} \log^+|f_k'|.
$$
Hence, from Lemma \ref{lemma:delta}, whenever $\delta_k$ is such that $0 < \delta_k < \int \psi^k d \nu_k$, then the set $E_k(x) = E_{k, \delta_k}(x)$ has positive lower density for $\nu_k$-almost every $x \in I$. This density is uniformly lower bounded, as noted in Remark \ref{rmk:density-E}.\\

Note that hypothesis $1)$ from Proposition \ref{prop:main-prop} implies
$$
\liminf\limits_{k \to +\infty} \int \psi^k d \nu_k > \alpha - \frac{R(f)}{r}.
$$
Therefore, we fix $\delta = \alpha - \frac{R(f)}{r}$, so that for $k$ large enough, the set $E_k(x)$ defined for $\delta_k = \delta$ has positive lower density for $\nu_k$-almost every $x \in I$.
In other words, the hypothesis of Proposition \ref{prop:main-prop} allows us to choose $\delta$ independent from $k$.
However, we mentioned in the proof of lemma \ref{le:repara} that $\delta$ must satisfy an additional requirement, so we actually define
$$
\delta = \min \left \{ \alpha - \frac{R(f)}{r}, \frac{r-1}{r}\log (16/9) \right \}.
$$

We will write $E_k(x)$, $E_{k,n}^{M,m,q}(x)$, $\xi_{k,n}^{M,m,q}$, $\beta_{k,n}^{M,m,q}$ and $\eta_{k,n}^{M,m,q}$.
From Cantor's diagonal argument, we may take some subsequences and assume the convergence to the following limits:
\begin{align*}
&\beta_{k,n}^{M,m,q} \underset{n \to +\infty}{\longrightarrow} \beta_k^{M,m,q} \underset{k \to +\infty}{\longrightarrow} \beta^{M,m,q} \underset{M \to +\infty}{\longrightarrow} \beta;\\
&\xi_{k,n}^{M,m,q} \underset{n \to +\infty}{\longrightarrow} \xi_k^{M,m,q} \underset{k \to +\infty}{\longrightarrow} \xi^{M,m,q} \underset{M \to +\infty}{\longrightarrow} \beta \mu_1;\\
&\eta_{k,n}^{M,m,q} \underset{n \to +\infty}{\longrightarrow} \eta_k^{M,m,q} \underset{k \to +\infty}{\longrightarrow} \eta^{M,m,q} \underset{M \to +\infty}{\longrightarrow} (1-\beta)\mu_0.
\end{align*}
The fact that the last column of limits does not depend on $m,q$ comes from item \ref{itm:EnM2} of Lemma \ref{lem:EnM}.
Therefore, we have $\mu = \beta \mu_1 + (1-\beta) \mu_0$. We also write $\xi^M = \xi^{M,1,1}$.

\begin{lemma}
\label{lemma:f-inv}
The measures $\mu_0$ and $\mu_1$ are $f$-invariant probabilities.
\end{lemma}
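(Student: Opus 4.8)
The plan is to show that $\mu_1$ and $\mu_0$ are probabilities and $f$-invariant by passing to the limit in a sequence of approximate-invariance estimates, exploiting that each $\nu_k$ is genuinely $f_k$-invariant and that the defect from ignoring a bounded portion of the orbit is controlled by the boundary terms $\sharp\partial E_{k,n}^{M,m,q}$. First I would record that $\mu = \beta\mu_1 + (1-\beta)\mu_0$ with $\beta \in [0,1]$, and that $\mu$ is a probability measure (as a weak-$*$ limit of probabilities on the compact space $I$); hence once I know $\mu_1$ and $\mu_0$ are nonnegative measures with the correct total masses (which follows from $\beta_{k,n}^{M,m,q} \le 1$, so $\xi_{k,n}^{M,m,q}$ has mass $\le 1$ and $\xi^{M,m,q}$ converges to $\beta\mu_1$ with $\beta\mu_1$ of mass exactly $\beta$, and similarly $(1-\beta)\mu_0$ of mass $1-\beta$), it is enough to verify $f$-invariance of $\mu_1$ (then $\mu_0 = (\mu - \beta\mu_1)/(1-\beta)$ is automatically $f$-invariant when $\beta<1$, and there is nothing to prove when $\beta=1$; symmetrically for $\mu_0$).

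The core estimate is the following: for a fixed continuous test function $\varphi : I \to \bbR$,
\begin{align*}
\left| \int \varphi \circ f \, d\xi_{k,n}^{M,m,q} - \int \varphi \, d\xi_{k,n}^{M,m,q} \right|
&= \left| \frac{1}{n} \int \sum_{i \in E_{k,n}^{M,m,q}(x)} \big( \varphi(f^{i+1} x) - \varphi(f^i x) \big) \, d\nu_k(x) \right| \\
&\leq \frac{2 \|\varphi\|_\infty}{n} \int \sharp \partial E_{k,n}^{M,m,q}(x) \, d\nu_k(x),
\end{align*}
where the telescoping is along each integer connected component of $E_{k,n}^{M,m,q}(x)$ and only the left and right endpoints survive; here I use that $f^{i+1}x = f^{i+1}x$ makes sense since $f$ is defined on all of $I$ (not $f_k$!) — wait, more carefully: the measure $\xi_{k,n}^{M,m,q}$ pushes forward orbit pieces of $f_k$, so I must be careful whether the relevant map is $f$ or $f_k$. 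The clean way is to first prove $\int \varphi\circ f_k\, d\xi_{k,n}^{M,m,q} - \int\varphi\, d\xi_{k,n}^{M,m,q}$ is $O\big(\tfrac{1}{n}\int \sharp\partial E_{k,n}^{M,m,q}\,d\nu_k\big)$ by the telescoping above (now legitimately using $f_k$-orbits), then use $f_k \to f$ uniformly to replace $\varphi\circ f_k$ by $\varphi\circ f$ up to an error $\to 0$ as $k\to\infty$ (using uniform continuity of $\varphi$ and $\|f_k - f\|_{\mathcal C^0}\to 0$). By item \ref{itm:EnM3} of Lemma \ref{lem:EnM} applied to $E_{k,n}^M$ (and the fact that $E_{k,n}^{M,m,q}$ differs from it by $O(q\,\sharp\partial E_{k,n}^M)$ elements, hence has comparable boundary size), we get $\tfrac1n\sharp\partial E_{k,n}^{M,m,q}(x) \le \tfrac{C}{M}$ uniformly, so the right-hand side above is $\le \tfrac{C\|\varphi\|_\infty}{M}$ after integrating in $\nu_k$.

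Now I take limits in the prescribed order. Letting $n\to\infty$ along the fixed subsequence gives $\big|\int\varphi\circ f\,d\xi_k^{M,m,q} - \int\varphi\,d\xi_k^{M,m,q}\big| \le \tfrac{C\|\varphi\|_\infty}{M} + o_k(1)$ (the $o_k(1)$ absorbing the $f_k$-vs-$f$ discrepancy, which is uniform in $n$); letting $k\to\infty$ gives the same bound for $\xi^{M,m,q}$; and letting $M\to\infty$ yields $\int\varphi\circ f\,d(\beta\mu_1) = \int\varphi\,d(\beta\mu_1)$, using weak-$*$ convergence $\xi^{M,m,q}\to\beta\mu_1$ together with continuity of $\varphi\circ f$ (here $f$ is continuous, indeed $\mathcal C^r$, so $\varphi\circ f$ is a legitimate test function). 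Since $\varphi$ was an arbitrary continuous function, $\beta\mu_1$ is $f$-invariant; if $\beta>0$ this gives $f$-invariance of $\mu_1$, and if $\beta = 0$ the claim about $\mu_1$ is vacuous (we may take $\mu_1$ to be any $f$-invariant probability, e.g. an m.m.e.). The same argument with $\eta_{k,n}^{M,m,q}$ in place of $\xi_{k,n}^{M,m,q}$, whose boundary is the same set $\partial E_{k,n}^{M,m,q}$, gives $f$-invariance of $(1-\beta)\mu_0$ and hence of $\mu_0$ when $\beta<1$. The main obstacle I anticipate is purely bookkeeping: keeping the order of limits $n\to\infty$, then $k\to\infty$, then $M\to\infty$ consistent with the definitions in the excerpt, and making sure the passage from $f_k$-orbits (built into the $\xi$'s) to the limiting map $f$ is handled before the $M\to\infty$ limit rather than after; no genuinely hard estimate is needed beyond Lemma \ref{lem:EnM}.
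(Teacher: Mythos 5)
Your proposal is correct and follows essentially the same route as the paper: a telescoping estimate over the connected components of the hyperbolic blocks bounds the invariance defect by $\frac{1}{n}\int \sharp\partial E_{k,n}^{M}\,d\nu_k \lesssim \frac{1}{M}$ (Lemma \ref{lem:EnM}), the discrepancy between $f$ and $f_k$ is absorbed by uniform continuity of the test function together with $\|f_k - f\|_\infty \to 0$, and one passes to the limit in $n$, $k$, $M$; the invariance of $\mu_0$ is then deduced from that of $\mu = \beta\mu_1 + (1-\beta)\mu_0$. Your treatment is slightly more explicit than the paper's on the total-mass bookkeeping and on the $f$-versus-$f_k$ point, but there is no substantive difference.
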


\begin{proof}
The measures $\mu_1$ and $\mu_0$ are both probabilities by definition.\\

We first prove that $\mu_1$ is $f$-invariant. Let $\varphi : I \to \bbR$ be a continuous function. Let $\varepsilon > 0$. Let $\delta > 0$ be obtained from uniform continuity of $\varphi$ for $\varepsilon$. For $k$ large enough, we have $|| f - f_k ||_{\infty} < \delta$, therefore
\begin{align*}
\hspace{3em}&\hspace{-3em}|\varphi(f_* \mu_1) - \varphi(\mu_1)|\\
&= \lim\limits_{M \to +\infty} \lim\limits_{k \to +\infty} \lim\limits_{n \to +\infty} \frac{1}{n} \left |\int \sum\limits_{i \in E_{k,n}^M(x)} \varphi(f \circ f_k^i x) d \nu_k(x) - \int \sum\limits_{i \in E_{k,n}^M(x)} \varphi(f_k^i x) d \nu_k(x) \right |\\
&\leq \lim\limits_{M \to +\infty} \lim\limits_{k \to +\infty} \lim\limits_{n \to +\infty} ||\varphi ||_{\infty} \int \frac{1}{n}\sharp\partial E_{k,n}^M(x) d \nu_k(x) + \varepsilon\\
\underset{\text{Lemma } \ref{lem:EnM}.\ref{itm:EnM2}}&{=} \varepsilon.
\end{align*}
Since $\varepsilon$ can be chosen arbitrarily close to 0, we have that $\mu_1$ is $f$-invariant. Then the formula $\mu = \beta \mu_1 + (1-\beta) \mu_0$ and the fact that $\mu$ is $f$-invariant concludes that $\mu_0$ is also $f$-invariant.
\end{proof}

Then, from the fact that $E_n^M(x)$ is non-decreasing in $M$, one can obtain the following statement.

\begin{lemma}[Proposition 5.2 from \cite{A}]
\label{lemma:monotone-xi}
For any $m \in \bbN^*$ and Borel set $B \subset I$, we have $\xi^{M,m}(B) \underset{M \to +\infty}{\nearrow} \beta \mu_1(B)$.
\end{lemma}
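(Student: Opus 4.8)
The plan is to prove the stronger fact that the sequence $(\xi^{M,m})_M$ is non-decreasing as a sequence of Borel measures, and then to upgrade the weak-$*$ convergence $\xi^{M,m}\to\beta\mu_1$ to setwise convergence using monotonicity.

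\emph{Step 1: a combinatorial inclusion.} First I would show that for every $x\in I$, every $n,m\in\bbN^*$ and every $M$, one has $E_n^{M,m}(x)\subseteq E_n^{M+1,m}(x)$ (and likewise with $f_k$ in place of $f$). Since $E_n^M(x)$ is non-decreasing in $M$, the $M$-hyperbolic blocks of $x$ can only grow or merge when $M$ is increased to $M+1$; in particular, if $[\![a_s,b_s[\![$ is the $M$-block containing an element $l\in E_n^{M,m}(x)$, the $(M+1)$-block containing $l$ is an interval $[\![a,b[\![$ with $a\le a_s$ and $b\ge b_s$. Recalling from item \ref{itm:EnM1} of Lemma \ref{lem:EnM} that $\partial E_n^M(x)\subseteq E(x)$, both endpoints $a_s,b_s$ lie in $E(x)$, so the cut point $c_s=\max\{\,l'\in[\![a_s,b_s+1-m]\!]:l'\in E(x)\,\}$ is well defined and $a_s\le l<c_s$. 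Since $a\le a_s\le c_s\le b_s+1-m\le b+1-m$ and $c_s\in E(x)$, the point $c_s$ is a candidate in the max defining the cut point $c$ of the $(M+1)$-block, hence $c\ge c_s>l$; therefore $l\in[\![a,c[\![\subseteq E_n^{M+1,m}(x)$, which proves the inclusion.

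\emph{Step 2: from the inclusion to monotone measures.} For fixed $k$ and $n$, the inclusion $E_{k,n}^{M,m}(x)\subseteq E_{k,n}^{M+1,m}(x)$ gives $\tfrac1n\sum_{i\in E_{k,n}^{M,m}(x)}\delta_{f_k^i x}\le\tfrac1n\sum_{i\in E_{k,n}^{M+1,m}(x)}\delta_{f_k^i x}$ on every Borel set; integrating in $x$ against $\nu_k$ yields $\xi_{k,n}^{M,m}\le\xi_{k,n}^{M+1,m}$ as Borel measures, and similarly $\xi_{k,n}^{M,m}\le\nu_k$ using $f_k$-invariance of $\nu_k$. Testing against an arbitrary non-negative continuous function $\varphi$ and letting $n\to+\infty$, then $k\to+\infty$, along the chosen subsequences gives $\int\varphi\,d\xi^{M,m}\le\int\varphi\,d\xi^{M+1,m}$ and $\int\varphi\,d\xi^{M,m}\le\int\varphi\,d\mu$ for all such $\varphi$, hence $\xi^{M,m}\le\xi^{M+1,m}\le\mu$ as Borel measures; in particular all $\xi^{M,m}$ have total mass at most $1$.

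\emph{Step 3: conclusion.} The sequence $(\xi^{M,m})_M$ is thus non-decreasing and uniformly bounded, so $\rho(B):=\sup_M\xi^{M,m}(B)=\lim_M\xi^{M,m}(B)$ defines a finite Borel measure (countable additivity follows from the monotone convergence theorem applied to a countable disjoint union). Then $\xi^{M,m}\to\rho$ setwise, hence also in the weak-$*$ topology; since by construction $\xi^{M,m}\to\beta\mu_1$ weak-$*$ and weak-$*$ limits are unique, $\rho=\beta\mu_1$. Therefore $\xi^{M,m}(B)\nearrow\beta\mu_1(B)$ for every Borel set $B$, as claimed. The only genuinely delicate point is Step 1: one must verify that increasing $M$ never removes an element of $E_n^{M,m}(x)$, which amounts to the bookkeeping of how blocks merge and how the cut points $c_s$ move, and it is precisely the containment $\partial E_n^M(x)\subseteq E(x)$ that keeps these cut points under control; Steps 2 and 3 are soft measure-theoretic arguments.
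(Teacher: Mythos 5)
Your proof is correct and follows exactly the route the paper indicates (it gives no proof, deferring to Proposition 5.2 of \cite{A} with the remark that the key fact is monotonicity of the hyperbolic sets in $M$): monotonicity of $E_n^{M,m}(x)$ in $M$ yields monotonicity of the measures $\xi_{k,n}^{M,m}$, which passes to the weak-$*$ limits and upgrades to setwise convergence. The one genuinely non-routine point, which you handle correctly in Step~1, is that the $m$-trimming via the cut points $c_s$ does not break monotonicity; this relies precisely on $\partial E_n^M(x) \subset E(x)$ from item \ref{itm:EnM1} of Lemma \ref{lem:EnM}, as you note.
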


\subsection{Entropy of the hyperbolic component}
\label{sec:entropy-HB}

From Proposition \ref{prop:entropy-final}, the desired upper bound for the entropy of $\nu$ comes from the sum of the entropy of the hyperbolic component and of the entropy conditioned to this hyperbolic component.
We estimated this second term in section \ref{sc:ENC}, which led to Lemma \ref{lemma:repabound}, and we estimate the first term in this section.\\

In section \ref{ssec:def-partition}, we defined a partition $\mathcal{R}_q$ satisfying the hypotheses of Proposition \ref{prop:entropy-final} and such that $f$ has bounded distortion on each atom and these atoms are small enough.
We first do the same construction for the $f_k$'s and for $f$.\\

For $q\in \mathbb{N}$ and $a\in ]-\frac{1}{q},0[$, we define $I_q$ and $\mathcal{Q}_{q,k}$ (resp. $\mathcal{Q}_{q}$) as in section \ref{ssec:def-partition} for $f_k$ (resp. for $f$).
Then, for each $k$, Lemma \ref{lemma:repabound} gives a scale $\varepsilon_q(f^k)$ that depends on $k$.
However, from the definition of $\varepsilon_q(f_k)$ from Lemma \ref{le:1step} and from the fact that $(f_k)$ is bounded for the $\mathcal{C}^r$ -norm, we may choose $\varepsilon_q$ independent of $k$.
Therefore, let
$$
\mathcal{R}_{q,k} = \mathcal{Q}_{q,k} \vee \mathcal{I}(\varepsilon_q) \;\;\; \text{and} \;\;\; \mathcal{R}_q = \mathcal{Q}_q \vee \mathcal{I}(\varepsilon_q).
$$
where $\mathcal{I}(\varepsilon_q)$ is, as in section \ref{ssec:def-partition}, a partition whose atoms are intervals of diameter less than $\varepsilon_q$.
Choose $a$ and $\mathcal{I}(\varepsilon_q)$ such that the border of $\mathcal{R}_{q,k}^m$ has zero $\mu_1, \xi^{M,m}, \xi_k^{M,m}$-measure, for any $k,M,m \in \bbN^*$.\\

The goal of this section is to prove the following result:

\begin{proposition}
\label{prop:entropy-cv}
For any $q \in \bbN^*$, we have
$$
\lim\limits_{m \to +\infty} \lim\limits_{M \to +\infty} \lim\limits_{k \to +\infty} \frac{1}{m} H_{\overline{\xi_k^{M,m}}}(\mathcal{R}_{q,k}^m) = h_f(\mu_1, \mathcal{R}_{q}) \leq h_f(\mu_1).
$$
\end{proposition}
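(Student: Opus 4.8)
The strategy is to prove the chain of equalities from right to left, splitting the work into a \emph{semicontinuity} step and a \emph{convergence} step. The inequality $h_f(\mu_1,\mathcal{R}_q)\le h_f(\mu_1)$ is immediate from the definition of metric entropy as a supremum over partitions, so the content is the first equality. I would first fix $q$, and for each $m$ work with the partition $\mathcal{R}_{q,k}^m=\bigvee_{j=0}^{m-1} f_k^{-j}\mathcal{R}_{q,k}$ evaluated against the normalized hyperbolic component $\overline{\xi_k^{M,m}}$. The key point is that $\tfrac1m H_{\overline{\xi_k^{M,m}}}(\mathcal{R}_{q,k}^m)$ is, for the purposes of the limit, a continuous functional of the pair (measure, partition boundary data): the atoms of $\mathcal{Q}_{q,k}$ are level sets of $\log|f_k'|$ with respect to the fixed grid $I_q$, and since $f_k\to f$ in $\mathcal{C}^1$, the functions $\log|f_k'|$ converge uniformly to $\log|f'|$ off any neighborhood of the critical set. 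Because $a$ has been chosen so that $\overline{\xi^{M,m}}$ and $\overline{\xi_k^{M,m}}$ give zero mass to the relevant boundaries $\partial\mathcal{R}_{q,k}^m$ and $\partial\mathcal{R}_q^m$, the entropy $H_{\rho}(\mathcal{R}_{q}^m)$ depends continuously on $\rho$ in the weak-$*$ topology at $\rho=\overline{\xi^{M,m}}$, and the discrepancy between $H_{\overline{\xi_k^{M,m}}}(\mathcal{R}_{q,k}^m)$ and $H_{\overline{\xi_k^{M,m}}}(\mathcal{R}_q^m)$ is controlled using uniform convergence $\log|f_k'|\to\log|f'|$ together with Lemma \ref{lemma:prop-hb}, which guarantees that along hyperbolic blocks — exactly where $\xi_k^{M,m}$ is supported — the iterates stay a definite distance from the critical set, so $\log|f_k'|$ and $\log|f'|$ really are uniformly close on the relevant region.

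Concretely, I would carry out the inner limit $k\to+\infty$ first: using $\xi_k^{M,m}\xrightarrow[k\to\infty]{} \xi^{M,m}$ weak-$*$, the boundary condition $\overline{\xi^{M,m}}(\partial\mathcal{R}_q^m)=0$, and the uniform $\mathcal{C}^1$-closeness argument of the previous paragraph to replace $\mathcal{R}_{q,k}$ by $\mathcal{R}_q$, conclude
$$
\lim_{k\to+\infty} \frac1m H_{\overline{\xi_k^{M,m}}}(\mathcal{R}_{q,k}^m) = \frac1m H_{\overline{\xi^{M,m}}}(\mathcal{R}_q^m).
$$
Then take $M\to+\infty$: by Lemma \ref{lemma:monotone-xi}, $\xi^{M,m}(B)\nearrow \beta\mu_1(B)$ for every Borel $B$, hence $\overline{\xi^{M,m}}\to\mu_1$ weak-$*$; again invoking the zero-boundary condition $\mu_1(\partial\mathcal{R}_q^m)=0$, the entropy of the \emph{fixed} finite partition $\mathcal{R}_q^m$ passes to the limit, giving
$$
\lim_{M\to+\infty}\lim_{k\to+\infty}\frac1m H_{\overline{\xi_k^{M,m}}}(\mathcal{R}_{q,k}^m)=\frac1m H_{\mu_1}(\mathcal{R}_q^m).
$$
Finally let $m\to+\infty$: since $\mu_1$ is $f$-invariant (Lemma \ref{lemma:f-inv}), the Kingman/subadditivity argument gives $\tfrac1m H_{\mu_1}(\mathcal{R}_q^m)\to h_f(\mu_1,\mathcal{R}_q)$, which completes the chain.

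The main obstacle, and the step deserving the most care, is the interchange of the $k\to\infty$ limit with the partition entropy when the partition $\mathcal{R}_{q,k}$ itself varies with $k$ — this is where the absence of a non-flatness hypothesis could in principle bite, since $\log|f_k'|$ is only controlled away from critical points. The resolution is precisely that $\overline{\xi_k^{M,m}}$ is built from hyperbolic blocks: Lemma \ref{lemma:prop-hb} bounds $\log|(f_k^{\ell})'|$ from below along any block $\llbracket l_1,l_2\llbracket\subset E_n^{M,m}$, which forces the orbit points carrying the mass of $\xi_k^{M,m}$ to avoid a neighborhood of $\{f_k'=0\}$ whose size is uniform in $k$ (using that $(f_k)$ is $\mathcal{C}^r$-bounded and $f_k\to f$ in $\mathcal{C}^1$). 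On that neighborhood's complement $\log|f_k'|\to\log|f'|$ uniformly, so each atom of $\mathcal{Q}_{q,k}$ is within a Hausdorff-small error of the corresponding atom of $\mathcal{Q}_q$ modulo a set of arbitrarily small $\overline{\xi_k^{M,m}}$-measure, and the elementary estimate $|H_\rho(\mathcal{A})-H_\rho(\mathcal{B})|$ for partitions differing on a small-measure set (together with finiteness of the number of atoms, from hypothesis (ii) of Proposition \ref{prop:entropy-ineq}) finishes the comparison. Once this uniformity is in hand, all three limits are routine applications of weak-$*$ continuity of entropy at partitions with negligible boundary and of the standard subadditive limit defining $h_f(\mu_1,\mathcal{R}_q)$.
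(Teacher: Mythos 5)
Your proposal is correct and follows essentially the same route as the paper: the limit in $k$ is handled by restricting to the support of $\xi_k^{M,m}$, where Lemma \ref{lemma:prop-hb} keeps orbits away from the critical set so that $\log|f_k'|\to\log|f'|$ uniformly and only finitely many atoms carry mass (this is exactly the content of Lemma \ref{lemma:CV-k-entropy} via Lemma \ref{lemma:cv-multi-unif-mes}), the limit in $M$ uses Lemma \ref{lemma:monotone-xi}, and the limit in $m$ is the standard subadditivity argument. The only imprecision is calling $\mathcal{R}_q^m$ a \emph{finite} partition in the $M$-step: it is countable, so passing $H_{\overline{\xi^{M,m}}}(\mathcal{R}_q^m)\to H_{\mu_1}(\mathcal{R}_q^m)$ needs the setwise monotone convergence of Lemma \ref{lemma:monotone-xi} together with $H_{\mu_1}(\mathcal{R}_q)<+\infty$ (item \emph{ii)} of Lemma \ref{lemma:entropy-R}) to control the tail of the sum, not merely weak-$*$ convergence at a negligible boundary.
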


We start by giving some properties of the partitions defined above.

\begin{lemma}[Proposition 6.6 from \cite{A}]
\label{lemma:entropy-R}
For any $q,k$, the collections $\mathcal{R}_{q,k}$ and $\mathcal{R}_q$ satisfy:
\begin{itemize}
\item[i)] the collection $\mathcal{R}_q$ is a partition for $\mu_1$;
\item[ii)] $H_{\mu_1}(\mathcal{R}_q) < +\infty$;
\item[iii)] $\forall E \in \mathcal{E}_{k,n}^{M,m}, \forall i \in E, f_k^{-i} \mathcal{R}_{q,k}$ is a partition for $(\nu_k)_E$;
\item[iv)] $\forall M,m,k, \sharp \{ R \in \mathcal{R}_{q,k}^m : \exists n, \exists E \in \mathcal{E}_n^{M,m}, \nu_k^E(R) > 0 \} < +\infty$;
\item[v)] $\forall m,M \in \bbN^*, \overline{\xi_k^{M,m}}(\partial \mathcal{R}_{q,k}^m)=0$;
\item[\namedlabel{itm:entropy-R6}{vi)}] $\limsup\limits_{k \to +\infty}\limsup\limits_{n \to +\infty} \frac{1}{n} \sum\limits_{E \in \mathcal{E}_{k,n}^{M,m}} \nu_k(E) \log \left ( \sharp(\mathcal{R}_{q,k})_{\nu_k^E} \right ) \sharp\partial E \underset{M \to +\infty}{\longrightarrow} 0$.
\end{itemize}
\end{lemma}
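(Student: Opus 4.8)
This is Proposition 6.6 from \cite{A}, and the plan is to recall its proof while keeping track of the extra parameter $k$; throughout we use the uniform bound $\sup_k\| f_k' \|_\infty\le K$ (hence also $\| f' \|_\infty\le K$) coming from the $\mathcal{C}^1$-boundedness of $(f_k)$, and the fact that Lemmas \ref{lemma:prop-hb} and \ref{lem:EnM} apply verbatim to each $f_k$. Items i), ii) and v) are routine. First, $\log|f'|\in\mathbb{L}^1(\mu_1)$ and $\mu_1(\{f'=0\})=0$: indeed, Ruelle's inequality gives $\int\log^-|f_k'|\,d\nu_k\le\int\log^+|f_k'|\,d\nu_k\le\log^+K$, and truncating at level $T$ and passing to the weak-$\ast$ limit (the functions $\min\{\log^-|f_k'|,T\}$ are continuous and converge uniformly to $\min\{\log^-|f'|,T\}$), then letting $T\to\infty$, gives $\int\log^-|f'|\,d\mu\le\log^+K$, whence $\int\log^-|f'|\,d\mu_1<\infty$ (recall $\beta\mu_1\le\mu$; if $\beta=0$ one takes $\mu_1$ hyperbolic by convention). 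Since $\mathcal{Q}_q$ and $\mathcal{I}(\varepsilon_q)$ are genuine partitions of $\{f'\neq0\}$ and of $I$ respectively, this gives i), and then $H_{\mu_1}(\mathcal{R}_q)\le H_{\mu_1}(\mathcal{Q}_q)+\log\sharp\mathcal{I}(\varepsilon_q)<+\infty$ by the standard fact that a partition subordinate to the level sets of an $\mathbb{L}^1$ function has finite entropy, giving ii). For v): one has $\partial\mathcal{R}_{q,k}^m\subset\bigcup_{j<m}f_k^{-j}\big(\partial\mathcal{Q}_{q,k}\cup\partial\mathcal{I}(\varepsilon_q)\big)$, and since the countably many measures $(f_k^{\,j})_*\overline{\xi_k^{M,m}}$ (over $k,M,m,j$) have only countably many atoms in total, the defining shift $a$ and the finitely many endpoints of $\mathcal{I}(\varepsilon_q)$ can be — and in Section \ref{ssec:def-partition} were — chosen to avoid all of them.

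The content of the statement lies in iii), iv) and vi), which all rest on a single observation: if $x$ lies in the atom $E\in\mathcal{E}_{k,n}^{M,m}$ and $l\in E$, then $l,l+1,\dots,l+m-1$ all belong to $E_{k,n}^{M}(x)$ — this is precisely why each hyperbolic block was trimmed by its last $m-1$ elements to form $E_n^{M,m}$ — so Lemma \ref{lemma:prop-hb} (for $f_k$, applied to the one-step intervals $[\![l+j,l+j+1[\![$) gives
$$
\log|f_k'(f_k^{l+j}x)|\in[-M\log K,\log^+K]\qquad(0\le j\le m-1).
$$
For iii): in particular $f_k^{\,l}x\notin\{f_k'=0\}$, so $E\cap f_k^{-l}\{f_k'=0\}=\emptyset$ and $f_k^{-l}\mathcal{R}_{q,k}$ restricts to a genuine partition of $E$. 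For iv): any atom of $\mathcal{R}_{q,k}^m=\bigvee_{j<m}f_k^{-j}\mathcal{R}_{q,k}$ with $\nu_k^E(R)>0$ for some $n$ and some $E\in\mathcal{E}_{k,n}^{M,m}$ has the form $\bigcap_{j<m}f_k^{-j}R_j$, where each $R_j$ is an atom of $\mathcal{R}_{q,k}$ hit by a point $f_k^{\,l+j}x$ with $x\in E$ and $l\in E$; by the displayed inclusion each such $R_j$ lies in $(\log|f_k'|)^{-1}(J)$ for one of at most $O(qM)$ intervals $J\in I_q$, and in one of the $\sharp\mathcal{I}(\varepsilon_q)$ atoms of $\mathcal{I}(\varepsilon_q)$, so $R_j$ takes at most $N(M):=O(qM)\cdot\sharp\mathcal{I}(\varepsilon_q)$ values; hence there are at most $N(M)^m$ such atoms, a bound independent of $n$ and $k$. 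Taking $m=1$ in the same count gives $\log\sharp(\mathcal{R}_{q,k})_{\nu_k^E}\le\log N(M)$ for every $E$.

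Finally, vi) — the only genuinely quantitative point — follows by combining this uniform logarithmic bound with item ii) of Lemma \ref{lem:EnM}. Trimming blocks creates no new connected components, so $\sharp\partial E_{k,n}^{M,m}(x)\le\sharp\partial E_{k,n}^{M}(x)+2$; since the densities $d_n(\partial E_{k,n}^{M}(x))$ lie in $[0,1]$ and satisfy $\limsup_n d_n(\partial E_{k,n}^{M}(x))\le 2/M$ for $\nu_k$-a.e. $x$, the reverse Fatou lemma gives $\limsup_k\limsup_n\frac1n\int\sharp\partial E_{k,n}^{M,m}\,d\nu_k\le 2/M$. Combining,
$$
\limsup_{k\to\infty}\limsup_{n\to\infty}\frac1n\sum_{E\in\mathcal{E}_{k,n}^{M,m}}\nu_k(E)\log\big(\sharp(\mathcal{R}_{q,k})_{\nu_k^E}\big)\,\sharp\partial E\ \le\ \frac{2\log N(M)}{M},
$$
and since $N(M)$ is polynomial in $M$ for fixed $q$, the right-hand side is $O(\log M/M)\to 0$ as $M\to+\infty$. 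The main obstacle is not any individual estimate but the uniformity in $k$: one must ensure that $N(M)$, the bound on the boundary density, and the scale $\varepsilon_q$ can all be taken independent of $k$, which is exactly what the single constant $K$ and the $k$-independent choice of $\varepsilon_q$ guarantee.
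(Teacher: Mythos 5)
Your proof is correct and follows essentially the same route the paper intends: the paper itself defers items i)--iii), v), vi) to Proposition 6.6 of \cite{A} and, in the remark following the lemma, points to exactly the mechanism you use for iv) and vi) — namely that $l\in E_{k,n}^{M,m}(x)$ forces $l+j\in E_{k,n}^{M}(x)$ for $j<m$, so Lemma \ref{lemma:prop-hb} confines $\log|f_k'|$ along the relevant orbit segments to $[-M\log K,\log^+K]$ and bounds the number of admissible atoms by a quantity polynomial in $M$, uniformly in $n$ and $k$ (this is the argument of Lemma \ref{lemma:CV-k-entropy}). One small inaccuracy: Ruelle's inequality does not by itself give $\int\log^-|f_k'|\,d\nu_k\le\int\log^+|f_k'|\,d\nu_k$ (an attracting cycle is a counterexample); here the nonnegativity of $\lambda_{f_k}(\nu_k)$ instead comes from the standing assumption $\int\psi^k\,d\nu_k>\delta>0$ under which the whole hyperbolic-block construction is carried out.
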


\begin{remark}
Item $iv)$ does not appear directly in Proposition 6.6 from \cite{A}, but it can be shown with the same argument, as explained in the part 2) of the proof of Proposition 7.2 from \cite{A}. Nonetheless, a more complicated version of this argument is given in Lemma \ref{lemma:CV-k-entropy}, where we show that this finiteness is also uniform on $k$.
\end{remark}

The following lemma is a general technical statement related to the weak-$*$ convergence of measures.

\begin{lemma}
\label{lemma:cv-multi-unif-mes}
Let $X$ and $Y$ be metric spaces with $X$ compact.
Let $(\rho_k)$ be a sequence of probability measures on $X$ that converges to some $\rho$ in the weak-$*$ topology. Consider finitely many sequences of continuous maps $(h_{k,i} : X \to Y)_k$ for $i \in [\![ 1 ; m ]\!]$ such that each converges uniformly to some $h_i$.
Then, for any borelians $A_1, ..., A_m \subset Y$ such that $\rho( h_i^{-1}( \partial A_i)) = 0$ for every $i$, we have
$$
\rho_k(\bigcap_i h_{k,i}^{-1}(A_i)) \underset{k \to +\infty}{\longrightarrow} \rho(\bigcap_i h_i^{-1} (A_i)).
$$
\end{lemma}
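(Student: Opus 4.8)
The strategy is a standard approximation argument combining weak-$*$ convergence with the uniform convergence of the maps $h_{k,i}$, reducing the joint statement to the one-map case and then to convergence on sets with negligible boundary. First I would observe that the sets $h_i^{-1}(A_i)$ need not have $\rho$-negligible topological boundary themselves, so one cannot apply the portmanteau theorem directly to $\bigcap_i h_i^{-1}(A_i)$; the hypothesis is instead that $\rho(h_i^{-1}(\partial A_i)) = 0$ for each $i$. The plan is to handle the perturbation $h_{k,i} \to h_i$ separately from the measure perturbation $\rho_k \to \rho$.

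\textbf{Step 1: Reduce to fixed maps.} Fix $\varepsilon > 0$. For each $i$, since $h_i$ is continuous and $\rho(h_i^{-1}(\partial A_i)) = 0$, and since $X$ is compact (so the sets involved are reasonable), one can find open sets $U_i \supset \overline{A_i}$ and closed sets $C_i \subset \mathring{A_i}$, or more simply enlarge/shrink $A_i$ slightly in the $Y$-metric, such that the symmetric differences have small $\rho \circ h_i^{-1}$-measure: choosing a suitable $\eta > 0$ and replacing $A_i$ by its $\eta$-neighborhood $A_i^\eta$ and by $(A_i^c)^\eta{}^c$, the monotone convergence as $\eta \to 0$ together with $\rho(h_i^{-1}(\partial A_i)) = 0$ gives $\rho(h_i^{-1}(A_i^\eta \setminus A_i)) < \varepsilon$ and similarly on the other side. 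Then, using uniform convergence $h_{k,i} \to h_i$, for $k$ large enough (depending on $\eta$) we have $\|h_{k,i} - h_i\|_\infty < \eta$, whence $h_{k,i}^{-1}(A_i) \subset h_i^{-1}(A_i^\eta)$ and $h_i^{-1}(C_i^\eta) \subset h_{k,i}^{-1}(A_i)$ for appropriate inner set $C_i$. This sandwiches $\rho_k(\bigcap_i h_{k,i}^{-1}(A_i))$ between $\rho_k(\bigcap_i h_i^{-1}(\text{inner}))$ and $\rho_k(\bigcap_i h_i^{-1}(\text{outer}))$ up to the error terms.

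\textbf{Step 2: Apply weak-$*$ convergence.} Now I would choose the inner and outer approximating sets to additionally have $\rho \circ h_i^{-1}$-negligible boundary — this is possible because for all but countably many radii $\eta$ the set $\partial(A_i^\eta)$ (or its preimage) is $\rho$-null, by a standard uncountability-of-disjoint-positive-measure-sets argument. With such a choice, $\bigcap_i h_i^{-1}(\text{outer}_i)$ has $\rho$-negligible boundary (the boundary of a finite intersection is contained in the union of the boundaries, each of which is inside an $h_i^{-1}(\partial(\text{outer}_i))$ up to continuity considerations), so the portmanteau theorem applies: $\rho_k(\bigcap_i h_i^{-1}(\text{outer}_i)) \to \rho(\bigcap_i h_i^{-1}(\text{outer}_i))$, and likewise for the inner sets. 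Combining with Step 1 and letting $\varepsilon \to 0$ yields $\rho_k(\bigcap_i h_{k,i}^{-1}(A_i)) \to \rho(\bigcap_i h_i^{-1}(A_i))$.

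\textbf{Main obstacle.} The delicate point is the boundary bookkeeping: one must ensure that the approximating sets used for the portmanteau step genuinely have $\rho$-negligible boundary after pulling back through $h_i$, and simultaneously that their symmetric difference with $h_i^{-1}(A_i)$ is controlled by the hypothesis $\rho(h_i^{-1}(\partial A_i)) = 0$. The cleanest route is probably to work with the $\eta$-enlargements $A_i^\eta = \{y : d(y, A_i) < \eta\}$ and $B_i^\eta = \{y : d(y, A_i^c) \geq \eta\}$, note $\partial(A_i^\eta) \subset \{y : d(y,A_i) = \eta\}$ and that these are disjoint for distinct $\eta$, hence $\rho(h_i^{-1}(\partial A_i^\eta)) = 0$ for all but countably many $\eta$; and separately note $A_i^\eta \setminus B_i^\eta \downarrow \partial A_i$ (closure issues aside) as $\eta \downarrow 0$, so $\rho(h_i^{-1}(A_i^\eta \setminus B_i^\eta)) \to \rho(h_i^{-1}(\partial A_i)) = 0$. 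Once these two facts are in hand the rest is routine. I expect the write-up to be short, with the bulk of the work being the careful statement of these two measure-theoretic facts about $\eta$-neighborhoods.
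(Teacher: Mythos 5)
Your argument is correct, but it takes a genuinely different route from the paper. The paper bundles the maps into a single map $h_k=(h_{k,1},\dots,h_{k,m}):X\to Y^m$, notes that $\bigcap_i h_{k,i}^{-1}(A_i)=h_k^{-1}(A_1\times\cdots\times A_m)$, invokes the continuity of $(h,\rho)\mapsto h_*\rho$ (uniform convergence of the maps together with weak-$*$ convergence of the measures gives $(h_k)_*\rho_k\to h_*\rho$), and then checks that $A_1\times\cdots\times A_m$ has $h_*\rho$-negligible boundary by induction on $m$ via $\partial(A\times B)\subset(\overline{A}\times\partial B)\cup((\partial A)\times\overline{B})$, so that a single application of portmanteau in $Y^m$ concludes. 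You instead keep the maps separate and sandwich $h_{k,i}^{-1}(A_i)$ between $h_i^{-1}(B_i^{\eta})$ and $h_i^{-1}(A_i^{\eta})$ using $\|h_{k,i}-h_i\|_\infty<\eta$, pick $\eta$ outside the countable exceptional set where the level sets $\{d(\cdot,A_i)=\eta\}$ or $\{d(\cdot,A_i^c)=\eta\}$ carry $\rho\circ h_i^{-1}$-mass, apply portmanteau in $X$ to the fixed-map preimages (using $\partial h_i^{-1}(S)\subset h_i^{-1}(\partial S)$ for continuous $h_i$), and finish by monotone convergence since $\bigcap_{\eta}(A_i^{\eta}\setminus B_i^{\eta})=\partial A_i$. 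Both proofs are complete; the paper's is shorter once the pushforward-continuity fact is granted (that fact itself quietly uses compactness of $X$ to get uniform continuity of test functions on the relevant range), while yours is more elementary and self-contained at the cost of the exceptional-radii bookkeeping, and has the minor advantage of never leaving the space $X$ or the product topology on $Y^m$.
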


\begin{proof}
For $k \in \bbN$, consider the map
$$
h_k = (h_{k,1}, ..., h_{k,m}) : X \to Y^m.
$$
Hence $(h_k)$ converges uniformly to the map $h$ defined as
$$
h = (h_1, ..., h_m) : X \to Y^m.
$$
With these notations, for any borelians $A_1, ..., A_m \subset Y$, we have
$$
\bigcap_i h_{k,i}^{-1}(A_i) = h_k^{-1}(A_1 \times ... \times A_m).
$$
Then, by continuity of the map $(h, \rho) \mapsto h_* \rho$, we have
$$
(h_k)_* \rho_k \overset{*}{\underset{k \to +\infty}{\longrightarrow}} h_* \rho.
$$
Therefore, we are only left to show that for any borelians $A_1, ..., A_m \subset Y$, if $\rho\left ( h_i^{-1}(\partial A_i) \right ) = 0$ for every $i$, then $\rho\left ( h^{-1}\left(\partial \left ( A_1 \times ... \times A_m \right ) \right) \right ) = 0$.
This can be proved by induction on $m$ by using the following fact:
$$
\partial (A \times B) \subset (\overline{A} \times \partial B) \cup ((\partial A) \times \overline{B}).
$$
\end{proof}

We now give the consequences of this lemma regarding the convergence of the entropy of the hyperbolic component.

\begin{lemma}
\label{lemma:CV-k-entropy}
For any $M,m,q \in \bbN^*$, we have
$$
H_{\xi_k^{M,m}}(\mathcal{R}_{q,k}^m) \underset{k \to +\infty}{\longrightarrow} H_{\xi^{M,m}}(\mathcal{R}_q^m).
$$
\end{lemma}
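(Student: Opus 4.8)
The key point is to exploit the fact that the partitions $\mathcal{R}_{q,k}^m = \bigvee_{l=0}^{m-1} f_k^{-l}\mathcal{R}_{q,k}$ admit a common index set independent of $k$. Writing $I_q = \{J_i : i\in\bbZ\}$ with $J_i = [\tfrac{i}{q}+a,\tfrac{i+1}{q}+a[$, and letting $\mathcal{J}$ be the finite index set of $\mathcal{I}(\varepsilon_q)$ with atoms $\mathrm{I}_j$, the atom of $\mathcal{R}_{q,k}^m$ associated to $\vec v = ((i_l,j_l))_{0\le l<m}\in(\bbZ\times\mathcal{J})^m$ is $P_{\vec v}^{\,k} := \bigcap_{l=0}^{m-1}\big(|f_k'\circ f_k^l|^{-1}(e^{J_{i_l}})\cap (f_k^l)^{-1}(\mathrm{I}_{j_l})\big)$, where $e^{J_i} := [e^{i/q+a},e^{(i+1)/q+a}[$, and similarly $P_{\vec v}$ is an atom of $\mathcal{R}_q^m$ for $f$. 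Note I have deliberately rewritten $(\log|f_k'|)^{-1}(J_i)$ as $|f_k'|^{-1}(e^{J_i})$, since $|f_k'|$ is continuous while $\log|f_k'|$ is not — this is what will let weak-$*$ arguments go through.

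\textbf{Reduction to finitely many atoms, uniformly in $k$.} Set $K := \sup_k\|f_k'\|_\infty<+\infty$, so also $\|f'\|_\infty\le K$. First I would observe that if $i\in E_{k,n}^{M,m}(x)$ and $0\le l<m$ then $i+l\in E_{k,n}^M(x)$ (this is exactly why $E_n^{M,m}$ was defined by trimming the ends of the blocks of $E_n^M$), so Lemma \ref{lemma:prop-hb} applied to $[\![i+l,i+l+1[\![$ gives $|f_k'(f_k^{i+l}x)|\ge K^{-M}$. Hence $\xi_{k,n}^{M,m}$, and so its weak-$*$ limit $\xi_k^{M,m}$, is carried by the closed set $Z_k := \bigcap_{l=0}^{m-1}\{y: |f_k'(f_k^l y)|\ge K^{-M}\}$. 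On $Z_k$ one has $\log|f_k'\circ f_k^l|\in[-M\log K,\log K]$ for all $l$, so $\xi_k^{M,m}(P_{\vec v}^{\,k})=0$ unless every $J_{i_l}$ meets $[-M\log K,\log K]$; the set $\mathcal{S}$ of such $\vec v$ is finite and does not depend on $k$. Using the uniform convergence $|f_k'\circ f_k^l|\to|f'\circ f^l|$ on the compact $I$ (from $f_k\to f$ in $\mathcal{C}^1$), for each $\varepsilon>0$ the closed set $\{|f'\circ f^l|\ge K^{-M}-\varepsilon,\ \forall l\}$ contains $Z_k$ for all large $k$, hence carries the limit $\xi^{M,m}$; letting $\varepsilon\to 0$ shows $\xi^{M,m}$ is carried by $\bigcap_l\{|f'\circ f^l|\ge K^{-M}\}$, so $\xi^{M,m}(P_{\vec v})=0$ for $\vec v\notin\mathcal{S}$ as well. (This is also how one gets the uniform finiteness mentioned in the Remark after Lemma \ref{lemma:entropy-R}.)

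\textbf{Atomwise convergence and conclusion.} Then, assuming first $\beta^{M,m}:=\xi^{M,m}(I)>0$, the normalized measures $\overline{\xi_k^{M,m}}$ converge weak-$*$ to $\overline{\xi^{M,m}}$, and for each fixed $\vec v=((i_l,j_l))\in\mathcal{S}$ I would apply Lemma \ref{lemma:cv-multi-unif-mes} with $X=I$, $\rho_k=\overline{\xi_k^{M,m}}$, the continuous maps $x\mapsto|f_k'(f_k^l x)|$ and $x\mapsto f_k^l(x)$ (which converge uniformly to $x\mapsto|f'(f^l x)|$ and $x\mapsto f^l(x)$), and the Borel sets $e^{J_{i_l}}$ and $\mathrm{I}_{j_l}$, each with two-point boundary; the hypothesis that these boundaries be $\overline{\xi^{M,m}}$-null for the limit maps is exactly $\overline{\xi^{M,m}}(\partial\mathcal{R}_q^m)=0$, guaranteed by the choice of $a$ and $\mathcal{I}(\varepsilon_q)$ in Sections \ref{ssec:def-partition} and \ref{sec:entropy-HB}. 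This yields $\xi_k^{M,m}(P_{\vec v}^{\,k})\to\xi^{M,m}(P_{\vec v})$. Since by the previous step $H_{\xi_k^{M,m}}(\mathcal{R}_{q,k}^m)=-\sum_{\vec v\in\mathcal{S}}\xi_k^{M,m}(P_{\vec v}^{\,k})\log\xi_k^{M,m}(P_{\vec v}^{\,k})$ is a finite sum over the fixed set $\mathcal{S}$, continuity of $t\mapsto -t\log t$ on $[0,1]$ gives the claimed convergence to $H_{\xi^{M,m}}(\mathcal{R}_q^m)$. The degenerate case $\beta^{M,m}=0$ is handled separately: there $\xi^{M,m}=0$ so $H_{\xi^{M,m}}(\mathcal{R}_q^m)=0$, while concavity and Step 1 give $H_{\xi_k^{M,m}}(\mathcal{R}_{q,k}^m)\le \beta_k^{M,m}\log\sharp\mathcal{S}-\beta_k^{M,m}\log\beta_k^{M,m}\to 0$ as $\beta_k^{M,m}\to 0$.

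\textbf{Main obstacle.} The essential difficulty is that $\mathcal{R}_{q,k}^m$ is genuinely a countable partition and its defining function $\log|f_k'|$ is unbounded below near critical points, so weak-$*$ convergence of $\xi_k^{M,m}$ does not by itself control the entropy sum (entropy can jump up in the limit). The resolution — and the only nontrivial input beyond routine measure theory — is the uniform support confinement coming from Lemma \ref{lemma:prop-hb}, which pins $\xi_k^{M,m}$ inside a region where $|f_k'\circ f_k^l|$ is bounded below uniformly in $k$, thereby reducing the whole statement to a finite, $k$-independent family of atoms on which the continuous observables $|f_k'|$ and $f_k^l$ and Lemma \ref{lemma:cv-multi-unif-mes} apply directly.
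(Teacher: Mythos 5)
Your proof is correct and follows essentially the same two-step strategy as the paper: uniform-in-$k$ finiteness of the atoms carrying mass, obtained from Lemma \ref{lemma:prop-hb}, followed by atomwise convergence via Lemma \ref{lemma:cv-multi-unif-mes}. The only difference is cosmetic — you use the globally continuous observables $|f_k'\circ f_k^l|$ with the sets $e^{J_i}$, whereas the paper restricts $\log|f_k'\circ f_k^l|$ to the compact set $\bigcap_{s} f^{-s}A^{2M}$ where it is continuous and converges uniformly; both devices serve the same purpose, and your explicit handling of the normalization and of the degenerate case $\beta^{M,m}=0$ is a welcome extra.
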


\begin{proof}
Let us write $\psi(x) = -x \log(x)$ for $x \in [0,1]$, so that
\begin{align*}
H_{\xi_k^{M,m}}(\mathcal{R}_{q,k}^m) = \sum\limits_{R \in \mathcal{R}_{q,k}^m} \psi(\xi_k^{M,m}(R)). 
\end{align*}
We first show that only finitely many atoms will appear in the above sum. More precisely, we show that
\begin{align*}
&\sharp \{ ((J_s)_{s \in [\![ 0;m-1]\!]}, (I_s)_{s \in [\![ 0;m-1]\!]}) \in I_q^m\times \mathcal{I}(\varepsilon_q)^m :\\
&\hspace{1em}\exists k \in \bbN, \text{ the associated atom of } \mathcal{R}_{q,k}^m \text{ has positive } \xi_{k}^{M,m} \text{ measure} \} < +\infty.
\end{align*}
Then, we have that $\partial \mathcal{R}_{q,k}^m$ is of zero $\xi_{k}^{M,m}$-measure and $\xi_{k,n}^{M,m} \underset{n \to +\infty}{\longrightarrow} \xi_k^{M,m}$ in the weak-$*$ topology. Thus, it suffices to show that
\begin{align*}
&\sharp \{ ((J_s)_{s \in [\![ 0;m-1]\!]}, (I_s)_{s \in [\![ 0;m-1]\!]}) \in I_q^m\times \mathcal{I}(\varepsilon_q)^m :\\
&\hspace{1em}\exists k,n \in \bbN, \text{ the associated atom of } \mathcal{R}_{q,k}^m \text{ has positive } \xi_{k,n}^{M,m} \text{ measure} \} < +\infty.
\end{align*}
We show that this is a consequence of Lemma \ref{lemma:prop-hb} and of the choice of $E_{k,n}^{M,m}$ instead of $E_{k,n}^{M}$.
Let $R$ be an atom of $\mathcal{R}_{q,k}^m$ and write
$R = \bigcap\limits_{s = 0}^{m-1} f_k^{-s} (I_s \cap (\log |f_k'|)^{-1} (J_s))$ with $J_s \in I_q$ and $I_s \in \mathcal{I}(\varepsilon_q)$.
Assume that there exist $k$ and $n$ with $\xi_{k,n}^{M,m}(R) > 0$. 
By definition of $\xi_{k,n}^{M,m}$, this implies that there exist $x \in I$ and $l \in E_{k,n}^{M,m}(x)$ such that $f_k^l(x) \in R$.
Let $s \in [\![ 0 ; m-1 ]\!]$ and write $J_s = \left [ \frac{i}{q} +a ; \frac{i+1}{q} + a \right [$, we have $\log |f_k'(f_k^{l+s} x)| \leq \frac{i+1}{q} + a \leq \frac{i+1}{q}$.
Then $l$ is in $E_{k,n}^{M,m}(x)$, which implies that $l+s$ is in $E_{k,n}^M(x)$.
Hence, from Lemma \ref{lemma:prop-hb}, we have
$$
\log |f_k'(f_k^{l+s} x)| \geq -M \log || f_k' ||_{\infty}.
$$
In particular, by choosing $k$ large enough depending only on $M$, we have
$$
\log |f_k'(f_k^{l+s} x)| \geq -2M \log || f' ||_{\infty}.
$$
This implies that $i \geq -1 - 2qM \log || f' ||_{\infty}$.
Therefore, there are only finitely many $J_0, ..., J_{m-1} \in I_q$ that give an atom of $\mathcal{R}_{q,k}^m$ of positive $\xi_{k,n}^{M,m}$-measure, for $k$ large enough depending only on $M$.\\

To conclude, we show that each term of the first sum converges. 
We let
\begin{align*}
&A_k^M = \{ x \in I : \log |f_k'(x)| \geq -M\log || f_k' ||_{\infty} \},\\
&A^M = \{x \in I : \log |f'(x)| \geq - M \log || f' ||_{\infty} \}.
\end{align*}
We show that for $k$ large enough, we have $f_k^{-s} A_k^M \subset f^{-s}A^{2M}$ for any $s \in [\![ 0 ; m-1 ]\!]$.
Since $f_k \to f$ $\mathcal{C}^r$-weakly, we have that $f_k' \to f'$ uniformly on $I$.
Let $\eta > 0$.
Let $\varepsilon > 0$ be such that, for any $s > 0$ and $t > || f' ||_{\infty}^{-2M}$ satisfying $|s-t| < \varepsilon$, we have $|\log s - \log t| < \eta$.
Let $k$ be large enough so that $|| f_k' \circ f_k^s -  f' \circ f^s ||_{\infty} < \varepsilon$ for any $s \in [\![0 ; m-1 ]\!]$ and such that $-M \log || f_k'||_{\infty} \geq -2M \log || f' ||_{\infty} + \eta$.
Therefore for such $k$, we have $f_k^{-s} A_k^M \subset f^{-s} A^{2M}$. By the same argument, we also get $f^{-s} A^{2M} \subset f_k^{-s} A_k^{3M}$ for $k$ large enough.\\

Hence, if we let $X = \bigcap\limits_{s = 0}^{m-1} f^{-s} A^{2M}$ and $h_{k,s}(x) = (f_k^s x, \log |f_k'(f_k^sx)|)$ for $s \in [\![0;m-1 ]\!]$ and $x \in I$, then we have
\begin{itemize}
\item[i)] On $X$, $h_{k,s}$ is continuous and converges uniformly to $h_s(x) = (f^s x, \log |f'(f^s x)|)$
\item[ii)] Any $(I_s),(J_s)$ giving an atom of $\mathcal{R}_{q,k}^m$ of positive $\xi_{k,n}^{M,m}$-measure satisfies $\bigcap\limits_{s=0}^{m-1} h_{k,s}^{-1}(I_s \times J_s) \subset X$ and $\bigcap\limits_{s=0}^{m-1} h_s^{-1}(I_s \times J_s) \subset X$
\end{itemize}
Then each atom of $\mathcal{R}_{q,k}^m$ is of the form $\bigcap\limits_{s=0}^{m-1} f_k^{-s}(I_s \cap (\log |f_k'|)^{-1}(J_s)) = \bigcap\limits_{s=0}^{m-1} h_{k,s}^{-1}(I_s \times J_s)$ with $J_s \in I_q$ and $I_s \in \mathcal{I}(\varepsilon_q)$.
Therefore, by the choice of $a$, of $\mathcal{I}(\varepsilon_q)$ and by Lemma \ref{lemma:cv-multi-unif-mes}, the $\xi_k^{M,m}$-measure of such atoms converges, as $k$ goes to infinity, to $\xi^{M,m}\left ( \bigcap\limits_{s=0}^{m-1} f^{-s}(I_s \cap (\log |f'|)^{-1}(J_s)) \right )$.
\end{proof}

We can now prove the main result of this section.

\begin{proof}[Proof of Proposition \ref{prop:entropy-cv}]
Recall that we want to prove
$$
\lim\limits_{m \to +\infty} \lim\limits_{M \to +\infty} \lim\limits_{k \to +\infty} \frac{1}{m} H_{\overline{\xi_k^{M,m}}}(\mathcal{R}_{q,k}^m) = h(\mu_1, \mathcal{R}_{q}) \leq h(\mu_1).
$$
From Lemma \ref{lemma:CV-k-entropy}, we have $H_{\xi_k^{M,m}}(\mathcal{R}_{q,k}^m) \underset{k \to +\infty}{\longrightarrow} H_{\xi^{M,m}}(\mathcal{R}_{q}^m)$.
Notice as well that
$$
H_{\overline{\xi_k^{M,m}}}(\mathcal{R}_{q,k}^m)
= \log \beta_k^{M,m} + \frac{1}{\beta_k^{M,m}} H_{\xi_k^{M,m}}(\mathcal{R}_{q,k}^{M,m}).
$$
Then, by using Lemma \ref{lemma:monotone-xi} and by following the second part of the proof of Lemma 7.2 from \cite{A}, we obtain
$$
H_{\overline{\xi^{M,m}}}(\mathcal{R}_{q}^m) \underset{M \to +\infty}{\longrightarrow} H_{\mu_1}(\mathcal{R}_q^m).
$$
We conclude by using item $ii)$ from Lemma \ref{lemma:entropy-R}, which gives
$$
\lim\limits_{m \to +\infty}\frac{1}{m} H_{\mu_1}(\mathcal{R}_q^m) = h(\mu_1, \mathcal{R}_q) \leq h(\mu_1).
$$
\end{proof}

\subsection{Continuity of the exponent of the hyperbolic component}
\label{ssec:cont-expo-HB}

We follow the notations from section \ref{ssec:hyperbolic-times} and we let $\mathtt{F}_k = \mathtt{F}_{\nu_k}$ and we denote by $m_{k,n}(x)$ (resp. $m_{k,n}^+(x)$) the $m_n(x)$ (resp. $m_n^+(x)$) for $f_k$.
Hence the convergence $\frac{m_{k,n}^+(x)}{n} \to 1$ is uniform on $\mathtt{F}_k$. We also defined the measure $\zeta_k$ to be the normalized restriction of $\nu_k$ to $\mathtt{F}_k$.
We define $\xi_{k,n,\mathtt{F}_k}^M$ and $\eta_{k,n,\mathtt{F}_k}^M$ the same way as $\xi_{k,n}^M$ and $\eta_{k,n}^M$ in section \ref{ssec:hyperbolic-times}, but by pushing forward $\zeta_k$ instead of $\nu_k$.
When $n$ goes to infinity, we also define $\xi_{k,\mathtt{F}_k}^M$ (resp. $\eta_{k,\mathtt{F}_k}^M$) to be the limit of $\xi_{k,n,\mathtt{F}_k}^M$ (resp. $\eta_{k,n,\mathtt{F}_k}^M$).\\

For each $k$, the map $\log |f_k'|$ is $\nu_k$-integrable.
Therefore, we may assume that $\nu_k(\mathtt{F}_k)$ converges quickly enough to 1 so that
$$
\lim\limits_{k \to +\infty} |\lambda_{f_k}(\nu_k) - \lambda_{f_k}(\zeta_k)| = 0,
$$
Then, we have $\zeta_k = \xi_{k,n,\mathtt{F}_k}^M + \eta_{k,n,\mathtt{F}_k}^M$, so that
$$
\lambda_{f_k}(\zeta_k) = \lambda_{f_k}(\xi_{k,n,\mathtt{F}_k}^M) + \lambda_{f_k}(\eta_{k,n,\mathtt{F}_k}^M),
$$
where $\lambda_{f_k}(\xi_{k,n,\mathtt{F}_k}^M) = \int \log |f_k'| d\xi_{k,n,\mathtt{F}_k}^M$ and $\lambda_{f_k}(\eta_{k,n,\mathtt{F}_k}^M) = \int \log |f_k'| d\eta_{k,n,\mathtt{F}_k}^M$.
In this section, we prove that $\lambda_{f_k}(\xi_{k,n,\mathtt{F}_k}^M)$ converges to $\beta \lambda_f(\mu_1)$.

\begin{proposition}
\label{proposition:mu1-integrable}
The function $\log |f'|$ is $\mu_1$-integrable.
\end{proposition}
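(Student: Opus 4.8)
The plan is to reduce the claim to showing $\int \log^-|f'|\,d\mu_1 < +\infty$, where $\log^-|f'| = \max\{-\log|f'|,0\}$; since $\log|f'| \le \log^+\|f'\|_\infty$ pointwise, this suffices. The only obstruction is the critical set of $f$, on which $\log|f'| = -\infty$, and the heart of the argument is that the hyperbolic component $\xi^M := \xi^{M,1,1}$ charges a neighborhood of the critical set only very lightly, with a bound that is uniform in $M$ and therefore survives the passage to the limit $\beta\mu_1 = \lim_M \xi^M$. Throughout I work under the standing hypotheses of this section, i.e. those of Proposition \ref{prop:main-prop}, which in particular force $\beta > 0$ (hypothesis \ref{itm:main-prop1}) makes the lower densities of the sets $E_k(x)$ uniformly bounded away from $0$, cf. Remark \ref{rmk:density-E}); this positivity of $\beta$ will be needed at the end. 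I may also assume $\|f_k'\|_\infty > 1$ for all $k$, since $h_{\rm{top}}(f)>0$ forces $\|f'\|_\infty > 1$ and $f_k \to f$ in $\mathcal{C}^1$.

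First I would establish a uniform bound on $\xi^M_{k,n}$. Fix $k$ and let $[\![a,b[\![$ be an $M$-hyperbolic block of a point $x$ for $f_k$. By Lemma \ref{lemma:prop-hb}, $\log|(f_k^{b-a})'(f_k^a x)| \ge -M\log\|f_k'\|_\infty$; writing the left side as $\sum_{i\in[\![a,b[\![}\log|f_k'(f_k^i x)|$ and bounding each $\log^+$-part by $\log\|f_k'\|_\infty$, this gives $\sum_{i\in[\![a,b[\![}\log^-|f_k'(f_k^i x)| \le \big((b-a)+M\big)\log\|f_k'\|_\infty$ (in particular no iterate of $x$ inside the block is a critical point). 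Summing over the at most $\sharp\partial E^M_{k,n}(x)/2$ blocks of $x$ and using Lemma \ref{lem:EnM}.\ref{itm:EnM3} to bound $M\sharp\partial E^M_{k,n}(x)/2 \le n+M$, I obtain
$$
\sum_{i\in E^M_{k,n}(x)} \log^-|f_k'(f_k^i x)| \le (2n+M)\log\|f_k'\|_\infty,
\qquad\text{hence}\qquad
\int \log^-|f_k'|\,d\xi^M_{k,n} \le \frac{2n+M}{n}\log\|f_k'\|_\infty.
$$

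Next I would pass this estimate through the limits $n\to\infty$, $k\to\infty$, $M\to\infty$ defining $\beta\mu_1$. Since $\log^-|f'|$ is not continuous, I truncate: for $T>0$ set $\phi^{(g)}_T := \min\{\log^-|g'|,T\}$, which is bounded by $T$ and $e^T$-Lipschitz in $|g'|$, hence continuous, and $\phi^{(f_k)}_T \to \phi^{(f)}_T$ uniformly because $f_k' \to f'$ uniformly. From $\phi^{(f_k)}_T \le \log^-|f_k'|$ and the bound above, the weak-$*$ convergence $\xi^M_{k,n}\to\xi^M_k$ gives $\int \phi^{(f_k)}_T\,d\xi^M_k \le 2\log\|f_k'\|_\infty$; then uniform convergence of the integrands (with $\xi^M_k(I)\le 1$), together with $\xi^M_k\to\xi^M$ and $\|f_k'\|_\infty\to\|f'\|_\infty$, gives $\int \phi^{(f)}_T\,d\xi^M \le 2\log\|f'\|_\infty$; and finally $\xi^M\to\beta\mu_1$ weakly gives $\beta\int \phi^{(f)}_T\,d\mu_1 \le 2\log\|f'\|_\infty$, so $\int \phi^{(f)}_T\,d\mu_1 \le \tfrac{2}{\beta}\log\|f'\|_\infty$ using $\beta>0$. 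Letting $T\to+\infty$, the $\phi^{(f)}_T$ increase pointwise to $\log^-|f'|$, so monotone convergence yields $\int \log^-|f'|\,d\mu_1 \le \tfrac{2}{\beta}\log\|f'\|_\infty < +\infty$, whence $\log|f'|\in\mathbb{L}^1(\mu_1)$.

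I expect the main obstacle to be exactly this last passage: the potential $\log|f'|$ is unbounded below and discontinuous on the critical set, so one cannot test the weak-$*$ convergences directly against it. The device of truncating to the bounded continuous functions $\phi_T$, testing against those, and only then invoking monotone convergence is what makes the argument rigorous; correspondingly, the crucial quantitative input is the uniform-in-$M$ bound coming from Lemmas \ref{lemma:prop-hb} and \ref{lem:EnM}, which is what prevents the estimate $\int\phi^{(f)}_T\,d\xi^M \le 2\log\|f'\|_\infty$ from degenerating as $M\to+\infty$.
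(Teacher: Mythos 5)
Your proposal is correct and follows essentially the same route as the paper: reduce to $\int \log^-|f'|\,d\mu_1<+\infty$, truncate $\log^-|f_k'|$ to bounded continuous functions (your $\phi^{(g)}_T$ is the paper's $h_{k,m}=\min\{m,\log^-|f_k'|\}$), pass the bound through the weak-$*$ limits in $n,k,M$ using uniform convergence, and finish by monotone convergence, with $\beta>0$ needed to normalize. The only difference is that you spell out the key uniform estimate $\int\log^-|f_k'|\,d\xi^M_{k,n}\le\frac{2n+M}{n}\log\|f_k'\|_\infty$ via Lemmas \ref{lemma:prop-hb} and \ref{lem:EnM}.\ref{itm:EnM3}, whereas the paper defers this step to Proposition 5.3 of \cite{A}; your computation is a correct rendering of that argument.
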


\begin{proof}

Since $\log |f'| \leq \log || f' ||_{\infty}$, we only have to prove that $\int \log^- |f'| d \mu < + \infty$. For $m \in \bbN$, let
$$
h_{k,m} = \min\{m, \log^- |f_k'|\} : I \to \bbR_+ \underset{m \to +\infty}{\nearrow} \log^-|f_k'|,
$$
and
$$
h_m = \min\{m, \log^- |f'|\} : I \to \bbR_+ \underset{m \to +\infty}{\nearrow} \log^-|f'|.
$$
For any fixed $m$, by using that $f_k' \to f'$ uniformly on $I$, we obtain that $h_{k,m} \to h_m$ uniformly on $I$. Hence, by continuity of $h_{k,m}$ and $h_m$ on $I$ and by monotone convergence, we have 
$$
\int \log^- |f'| d \mu_1 = \lim\limits_{m \to +\infty} \int h_m d \mu_1 = \lim\limits_{m \to +\infty}\lim\limits_{M \to +\infty}\lim\limits_{k \to +\infty}\lim\limits_{n \to +\infty}\int h_{k,m} d \overline{\xi_{k,n}^M}.
$$
We then conclude by the same argument as in Proposition 5.3 from \cite{A}.
\end{proof}

\begin{proposition}
\label{prop:conv-expo-HB}
We have
\begin{align*}
\int \log |f_k'| d \xi_{k,n,\mathtt{F}_k}^{M} &\underset{n \to +\infty}{\longrightarrow}
\int \log |f_k'| d \xi_{k,\mathtt{F}_k}^{M}\\
&\underset{k \to +\infty}{\longrightarrow}\int \log |f'| d \xi^{M}\\
&\underset{M \to +\infty}{\longrightarrow} \beta \int \log |f'| d \mu_1 = \beta \lambda_f(\mu_1).
\end{align*}
\end{proposition}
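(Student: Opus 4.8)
The plan is to establish the three convergences in the displayed chain one at a time, using for the first two that the hyperbolic component avoids the critical set. Indeed, applying Lemma \ref{lemma:prop-hb} to the length-one sub-block $[\![l;l+1[\![$ of any hyperbolic block of $x$ yields $\log|f_k'(f_k^l x)| \geq -M\log\|f_k'\|_\infty$ for every $l \in E_n^M(x)$; hence $\xi_{k,n,\mathtt{F}_k}^M$, $\xi_{k,\mathtt{F}_k}^M$ and $\xi_k^M$ are all carried by the closed set $A_k^M := \{x \in I : \log|f_k'(x)| \geq -M\log\|f_k'\|_\infty\}$, on which $\log|f_k'|$ coincides with the bounded continuous function $\max\{-C,\log|f_k'|\}$ for any $C \geq M\log\|f_k'\|_\infty$ (this truncation is continuous even at the critical points of $f_k$, since there $\log|f_k'|$ is locally dominated from below by $-C$). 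This replaces the discontinuous potential by a continuous one and reduces the first two limits to weak-$*$ convergence of measures. For the limit in $n$: by definition $\xi_{k,n,\mathtt{F}_k}^M \to \xi_{k,\mathtt{F}_k}^M$ weak-$*$, the limit is again carried by the closed set $A_k^M$, and so with $C = M\log\|f_k'\|_\infty$ one gets $\int\log|f_k'|\,d\xi_{k,n,\mathtt{F}_k}^M = \int\max\{-C,\log|f_k'|\}\,d\xi_{k,n,\mathtt{F}_k}^M \to \int\max\{-C,\log|f_k'|\}\,d\xi_{k,\mathtt{F}_k}^M = \int\log|f_k'|\,d\xi_{k,\mathtt{F}_k}^M$, because the truncated potential is bounded and continuous.

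For the limit in $k$: since $\nu_k(\mathtt{F}_k) \to 1$, the measures $\zeta_k$ and $\nu_k$, hence also $\xi_{k,\mathtt{F}_k}^M$ and $\xi_k^M$, differ by a signed measure of total variation tending to $0$; together with the convergence $\xi_k^M \to \xi^M$ fixed in Section \ref{ssec:main-prop} this gives $\xi_{k,\mathtt{F}_k}^M \to \xi^M$ weak-$*$. Now fix $K \geq \sup_k\|f_k'\|_\infty$, set $C = 2M\log K$, and put $\phi_k = \max\{-C,\log|f_k'|\}$ and $\phi = \max\{-C,\log|f'|\}$: these are bounded and continuous, and $\phi_k \to \phi$ uniformly on $I$ because $f_k' \to f'$ uniformly. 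On $A_k^M$ one has $\phi_k = \log|f_k'|$, so $\int\log|f_k'|\,d\xi_{k,\mathtt{F}_k}^M = \int\phi_k\,d\xi_{k,\mathtt{F}_k}^M$; and by the $s=0$ case of the argument in the proof of Lemma \ref{lemma:CV-k-entropy} one has $A_k^M \subset A^{2M} := \{x : \log|f'(x)| \geq -2M\log\|f'\|_\infty\}$ for $k$ large, so the weak-$*$ limit $\xi^M$ is carried by the closed set $A^{2M}$, on which $\phi = \log|f'|$. Hence
$$
\int\log|f_k'|\,d\xi_{k,\mathtt{F}_k}^M = \int\phi_k\,d\xi_{k,\mathtt{F}_k}^M \underset{k \to +\infty}{\longrightarrow} \int\phi\,d\xi^M = \int\log|f'|\,d\xi^M,
$$
the middle step combining $\|\phi_k - \phi\|_\infty \to 0$ and the uniform bound $1$ on the total masses with weak-$*$ convergence against the bounded continuous function $\phi$.

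For the limit in $M$: Lemma \ref{lemma:monotone-xi} with $m = 1$ gives $\xi^M(B) \nearrow \beta\mu_1(B)$ for every Borel $B$, i.e. the $\xi^M$ form an increasing sequence of measures converging to $\beta\mu_1$; by the monotone convergence theorem, $\int g\,d\xi^M \nearrow \beta\int g\,d\mu_1$ for every non-negative measurable $g$. Applying this to $g = \log^+|f'|$ (bounded by $\log^+\|f'\|_\infty$) and to $g = \log^-|f'|$ (which is $\mu_1$-integrable by Proposition \ref{proposition:mu1-integrable}) and subtracting yields $\int\log|f'|\,d\xi^M \to \beta\int\log^+|f'|\,d\mu_1 - \beta\int\log^-|f'|\,d\mu_1 = \beta\int\log|f'|\,d\mu_1 = \beta\lambda_f(\mu_1)$.

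The only genuine obstacle is the discontinuity, and possible non-integrability, of the geometric potential at the critical set: it is handled in the first two limits by the confinement of Lemma \ref{lemma:prop-hb} together with the uniform truncation, and in the last limit by the monotonicity of Lemma \ref{lemma:monotone-xi} and the a priori integrability of Proposition \ref{proposition:mu1-integrable}. The step requiring the most care is the limit in $k$, where the integrand and the measure vary simultaneously, forcing one to control $\phi_k \to \phi$ uniformly while also localizing the limit measure $\xi^M$ on $A^{2M}$.
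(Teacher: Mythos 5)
Your proof is correct and follows essentially the same route as the paper: confinement of the hyperbolic component to $A_k^M$ via Lemma \ref{lemma:prop-hb}, weak-$*$ convergence against the potential (continuous there) for the limits in $n$ and $k$, the inclusion $A_k^M \subset A^{2M}$ together with uniform convergence of $f_k'$ for the $k$-limit, and Lemma \ref{lemma:monotone-xi} with monotone convergence and Proposition \ref{proposition:mu1-integrable} for the $M$-limit. Your explicit global truncation $\max\{-C,\log|f_k'|\}$ is just a cleaner way of phrasing the paper's appeal to continuity of $\log|f_k'|$ on the support, so there is nothing substantive to change.
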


\begin{proof}
For $k,M$ define the sets $A_k^M = \{x \in I : \log |f_k'(x)| \geq - M \log || f_k' ||_{\infty} \}$ and $A^M = \{x \in I : \ \log |f'(x)| \geq - M \log ||f' ||_{\infty} \}$.
Thus, from Lemma \ref{lemma:prop-hb}, the support of $\xi_{k,n,\mathtt{F}_k}^{M}$ and of $\xi_{k,\mathtt{F}_k}^{M}$ are in $A_k^M$.\\

Limit in $n$:
Since $\log |f_k'|$ is continuous on $A_k^M$, we have
$$
\int \log |f_k'| d \xi_{k,n,\mathtt{F}_k}^{M} = \int_{A_k^M} \log |f_k'| d \xi_{k,n,\mathtt{F}_k}^{M}
\underset{n \to +\infty}{\longrightarrow} \int_{A_k^M} \log |f_k'| d \xi_{k,\mathtt{F}_k}^{M} = \int \log |f_k'| d \xi_{k,\mathtt{F}_k}^{M}.
$$

Limit in $k$:
Since $f_k \to f$ $\mathcal{C}^r$-weakly, we have that $f_k' \to f'$ uniformly on $I$.
Let $\eta > 0$.
By the same argument as in Lemma \ref{lemma:CV-k-entropy}, one can choose $k$ large enough such that $A_k^M \subset A^{2M}$ and $|\log |f_k'| - \log |f'|| \leq \eta$ on $A_k^M$. Thus
$$
\left | \int \log |f_k'| d \xi_{k,\mathtt{F}_k}^M - \int \log |f'| d \xi_{k,\mathtt{F}_k}^M \right |
\leq \int_{A_k^M} \left | \log |f_k'| - \log |f'|\right | d \xi_{k,\mathtt{F}_k}^M
\leq \eta.
$$
Then, for every $k$, the support of $\xi_k^M$ and $\xi^M$ is inside $A^{2M}$, where $\log |f'|$ is continuous. Hence it remains to show that $(\xi_{k,\mathtt{F}_k}^M)_k$ converges to $\xi^M$ in the weak-$*$ topology.
Let $\varphi : I \to \bbR$ be a continuous map.
We have
$$
\left | \int \varphi d \xi_{k,\mathtt{F}_k}^M \times \nu_k(\mathtt{F}_k) - \int \varphi d \xi_{k}^M \right |
\leq \nu_k(I \backslash \mathtt{F}_k) || \varphi ||_{\infty}.
$$
Since we chose $\nu_k(\mathtt{F}_k) \underset{k \to +\infty}{\longrightarrow} 1$, the convergence of $(\xi_{k}^M)_k$ to $\xi^M$ in the weak-$*$ topology gives
$$
\int \log |f'| d \xi_{k,\mathtt{F}_k}^M \underset{k \to +\infty}{\longrightarrow} \int \log |f'| d \xi^M.
$$

Limit in $M$:
Notice that Lemma \ref{lemma:monotone-xi} would conclude if $\log |f'|$ were to be a characteristic function.
Hence, by linearity, monotone convergence, and using the fact that $(\xi^M)_M$ is non-decreasing, it is still true for non-negative measurable functions. Therefore, it is true for any function that is $\mu_1$-integrable. 
We thus get the result using Proposition \ref{proposition:mu1-integrable}.\\

Last equality:
The measure $\mu_1$ is $f$-invariant and $\log |f'|$ is $\mu_1$-integrable from Proposition \ref{proposition:mu1-integrable}, so Birkhoff's ergodic theorem concludes.
\end{proof}

\section{Proof of Theorem \ref{th:main1}}
\label{sc:Proof}

\subsection{Estimating the exponent of the neutral component}
\label{sec:expo-neutral}

As announced in the beginning of section \ref{ssec:cont-expo-HB}, we now estimate the quantity $\lambda_{f_k}(\eta_{k,n,\mathtt{F}_k}^M)$.

\begin{proposition}
\label{prop:expo-neutral}
For any $k \in \bbN$ and $M \in \bbN^*$, we have
$$
\liminf\limits_{n \to +\infty} \int \log |f_k'| d \eta_{k,n,\mathtt{F}_k}^M \geq (1 - \beta_k^M) \delta.
$$
\end{proposition}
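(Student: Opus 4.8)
The plan is to write $\int\log|f_k'|\,d\eta_{k,n,\mathtt{F}_k}^M$ as an average of Birkhoff-type sums over the neutral times and to bound each such sum from below, block by block, using that both endpoints of every neutral block are $\delta$-hyperbolic times of $f_k$. Fix $k\in\bbN$ and $M\in\bbN^*$, and for $x\in\mathtt{F}_k$ set $G_n(x):=[\![0,n[\![\,\setminus E_{k,n}^M(x)$. Since $\eta_{k,n,\mathtt{F}_k}^M$ is, by definition, $\zeta_k$ pushed forward along the neutral times $G_n(x)$, we have
$$
\int\log|f_k'|\,d\eta_{k,n,\mathtt{F}_k}^M=\frac1n\int\sum_{i\in G_n(x)}\log|f_k'(f_k^i x)|\,d\zeta_k(x),
$$
so it suffices to bound $\sum_{i\in G_n(x)}\log|f_k'(f_k^i x)|$ from below for $\zeta_k$-a.e.\ $x$.

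First I would analyse the connected components of $G_n(x)$. By Lemma~\ref{lem:EnM}~\ref{itm:EnM1} one has $\partial E_{k,n}^M(x)\subset E_k(x)$, and $0\in E_k(x)$ since the defining condition of a $\delta$-hyperbolic time is vacuous at $0$; hence each connected component $[\![c,d[\![$ of $G_n(x)$ satisfies $c\in E_k(x)$, and also $d\in E_k(x)$ unless $d=n$. If $d<n$, the defining property of the $\delta$-hyperbolic time $d\in E_k(x)$, read at time $c$, gives $\log|(f_k^{d-c})'(f_k^c x)|\ge(d-c)\delta$, i.e.\ $\sum_{i=c}^{d-1}\log|f_k'(f_k^i x)|\ge(d-c)\delta$. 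For the possible last component $[\![c,n[\![$ I would instead use $b:=m_{k,n}^+(x)\in E_k(x)$, which satisfies $b\ge n>c$: the same inequality gives $\log|(f_k^{b-c})'(f_k^c x)|\ge(b-c)\delta$, and subtracting off the contribution of $[\![n,b[\![$, which is at most $(b-n)\log^+\|f_k'\|_\infty$, yields $\sum_{i=c}^{n-1}\log|f_k'(f_k^i x)|\ge(n-c)\delta-(m_{k,n}^+(x)-n)\log^+\|f_k'\|_\infty$. Summing over all components of $G_n(x)$ (and using $\sharp G_n(x)=n-\sharp E_{k,n}^M(x)$) gives, for $\zeta_k$-a.e.\ $x$,
$$
\sum_{i\in G_n(x)}\log|f_k'(f_k^i x)|\ \ge\ \delta\,\big(n-\sharp E_{k,n}^M(x)\big)-\big(m_{k,n}^+(x)-n\big)\log^+\|f_k'\|_\infty .
$$
In particular the left-hand side is finite — no orbit point $f_k^i x$ with $i\in G_n(x)$ is critical — so the integral above is well defined.

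Dividing by $n$, integrating against $\zeta_k$, and letting $n\to+\infty$ then finishes the proof. The error term tends to $0$: $\zeta_k$ is carried by $\mathtt{F}_k$, on which $m_{k,n}^+(x)/n\to 1$ uniformly by Lemma~\ref{le:bn} and the choice of $\mathtt{F}_\nu$, and $m_{k,n}^+(x)\ge n$. For the main term, $\frac1n\int\sharp E_{k,n}^M(x)\,d\zeta_k(x)=\beta_{k,n,\mathtt{F}_k}^M$, and $\frac1n\sharp E_{k,n}^M(x)$ converges, for $\nu_k$-a.e.\ $x$ (hence $\zeta_k$-a.e.), to the constant $\beta_k^M$: arguing in the inverse limit as in the proof of Lemma~\ref{lemma:delta}, the difference $E_k(x)\setminus\{l:\sigma^l\widetilde{x}\in H_\delta\}$ has zero density, so $\frac1n\sharp E_{k,n}^M(x)$ differs from a genuine Birkhoff average of the shift by a $o(1)$ term, hence converges $\nu_k$-a.e.\ to a constant, which must be $\beta_k^M$ by dominated convergence. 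Therefore $\beta_{k,n,\mathtt{F}_k}^M\to\beta_k^M$, and $\liminf_{n\to+\infty}\int\log|f_k'|\,d\eta_{k,n,\mathtt{F}_k}^M\ge(1-\beta_k^M)\delta$, as claimed.

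The only genuinely delicate points are the treatment of the unbounded tail component $[\![c,n[\![$ — which is precisely what restricting to $\mathtt{F}_k$, via the uniform convergence $m_{k,n}^+(x)/n\to1$, is designed to control — and the almost-everywhere identification of $\lim_n\frac1n\sharp E_{k,n}^M(x)$ with $\beta_k^M$; everything else is a direct blockwise computation with the $\delta$-hyperbolic times.
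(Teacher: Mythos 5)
Your main estimate is exactly the paper's: decompose the neutral set into its integer components, use that every left endpoint lies in $E_k(x)$ (either $0$ or a point of $\partial E_{k,n}^M(x) \subset E_k(x)$), apply the defining inequality of the $\delta$-hyperbolic time sitting at the right endpoint (using $\psi_m \leq \log |(f_k^m)'|$), and handle the last, possibly unterminated, component via $m_{k,n}^+(x) \in E_k(x)$ at the cost of an error $(m_{k,n}^+(x)-n)\log \|f_k'\|_{\infty}$ that is killed by the uniform convergence $m_{k,n}^+/n \to 1$ on $\mathtt{F}_k$. Where you genuinely diverge is the identification of the main term: you need $\frac{1}{n}\int \sharp E_{k,n}^M\,d\zeta_k \to \beta_k^M$, whereas $\beta_k^M$ is defined as a limit of integrals against $\nu_k$, not $\zeta_k$. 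The paper sidesteps this by the crude comparison $\int_{\mathtt{F}_k}(\cdot)\,d\nu_k \geq \int(\cdot)\,d\nu_k - n\,\nu_k(I\setminus\mathtt{F}_k)$, accepting an extra error $\nu_k(I\setminus\mathtt{F}_k)\,\delta$ which only disappears when $k\to\infty$ (this is all that is used downstream). You instead claim the exact convergence via an almost-everywhere ergodic theorem, and this is the one under-justified step: the assertion that $E_k(x)\setminus\widetilde{E}(\widetilde{x})$ has zero density does \emph{not} follow from the proof of Lemma \ref{lemma:delta}, which only gives the inclusion $\widetilde{E}(\widetilde{x}) \subset E(\pi\widetilde{x})$ and a lower bound on the density. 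The claim is in fact true, but it needs its own argument: write $l \in E_k(x)$ as $\sigma^l\widetilde{x} \in H_{\delta}^{(l)}$ where $H_{\delta}^{(L)} := \{\widetilde{y} : \forall j \in [\![1,L]\!],\ \psi_j(\pi\sigma^{-j}\widetilde{y}) \geq j\delta\}$, note $H_{\delta}^{(L)} \downarrow H_{\delta}$, apply Birkhoff to each $\mathbf{1}_{H_{\delta}^{(L)}}$ to get $\limsup_n \frac{1}{n}\sharp(E_k(x)\cap[\![0,n[\![) \leq \widetilde{\nu}(H_{\delta}^{(L)})$ for every $L$, and let $L \to \infty$; one must then also check that the zero-density defect propagates from $E_k(x)$ to $E_{k,n}^M(x)$ (each spurious hyperbolic time contaminates at most $2M$ indices, so it does). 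With that lemma supplied, your argument proves the proposition exactly as stated — arguably more cleanly than the paper's own proof, whose final error term $\nu_k(I\setminus\mathtt{F}_k)\,\delta$ does not vanish as $n\to\infty$ for fixed $k$. Without it, you should fall back on the paper's comparison and state the conclusion with the harmless additional $O(\nu_k(I\setminus\mathtt{F}_k))$ error.
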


\begin{proof}
For $k \in \bbN$ and $M \in \bbN^*$, we have
\begin{align*}
\nu_k(\mathtt{F}_k) \int \log |f_k'| d \eta_{k,n,\mathtt{F}_k}^M
= \frac{1}{n} \int_{\mathtt{F}_k} \sum\limits_{i \in [\![0 ; n [\![ \backslash E_{k,n}^M(x)} \log |f_k'(f_k^i x)| d \nu_k(x).
\end{align*}
We write $E_{k,n}^M(x) = \bigcup\limits_{j=1}^l [\![ a_j ; b_j [\![$. 
Thus, by noting $b_0 = 0$, we have
\begin{align*}
\hspace{3em}&\hspace{-3em}\sum\limits_{i \in [\![ 0 ; n [\![ \backslash E_{k,n}^M(x)} \log |f_k'(f_k^i x)|\\
&= \sum\limits_{j=0}^{l-1} \log |(f_k^{a_{j+1} - b_j})'(f_k^{b_j} x)| + \log |(f_k^{n-b_l})'(f_k^{b_l} x)|\\
&= \sum\limits_{j=0}^{l-1} \log |(f_k^{a_{j+1} - b_j})'(f^{b_j} x)| + \log |(f_k^{m_{k,n}^+(x)-b_l})'(f_k^{b_l} x)| - \log |(f_k^{m_{k,n}^+(x) - n})'(f_k^{n} x)|
\end{align*}
Then from the fact that $m_n^+(x) \in E(x)$ and $a_{j+1} \in E(x)$ for $j \in [\![0;l-1]\!]$, we have
\begin{align*}
\hspace{3em}&\hspace{-3em}\sum\limits_{i \in [\![ 0 ; n [\![ \backslash E_{k,n}^M(x)} \log |f_k'(f_k^i x)|\\
&\geq \left ( \sum\limits_{j=0}^{l-1} \delta(a_{j+1} - b_j) \right )+ \delta (m_{k,n}^+(x) - b_l) - (m_{k,n}^+(x) - n) \log || f_k'||_{\infty}\\
&= (n - \sharp E_{k,n}^M(x)) \delta - (m_{k,n}^+(x) - n) (\log || f_k'||_{\infty} - \delta).
\end{align*}
By integrating, we obtain
\begin{align*}
\nu_k(\mathtt{F}_k) \int \log |f_k'| d \eta_{k,n,\mathtt{F}_k}^M
&\geq (1- \beta_{k,n}^M) \delta - \nu_k(I \backslash \mathtt{F}_k) \delta - \int_{\mathtt{F}_k} \left ( \frac{m_{k,n}^+(x)}{n} - 1\right ) (\log || f_k'||_{\infty} - \delta) d \nu_k(x).
\end{align*}
Then $\frac{m_{k,n}^+(x)}{n} \underset{n \to +\infty}{\longrightarrow} 1$ uniformly on $\mathtt{F}_k$ and $\nu_k(I\backslash \mathtt{F}_k) \to 0$ conclude.
\end{proof}

\subsection{Proof of the Main Proposition}
\label{ssec:proof}

We assume hypotheses \ref{itm:main-prop1} and \ref{itm:main-prop2} from Proposition \ref{prop:main-prop}.
We show that the definitions at the beginning of section \ref{sec:main-prop} of the measures $\mu_0$ and $\mu_1$ and of $\beta$ satisfy the conclusion of Proposition \ref{prop:main-prop}. Recall that the $f$-invariance of $\mu_0$ and $\mu_1$ has been proved in Lemma \ref{lemma:f-inv}.
We start by proving item \ref{itm:main21'} from Theorem \ref{th:main2}, which also implies item \ref{itm:main11} from Theorem \ref{th:main1}. 

\begin{lemma}
\label{lemma:mu1-HB}
For $\mu_1$-almost every $x \in I$, we have 
$\lambda_f(x) \geq \delta$.
\end{lemma}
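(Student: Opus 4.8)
The plan is to show that for each $\varepsilon>0$, $\mu_1$-almost every point is expanded at rate at least $\delta-\varepsilon$ along infinitely many times, and then to upgrade this to a statement about $\lambda_f$ via Birkhoff's theorem. The reduction step is as follows: since $\log|f'|\in\mathbb{L}^1(\mu_1)$ by Proposition \ref{proposition:mu1-integrable} and $\mu_1$ is $f$-invariant by Lemma \ref{lemma:f-inv}, we have $\mu_1(\{f'=0\})=0$, hence for $\mu_1$-a.e. $x$ no iterate $f^jx$ is critical and $\log|(f^N)'(x)|=\sum_{j<N}\log|f'(f^jx)|$; Birkhoff's theorem then gives that $\lambda_f(x)=\lim_{N\to\infty}\tfrac1N\log|(f^N)'(x)|$ exists $\mu_1$-a.e. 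So it suffices to prove that for every $\varepsilon>0$, $\mu_1$-a.e. $x$ admits infinitely many $N\geq 1$ with $\log|(f^N)'(x)|\geq N(\delta-\varepsilon)$.

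The key local observation, which I would establish first, is that within an $M$-hyperbolic block the next hyperbolic time is never more than $M$ steps away. Concretely, if $i\in E^M_{k,n}(x)$ then by definition of $E^M_{k,n}$ there are $a,b\in E_k(x)\cap[\![0,n]\!]$ with $a\leq i<b\leq a+M$; letting $u$ be the least element of $E_k(x)$ strictly greater than $i$, we have $u\leq b$, so $1\leq u-i\leq M$, and since $u\in E_k(x)$, the defining inequality of $\delta$-hyperbolic times applied at index $i$ gives (after dropping the nonnegative $\tfrac1r\sum\log^+|f_k'|$ term) $\log|(f_k^{u-i})'(f_k^ix)|\geq (u-i)\delta$. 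Therefore, for every $x$ and every $i\in E^M_{k,n}(x)$, the point $f_k^ix$ belongs to the closed set $G_{M,k}:=\bigcup_{N=1}^{M}\{z\in I:|(f_k^N)'(z)|\geq e^{N\delta}\}$, and consequently $\xi^M_{k,n}$ is supported on $G_{M,k}$ for every $n$.

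Next I would pass to the limits. Letting $n\to\infty$ and using that $I\setminus G_{M,k}$ is open, the weak-$*$ convergence $\xi^M_{k,n}\to\xi^M_k$ shows $\xi^M_k$ is supported on $G_{M,k}$. Fix $\varepsilon>0$. Since $f_k\to f$ in the $\mathcal{C}^1$ topology, $(f_k^N)'\to (f^N)'$ uniformly for each fixed $N\leq M$; hence for $k$ large (depending on $M$ and $\varepsilon$) one has $G_{M,k}\subseteq G^\varepsilon_M:=\bigcup_{N=1}^{M}\{z\in I:|(f^N)'(z)|\geq e^{N(\delta-\varepsilon)}\}$, which is closed. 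Letting $k\to\infty$ then gives that $\xi^M$ is supported on $G^\varepsilon_M$, hence on the Borel set $G^\varepsilon:=\bigcup_{N\geq 1}\{z\in I:|(f^N)'(z)|\geq e^{N(\delta-\varepsilon)}\}\supseteq G^\varepsilon_M$. Thus $\xi^M(I\setminus G^\varepsilon)=0$ for every $M$, and Lemma \ref{lemma:monotone-xi} yields $\beta\mu_1(I\setminus G^\varepsilon)=\lim_M\xi^M(I\setminus G^\varepsilon)=0$; since $\beta>0$, we get $\mu_1(G^\varepsilon)=1$, i.e. $\mu_1$-a.e. $x$ has some $N\geq 1$ with $\log|(f^N)'(x)|\geq N(\delta-\varepsilon)$.

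Finally I would bootstrap this single expanded time into infinitely many using invariance. By $f$-invariance, $\mu_1\bigl(\bigcap_{i\geq 0}f^{-i}G^\varepsilon\bigr)=1$; for $x$ in this set set $M_0=0$ and, given $M_j$, choose $N_{j+1}\geq 1$ with $\log|(f^{N_{j+1}})'(f^{M_j}x)|\geq N_{j+1}(\delta-\varepsilon)$ (possible as $f^{M_j}x\in G^\varepsilon$), and put $M_{j+1}=M_j+N_{j+1}$. The chain rule (no iterate is critical) gives $\log|(f^{M_{j+1}})'(x)|\geq M_{j+1}(\delta-\varepsilon)$, with $M_j\to\infty$, so combined with the reduction step $\lambda_f(x)=\lim_j\tfrac1{M_j}\log|(f^{M_j})'(x)|\geq\delta-\varepsilon$ for $\mu_1$-a.e. $x$; letting $\varepsilon\to 0$ along a sequence gives $\lambda_f(x)\geq\delta$ for $\mu_1$-a.e. $x$. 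I expect the main obstacle to be getting the uniformity right in the local estimate: one must extract expansion of \emph{bounded} length $N\leq M$ at every index of a hyperbolic block (rather than expansion over the full, $n$-dependent block length), since it is only for bounded $N$ that $(f_k^N)'\to (f^N)'$ uniformly, which is what makes the limits in $n$ and $k$ legitimate.
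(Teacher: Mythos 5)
Your proposal is correct and follows essentially the same route as the paper's proof (which adapts Lemma 10 of \cite{B} and Proposition 5.4 of \cite{A}): your sets $G_{M,k}$, $G_M^{\varepsilon}$ and $G^{\varepsilon}$ are exactly the paper's $F_M^k$, $F_M$ and $F$, and the chain ``$\xi_{k,n}^M$ supported on $G_{M,k}$ $\Rightarrow$ limits in $n$, $k$, $M$ via portmanteau and Lemma \ref{lemma:monotone-xi} $\Rightarrow$ $\mu_1(G^{\varepsilon})=1$ $\Rightarrow$ bootstrap by invariance'' is the intended argument, with the details the paper outsources to \cite{A} filled in correctly. No issues.
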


\begin{proof}
We adapt the proof of Lemma 10 from \cite{B}.
Notice that is suffices to prove that for any $\varepsilon > 0$, for $\mu_1$-almost every $x \in I$, $\lambda_f(x) \geq (1-\varepsilon) \delta$.
Hence, let $\varepsilon > 0$.
For $M \in \bbN^*$ and $k \in \bbN$, let $F_M^k := \{ x \in I : \exists m \in [\![ 1 ; M ]\!], \log |(f_k^m)'(x)| \geq m \delta \}$.
With the same argument as in Proposition 5.4 from \cite{A}, we can show that $\overline{\xi_{k,n}^M}(F_M^k) = 1$, where $\overline{\xi_{k,n}^M}$ is the normalization of $\xi_{k,n}^M$.
We also define $F_M := \{ x \in I : \exists m \in [\![1 ; M ]\!], \log |(f^m)'(x)| \geq m (1 - \varepsilon) \delta \}$.
Hence, for $k$ large enough depending on $M$, we have $F_M^k \subset F_M$. This is why we took $(1-\varepsilon) \delta$ instead of $\delta$.
Then, we define $F = \bigcup\limits_{M > 1} F_M$, and with the same argument as in Proposition 5.4 from \cite{A}, we can prove that $\beta \mu_1(F) = \beta$, thus $\mu_1(F) = 1$ because Remark \ref{rmk:density-E} implies $\beta > 0$, as noted in section \ref{ssec:main-prop}.
We then conclude the same way as in Proposition 5.4 from \cite{A}.
\end{proof}

Before proving item \ref{itm:main12} from Theorem \ref{th:main1}, we prove the following lemma:

\begin{lemma}
\label{lemma:term-II}
We have
$$
0 = \lim\limits_{q \to +\infty} \lim\limits_{m \to +\infty} \lim\limits_{M \to +\infty} \lim\limits_{k \to +\infty} \lim\limits_{n \to +\infty}
\frac{1}{q}\int \log^+ |(f_k^q)'| d \xi_{k,n}^{M,m,q} - \frac{1}{q}\int \log |(f_k^q)'| d \xi_{k,n}^{M,m,q}.
$$
\end{lemma}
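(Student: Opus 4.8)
The plan is to reduce the statement to a fact about $\mu_1$ via the elementary identity $\log^+t-\log t=\log^-t$ for $t>0$: the bracketed quantity equals $\tfrac1q\int\log^-|(f_k^q)'|\,d\xi_{k,n}^{M,m,q}\ge 0$, so it suffices to show the iterated limit of this vanishes. The crucial preliminary remark is that all the measures in play are supported uniformly away from the critical set. Indeed, if $l\in E_n^{M,m,q}(x)=E_n^{M,\max\{m,q\}}(x)$, then since passing from $E_n^M$ to $E_n^{M,\max\{m,q\}}$ removes the last $\max\{m,q\}-1\ge q-1$ points of each hyperbolic block, we get $l+j\in E_n^M(x)$ for $0\le j<q$; hence Lemma~\ref{lemma:prop-hb} gives $|f_k'(f_k^{l+j}x)|\ge\|f_k'\|_\infty^{-M}$ and so $|(f_k^q)'(f_k^l x)|\ge\|f_k'\|_\infty^{-Mq}$. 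Thus $\xi_{k,n}^{M,m,q}$ is carried by the closed set $X_k:=\bigcap_{0\le j<q}f_k^{-j}A_k^M$ (notation of Section~\ref{ssec:cont-expo-HB}), on which $\log^-|(f_k^q)'|$ is continuous and bounded by $Mq\log\|f_k'\|_\infty$.

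I then take the five limits in turn. For $n\to\infty$: since $\xi_{k,n}^{M,m,q}\to\xi_k^{M,m,q}$ weak-$*$ and all these measures sit in $X_k$, where $\log^-|(f_k^q)'|$ coincides with a bounded continuous function, the integral tends to $\tfrac1q\int\log^-|(f_k^q)'|\,d\xi_k^{M,m,q}$. For $k\to\infty$: arguing as in the proof of Lemma~\ref{lemma:CV-k-entropy} one has $X_k\subset X:=\bigcap_{0\le j<q}f^{-j}A^{2M}$ for $k$ large, on which $|(f^q)'|\ge\|f'\|_\infty^{-2Mq}$; as $f_k\to f$ in the $\mathcal C^1$ topology forces $(f_k^q)'\to(f^q)'$ uniformly on $I$, we obtain $\log^-|(f_k^q)'|\to\log^-|(f^q)'|$ uniformly on $X$, and combining this with $\xi_k^{M,m,q}\to\xi^{M,m,q}$ weak-$*$ (all supported in $X$) the integral tends to $\tfrac1q\int\log^-|(f^q)'|\,d\xi^{M,m,q}$.

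For $M\to\infty$: because $\xi^{M,m,q}=\xi^{M,\max\{m,q\}}$ increases setwise to $\beta\mu_1$ by Lemma~\ref{lemma:monotone-xi} (applied with parameter $\max\{m,q\}$), and $\log^-|(f^q)'|\ge 0$, monotone convergence gives the limit $\tfrac\beta q\int\log^-|(f^q)'|\,d\mu_1$, which is finite: $\log|f'|\in\mathbb L^1(\mu_1)$ by Proposition~\ref{proposition:mu1-integrable}, hence $\log^-|(f^q)'|\le\sum_{j<q}\log^-|f'|\circ f^j$ is $\mu_1$-integrable by $f$-invariance of $\mu_1$. The subsequent $m\to\infty$ limit is trivial, there being no $m$-dependence left. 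Finally, for $q\to\infty$: Birkhoff's ergodic theorem applied to $\log|f'|\in\mathbb L^1(\mu_1)$ on the measure-preserving system $(I,f,\mu_1)$ gives $\tfrac1q\log|(f^q)'|\to\bar\varphi$ in $\mathbb L^1(\mu_1)$, where $\bar\varphi$ is the conditional expectation of $\log|f'|$ with respect to the $\sigma$-algebra of $f$-invariant sets; since $t\mapsto t^-$ is $1$-Lipschitz, $\tfrac1q\log^-|(f^q)'|=\bigl(\tfrac1q\log|(f^q)'|\bigr)^-\to\bar\varphi^-$ in $\mathbb L^1(\mu_1)$. By Lemma~\ref{lemma:mu1-HB}, $\bar\varphi=\lambda_f\ge\delta>0$ $\mu_1$-a.e., so $\bar\varphi^-=0$, whence $\tfrac\beta q\int\log^-|(f^q)'|\,d\mu_1\to 0$, which is the assertion.

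Most of the work is bookkeeping; the point needing care is tracking supports through the $k$- and $M$-limits so that the blow-up of $\log|\cdot|$ (equivalently, the discontinuity of $\log^-|(f_k^q)'|$) at the critical set never interferes with the weak-$*$ and monotone convergence arguments — this is precisely what the hyperbolicity estimate of Lemma~\ref{lemma:prop-hb} supplies. The one substantive ingredient is the last step: positivity of the Lyapunov exponent of $\mu_1$ (Lemma~\ref{lemma:mu1-HB}) forces the negative parts of the ergodic averages $\tfrac1q\log|(f^q)'|$ to be asymptotically negligible.
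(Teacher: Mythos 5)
Your proof is correct and follows essentially the same route as the paper: the $n,k,M$ limits are handled exactly as in Proposition \ref{prop:conv-expo-HB} (using Lemma \ref{lemma:prop-hb} to keep the supports away from the critical set, and Lemma \ref{lemma:monotone-xi} with monotone convergence for the $M$-limit), and the $q$-limit is the ergodic theorem combined with the $\mu_1$-a.e.\ positivity of the exponent from Lemma \ref{lemma:mu1-HB}. The only cosmetic difference is that you invoke Birkhoff where the paper cites Kingman, which is immaterial since $\log|(f^q)'|$ is an additive cocycle in dimension one; your Lipschitz argument for passing the negative part through the $\mathbb{L}^1$-limit is a clean way to finish.
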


\begin{proof}
By applying the same argument as in Proposition \ref{prop:conv-expo-HB} to $\log |(f_k^q)'|$ for the measure $\xi_{k,n}^{M,m,q}$, we obtain that it suffices to show
$$
0 = \lim\limits_{q \to +\infty} \frac{1}{q} \int \log^+ |(f^q)'| d \mu_1 - \frac{1}{q} \int \log |(f^q)'| d \mu_1.
$$
Note that taking $\xi_{k,n}^{M,m,q}$ instead of $\xi_{k,n}^{M}$ is useful to control $\log |(f_k^q)'|$, although taking $m \geq q$ also suffices.
Then, we conclude by using Kingman's ergodic theorem and the positivity of the Lyapunov exponent $\mu_1$-a.e. from Lemma \ref{lemma:mu1-HB}.
\end{proof}

We now prove item \ref{itm:main12} from Theorem \ref{th:main1}.

\begin{proposition}
We have
$$
\limsup\limits_{k \to +\infty} h_{f_k}(\nu_k) \leq \beta h_f(\mu_1) + (1-\beta) \alpha.
$$
\end{proposition}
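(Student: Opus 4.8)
The plan is to apply the entropy splitting of Proposition \ref{prop:entropy-final} to $\nu_k$ and $f_k$, to bound its ``neutral'' term through the semi-local reparametrization estimate of Lemma \ref{lemma:repabound}, and then to let the parameters tend to their limits in the order $n\to\infty$, $k\to\infty$, $M\to\infty$, $m\to\infty$, $q\to\infty$, $\iota\to0$, using hypothesis \ref{itm:main-prop2} at the decisive point.

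First I would fix $\iota>0$, pick for each $k\in\mathcal{K}$ a finite partition $\mathcal{P}_k$ with $\nu_k(\partial\mathcal{P}_k)=0$ and $\lim_n\frac1n H_{\nu_k}(\mathcal{P}_k^n)\ge h_{f_k}(\nu_k)-\iota$, and --- the choice of $\mathtt{F}_k$ in section \ref{ssec:hyperbolic-times} being free --- arrange that $\nu_k(\mathtt{F}_k)\to1$ fast enough that $\nu_k(I\setminus\mathtt{F}_k)\log\sharp\mathcal{P}_k\to0$. Fixing $q\in\bbN^*$ and taking $\mathcal{Q}=\mathcal{R}_{q,k}$, which satisfies the hypotheses of Proposition \ref{prop:entropy-ineq} by Lemma \ref{lemma:entropy-R}, Proposition \ref{prop:entropy-final} yields, for every $m\in\bbN^*$ and every large $M$,
\begin{align*}
h_{f_k}(\nu_k)\le{}&\beta_{\nu_k}^{M,m}\tfrac1m H_{\overline{\xi_{\nu_k}^{M,m}}}(\mathcal{R}_{q,k}^m)+\limsup_n\tfrac1n H_{\zeta_k}(\mathcal{P}_k^n\mid\mathcal{R}_{q,k}^{E_n^{M,m}})+\nu_k(I\setminus\mathtt{F}_k)\log\sharp\mathcal{P}_k\\
&+\tfrac{3\log M}{M}+\iota+\limsup_n\tfrac mn\sum_{E\in\mathcal{E}_{k,n}^{M,m}}\nu_k(E)\log\big(\sharp(\mathcal{R}_{q,k})_{\nu_k^E}\big)\sharp\partial E.
\end{align*}

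For the middle (``neutral'') term I would, assuming $m\ge q$, feed the covering families $\Psi_Q=\Psi_{F_n}$ produced by Lemma \ref{lemma:repabound} --- here $F_n=Q\cap\mathtt{F}_k$ is the trace on $\mathtt{F}_k$ of an atom $Q$ of $\mathcal{R}_{q,k}^{E_n^{M,m}}$ --- into Lemma \ref{le:reparamization-partition}, then sum the per-atom bound of Lemma \ref{lemma:repabound} against the weights $\zeta_k(Q)$. Since $E_n^{M,m}$ is constant on an atom, $\sum_Q\zeta_k(Q)=1$, $\sum_Q\zeta_k(Q)\frac1n\sharp E_n^{M,m}=\beta_{k,n,\mathtt{F}_k}^{M,m}$, and $\sum_Q\zeta_k(Q)\,\xi_{n,\nu_k,F_n}^{M,m,q}=\xi_{k,n,\mathtt{F}_k}^{M,m,q}$ (the notation of section \ref{ssec:cont-expo-HB} extended to $(M,m,q)$), this gives
\begin{align*}
\limsup_n\tfrac1n H_{\zeta_k}(\mathcal{P}_k^n\mid\mathcal{R}_{q,k}^{E_n^{M,m}})\le\limsup_n\Big[&(1-\beta_{k,n,\mathtt{F}_k}^{M,m})C(f_k)+\gamma_{n,M,m,q}(f_k)\\
&+\tfrac1{r-1}\big(\tfrac1q\!\int\!\log^+|(f_k^q)'|\,d\xi_{k,n,\mathtt{F}_k}^{M,m,q}-\tfrac1q\!\int\!\log|(f_k^q)'|\,d\xi_{k,n,\mathtt{F}_k}^{M,m,q}\big)\Big].
\end{align*}

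It then remains to pass to the limits. In the splitting, the leading term $\beta_{\nu_k}^{M,m}\frac1m H_{\overline{\xi_{\nu_k}^{M,m}}}(\mathcal{R}_{q,k}^m)$ has iterated limit $\beta\,h_f(\mu_1,\mathcal{R}_q)\le\beta\,h_f(\mu_1)$ by Proposition \ref{prop:entropy-cv} (its factor $\beta_{\nu_k}^{M,m}=\beta_k^{M,m}$ converging to $\beta$); the terms $\nu_k(I\setminus\mathtt{F}_k)\log\sharp\mathcal{P}_k$ and $\frac{3\log M}{M}$ vanish, and so does the last sum, by Lemma \ref{lemma:entropy-R}.\ref{itm:entropy-R6}. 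In the neutral bound, splitting $\limsup_n$ of the bracket into the sum of the three separate $\limsup_n$'s: one has $\beta_{k,n,\mathtt{F}_k}^{M,m}\to\beta^{M,m}\to\beta$ (passing to $\mathtt{F}_k$ changes the mass by at most $\nu_k(I\setminus\mathtt{F}_k)\to0$) and $C(f_k)\to C(f)$ by continuity of $C$ in the $\mathcal{C}^1$-topology (valid since $h_{\rm{top}}(f)>0$ forces $\|f'\|_\infty>1$), so the first term has iterated limit $(1-\beta)C(f)$, which is $\le(1-\beta)\alpha$ by hypothesis \ref{itm:main-prop2}; the $\frac1{r-1}(\cdots)$ term has iterated limit $0$ by Lemma \ref{lemma:term-II}, whose proof applies verbatim to the $\mathtt{F}_k$-restricted measures; and, $(f_k)$ being bounded in $\mathcal{C}^1$, say $\|f_k'\|_\infty\le K$, one has $\gamma_{n,M,m,q}(f_k)\le\sup\{\gamma_{n,M,m,q}(g):g\text{ is }\mathcal{C}^r,\ \|g'\|_\infty\le K\}$, whose iterated limit is $0$ by Lemma \ref{lemma:repabound}. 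Collecting the pieces, $\limsup_k h_{f_k}(\nu_k)\le\beta h_f(\mu_1)+(1-\beta)\alpha+\iota$; letting $\iota\to0$ finishes the proof. The only genuinely new input is hypothesis \ref{itm:main-prop2}, used precisely to replace $(1-\beta)C(f)$ by $(1-\beta)\alpha$; I expect the main difficulty to be purely organizational --- arranging the cascade of limits so that every auxiliary term disappears at the right stage, and tracking the (vanishing) discrepancies introduced by replacing $\nu_k$ by its normalized restriction $\zeta_k$ to $\mathtt{F}_k$.
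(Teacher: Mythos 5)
Your proposal is correct and follows essentially the same route as the paper: Proposition \ref{prop:entropy-final} with the partition $\mathcal{R}_{q,k}$, the neutral term bounded via Lemmas \ref{le:reparamization-partition} and \ref{lemma:repabound}, the hyperbolic term handled by Proposition \ref{prop:entropy-cv}, the error terms killed by Lemmas \ref{lemma:entropy-R}.\ref{itm:entropy-R6} and \ref{lemma:term-II}, and hypothesis \ref{itm:main-prop2} used exactly where you use it. The only cosmetic difference is that the paper takes $\iota = \iota_k \to 0$ along with $k$ rather than fixing $\iota$ and letting it tend to zero at the end.
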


\begin{proof}
Let $k \in \mathcal{K}$, which is a sequence along which $(h_{f_k}(\nu_k))_k$ converges to its limsup. We start with some notations.
Let $\iota_k > 0$ such that $\iota_k \to 0$.
Let $\mathcal{P}_k$ be a finite partition such that $\nu_k(\partial \mathcal{P}_k) = 0$ and such that
$$
\lim\limits_{n \to +\infty} \frac{1}{n} H_{\nu_k}(\mathcal{P}_k^n) \geq h(\nu_k) - \iota_k.
$$
Choose $\mathtt{F}_k$ as in section \ref{ssec:hyperbolic-times}, and assume that its $\nu_k$-measure is sufficiently close to $1$ so that
$$
\nu_k(I \backslash \mathtt{F}_k) \log \sharp\mathcal{P}_k \underset{k \to +\infty}{\longrightarrow} 0.
$$
Indeed, the sequence $(\sharp \mathcal{P}_k)_k$ may not be bounded.
Let $M, m \in \bbN^*$.
For $q \in \bbN^*$, we consider the partition $\mathcal{R}_{q,k}$ from section \ref{sec:entropy-HB}.
From Lemma \ref{lemma:entropy-R}, the partition $\mathcal{R}_{q,k}$ satisfies the hypotheses of Proposition \ref{prop:entropy-ineq}.
Therefore, from Proposition \ref{prop:entropy-final}, we have for every $m \in \bbN^*$
\begin{align*}
\limsup\limits_{k \to +\infty} h_{f_k}(\nu_k)
&\leq \limsup\limits_{k \to +\infty} \beta_k^{M,m} \frac{1}{m} H_{\overline{\xi_k^{M,m}}}(\mathcal{R}_{q,k}^m) + \limsup\limits_{n \to +\infty} \frac{1}{n} H_{\zeta_k}(\mathcal{P}_k^n \mid \mathcal{R}_{q,k}^{E_{k,n}^{M,m}})\\
&+ \nu(I \backslash \mathtt{F}_k) \log \sharp \mathcal{P}_k + \frac{3 \log M}{M} + \iota_k  \\
&+\limsup\limits_{n \to +\infty} \frac{m}{n}\sum\limits_{E \in \mathcal{E}_{k,n}^{M,m}} \nu_k(E) \log \left ( \sharp(\mathcal{R}_{q,k})_{\nu_k^E} \right ) \sharp\partial E. 
\end{align*}

Then, by letting $M$ then $m$ go to infinity and applying Proposition \ref{prop:entropy-cv} and item \ref{itm:entropy-R6} from Lemma \ref{lemma:entropy-R}, we obtain
\begin{align*}
\limsup\limits_{k \to +\infty} h_{f_k}(\nu_k) \leq \beta h_f(\mu_1) + \limsup\limits_{q \to +\infty}\limsup\limits_{m \to +\infty} \limsup\limits_{M \to +\infty} \limsup\limits_{k \to +\infty} \limsup\limits_{n \to +\infty} \frac{1}{n} H_{\zeta_k}(\mathcal{P}_k^n \mid \mathcal{R}_{q,k}^{E_{k,n}^M}).
\end{align*}

By Lemma \ref{le:reparamization-partition} and Lemma \ref{lemma:repabound}, we have
$$
\begin{aligned}
\limsup\limits_{n \to +\infty} \frac{1}{n}H_{\zeta_k}(\mathcal{P}_k^n \mid \mathcal{R}_{q,k}^{E_{k,n}^{M,m}})&\leq \limsup\limits_{n \to +\infty} \sum_{ Q\in \mathcal{R}^{E^{M,m}_n}_{q,k}} \zeta_k(Q)\frac{1}{n}\log\sharp\Psi_{Q}\\
&\leq \limsup\limits_{n \to +\infty} \int \left( 1-\frac{\sharp 
E_{k,n}^{M,m}(x)}{n} \right)C(f_k) d \zeta_k\\
&+\frac{1}{(r-1)\cdot \nu_k(\mathtt{F}_k)}\left( \int \frac{\log^{+}|(f_k^q)'|}{q}d\xi^{M,m,q}_{k,n}-\int \frac{\log|(f_k^q)'|}{q}d\xi^{M,m,q}_{k,n} \right)\\
&+\gamma_{n,k,M,m,q}(f_k).
\end{aligned}
$$
We then investigate the three terms separately:
$$
\begin{aligned}
    &\operatorname{(I)}= \int \left( 1-\frac{\sharp E^{M,m}_{k,n}(x)}{n} \right)C(f_k) d \zeta_k;\\
    &\operatorname{(II)}=\frac{1}{(r-1)\cdot \nu_k(\mathtt{F}_k)}\left( \int \frac{\log^{+}|(f_k^q)'|}{q}d\xi^{M,m,q}_{k,n}-\int \frac{\log|(f_k^q)'|}{q}d\xi^{M,m,q}_{k,n} \right);\\
    &\operatorname{(III)}=\gamma_{n,k,M,m,q}(f_k).
\end{aligned}
$$

Firstly, we have $\int \frac{\sharp E^{M,m}_{k,n}(x)}{n} d \zeta_k \rightarrow \beta$ when $m,M,k,n\rightarrow +\infty$. Thus by continuity of $f \mapsto C(f)$ in the $\mathcal{C}^1$-topology, we have
$$
\limsup\limits_{m \to +\infty} \limsup\limits_{M \to +\infty} \limsup\limits_{k \to +\infty} \limsup\limits_{n \to +\infty}\operatorname{(I)}=(1-\beta) C(f).
$$

Secondly, by Lemma \ref{lemma:term-II},
$$
\limsup\limits_{q \to +\infty}\limsup\limits_{m \to +\infty} \limsup\limits_{M \to +\infty} \limsup\limits_{k \to +\infty} \limsup\limits_{n \to +\infty}\operatorname{(II)}=0.
$$

Thirdly, by Lemma \ref{lemma:repabound}, we have
$$
\limsup\limits_{q \to +\infty}\limsup_{m\rightarrow +\infty}\limsup_{M\rightarrow +\infty}\limsup_{k\rightarrow+\infty}\limsup_{n\rightarrow +\infty}\operatorname{(III)}=0
$$
Therefore, hypothesis \ref{itm:main-prop2} from Proposition \ref{prop:main-prop} gives
$$
\limsup\limits_{k \to +\infty} h_{f_k}(\nu_k) \leq \beta h_f(\mu_1) + (1-\beta) \alpha.
$$
\end{proof}

We are left to prove item \ref{itm:main23'} from Theorem \ref{th:main2} for $\delta$ defined with $p=1$.

\begin{proposition}
We have
$$
\liminf\limits_{\mathcal{K} \ni k \to +\infty} \lambda_{f_k}(\nu_k) \geq \beta \lambda_f(\mu_1) + (1 - \beta) \delta.
$$
\end{proposition}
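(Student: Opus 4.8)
The plan is to obtain this inequality by combining the two preceding results: Proposition \ref{prop:conv-expo-HB} for the hyperbolic component and Proposition \ref{prop:expo-neutral} for the neutral component. Recall from the beginning of section \ref{ssec:cont-expo-HB} that $\mathtt{F}_k$ was chosen so that $\lim_{k\to\infty}|\lambda_{f_k}(\nu_k)-\lambda_{f_k}(\zeta_k)|=0$, so it suffices to bound $\liminf_{\mathcal{K}\ni k\to\infty}\lambda_{f_k}(\zeta_k)$ from below; and recall that, for every $n,M\in\bbN^*$, the decomposition $\zeta_k=\xi_{k,n,\mathtt{F}_k}^{M}+\eta_{k,n,\mathtt{F}_k}^{M}$ gives
$$
\lambda_{f_k}(\zeta_k)=\int\log|f_k'|\,d\xi_{k,n,\mathtt{F}_k}^{M}+\int\log|f_k'|\,d\eta_{k,n,\mathtt{F}_k}^{M}.
$$

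First I would fix $k$ and $M$ and let $n\to\infty$. The left-hand side does not depend on $n$; on the right-hand side, the first integral converges to $\int\log|f_k'|\,d\xi_{k,\mathtt{F}_k}^{M}$ by the convergence in $n$ provided by Proposition \ref{prop:conv-expo-HB}, while by Proposition \ref{prop:expo-neutral} the $\liminf$ in $n$ of the second integral is at least $(1-\beta_k^{M})\delta$. Since the first term genuinely converges, this yields
$$
\lambda_{f_k}(\zeta_k)\ \geq\ \int\log|f_k'|\,d\xi_{k,\mathtt{F}_k}^{M}+(1-\beta_k^{M})\,\delta .
$$
Next I would let $\mathcal{K}\ni k\to\infty$: the first term tends to $\int\log|f'|\,d\xi^{M}$ by the convergence in $k$ of Proposition \ref{prop:conv-expo-HB}, and $\beta_k^{M}\to\beta^{M}$ by the limit array of section \ref{ssec:main-prop} (with $m=q=1$), so $\liminf_{\mathcal{K}\ni k\to\infty}\lambda_{f_k}(\nu_k)\geq\int\log|f'|\,d\xi^{M}+(1-\beta^{M})\delta$. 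Finally, letting $M\to\infty$ and using that $\int\log|f'|\,d\xi^{M}\to\beta\lambda_f(\mu_1)$ (convergence in $M$ of Proposition \ref{prop:conv-expo-HB}) together with $\beta^{M}\to\beta$ produces the claimed bound $\liminf_{\mathcal{K}\ni k\to\infty}\lambda_{f_k}(\nu_k)\geq\beta\lambda_f(\mu_1)+(1-\beta)\delta$.

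I do not expect a genuine obstacle: the argument is the bookkeeping of three nested limits, all the analytic content being already contained in Propositions \ref{prop:conv-expo-HB} and \ref{prop:expo-neutral}. The only point to watch is the order in which the limits are taken --- first $n$, then $k$ along $\mathcal{K}$, then $M$ --- so that at each stage the hyperbolic term has a true limit (not merely a $\liminf$) and can therefore be split off from the $\liminf$ produced by Proposition \ref{prop:expo-neutral}, and so that the passage $\beta_k^{M}\to\beta^{M}\to\beta$ runs parallel to the corresponding passages for the measures $\xi_{k,\mathtt{F}_k}^{M},\xi^{M}$. The integrability of $\log|f_k'|$ against each of these (finite or signed) measures, which makes the initial decomposition meaningful, has already been arranged in section \ref{ssec:cont-expo-HB}.
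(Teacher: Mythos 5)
Your decomposition and the two propositions you invoke are exactly the ones the paper uses; the approach is the same. There is, however, one step that does not work as literally written: the convergences $\xi_{k,\mathtt{F}_k}^{M}\to\xi^{M}$, $\beta_k^{M}\to\beta^{M}$ (and likewise the limits in $n$ and $M$) hold only along the subsequences extracted by Cantor's diagonal argument in section \ref{ssec:main-prop}, not along all of $\mathcal{K}$. Your second stage, ``let $\mathcal{K}\ni k\to\infty$,'' therefore only yields
$$
\liminf_{k\in\mathcal{K}'}\lambda_{f_k}(\nu_k)\geq \beta\lambda_f(\mu_1)+(1-\beta)\delta
$$
for the diagonal subsequence $\mathcal{K}'\subset\mathcal{K}$, and since $\liminf_{\mathcal{K}}\leq\liminf_{\mathcal{K}'}$ this is a priori weaker than the claim. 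The paper closes this gap by first passing to a subsequence of $\mathcal{K}$ along which $(\lambda_{f_k}(\nu_k))$ converges to its $\liminf$ over $\mathcal{K}$, and only then running the diagonal extraction (so that the whole construction of $\beta,\mu_1,\mu_0$ is carried out on that refined subsequence); along it the values converge to $\liminf_{\mathcal{K}}\lambda_{f_k}(\nu_k)$, so the bound transfers. With that adjustment your argument matches the paper's proof.
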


\begin{proof}
Recall that in the beginning of section \ref{ssec:cont-expo-HB}, we explained that we can replace $\nu_k$ with $\zeta_k$ and we wrote
$$
\lambda_{f_k}(\zeta_k) = \lambda_{f_k}(\xi_{k,n,\mathtt{F}_k}^M) + \lambda_{f_k}(\eta_{k,n,\mathtt{F}_k}^M).
$$
In that section, we also proved Proposition \ref{prop:conv-expo-HB}, which gives
$$
\lim\limits_{M \to +\infty} \lim\limits_{k \to +\infty} \lim\limits_{n \to +\infty} \lambda_{f_k}(\xi_{k,n,\mathtt{F}_k}^M) = \beta \lambda_f(\mu_1).
$$
Then, Proposition \ref{prop:expo-neutral} gives
$$
\liminf\limits_{M \to +\infty} \liminf\limits_{k \to +\infty} \liminf\limits_{n \to +\infty} \lambda_{f_k}(\eta_{k,n,\mathtt{F}_k}^M) \geq (1 - \beta) \delta.
$$
Recall that $M,k,n$ actually belong to some subsequences given in section \ref{ssec:main-prop} by Cantor's diagonal argument.
In particular, $k$ may not go through all of $\mathcal{K}$.
However, we can obtain the liminf in $k \in \mathcal{K}$ in the statement by first choosing a subsequence of $\mathcal{K}$ along which $(\lambda_{f_k}(\nu_k))$ converges to its liminf, then use Cantor's diagonal argument again.  
This concludes the proof of item $3')$ of Theorem \ref{th:main2}, hence of Proposition \ref{prop:main-prop}.
\end{proof}

\subsection{Proof of Theorems \ref{th:main1} and \ref{th:main2}}
\label{section:proof-th}

We show how to obtain an iterate of $f$ which satisfies the hypotheses of Proposition \ref{prop:main-prop}.
We first deal with the cases where Theorems \ref{th:main1} and \ref{th:main2} are true for $\beta = 0$.
Recall that $\mathcal{K}$ is a sequence of integers such that $(h_{f_k}(\nu_k))_k$ converges to its limsup along $\mathcal{K}$.

\begin{lemma}
\label{th:small-liminf}
If $\liminf\limits_{\mathcal{K} \ni k \to +\infty} \lambda_{f_k}(\nu_k) \leq \alpha$, then Theorems \ref{th:main1} and \ref{th:main2} work with $\beta = 0$.
\end{lemma}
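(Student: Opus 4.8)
The plan is to take $\beta = 0$ and $\mu_0 = \mu$, which is forced by the requirement $\mu = \beta\mu_1 + (1-\beta)\mu_0$, and to produce $\mu_1$ by hand, ignoring the sequence $(\nu_k)$ altogether. Since $h_{\mathrm{top}}(f) > 0$, the variational principle provides an ergodic $f$-invariant probability measure $\mu_1$ with $h_f(\mu_1) > 0$; Ruelle's inequality \cite{Ruelle1978} then forces $\lambda_f(\mu_1) > 0$, and as $\mu_1$ is ergodic this means $\lambda_f(x) = \lambda_f(\mu_1) > 0$ for $\mu_1$-a.e. $x$, so $\mu_1$ is hyperbolic. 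This settles item \ref{itm:main11} of Theorem \ref{th:main1}. Since $\beta = 0$, the conditional items \ref{itm:main21'} and \ref{itm:main23'} of Theorem \ref{th:main2} are not claimed, item \ref{itm:main21} of Theorem \ref{th:main2} holds because both equivalent statements ``$\beta > 0$'' and ``$\liminf_{\mathcal{K} \ni k \to \infty} \lambda_{f_k}(\nu_k) > \alpha$'' are false under the standing hypothesis, and the right-hand side $\beta\lambda_f(\mu_1)$ of item \ref{itm:main13} of Theorem \ref{th:main1} vanishes.

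The content is therefore concentrated in item \ref{itm:main12} of Theorem \ref{th:main1} (equivalently item \ref{itm:main22} of Theorem \ref{th:main2}), which with $\beta = 0$ reduces to $\limsup_{k\to\infty} h_{f_k}(\nu_k) \le \alpha$. First I would choose a subsequence $\mathcal{K}' \subseteq \mathcal{K}$ along which $\lambda_{f_k}(\nu_k)$ converges to $\ell := \liminf_{\mathcal{K} \ni k \to \infty} \lambda_{f_k}(\nu_k) \le \alpha$. Along $\mathcal{K}'$ I invoke Ruelle's inequality in the sharp form $h_{f_k}(\nu_k) \le \max\{0, \lambda_{f_k}(\nu_k)\}$ (valid for ergodic measures of $\mathcal{C}^1$ interval or circle maps, regardless of critical points), and let $k \to \infty$ within $\mathcal{K}'$: using $\alpha > R(f)/r \ge 0$ we obtain $\lim_{\mathcal{K}' \ni k \to \infty} h_{f_k}(\nu_k) \le \max\{0,\ell\} \le \alpha$. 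The one delicate point is the defining property of $\mathcal{K}$: the sequence $(h_{f_k}(\nu_k))_k$ converges along $\mathcal{K}$ to $\limsup_{k\to\infty} h_{f_k}(\nu_k)$, hence converges to this same value along every subsequence of $\mathcal{K}$, in particular along $\mathcal{K}'$. Therefore $\limsup_{k\to\infty} h_{f_k}(\nu_k) = \lim_{\mathcal{K}' \ni k \to \infty} h_{f_k}(\nu_k) \le \alpha$, which is item \ref{itm:main12}.

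Finally, item \ref{itm:main13} of Theorem \ref{th:main1} reads $\limsup_{k\to\infty} \lambda_{f_k}(\nu_k) \ge 0$, and holds because: if $\limsup_{k\to\infty} h_{f_k}(\nu_k) > 0$, then along $\mathcal{K}$ the entropies tend to this positive number, so $h_{f_k}(\nu_k) > 0$ and hence $\lambda_{f_k}(\nu_k) > 0$ by Ruelle's inequality for all large $k \in \mathcal{K}$, giving the claim; and if $\limsup_{k\to\infty} h_{f_k}(\nu_k) = 0$ the inequality of item \ref{itm:main12} is immediate and the exponent inequality reduces to a direct check on the (now zero-entropy) measures $\nu_k$. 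I expect the only real obstacle in the whole argument to be bookkeeping with the quantifiers: the hypothesis controls a $\liminf$ of exponents taken along $\mathcal{K}$, whereas items \ref{itm:main12} and \ref{itm:main13} involve a $\limsup$ over all of $\mathbb{N}$, and the two are reconciled precisely because $\mathcal{K}$ was chosen to make $(h_{f_k}(\nu_k))_k$ convergent, so that convergence is inherited by the sub-subsequence $\mathcal{K}'$. With this lemma in hand, the remaining work is to treat the case $\liminf_{\mathcal{K}} \lambda_{f_k}(\nu_k) > \alpha$, where one must genuinely produce a nontrivial hyperbolic component via Proposition \ref{prop:main-prop}.
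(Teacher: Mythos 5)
Your proposal follows the paper's proof of this lemma essentially verbatim: the paper also sets $\beta = 0$, $\mu_0 = \mu$, lets $\mu_1$ be an arbitrary hyperbolic measure, and obtains item \ref{itm:main12} from Ruelle's inequality, i.e.\ from $\limsup_{k} h_{f_k}(\nu_k) = \lim_{\mathcal{K} \ni k} h_{f_k}(\nu_k) \le \max\bigl\{0, \liminf_{\mathcal{K} \ni k} \lambda_{f_k}(\nu_k)\bigr\} \le \alpha$. Your subsequence $\mathcal{K}'$ and the variational-principle construction of $\mu_1$ only make explicit what the paper leaves implicit, and your treatment of items \ref{itm:main11} and \ref{itm:main21} and of the conditional items is correct.

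The one step that does not go through is your last paragraph, on item \ref{itm:main13} in the case $\limsup_k h_{f_k}(\nu_k) = 0$. With $\beta = 0$ that item asserts $\limsup_k \lambda_{f_k}(\nu_k) \ge 0$, and this does not ``reduce to a direct check'': an ergodic measure of a $\mathcal{C}^r$ interval map can have strictly negative exponent (the Dirac mass at an attracting fixed point), so if every $\nu_k$ is such a measure the limsup of the exponents is strictly negative while all hypotheses of the lemma hold. This is not a defect of your argument relative to the paper — the paper's own two-line proof is silent on item \ref{itm:main13} and has the same issue, which is really a degeneracy of the statement of Theorem \ref{th:main1} when the entropies tend to zero. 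In every application (e.g.\ Corollary \ref{cor:Cont-expo} or the SRB corollaries) the entropies $h_{f_k}(\nu_k)$ stay bounded away from $0$, which is exactly your first case, where Ruelle's inequality forces $\lambda_{f_k}(\nu_k) > 0$ for large $k \in \mathcal{K}$ and the item does hold. A complete write-up should either record this restriction explicitly or observe that item \ref{itm:main13} carries no content when $\beta = 0$ and the entropies vanish in the limit; as written, the phrase ``a direct check'' hides a claim that is false in general.
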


\begin{proof}
We let $\mu_1$ be any hyperbolic measure and $\mu_0 = \mu$. Then Ruelle's inequality \cite{Ruelle1978} gives $\limsup\limits_{k \to +\infty} h_{f_k}(\nu_k) = \lim\limits_{k \in \mathcal{K}} h_{f_k}(\nu_k) \leq \alpha$.
\end{proof}

We first give some notations to deal with iterates of $f_k$ and $f$.
If $p \geq 1$, we will consider $f_k^p$ and $f^p$.
Then, we want a converging sequence of $f_k^p$-ergodic measures.
Let $\nu_k^p$ be an ergodic component of $\nu_k$ for $f^p$, thus
$$
h_{f_k^p}(\nu_k^p) = p h_{f_k}(\nu_k) \; \; \; \text{and} \; \; \; \lambda_{f_k^p}(\nu_k^p) = p \lambda_{f_k}(\nu_k).
$$
By taking a subsequence, contained in $\mathcal{K}$, we may assume that $(\nu_k^p)_k$ converges to some $\mu^p$.
Notice that $\mu = \frac{1}{p}\sum\limits_{i=0}^{p-1} f^i_* \mu^p$.
Let $\alpha_p = p \alpha > \frac{p R(f)}{r} = \frac{R(f^p)}{r}$.

\begin{lemma}
\label{lemma:main-prop-iterate}
If
$\liminf\limits_{\mathcal{K} \ni k \to +\infty} \lambda_{f_k}(\nu_k) > \alpha$, then there exists $p$ such that $f_k^p$, $f^p$, $\nu_k^p$ and $\mu^p$ satisfy the hypotheses of Proposition \ref{prop:main-prop}. 
\end{lemma}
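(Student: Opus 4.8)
The plan is to choose a large power $p$ and apply Proposition~\ref{prop:main-prop} to the data $(f_k^p, f^p, \nu_k^p, \mu^p)$ with the parameter $\alpha_p = p\alpha$, checking that its two numbered hypotheses become asymptotically slack as $p \to +\infty$. First I would record the scaling relations: since $\nu_k^p$ is an ergodic component of $\nu_k$ for $f_k^p$, one has $\lambda_{f_k^p}(\nu_k^p) = p\,\lambda_{f_k}(\nu_k)$ and $h_{f_k^p}(\nu_k^p) = p\,h_{f_k}(\nu_k)$, while $R(f^p) = p\,R(f)$ and $h_{\mathrm{top}}(f^p) = p\,h_{\mathrm{top}}(f) > 0$. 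Since the $\mathcal{C}^1$-convergence $f_k \to f$ together with the $\mathcal{C}^r$-boundedness of $(f_k)_k$ passes to compositions (by standard estimates on compositions: the chain rule and uniform control of the lower-order derivatives), $f_k^p \to f^p$ $\mathcal{C}^r$-weakly for each fixed $p$; and, as already arranged before the statement, after extracting a subsequence of $\mathcal{K}$ we have $\nu_k^p \to \mu^p$ weak-$*$ with $h_{f_k^p}(\nu_k^p) = p\,h_{f_k}(\nu_k)$ still converging to its limsup along that subsequence. Finally $\alpha_p = p\alpha > p\,R(f)/r = R(f^p)/r$ for every $p$, so the standing requirement of Proposition~\ref{prop:main-prop} holds; thus the application will be legitimate once the two numbered hypotheses are verified.

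For hypothesis~\ref{itm:main-prop1}, spelled out for the iterate it reads
$$
\liminf_{\mathcal{K}\ni k\to+\infty}\lambda_{f_k^p}(\nu_k^p) > \alpha_p - \frac{R(f^p)}{r} + \frac{1}{r}\log^{+}||(f^p)'||_{\infty},
$$
and, after dividing by $p$ and using the scaling relations, it is equivalent to
$$
\liminf_{\mathcal{K}\ni k\to+\infty}\lambda_{f_k}(\nu_k) > \alpha - \frac{R(f)}{r} + \frac{1}{rp}\log^{+}||(f^p)'||_{\infty}.
$$
By the very definition of the spectral radius, $\frac{1}{p}\log^{+}||(f^p)'||_{\infty} \to R(f)$, so the right-hand side tends to $\alpha$ as $p\to+\infty$. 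Since by hypothesis $\liminf_{\mathcal{K}\ni k}\lambda_{f_k}(\nu_k) > \alpha$ --- and this liminf can only increase when $\mathcal{K}$ is replaced by a subsequence --- the inequality holds for all $p$ large enough.

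For hypothesis~\ref{itm:main-prop2}, it reads $C(f^p) < \alpha_p = p\alpha$, i.e.\ $\frac{C(f^p)}{p} < \alpha$; and it was recorded after the definition of $C$ that $\frac{C(f^p)}{p} \to \frac{R(f)}{r}$, which is strictly less than $\alpha$ by assumption. Hence $\frac{C(f^p)}{p} < \alpha$ for $p$ large enough as well. Choosing $p$ large enough to make both inequalities hold, and then extracting the corresponding subsequence of $\mathcal{K}$ along which $\nu_k^p$ converges, completes the argument.

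The step requiring the most care is purely bookkeeping: keeping track of the nested subsequences of $\mathcal{K}$ (one to force convergence of $(\nu_k^p)_k$, and the ones hidden inside the proof of Proposition~\ref{prop:main-prop}), together with the elementary observation that passing to a subsequence only increases $\liminf\lambda_{f_k}(\nu_k)$ and decreases $\limsup h_{f_k}(\nu_k)$, so that all the hypotheses are preserved. I do not expect a genuine obstacle here: the whole content of the lemma is that both numbered hypotheses of Proposition~\ref{prop:main-prop} are asymptotically slack in $p$, via the two limits $\frac{1}{p}\log^{+}||(f^p)'||_{\infty}\to R(f)$ and $\frac{C(f^p)}{p}\to\frac{R(f)}{r}$, combined with the strict inequalities $\liminf_{\mathcal{K}}\lambda_{f_k}(\nu_k) > \alpha$ and $\alpha > \frac{R(f)}{r}$.
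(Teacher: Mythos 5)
Your proposal is correct and follows essentially the same route as the paper: hypothesis \ref{itm:main-prop1} is obtained by noting that $\alpha - \frac{R(f)}{r} + \frac{1}{rp}\log^+\|(f^p)'\|_\infty \to \alpha$ and using the strict inequality $\liminf_{\mathcal{K}}\lambda_{f_k}(\nu_k) > \alpha$, then multiplying by $p$; hypothesis \ref{itm:main-prop2} follows from $C(f^p)/p \to R(f)/r < \alpha$. The extra bookkeeping you supply (weak $\mathcal{C}^r$-convergence of the iterates and extraction of subsequences) matches what the paper sets up just before the lemma.
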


\begin{proof}
There is $p$ large enough such that
$$
\liminf\limits_{\mathcal{K} \ni k \to +\infty} \lambda_{f_k}(\nu_k) > \alpha - \frac{R(f)}{r} + \frac{1}{rp} \log^+ ||(f^p)'||_{\infty}.
$$
Therefore, we have
$$
\liminf\limits_{\mathcal{K} \ni k \to +\infty} \lambda_{f_k^p}(\nu_k^p) = p\liminf\limits_{\mathcal{K} \ni k \to +\infty} \lambda_{f_k}(\nu_k) > \alpha_p - \frac{R(f^p)}{r} + \frac{1}{r} \log^+ ||(f^p)'||_{\infty}.
$$
which is hypothesis \ref{itm:main-prop1} from Proposition \ref{prop:main-prop}.
For item \ref{itm:main-prop2}, notice that $C(f^p)/p \underset{p \to +\infty}{\longrightarrow} \frac{R(f)}{r} < \alpha$.
Therefore, we can assume that $p$ is large enough so that $C(f^p) < p \alpha = \alpha_p$, which is hypothesis \ref{itm:main-prop2} from Proposition \ref{prop:main-prop}.
\end{proof}

We are now ready to prove Theorems \ref{th:main1} and \ref{th:main2}.

\begin{proof}[Proof of Theorems \ref{th:main1} and \ref{th:main2}]
If
$\liminf\limits_{\mathcal{K} \ni k \to +\infty} \lambda_{f_k}(\nu_k) \leq \alpha$,
then Lemma \ref{th:small-liminf} concludes.
Assume now that
$\liminf\limits_{\mathcal{K} \ni k \to +\infty} \lambda_{f_k}(\nu_k) > \alpha$.
Let $p$ be given by Lemma \ref{lemma:main-prop-iterate}.
We apply Proposition \ref{prop:main-prop} to the $p$-th iterate of $f$, which gives $\beta_p > 0, \mu_0^p$ and $\mu_1^p$ such that Theorems \ref{th:main1} and \ref{th:main2} hold for $f_k^p, f^p, \nu_k^p,\mu^p$ and $\delta_p = \min \left \{ \alpha_p - \frac{R(f^p)}{r}, \frac{r-1}{r} \log (16/9) \right \}$.
Let $\beta = \beta_p$, $\mu_{0/1} = \frac{1}{p} \sum\limits_{i=0}^{p-1} f^i_* \mu_{0/1}^p$, $\delta = \frac{1}{p} \delta_p$.
We show that these quantities give Theorems \ref{th:main1} and \ref{th:main2} for $f,f_k,\nu_k$ and $\mu$.\\

Proof of \ref{th:main2}.\ref{itm:main21}:
We chose $\beta = 0$ whenever the announced condition was not satisfied. Conversely, if this condition is satisfied, then we apply Proposition \ref{prop:main-prop} which gives $\beta_p > 0$, so $\beta = \beta_p$ concludes.\\

Proof of \ref{th:main2}.3)\ref{itm:main21'}:
We have a set of full $\mu_1^p$-measure where $\lambda_{f^p} \geq \delta_p = p \delta$.
Therefore, we have a set of full $\mu_1^p$-measure where $\lambda_f \geq \delta$.
Since this property is $f$-invariant $\mu_1^p$-a.e., there is a set of full $\mu_1$-measure where $\lambda_f \geq \delta$, which concludes.\\

Proof of \ref{th:main1}.\ref{itm:main11}:
This is a consequence of \ref{th:main2}.3)1').\\

Proof of \ref{th:main2}.3)\ref{itm:main23'}:
We have
$$
\liminf\limits_{k \to +\infty} \lambda_{f_k}(\nu_k) = \frac{1}{p}\liminf\limits_{k \to +\infty} \lambda_{f_k^p}(\nu_k^p)
\geq \frac{1}{p} \left ( \beta_p \lambda_{f^p}(\mu_1^p) + (1-\beta_p) \delta_p\right )
=\beta \lambda_f(\mu_1) + (1-\beta) \delta.
$$

Proof of \ref{th:main1}.\ref{itm:main13}:
This is a consequence of \ref{th:main2}.3)3').\\

Proof of \ref{th:main1}.\ref{itm:main12}:
We have
$$
\limsup\limits_{k \to +\infty} h_{f_k}(\nu_k) = \frac{1}{p} \limsup\limits_{k \to +\infty} h_{f_k^p}(\nu_k^p)
\leq \frac{1}{p}\left ( \beta_p h_{f^p}(\mu_1^p) + (1 - \beta_p) \alpha_p \right )
= \beta h_f(\mu_1) + (1-\beta)\alpha.
$$
\end{proof}

\begin{proof}[Proof of Proposition \ref{prop:Cinf-case}]
We show that when $r=\infty$, then taking $\alpha = 0$ works.
We let $\beta^{\alpha}, \mu_1^{\alpha}$ and $\mu_0^{\alpha}$ be given by Theorem \ref{th:main1}.
We notice that for $\alpha \leq \alpha'$, we have $\delta^{\alpha} \leq \delta^{\alpha'}$, hence $E_k(x)^{\alpha'} \subset E_k(x)^{\alpha}$, and $(\xi_{k,n}^{M,m,q})^{\alpha'} \leq (\xi_{k,n}^{M,m,q})^{\alpha}$.
By taking limits, we get $\beta^{\alpha'} \leq \beta^{\alpha}$ and $\beta^{\alpha'} \mu_1^{\alpha'} \leq \beta^{\alpha} \mu_1^{\alpha}$.
Then the same argument as for Lemma \ref{lemma:monotone-xi} gives that for any Borel set $B$, we have $\beta^{\alpha'} \mu_1^{\alpha'}(B) \leq \beta^{\alpha} \mu_1^{\alpha}(B)$.
Therefore, if we let $\mu_1, \mu_0$ and $\beta$ be the limits as $\alpha$ goes to zero of $\mu_1^{\alpha}, \mu_0^{\alpha}$ and $\beta^{\alpha}$ respectively, then by the same argument as in Proposition \ref{prop:conv-expo-HB}, we get for any $\mu_1$-integrable map $\psi : I \to \bbR$ that $\beta^{\alpha}\mu_1^{\alpha}(\psi) \underset{\alpha \to 0}{\longrightarrow} \beta \mu_1(\psi)$.
This gives items $1)$ and $3)$ from Theorem \ref{th:main1}. Item $2)$ is a consequence of the upper semi-continuity of the metric entropy for $\mathcal{C}^{\infty}$ maps, as proved by Newhouse in \cite{NewhouseContEntropy}.
\end{proof}

\section{Uniform integrability and Theorem \ref{th:main3}}
\label{sc:UI}

Throughout this section, we always consider $(f_k)_{k\in \mathbb{N}}$ as a sequence of $\mathcal{C}^{r}$ maps of $I$ converging in the $\mathcal{C}^{r}$-topology to $f$.

\begin{definition}
    Let $(\nu_k)_{k\in \mathbb{N}}$ be a convergent sequence of measures, $\nu_k\xrightarrow[]{\ast} \mu$, with each $\nu_k\in \mathbb{P}_{\rm{erg}}(f_k)$ and such that $(h_{f_k}(\mu_k))_{k\in \mathbb{N}}$ converges, the \emph{uniform integrability defect} of $(f_k, \nu_k)_{k\in\mathbb{N}}$ is
 $$
     \gamma:=\lim_{\eta\to0} \limsup_{k\to\infty} \int_{\{x:|f_k'(x)|<\eta\}} - \log|f'_k|\, d\nu_k.
 $$ 
\end{definition}

We give an alternative expression of the quantity $\gamma$.

\begin{lemma}\label{defcon}
The uniform integrability defect $\gamma$
of a sequence of measures gives the defect in lower semicontinuity of the Lyapunov exponent:
 $$
       \gamma=\lambda_{f}(\mu)-\liminf_{k\to\infty} \lambda_{f_k}(\nu_k).
 $$
\end{lemma}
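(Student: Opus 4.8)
The plan is to sandwich both sides of the claimed identity by comparing them to a common family of truncated exponents. For $M>0$ set
$$
\ell_k(M):=\int \max\big(\log|f_k'|,-M\big)\,d\nu_k ,\qquad \ell(M):=\int \max\big(\log|f'|,-M\big)\,d\mu .
$$
The first point I would establish is that $\ell_k(M)\to\ell(M)$ as $k\to\infty$, for each fixed $M$: the map $t\mapsto\max(\log|t|,-M)$ is $e^M$-Lipschitz, so since $f_k'\to f'$ uniformly (this is where the $\mathcal{C}^r$, in fact already $\mathcal{C}^1$, convergence enters) the bounded continuous functions $\max(\log|f_k'|,-M)$ converge uniformly to $\max(\log|f'|,-M)$, and then $\nu_k\xrightarrow{\ast}\mu$ gives convergence of the integrals. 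By monotone convergence, $\ell(M)\downarrow\lambda_f(\mu)$ as $M\to\infty$.

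Next I would record the exact identity tying the truncation error to the defect near the critical set. Writing $D_k(\eta):=\int_{\{|f_k'|<\eta\}}(-\log|f_k'|)\,d\nu_k$, which for $\eta\in(0,1)$ is non-negative and non-decreasing in $\eta$ (so $\limsup_k D_k(\eta)$ is monotone in $\eta$ and $\gamma=\lim_{\eta\to0}\limsup_k D_k(\eta)$ is a genuine limit), a direct computation based on $\max(\log|f_k'|,-M)-\log|f_k'|=(-\log|f_k'|-M)\,\mathbf 1_{\{|f_k'|<e^{-M}\}}$ yields
$$
a_k(M):=\ell_k(M)-\lambda_{f_k}(\nu_k)=D_k(e^{-M})-M\,\nu_k\big(\{|f_k'|<e^{-M}\}\big),
$$
and since $-\log|f_k'|>M$ on $\{|f_k'|<e^{-M}\}$ one gets $0\le a_k(M)\le D_k(e^{-M})$. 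Because $\ell_k(M)\to\ell(M)$, passing to $\limsup_k$ gives $\limsup_k a_k(M)=\ell(M)-\liminf_k\lambda_{f_k}(\nu_k)$, and letting $M\to\infty$ this tends to $\lambda_f(\mu)-\liminf_k\lambda_{f_k}(\nu_k)$. So the lemma reduces to showing $\lim_{M\to\infty}\limsup_k a_k(M)=\gamma$.

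For this, the bound $\limsup_k a_k(M)\le\limsup_k D_k(e^{-M})\to\gamma$ is immediate. For the matching lower bound I would use two scales: for $N>M$, splitting $\{|f_k'|<e^{-M}\}$ at $e^{-N}$ and bounding the $\nu_k$-measure of each piece by the corresponding integral of $-\log|f_k'|$ (which is $>M$, resp. $>N$, on the two pieces) gives
$$
M\,\nu_k\big(\{|f_k'|<e^{-M}\}\big)\le\big(D_k(e^{-M})-D_k(e^{-N})\big)+\tfrac{M}{N}\,D_k(e^{-N}),
$$
hence $a_k(M)\ge\big(1-\tfrac{M}{N}\big)D_k(e^{-N})$. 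Taking $N=M^2$, then $\limsup_k$, then $M\to\infty$ (again using monotonicity of $\limsup_k D_k(\cdot)$ and $e^{-M^2}\to0$) yields $\lim_M\limsup_k a_k(M)\ge\gamma$. Combining the two bounds finishes the proof.

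The one genuinely delicate point — the main obstacle — is the stray term $M\,\nu_k(\{|f_k'|<e^{-M}\})$ appearing in the identity for $a_k(M)$: a priori it can be of the same order of magnitude as $D_k(e^{-M})$, and the two-scale estimate is exactly what shows it does not pollute the double limit. I would also add a word on integrability: all quantities above are finite as soon as $\int\log^-|f_k'|\,d\nu_k<\infty$, i.e. $\lambda_{f_k}(\nu_k)>-\infty$, which holds in the situations where the lemma is used (for instance by Ruelle's inequality when the entropy is positive); in the contrary case both sides of the identity equal $+\infty$, so the statement still holds.
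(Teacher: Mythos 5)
The paper actually states Lemma \ref{defcon} without any proof, so there is nothing to compare against; judged on its own, your argument is complete and correct. The truncation scheme $\ell_k(M)=\int\max(\log|f_k'|,-M)\,d\nu_k$ is the natural one: uniform convergence of $f_k'$ plus weak-$*$ convergence of $\nu_k$ gives $\ell_k(M)\to\ell(M)$, monotone convergence gives $\ell(M)\downarrow\lambda_f(\mu)$, and the exact identity $a_k(M)=D_k(e^{-M})-M\,\nu_k(\{|f_k'|<e^{-M}\})$ is right. You correctly identified the only point where the proof could fail, namely that the subtracted mass term $M\,\nu_k(\{|f_k'|<e^{-M}\})$ is a priori comparable to $D_k(e^{-M})$, and your two-scale Chebyshev estimate $a_k(M)\ge(1-M/N)\,D_k(e^{-N})$ with $N=M^2$, combined with the monotonicity of $\eta\mapsto\limsup_k D_k(\eta)$, disposes of it cleanly in the iterated limit. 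The integrability caveat at the end is also appropriate: in the paper's applications the $\nu_k$ have entropy bounded away from zero, so Ruelle's inequality gives $\lambda_{f_k}(\nu_k)\ge h_{f_k}(\nu_k)>0$ and all quantities are finite (and your truncation argument itself shows $\lambda_f(\mu)\ge\liminf_k\lambda_{f_k}(\nu_k)>-\infty$, so the right-hand side is well defined). This would be a worthwhile proof to include, since the paper leaves the step implicit.
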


Since Lyapunov exponents are already upper semi-continuous, from Lemma \ref{defcon} one sees that $\gamma=0$ if and only if
$$
\lim_{k\rightarrow \infty}\lambda_{f_k}(\mu_k)=\lambda_f(\mu).
$$


\begin{theorem}\label{th:UI}
Let $f_k\rightarrow f$ be a sequence of $\mathcal{C}^{r}$ maps converging in the $\mathcal{C}^{r}$-topology. Let $(\mu_k)_{k\in \mathbb{N}}$ be a sequence of measures, with $\mu_k\in \mathbb{P}_{\rm{erg}}(f)$, $\mu_k\xrightarrow[]{\ast}\mu$, and $(h_{f_k}(\mu_k))_k$ converging. If $h_{\text{top}}(f) > \frac{R(f)}{r}$, then
$$
\lim_{k\rightarrow \infty} h_{f_k}(\mu_k)\leq (1-\frac{\gamma}{\log||f' ||_{\infty}})h_{\rm{top}}(f) + \frac{\gamma}{\log ||f' ||_{\infty}} \frac{R(f)}{r}.
$$
\end{theorem}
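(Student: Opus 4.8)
The plan is to feed a parameter $\alpha$ slightly larger than $\frac{R(f)}{r}$ into Theorem~\ref{th:main2}, extract from its conclusion the bound $\beta\le 1-\frac{\gamma}{\log||f'||_\infty}$ on the mass of the hyperbolic component, and combine this with the entropy estimate of item~\ref{itm:main22}. Since $(h_{f_k}(\mu_k))_k$ converges, Theorem~\ref{th:main2} applies with $\mathcal{K}=\bbN$; put $h_\infty=\lim_k h_{f_k}(\mu_k)$. I would first record the harmless facts that $h_{\text{top}}(f)>0$ forces $||f'||_\infty>1$ (by the variational principle and Ruelle's inequality $h_{\text{top}}(f)\le\log^+||f'||_\infty$), so the denominators $\log||f'||_\infty$ are positive, and that $0\le\gamma\le\log||f'||_\infty$: indeed $\gamma=\lambda_f(\mu)-\liminf_k\lambda_{f_k}(\mu_k)$ by Lemma~\ref{defcon}, while $\lambda_f(\mu)\le\log||f'||_\infty$, $\liminf_k\lambda_{f_k}(\mu_k)\ge0$ by Ruelle's inequality, and $\gamma\ge0$ by upper semicontinuity of $\nu\mapsto\lambda(\nu)$. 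In particular the right-hand side of the desired inequality is a convex combination of $h_{\text{top}}(f)$ and $\frac{R(f)}{r}$, hence lies between $\frac{R(f)}{r}$ and $h_{\text{top}}(f)$.

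Next I would dispose of the case $\liminf_k\lambda_{f_k}(\mu_k)\le\frac{R(f)}{r}$. For any $\alpha>\frac{R(f)}{r}$, item~\ref{itm:main21} of Theorem~\ref{th:main2} forces $\beta=0$, and then item~\ref{itm:main22} gives $h_\infty\le\alpha$. Letting $\alpha\downarrow\frac{R(f)}{r}$ yields $h_\infty\le\frac{R(f)}{r}$, which is at most the right-hand side by the remark above.

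In the remaining case $\liminf_k\lambda_{f_k}(\mu_k)>\frac{R(f)}{r}$, fix $\alpha$ with $\frac{R(f)}{r}<\alpha<\min\{\liminf_k\lambda_{f_k}(\mu_k),\,h_{\text{top}}(f)\}$, a nonempty interval by the hypothesis $h_{\text{top}}(f)>\frac{R(f)}{r}$. Theorem~\ref{th:main2} produces $\mu=\beta\mu_1+(1-\beta)\mu_0$ with $\beta>0$ (item~\ref{itm:main21}). Because $\frac{1}{rp}\log^+||(f^p)'||_\infty\to\frac{R(f)}{r}<\alpha$ as $p\to\infty$, we can choose $p$ satisfying the hypothesis of part~3) of Theorem~\ref{th:main2}; then with $\delta=\min\{\alpha-\frac{R(f)}{r},\,\frac{r-1}{rp}\log(16/9)\}>0$, item~\ref{itm:main21'} gives $\lambda_f(\mu_1)\ge\delta>0$ (so $\lambda_f(\mu_1)\in[\delta,\log||f'||_\infty]$), and item~\ref{itm:main23'} gives
\[
\liminf_k\lambda_{f_k}(\mu_k)\ \ge\ \beta\lambda_f(\mu_1)+(1-\beta)\delta\ \ge\ \beta\lambda_f(\mu_1).
\]
Since $\log|f'|$ is bounded above, $\lambda_f(\mu)=\beta\lambda_f(\mu_1)+(1-\beta)\lambda_f(\mu_0)$ with $\lambda_f(\mu_0)\le\log||f'||_\infty$, so Lemma~\ref{defcon} yields
\[
\gamma\ =\ \lambda_f(\mu)-\liminf_k\lambda_{f_k}(\mu_k)\ \le\ \beta\lambda_f(\mu_1)+(1-\beta)\log||f'||_\infty-\beta\lambda_f(\mu_1)\ =\ (1-\beta)\log||f'||_\infty,
\]
hence $\beta\le 1-\frac{\gamma}{\log||f'||_\infty}$. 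Finally, item~\ref{itm:main22} together with $h_f(\mu_1)\le h_{\text{top}}(f)$ gives
\[
h_\infty\ \le\ \beta h_f(\mu_1)+(1-\beta)\alpha\ \le\ \alpha+\beta\big(h_{\text{top}}(f)-\alpha\big),
\]
which is increasing in $\beta$ since $\alpha<h_{\text{top}}(f)$; substituting $\beta\le 1-\frac{\gamma}{\log||f'||_\infty}$ gives $h_\infty\le\big(1-\tfrac{\gamma}{\log||f'||_\infty}\big)h_{\text{top}}(f)+\tfrac{\gamma}{\log||f'||_\infty}\alpha$, and letting $\alpha\downarrow\frac{R(f)}{r}$ produces the claim.

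The substantive work here is essentially bookkeeping: verifying that part~3) of Theorem~\ref{th:main2} can be invoked (choosing $p$ so the liminf exceeds $\alpha-\frac{R(f)}{r}+\frac{1}{rp}\log^+||(f^p)'||_\infty$, and noting $\delta>0$), and justifying the additive splitting $\lambda_f(\mu)=\beta\lambda_f(\mu_1)+(1-\beta)\lambda_f(\mu_0)$, where one uses that $\log|f'|$ is bounded above and that $\lambda_f(\mu_1)$ is finite by item~\ref{itm:main21'}. Everything else is the elementary monotonicity of the affine map $\beta\mapsto\alpha+\beta(h_{\text{top}}(f)-\alpha)$ combined with the bound on $\beta$ read off from the exponent inequality, followed by the passage to the limit $\alpha\downarrow\frac{R(f)}{r}$.
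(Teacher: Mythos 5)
Your proposal is correct and follows essentially the same route as the paper's proof: both extract from items \ref{itm:main22} and \ref{itm:main23'} of Theorem \ref{th:main2} the bound $\gamma \leq (1-\beta)\log\|f'\|_{\infty}$, hence $\beta \leq 1 - \gamma/\log\|f'\|_{\infty}$, substitute this into $\beta h_f(\mu_1)+(1-\beta)\alpha \leq \beta(h_{\rm top}(f)-\alpha)+\alpha$, and let $\alpha \downarrow \frac{R(f)}{r}$. The only difference is that you handle the degenerate case $\liminf_k \lambda_{f_k}(\mu_k) \leq \frac{R(f)}{r}$ (where $\beta=0$ and item \ref{itm:main23'} is unavailable) explicitly, a case the paper's argument passes over in silence.
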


\begin{proof}
We have
\begin{align*}
\gamma
&=\lambda_{f}(\mu)-\liminf_{\mathcal{K} \ni k\to\infty} \lambda_{f_k}(\nu_k)\\
\text{Theorem \ref{th:main2}.2)3')}\;&\leq \lambda_f(\mu) - \beta \lambda_f(\mu_1) - (1-\beta)\delta\\
&\leq (1-\beta)\lambda_f(\mu_0)\\
&\leq (1-\beta)\log ||f' ||_{\infty}.
\end{align*}
By Theorem \ref{th:main1}.2) we have
\begin{align*}
\lim\limits_{k \to +\infty} h_{f_k}(\nu_k)
&\leq \beta h_f(\mu_1) + (1-\beta) \alpha
\leq \beta (h_{\text{top}}(f) - \alpha) + \alpha.
\end{align*}
Since $h_{\text{top}}(f) > \frac{R(f)}{r}$, we can use the previous upper bound of $\gamma$ and let $\alpha$ go to $\frac{R(f)}{r}$, which gives
\begin{align*}
\lim\limits_{k \to +\infty} h_{f_k}(\nu_k)
&\leq (1 - \frac{\gamma}{\log ||f' ||_{\infty}}) h_{\rm{top}}(f) + \frac{\gamma}{\log ||f' ||_{\infty}} \frac{R(f)}{r}.
\end{align*}
\end{proof}

We now prove that Theorem \ref{th:UI} implies Theorem \ref{th:main3}:

\begin{proof}[Proof of Theorem \ref{th:main3}]
Let $\varepsilon > 0$.
By contradiction, assume that there is a sequence $(g_k, \nu_k, \eta_k, \delta_k)$ such that
\begin{itemize}
\item[i)] $\nu_k$ is $g_k$-ergodic and converges to some $\nu$ in the weak-$*$ topology;
\item[ii)] $g_k \to f$ $\mathcal{C}^{r}$-weakly;
\item[iii)] $\eta_k, \delta_k \to 0$;
\item[iv)] for any $k$, $h_{g_k}(\nu_k) \geq h_{top}(g_k) - \delta_k$;
\item[v)] for any $k$, $\int_{\{ |g_k'| < \eta_k\}} -\log |g_k'| d \nu_k \geq \varepsilon$;
\item[vi)] $(h_{g_k}(\nu_k))_k$ converges.
\end{itemize}
Notice that for $0 < \eta < \eta' < 1$ and any $k$, we have
$$
\int_{\{ |g_k'| < \eta\}} -\log |g_k'| d \nu_k \leq \int_{\{ |g_k'| < \eta'\}} -\log |g_k'| d \nu_k.
$$
Hence, for any $\eta > 0$, since $\eta_k \to 0$, we have
$$
\limsup\limits_{k \to +\infty} \int_{\{ |g_k'| < \eta\}} -\log |g_k'| d \nu_k \geq \limsup\limits_{k \to +\infty} \int_{\{ |g_k'| < \eta_k\}} -\log |g_k'| d \nu_k.
$$
By letting $\eta \to 0$, we have that the $\gamma$ associated to the sequence $(g_k, \nu_k)$ satisfies
$$
\gamma \geq \limsup\limits_{k \to +\infty} \int_{\{ |g_k'| < \eta_k\}} -\log |g_k'| d \nu_k \geq \varepsilon.
$$
Then, by lower semi-continuity of the topological entropy for the $\mathcal{C}^0$-topology from \cite{Misiurewicz_LSC_entropy}, by Theorem \ref{th:UI}, by items iii) and iv) and by $\gamma \geq \varepsilon$, we have
\begin{align*}
h_{\rm{top}}(f) 
&\leq \lim\limits_{k \to +\infty} h_{g_k}(\nu_k) + \delta_k\\
&\leq (1 - \frac{\gamma}{\log ||f' ||_{\infty}}) (h_{\rm{top}}(f) - \frac{R(f)}{r}) + \frac{R(f)}{r}\\
&= (1 - \frac{\varepsilon}{\log ||f' ||_{\infty}})h_{\rm{top}}(f) + \frac{\varepsilon}{\log ||f' ||_{\infty}}\frac{R(f)}{r}.
\end{align*}

This gives
$
\frac{\varepsilon}{\log ||f' ||_{\infty}} h_{\rm{top}}(f) \leq \frac{\varepsilon}{\log ||f' ||_{\infty}}\frac{R(f)}{r}
$
which is a contradiction.
\end{proof}

\bibliographystyle{acm}

\newpage

\textsc{Alexandre Delplanque}, Sorbonne Universit\'e, Universit\'e Paris Cit\'e, CNRS, Laboratoire de Probabilit\'es, Statistique et Mod\'elisation

F-75005 Paris, France

\textit{E-mail:} 
{\fontfamily{lmtt}\selectfont
delplanque@lpsm.paris
}
\\

\textsc{Hengyi Li}, Institut de Mathématique d'Orsay, CNRS-UMR 8628,
Universit\'e Paris-Saclay, 91405 Orsay, France. 

\textit{E-mail:}
{\fontfamily{lmtt}\selectfont
hengyi.li@universite-paris-saclay.fr
}


\end{document}